\allowdisplaybreaks \allowdisplaybreaks[4]
\newtheorem{thm}{Theorem}[section]
\newtheorem{cor}[thm]{Corollary}
\newtheorem{lem}{Lemma}[section]
\newtheorem{prop}{Proposition}[section]
\theoremstyle{definition}
\newtheorem{defn}{Definition}[section]
\theoremstyle{remark}
\newtheorem{rem}{Remark}[section]
\numberwithin{equation}{section}
\newcommand{\bmf}[1]{{\mathbf{#1}}}
\newcommand{\sfr}{{\sf{R}}}
\newcommand{\sfi}{{\sf{I}}}
\title[Geometric structures of generalized elastic transmission eigenfunctions]{On a local geometric property of the generalized elastic transmission eigenfunctions and application}
\author{Huaian Diao}
\address{School of Mathematics and Statistics, Northeast Normal University,
Changchun, Jilin 130024, China.}
\email{hadiao@gmail.com}
\author{Hongyu Liu}
\address{Department of Mathematics, City University of Hong Kong, Kowloon, Hong Kong SAR, China.}
\email{hongyu.liuip@gmail.com; hongyliu@cityu.edu.hk}
\author{Baiyi Sun}
\address{School of Mathematics and Statistics, Northeast Normal University,
Changchun, Jilin 130024, China.}
\email{1412102726@qq.com}
\date{} 
\begin{document}
\maketitle

\begin{abstract}
Consider the nonlinear and completely continuous scattering map
\[
\mathcal{S}\big((\Omega; \lambda, \mu, V), \mathbf{u}^i\big)=\mathbf{u}_t^\infty(\hat{\mathbf{x}}), \quad \hat{\mathbf{x}}\in\mathbb{S}^{n-1}, 
\]
which sends an inhomogeneous elastic scatterer $(\Omega; \lambda, \mu, V)$ to its far-field pattern $\mathbf{u}_t^\infty$ due to an incident wave field $\mathbf{u}^i$ via the Lam\'e system. Here, $(\lambda, \mu, V)$ signifies the medium configuration of an elastic scatterer that is compactly supported in $\Omega$. In this paper, we are concerned with the intrinsic geometric structure of the kernel space of $\mathcal{S}$, which is of fundamental importance to the theory of inverse scattering and invisibility cloaking for elastic waves and has received considerable attention recently. It turns out that the study is contained in analysing the geometric properties of a certain non-selfadjoint and non-elliptic transmission eigenvalue problem. We propose a generalized elastic transmission eigenvalue problem and prove that the transmission eigenfunctions vanish locally around a corner of $\partial\Omega$ under generic regularity criteria. The regularity criteria are characerized by the H\"older continuity or a certain Fourier extension property of the transmission eigenfunctions. As an interesting and significant application, we apply the local geometric property to derive several novel unique identifiability results for a longstanding inverse elastic problem by a single far-field measurement.

\medskip

\noindent{\bf Keywords:}~~Elasticity, non-scattering and invisibility, transmission eigenfunctions, geometric structure, corner singularity, inverse obstacle problem, unique identifiability, single far-field pattern. 

\noindent{\bf AMS Class (2010):}~~35Q60, 78A46 (primary); 35P25, 78A05, 81U40 (secondary).

\end{abstract}

\maketitle

\section{Introduction}

\subsection{Background and motivation}

We first introduce the elastic scattering due to an embedded inhomogeneous medium and an incident wave field, which is the physical origin of our study. Let $\lambda, \mu$ be real constants satisfying the following strong convexity condition
\begin{equation}\notag 
 \mu>0,\ \ n\lambda+2\mu>0\ \mbox{ for }\ n=2,3.
\end{equation}
Let $\Omega\subset\mathbb{R}^n$, $n=2,3$, be a bounded Lipschitz domain with a connected complement $\mathbb{R}^n\backslash\overline{\Omega}$. Suppose that $V\in L^\infty(\mathbb{R}^n)$ is a real-valued function with $\mathrm{supp}(V)\subset\Omega$. The parameters $\lambda, \mu$ and $V$ characterize the elastic medium configuration of the space $\mathbb{R}^n$, with $\lambda, \mu$ and $1+V$ respectively denoting the bulk moduli and density. Throughout, we assume that $V$ is nontrivial, which is also referred to as a scattering potential. $(\Omega; \lambda, \mu, V)$ signifies an inhomogeneous scatterer embedded in the uniformly homogeneous space $\mathbb{R}^n$. Let $\mathbf{u}^i$ be an incident field which is a $\mathbb{C}^n$-valued entire solution to the following Lam\'e system:
\begin{equation}\label{eq:in1}
\mathcal{L}\mathbf{u}^i+\omega^2\mathbf{u}^i=0\ \ \mbox{in}\ \mathbb{R}^n,\quad \mathcal{L}\mathbf{u}^i:=\lambda \Delta \mathbf{u}^i+(\lambda+\mu) \nabla \nabla \cdot {\mathbf{u}^i}, 
\end{equation}
where $\omega\in\mathbb{R}_+$ signifies the angular frequency of the time-harmonic wave propagation. The interaction between the incident field $\mathbf{u}^i$ and the scatterer $(\Omega;\lambda, \mu, V)$ generates the elastic scattering, which is governed by the following Lam\'e system:
\begin{equation}\label{eq:scat1}
\mathcal{L}\bmf{u}+\omega^2(1+V)\mathbf{u}=0\ \ \mbox{in}\ \mathbb{R}^n;\ \ \mathbf{u}=\mathbf{u}^i+\mathbf{u}^{\mathrm{sc}};\ \ \mathbf{u}^{\mathrm{sc}}\ \mbox{is radiating}. 
\end{equation}
Here by radiating, we mean that $\mathbf{u}^\mathrm{sc}$ satisfies the following Kupradze radiation condition
\begin{equation}\label{eq:kp1}
\lim_{r\rightarrow\infty}r^{\frac{n-1}{2}}\left(\frac{\partial \bmf{u}_\beta^{\mathrm{sc} }}{\partial r}-\mathrm{i}k_\beta \bmf{u}_\beta^{\mathrm{sc} }\right) =\,\mathbf 0,\quad r:=|\mathbf{x}|,\quad \beta=p, s,
\end{equation}
where 
\begin{equation}\label{eq:decomp1}
\bmf{u}^{\mathrm{s c} }=\bmf{u}_{p}^{\mathrm {s c} }+\bmf{u}_{s}^{\mathrm{s c} },\ \	 \bmf{u}_{p}^{\mathrm {s c} }:=-\frac{1}{k_{p}^{2}} \nabla\left( \nabla \cdot \bmf{ u}^{\mathrm {s c} }\right ), \quad \bmf{ u}_{s}^{\mathrm{s c}}:=\begin{cases}  \frac{1}{k_{s}^{2}}\nabla \times \nabla \times  {\mathbf u}^{\mathrm {s c} }\ \ & (\mbox{3D})\\ 
 \frac{1}{k_{s}^{2}} \bf{curl} \operatorname{curl} u^{\mathrm {s c} }\ \ & (\mbox{2D})	 
\end{cases},
\end{equation}
and 
\begin{equation}\label{eq:kpks}
	k_p :=\frac{\omega}{\sqrt{ 2\mu+\lambda }}, \ \ k_s:=\frac{\omega}{\sqrt{ \mu}}. 
\end{equation}
In \eqref{eq:decomp1}, the two-dimensional operators $\bf{curl}$  and $\operatorname{curl}$ are defined respectively by
\[
 {\rm curl}\, \bmf{ u}=\partial_1 u_2-\partial_2 u_1, \quad {\bf
curl}\, {u}=(\partial_2 u, -\partial_1 u)^\top,
\]
with $\bmf{u}=(u_1, u_2)$ and $u$ being vector-valued and scalar functions, respectively. It is noted that \eqref{eq:decomp1} is the Helmholtz decomposition of the vector field $\bmf{u}^\mathrm{sc}$, where $\bmf{u}_p^{\mathrm sc}$ and $\bmf{u}_s^{\mathrm sc}$ are respectively referred to as the compressional and shear parts of $\bmf{u}^{\mathrm sc}$. This decomposition shall also play an important role in our subsequent analysis. The Kupradze radiation condition characterizes the outward propagating nature of the scattered field $\bmf{u}^\mathrm{sc}$. The well-posedness of the scattering system \eqref{eq:scat1} is known \cite{Hahner98}, and in particular there exists a unique solution $\bmf{u}\in H_{loc}^1(\mathbb{R}^n)$ which admits the following asymptotic expansions:
\begin{equation}\label{eq:far-field}
\begin{split}
	\bmf{u}_{\beta}^{\mathrm{sc}}(\bmf{x}) =& \frac{{e}^{\mathrm{i} k_{p} r}}{{r}^{\frac{
	n-1}{2} }}\left\{u_{p}^{\infty}(\hat{\bmf{x}}) \hat{\bmf{x}}+O\left(\frac{1}{r}\right)\right\},\ \ \hat{\bmf{x}}:=\bmf{x}/|\bmf{x}|,\ \ \beta=p, s, \\
	\bmf{u}^{\mathrm{sc}}(\bmf{x})=& \frac{{e}^{\mathrm{i} k_{p} r}}{{r}^{\frac{
n-1}{2} }} u_{p}^{\infty}(\hat{\bmf{x} } ) \hat{\bmf{x}}+\frac{{e}^{\mathrm{i} k_{s} r}}{{r}^{\frac{
n-1}{2} }} u_{s}^{\infty}(\hat{\bmf{x} }) \hat{\bmf{x} }^{\perp}+O\left(\frac{1}{r^{(n+1) / 2}}\right) ,
\end{split}
\end{equation}
as $r =|\bmf{x} | \rightarrow \infty$, where $u_{p}^{\infty}$ and $u_{s}^{\infty}$ are both scalar functions defined on the unit sphere $\mathbb S^{n-1}:=\{\hat{\bmf{x}}\in\mathbb{R}^n \big| |\hat{\bmf{x}}|=1\}$. Define the far-field pattern $\bmf{u}_t^\infty$ of $\bmf{u}^{\mathrm{sc}} $ as $\bmf{u}_t^{\infty}(\hat{\bmf{x}} ) :=u_{p}^{\infty}(\hat{\bmf{x}}) \hat{\bmf{x}}+u_{s}^{\infty}(\hat{\bmf{x}}) \hat{\bmf{x}}^{\perp}.$ One clearly has the unique correspondences:
$\bmf{u}_{p}^{\infty}(\hat{\mathbf{x}} )=\bmf{u}_t^{\infty}(\hat{\bmf{x}}) \cdot \hat{\bmf{x}}$ and $\bmf{u}_{s}^{\infty}(\hat{\bmf{x}} )=\bmf{u}_t^{\infty}(\hat{\bmf{x}}) \cdot \hat{\bmf{x}}^{\perp}$. Moreover, due to the Rellich Theorem \cite{CK}, the correspondence between $\bmf{u}_t^\infty$ and $\mathbf{u}^{\mathrm sc}$ is also one-to-one. 

Define the scattering operator $\mathcal{S}$ as
\begin{equation}\label{eq:so1}
\mathcal{S}\big((\Omega; \lambda, \mu, V), \mathbf{u}^i\big)=\mathbf{u}_t^\infty(\hat{\mathbf{x}}), \quad \hat{\mathbf{x}}\in\mathbb{S}^{n-1}, 
\end{equation}
which is implicitly defined by the scattering system \eqref{eq:scat1}. It can be directly verified that $\mathcal{S}$ is nonlinear with respect to the argument $(\Omega;\lambda,\mu, V)$, whereas it is linear with respect to the argument $\mathbf{u}^i$.  An inverse problem of industrial importance in geophysical exploration and medical imaging is to recover $(\Omega; \lambda,\mu, V)$ by knowledge of $\mathbf{u}_t^\infty$. In this paper, we are mainly concerned with the kernel space of $\mathcal{S}$, namely
\begin{equation}\label{eq:so2}
\mathcal{S}\big((\Omega; \lambda, \mu, V), \mathbf{u}^i\big)= \mathbf{0}, 
\end{equation}
which corresponds to the physical scenario that no scattering, a.k.a. invisibility occurs. In particular, we consider the geometric structures of the kernel space $\mathrm{Ker}(\mathcal{S})$, namely, the quantitative relationships between the configurations in $\mathrm{Ker}(\mathcal{S})$ and the intrinsic geometries of $\Omega$. Intuitively, if non-scattering/invisibility occurs, 
one has nil scattering information from the far-field measurement and hence the inverse problem described above fails. However, it is much surprising that the geometric understanding of $\mathrm{Ker}(\mathcal{S})$ can not only provide quantitative understanding of how the waves behave inside the scattering object when invisibility occurs with respect to exterior observations, but can also provide a completely new perspective of the inverse problem \eqref{eq:so1} for both the derivation of new theoretical uniqueness results and the development of novel numerical reconstruction algorithms. Before discussing more about these aspects, we present the so-called elastic transmission eigenvalue problem, which shall provide a broader spectral perspective of studying the geometric structure of $\mathrm{Ker}(\mathcal{S})$. 

Consider the following PDE system for ${\mathbf v}=(v_\ell)_{\ell=1}^n\in H^1(\Omega)^n$ and ${\mathbf w}=(w_\ell)_{\ell=1}^n\in H^{1}(\Omega)^n$:
\begin{align}\label{eq:lame1n1}
\left\{
\begin{array}{l}
{\lambda \Delta {\mathbf v}+(\lambda+\mu) \nabla \nabla \cdot {\mathbf v}+\omega^{2} {\mathbf v}={\mathbf 0} }\hspace*{2.04cm}\ \mbox{ in } \Omega, \\[5pt]
{\lambda \Delta {\mathbf w}+(\lambda+\mu) \nabla \nabla \cdot {\mathbf w}+\omega^{2}(1+V) {\mathbf w}={\mathbf 0} }\hspace*{0.5cm}\ \mbox{ in } \Omega, \\[5pt]
{\mathbf w}={\mathbf v},\quad T_{\nu}{\mathbf v}=T_{\nu}{\mathbf w} \hspace*{4.1cm}\mbox{ on } \partial \Omega,
  \end{array}
\right.
\end{align}
with $\nu$ signifying the outward unit normal to $\partial \Omega$, and the {\it boundary traction} operator $T_{\nu }{\mathbf v}$ defined as 
\begin{equation}\label{eq:Tu def}
T_{\nu }{\mathbf v}=	\begin{cases}
		2\mu\partial_{\nu }{\mathbf v}+\lambda {\nu} \left(\nabla \cdot {\mathbf v}\right)+\mu (\partial_{2}v_{1}-\partial_{1}v_{2})  \nu^\perp ,\quad \mbox{ for } n=2,\\[5pt]
		2 \mu \partial_{\nu }{\mathbf v}+\lambda {\nu} \left(\nabla \cdot {\mathbf v}\right)+\mu {\nu} \times(\nabla \times {\mathbf v}),\hspace*{1.1cm} \mbox{ for } n=3,
	\end{cases}
\end{equation}
where for $n=2$, $\nu^\perp \in \mathbb R^2$ denotes the unit vector obtained via rotating $\nu$ anti-clockwise by $\pi/2$. It is clear that $\mathbf{v}=\mathbf{w}\equiv \mathbf{0}$ are a pair of trivial solutions to \eqref{eq:lame1n1}.
If for a certain $\omega \in\mathbb{R}_+$, there exists a pair of nontrivial solutions $({\mathbf v}, {\mathbf w} )\in H^1(\Omega)^n\times H^1(\Omega)^n$ to \eqref{eq:lame1n1}, then $\omega$ is called an elastic transmission eigenvalue and $({\mathbf v}, {\mathbf w})$ is referred to as the corresponding pair of elastic transmission eigenfunctions.
The connection between $\mathrm{Ker}(\mathcal{S})$ and the transmission eigenvalue problem \eqref{eq:lame1n1} can be described as follows. If $\mathbf{u}_t^\infty\equiv 0$, one clearly has $\mathbf{u}^{\mathrm{sc}}=0$ in $\mathbb{R}^n\backslash\overline{\Omega}$. This in turn yields that $\mathbf{u}=\mathbf{u}^i$ in $\mathbb{R}^n\backslash\overline{\Omega}$ for the scattering system \eqref{eq:scat1}. In such a case, one can show that $\mathbf{v}=\mathbf{u}^i|_{\Omega}$ and $\mathbf{w}=\mathbf{u}|_{\Omega}$ are a pair of transmission eigenfunctions. However, if $(\mathbf{v},\mathbf{w})$ are a pair of transmission eigenfunctions, one cannot directly have the non-scattering/invisibility unless $\mathbf{v}$ can be (analytically) extended to an entire solution to \eqref{eq:in1} to generate an incident field. Nevertheless, $(\mathbf{v}, \mathbf{w})$ is located in any small neighbourhood of $\mathrm{Ker}(\mathcal{S})$ in the sense that $\mathbf{v}$ and $\mathbf{w}$ can respectively approximate $\mathbf{u}^i$ and $\mathbf{u}$ in $\Omega$ within $\varepsilon$-accuracy for any $\varepsilon>0$ such that the corresponding scattering amplitude is also of order $\varepsilon$. This viewpoint has been verified in \cite{BL2016,BL2017b,JL} for the acoustic scattering governed by the Helmholtz equation, and one should be able to show similar results for the elastic scattering by following a similar spirit. However, this is beyond the scope the current study. 

Due to its physical significance, the transmission eigenvalue problems have been extensively and intensively investigated in the literature, especially associated with the acoustic and electromagnetic scattering. It turns out that the transmission eigenvalue problems are non-elliptic and non-selfadjoint, and the corresponding mathematical study is highly challenging and intriguing, which is of significant interest in its own sake for the spectral theory of partial differential operators. We refer to \cite{CKreview,CHreview,Liureview} for historical accounts and surveys on the existing developments of the transmission eigenvalue problems. It is pointed out that the spectral study is less touched for the elastic transmission eigenvalue problems than that for the acoustic and electromagnetic transmission eigenvalue problems. Moreover, the existing results are concerned more about the spectral properties of the transmission eigenvalues and there are much fewer results on the intrinsic properties of the transmission eigenfunctions. Recently, there are considerable efforts in the literature in unveiling the distinct geometric structures of the transmission eigenfunctions. 
In \cite{BL2017b}, a local geometric structure was discovered for the acoustic transmission eigenfunctions, showing that they are generically vanishing around a corner on $\partial\Omega$. The discovery is motivated by the relevant study \cite{BL2016,SPV,BPS} which verified that if a scattering potential possesses a corner, it scatters every incident field non-trivially and stably. The vanishing property strongly depends on the regularity of the transmission eigenfunctions, and has been established under two regularity conditions. The first one is the H\"older continuity of the transmission eigenfunctions locally around the corner. This means that the transmission eigenfunctions should be more regular than $H^1$, say by the standard Sobolev embedding, $W^{1,\frac{n}{n-\alpha}}$ or $W^{2,\frac{n}{2-\alpha}}$ with $\alpha\in (0, 1)$. On the other hand, as evidenced by the numerical study \cite{BLLW}, there are indeed cases where the transmission eigenfunctions are not vanishing and instead they are localizing, especially for certain concave corners. In order to gain more insightful understanding of the regularity effect, another criterion was introduced in terms of the growth rate of the Herglotz densities which arise from the Fourier extension of the transmission eigenfunctions \cite{BL2017b,DCL,DDL}. There are several further studies on the locally vanishing property of the transmission eigenfunctions in different physical scenarios \cite{Bsource,EBL,BL2018,BXL,CX}. In \cite{CDHLW}, a global rigidity property is discovered, showing that the transmission eigenfunctions tend to localize on $\partial\Omega$. Both the local and global geometric structures of transmission eigenfunctions can produce interesting and significant applications. First, they are of fundamental importance to the invisibility cloaking which is a frontier technology \cite{GKLU5,LiuUhl}. Second, they have been used to establish novel unique identifiability results for the inverse scattering problems by a single far-field measurement \cite{Bsource,EBL,BL2016,BL2017,BL2018,CX,CDL,DCL,LT,LTY}, which constitutes a longstanding problem in the inverse scattering theory \cite{CK,LiuZou4}. Furthermore, in \cite{CDHLW}, a super-resolution wave imaging scheme was developed by making use the geometric properties of the transmission eigenfunctions. 

However, most of the existing studies discussed above are concerned with the transmission eigenfunctions associated with the acoustic or electromagnetic scattering. There is very limited study on the elastic transmission eigenfunctions due to their more complicated physical and mathematical nature. In \cite{EBL}, the authors proved that the elastic transmission eigenfunctions locally vanish around a corner under the condition that the eigenfunctions are H\"older continuous in the domain. The result was applied to deriving a novel uniqueness result in determining an unknown elastic source from its far-field pattern. In this paper, we shall provide a much more comprehensive study of this important geometric property for the elastic transmission eigenfunctions. First, we shall consider a more general formulation of the elastic transmission eigenvalue problem which includes \eqref{eq:lame1n1} as a special case. Second, we establish the local vanishing property under both the H\"older continuity and the Fourier extension property of the generalised transmission eigenfunctions. Third, we apply the newly established results to deriving two novel unique identifiability results for the inverse elastic problem in determining the polygonal/polyhedral support of an inhomogeneous medium independent of its content. Finally, we would like to briefly discuss the mathematical strategies that we develop to derive the results. In order to establish the geometric property, one needs to track the singularity of the transmission eigenfunctions (with respect to their smoothness) induced by the geometric singularity of the domain, namely the corner. To achieve that end, we develop a microlocal argument that can be localized around the corner. An integral identity involving the difference of the two transmission eigenfunctions as well as a special type of CGO (Complex Geometric Optics) solutions is a critical ingredient in our study. Compared to most of the existing studies mentioned earlier, there is a boundary integral terms due to the generalized transmission conditions in our study, which together with the more complicated nature of the Lam\'e system, makes the related analysis and estimates highly technical and subtle. In tracking the order of the asymptotic parameter in the phase of the CGO solutions, we manage to derive the desired results.

\subsection{Mathematical setup and summary of the main findings}

Let $\eta\in L^{\infty}(\partial\Omega)$ be a real-valued function. Instead of \eqref{eq:lame1n1}, we consider the following generalized elastic  transmission eigenvalue problem for ${\mathbf v}=(v_\ell)_{\ell=1}^n,{\mathbf w}=(w_\ell)_{\ell=1}^n\in H^{1}(\Omega)^n$,
\begin{align}\label{eq:lame1}
\left\{
\begin{array}{l}
{\lambda \Delta {\mathbf v}+(\lambda+\mu) \nabla \nabla \cdot {\mathbf v}+\omega^{2} {\mathbf v}={\mathbf 0} }\hspace*{2cm}\ \mbox{ in } \Omega, \\[5pt]
{\lambda \Delta {\mathbf w}+(\lambda+\mu) \nabla \nabla \cdot {\mathbf w}+\omega^{2}(1+V) {\mathbf w}={\mathbf 0} }\hspace*{0.5cm}\ \mbox{ in } \Omega, \\[5pt]
{\mathbf w}={\mathbf v},\quad T_{\nu}{\mathbf v}+\eta {\mathbf v}=T_{\nu}{\mathbf w} \hspace*{3.3cm}\mbox{ on } \Gamma. 
  \end{array}
\right.
\end{align}
where $\Gamma\subset\partial\Omega$ is an open subset. We note that if $\eta\equiv 0$ and $\Gamma=\partial\Omega$, the transmission eigenvalue problem \eqref{eq:lame1} is reduced to \eqref{eq:lame1n1}. Hence, we refer to \eqref{eq:lame1} as the generalised transmission eigenvalue problem. It is particular to note that the transmission condition $T_{\nu}{\mathbf v}+\eta {\mathbf v}=T_{\nu}{\mathbf w}$ not only brings mathematical generalisation but also 
is physically meaningful. In fact, it is referred to as the conductive transmission condition in the context of electromagnetic scattering, which arises in effectively describing a thin layer of highly conducting coating \cite{Ang,CDL}. In \eqref{eq:lame1}, the generalised transmission condition can also be used to effectively describe a thin layer of highly lossy elastic coating. However, we shall not explore more about this point since it is not the focus of the current article. If $\eta\equiv 0$, the existence and properties of the transmission eigenvalues to \eqref{eq:lame1} have been studied in \cite{Bellis2013,Bellis2010}. In this paper, we shall study the intrinsic geometric properties of the transmission eigenfunctions assuming their existence in the general case.  

The major geometric finding can be sketched as follows. Let $\Gamma=\Gamma^-\cup\Gamma^+$, where $\Gamma^\pm$ are two non-collinear/non-coplanar line segments or planes in 2D and 3D, respectively. That is, $\Gamma^-$ and $\Gamma^+$ form a (non-degenerate) corner on $\partial\Omega$. Under mild conditions on $V$ and $\eta$ as well as the necessary regularity requirements on $\mathbf{v}$ and $\mathbf{w}$ as discussed above, it is shown that $\mathbf{v}$ and $\mathbf{w}$ are vanishing around the corner. The 2D results are contained in Theorems~\ref{thm:29} and \ref{thm:23} and Corollary~\ref{cor:22} for $\eta\neq 0$ and $\eta\equiv 0$, respectively, whereas the corresponding 3D results are contained in Theorem~\ref{thm:31} and Corollaries~\ref{cor:3.1} and \ref{cor:34 eta}. According to our earlier discussion, the geometric results imply that when non-scattering or nearly non-scattering occurs, the incident and the total wave fields propagate in a peculiar manner that avoids the corner places of the inhomogeneous medium. Moreover, the results can be used to establish novel unique identifiability results the geometrical inverse elastic problem, which are contained in Theorems~\ref{thm:41} and \ref{Th: unique eta}. 

Before proceeding further to prove our main result in $\mathbb R^2$ in Section \ref{sec:2}, we would like to summarize our main methodologies to provide the readers a global picture of our study. Consider the elastic transmission eigenfunctions $(\mathbf v,\mathbf w)$ fulfilling \eqref{eq:lame1}. We use the elastic Herglotz wave function to approximate  $\mathbf v$ with certain accuracy and kernel increasing property, which severs as a certain regularity characterization for  $\mathbf v \in H^1(\Omega )^2$. By virtue of the complex geometric optics (CGO) solution introduced in \cite{EBL}, we establish the integral equality via the Green formula. The asymptotic decay of all integrals in the underlying integral equality  with respect to the asymptotic parameter in the CGO is carefully studied, where we extract the leading order terms in the aforementioned asymptotic  analysis. With the above preparations, we prove the vanishing property of the elastic transmission eigenfunction near a planar corner. For the 3D case, by using the dimensional reduction technique and similar to the 2D result, we can establish the local geometrical characterization of the elastic transmission eigenfunction near an edge corner in $\mathbb R^3$ under generic conditions, which shall be clearer from our subsequent analysis in Section \ref{sec:3}.

The rest of the paper is organized as follows. In Sections \ref{sec:2} and \ref{sec:3}, we present the studies in two and three dimensions respectively. Section \ref{sect:4} is devoted to the study of the inverse elastic problem.

\section{vanishing near corners of generalized elastic transmission eigenfunctions: two-dimensional case}\label{sec:2}

In this section, we consider the geometric property of the generalized elastic transmission eigenfunction to \eqref{eq:lame1} in two dimensions. First, we introduce the geometric setup of our study. For $\mathbf{x}=(x_{1}, x_{2})^\top  \in \mathbb R^2$, the polar coordinate of $\mathbf x$ is given by $\mathbf{x}=(r \cos\theta, r \sin\theta)^\top$. Denote an open sector $ W \Subset \mathbb{R}^{2}$ and its boundary $\Gamma^{\pm}$ as follows:
\begin{equation}\label{eq:lame2}
\begin{split}
W&=\{\mathbf{x}\in \mathbb{R}^{2}|{\mathbf x}\neq\mathbf{ 0},\ \ \theta_{m}<\arg(x_{1}+{\rm i}x_{2})<\theta_{M}\},\\
\Gamma^-&=\{\mathbf{x}\in \mathbb{R}^{2}|{\mathbf x}\neq\mathbf{ 0},\ \ \arg(x_{1}+{\rm i}x_{2})=\theta_{m}\}, \\
\Gamma^+&=\{\mathbf{x}\in \mathbb{R}^{2}|{\mathbf x}\neq\mathbf{ 0},\ \ \arg(x_{1}+{\rm i}x_{2})=\theta_{M}\},
\end{split}
\end{equation}
where $-\pi<\theta_{m}<\theta_{M}<\pi$.
Let $B_{h}$ and $B_{\varepsilon}$  denote open disks centered at $\mathbf 0$ of radii $h \in \mathbb R_+$ and $\varepsilon \in \mathbb R_+ $ with $ \varepsilon < h $, respectively. In the sequel, we set
\begin{equation}\label{eq:SIGN}
	S_h= W\cap B_h,\, \Gamma_h^{\pm }= \Gamma^{\pm } \cap B_h,\, \overline  S_h=\overline{ W} \cap B_h, \, \ \mbox{and}\ 	\Lambda_h=S_h \cap \partial B_h.	
\end{equation}

The elastic Herglotz wave function $\mathbf{v}_{\mathbf g}$ in $\mathbb R^2$ is defined by
\begin{equation}\label{eq:h2d}
 \begin{aligned}
\mathbf{v}_{\mathbf g}=e^{-\frac{{\rm i}\pi}{4}}\int_{\mathbb S^{1}}\Big   \{\sqrt{\frac{k_{p}}{\omega }}e^{ {\rm i} k_{p}\mathbf{d}\cdot \mathbf{x}}g_{p}({\mathbf d})\mathbf{d}+\sqrt{\frac{k_{s}}{\omega }}e^{{\rm i} k_{s}\mathbf{d}\cdot \mathbf{x}}g_{s}({\mathbf d})\mathbf{d}^{\perp}\Big  \}{\rm d}\sigma(\mathbf{d}),
 \end{aligned}
 \end{equation}
where the kernel  ${\mathbf g} =(g_{p},g_{s})^{\top }$ satisfies  $ g_{p},\, g_{s}\in L^{2}({\mathbb S^{1}})$,   ${\mathbf d}, \mathbf{d}^{\perp} \in \mathbb S^1$ and ${\mathbf d} \perp \mathbf{d}^{\perp}$.

\begin{lem}\cite[Theorem 3.4]{Arens}\label{lem:herg}
	Let $D \Subset \mathbb R^2$ be a bounded Lipschitz domain with a connected complement. Then the set of elastic Herglotz wave functions is dense with respect to the $H^1(D )^2$-norm in the set of solutions to the Lam\'e  system \begin{equation}\label{eq:navier}
\mathcal{L}  \mathbf v +\omega^{2} {\mathbf v}={\mathbf 0} , \quad  \mathcal{L} := \lambda \Delta +(\lambda+\mu) \nabla \left(\nabla \cdot\right)\quad\mbox{in}\ \ D. 
\end{equation}
\end{lem}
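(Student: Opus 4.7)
The plan is a Hahn-Banach duality argument combined with Rellich's lemma for the Lam\'e operator. Let $X\subset H^1(D)^2$ denote the closed subspace of $H^1$-solutions to \eqref{eq:navier} in $D$ and let $Y\subset X$ denote the $H^1(D)^2$-closure of the set $\{\bmf{v}_{\mathbf g}|_D : g_p,g_s\in L^2(\mathbb S^1)\}$ of elastic Herglotz restrictions. If $Y\neq X$, then by the Hahn-Banach theorem there is a bounded linear functional $\ell$ on $H^1(D)^2$ that vanishes identically on $Y$ but is nonzero on some $\mathbf u_0\in X$. The aim is to show that such an $\ell$ must in fact vanish on all of $X$, which contradicts the existence of $\mathbf u_0$ and yields the asserted density.

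By Riesz representation there exist $\bmf{f}\in L^2(D)^2$ and $\mathbf F\in L^2(D)^{2\times 2}$ with $\ell(\mathbf u)=\int_D\bigl(\bmf{f}\cdot\mathbf u+\mathbf F:\nabla\mathbf u\bigr)\,d\mathbf x$. I would then test $\ell$ against the two plane-wave families $\mathbf x\mapsto e^{\mathrm{i}k_p\mathbf d\cdot\mathbf x}\mathbf d$ and $\mathbf x\mapsto e^{\mathrm{i}k_s\mathbf d\cdot\mathbf x}\mathbf d^\perp$ that appear inside the definition \eqref{eq:h2d} of $\bmf{v}_{\mathbf g}$. Since the kernel $\mathbf g=(g_p,g_s)^\top$ ranges over all of $L^2(\mathbb S^1)^2$ independently, the identity $\ell(\bmf{v}_{\mathbf g})=0$ decouples into two scalar conditions on $\mathbb S^1$ which assert that certain Fourier-type integrals of $(\bmf{f},\mathbf F)$ vanish on the $P$- and $S$-Ewald circles $\{|\bm{\xi}|=k_p\}$ and $\{|\bm{\xi}|=k_s\}$ respectively.

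The key step is to reinterpret these two vanishing conditions as the vanishing of the compressional and shear far-field patterns of an auxiliary radiating Lam\'e field. Concretely, extend $(\bmf{f},\mathbf F)$ by zero outside $D$ and define $\bmf{p}$ on $\mathbb R^2\setminus\overline D$ as the elastic volume/double-layer potential generated by the density $(\bmf{f},\mathbf F)$ via the Kupradze fundamental tensor $\bGamma$ at frequency $\omega$. Then $\bmf{p}$ solves $\mathcal{L}\bmf{p}+\omega^2\bmf{p}=\mathbf 0$ on $\mathbb R^2\setminus\overline D$, satisfies the Kupradze radiation condition \eqref{eq:kp1}, and its $P$- and $S$-far-field patterns in \eqref{eq:far-field} coincide, up to nonzero multiplicative constants, with the two scalar functions that have just been shown to vanish. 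Rellich's lemma for the Lam\'e system, together with unique continuation and the hypothesis that $\mathbb R^2\setminus\overline D$ is connected, then forces $\bmf{p}\equiv\mathbf 0$ outside $D$.

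In the last step, I would use the jump relations for elastic volume and layer potentials across $\partial D$ to translate $\bmf{p}\equiv\mathbf 0$ into a boundary identity that, applied through Green's second formula for $\mathcal{L}+\omega^2$ to an arbitrary $\mathbf u\in X$, delivers $\ell(\mathbf u)=0$, which is the sought contradiction. The main technical obstacle I expect to have to address carefully is the gradient term $\mathbf F:\nabla\mathbf u$ in the Riesz representation: after integration by parts it contributes a distributional density on $\partial D$ that has to be absorbed into the single-layer piece of the potential $\bmf{p}$, and one must then verify that the resulting elastic layer/volume potential has the claimed $P$- and $S$-far-field patterns. With these jump and far-field identities in place, the chain Hahn-Banach $\Rightarrow$ Fourier conditions on two Ewald circles $\Rightarrow$ Rellich $\Rightarrow$ jump relations $\Rightarrow$ $\ell\equiv 0$ on $X$ closes the argument.
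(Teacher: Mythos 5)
Your proposal is correct and follows essentially the same route as the source the paper cites for this lemma: the paper offers no proof of its own but refers to \cite[Theorem 3.4]{Arens}, whose argument is exactly this Hahn--Banach duality scheme (Riesz representation of an annihilating functional, testing against the $P$- and $S$-plane waves, reinterpretation as vanishing far-field patterns of an auxiliary radiating Lam\'e potential, Rellich plus unique continuation on the connected complement, and jump relations to conclude the functional vanishes on all solutions). The technical point you flag about absorbing the $\mathbf F:\nabla\mathbf u$ term into the potential is handled there in the standard way, so no gap remains.
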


%
%
%

 By  virtue of Lemma \ref{lem:herg}, for any generalized elastic transmission eigenfunctions $(\mathbf v ,\mathbf w) \in H^1(\Omega )^2 \times H^1(\Omega )^2$ to \eqref{eq:lame1}, there exists a sequence of elastic Herglotz wave functions $\mathbf{v}_{j}$ given by
 \begin{equation}\label{eq:vj}
 \begin{aligned}
\mathbf{v}_{j}(\mathbf x ) =e^{-\frac{{\rm i}\pi}{4}}\int_{\mathbb S^{1}}\Big   \{\sqrt{\frac{k_{p}}{\omega }}e^{ {\rm i} k_{p}\mathbf{d}\cdot \mathbf{x}}g_{jp}({\mathbf d})\mathbf{d}+\sqrt{\frac{k_{s}}{\omega }}e^{{\rm i} k_{s}\mathbf{d}\cdot \mathbf{x}}g_{js}({\mathbf d})\mathbf{d}^{\perp}\Big  \}{\rm d}\sigma(\mathbf{d}),
 \end{aligned}
 \end{equation}
 which can approximate $\mathbf v$ to an arbitrary accuracy in $H^1(\Omega )^2$. It is clear that $\mathbf{v}_j$ can be regarded as the Fourier extension of $\mathbf{v}$. 

In what follows, we shall split the real and imaginary parts of the elastic transmission eigenfunctions $({\mathbf v}, {\mathbf w})$ to \eqref{eq:lame1}  as
\begin{equation}\label{eq:decouple}
{\mathbf v}={\mathbf v}_{\sf R}+{\mathrm i} {\mathbf v}_{\sfi},\quad {\mathbf w}={\mathbf w}_{\sfr}+{\mathrm i} {\mathbf w}_{\sfi}.	
\end{equation}
It is straightforward to verify that both $(\mathbf{v}_{\sf R}, \mathbf{w}_{\sf R})$ and $(\mathbf{v}_{\sf R}, \mathbf{w}_{\sf R})$ satisfy \eqref{eq:lame1}. In a similar manner, we let the real and imaginary part of the kernel functions $g_{j\beta }({\mathbf d})$ ($\beta=p,s$) of \eqref{eq:vj} be given by
\begin{equation}\label{eq:gjpgjs}
	g_{j\beta }({\mathbf d})=g_{j\beta }^{\sfr }({\mathbf d})+\mathrm i g_{j\beta }^{\sfi }({\mathbf d}).
\end{equation}
We can derive the following auxiliary propositions. 

\begin{prop}\label{Pro:21}
Let the elastic Herglotz wave function $\mathbf v_{j}$ be defined by \eqref{eq:vj}. Let
\begin{equation}\label{eq:vrj 28}
\mathbf v_{j} (\mathbf x)=
\mathbf v_{j} ^\sfr(\mathbf x)+{\rm i}\mathbf v_{j}^\sfi (\mathbf x),
\end{equation}
where $\mathbf v_{j} ^\sfr(\mathbf x)$ and $\mathbf v_{j}^\sfi (\mathbf x)$ are real valued functions. Then
\begin{align}\label{vjrd}
\mathbf v_{j} ^\sfr(\mathbf x)=& \frac{1}{\sqrt 2}\int_{\mathbb S^1}\sqrt{\frac{k_{p}}{\omega }}\bigg ( \cos (k_{p} \mathbf{d}\cdot \mathbf{x})g_{jp}^\sfr ({\mathbf d})+ \cos (k_{p} \mathbf{d}\cdot \mathbf{x})g_{jp}^\sfi ({\mathbf d})  +\sin (k_{p} \mathbf{d}\cdot \mathbf{x})g_{jp}^\sfr ({\mathbf d}) \notag
\\&-\sin (k_{p} \mathbf{d}\cdot \mathbf{x})g_{jp}^\sfi ({\mathbf d}) \bigg )\mathbf{d}+
\sqrt{\frac{k_{s}}{\omega }}\bigg ( \cos (k_{s} \mathbf{d}\cdot \mathbf{x})g_{js}^\sfr ({\mathbf d})+ \cos (k_{s} \mathbf{d}\cdot \mathbf{x})g_{js}^\sfi ({\mathbf d}) \notag \\
& +\sin (k_{s} \mathbf{d}\cdot \mathbf{x})g_{js}^\sfr ({\mathbf d})-\sin (k_{s} \mathbf{d}\cdot \mathbf{x})g_{js}^\sfi ({\mathbf d}) \bigg )\mathbf{d}^{\perp}
{\rm d}\sigma(\mathbf{d})
\end{align}
and
\begin{equation}\notag
\begin{aligned}
\mathbf v_{j} ^\sfi(\mathbf x)=& \frac{1}{\sqrt 2}\int_{\mathbb S^1}\sqrt{\frac{k_{p}}{\omega }}\bigg ( -\cos (k_{p} \mathbf{d}\cdot \mathbf{x})g_{jp}^\sfr ({\mathbf d})+ \cos (k_{p} \mathbf{d}\cdot \mathbf{x})g_{jp}^\sfi ({\mathbf d})  +\sin (k_{p} \mathbf{d}\cdot \mathbf{x})g_{jp}^\sfr ({\mathbf d})\\&+\sin (k_{p} \mathbf{d}\cdot \mathbf{x})g_{jp}^\sfi ({\mathbf d}) \bigg )\mathbf{d}+
\sqrt{\frac{k_{s}}{\omega }}\bigg ( -\cos (k_{s} \mathbf{d}\cdot \mathbf{x})g_{js}^\sfr ({\mathbf d})+ \cos (k_{s} \mathbf{d}\cdot \mathbf{x})g_{js}^\sfi ({\mathbf d}) \\& +\sin (k_{s} \mathbf{d}\cdot \mathbf{x})g_{js}^\sfr ({\mathbf d})+\sin (k_{s} \mathbf{d}\cdot \mathbf{x})g_{js}^\sfi ({\mathbf d}) \bigg )\mathbf{d}^{\perp}
{\rm d}\sigma(\mathbf{d}).
\end{aligned}
\end{equation}
\end{prop}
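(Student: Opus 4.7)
The plan is to prove Proposition~\ref{Pro:21} by a direct algebraic expansion of the defining integral \eqref{eq:vj}, splitting every complex factor into its real and imaginary parts and then collecting like terms inside the integral over $\mathbb S^1$.

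First, I would rewrite the overall prefactor as $e^{-\mathrm{i}\pi/4}=\tfrac{1}{\sqrt 2}(1-\mathrm{i})$ and expand each oscillating exponential by Euler's formula,
$$
e^{\mathrm{i} k_\beta \mathbf d\cdot\mathbf x}=\cos(k_\beta \mathbf d\cdot\mathbf x)+\mathrm{i}\sin(k_\beta \mathbf d\cdot\mathbf x),\qquad \beta=p,s.
$$
Combined with the decomposition \eqref{eq:gjpgjs} of the kernel, the scalar part of the integrand associated with index $\beta$ becomes
$$
\tfrac{1}{\sqrt 2}\sqrt{\tfrac{k_\beta}{\omega}}\,(1-\mathrm{i})\bigl[\cos(k_\beta \mathbf d\cdot\mathbf x)+\mathrm{i}\sin(k_\beta \mathbf d\cdot\mathbf x)\bigr]\bigl[g_{j\beta}^{\sfr}(\mathbf d)+\mathrm{i}\,g_{j\beta}^{\sfi}(\mathbf d)\bigr].
$$

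Multiplying the three bracketed factors produces eight elementary terms. A short computation groups the real contributions as
$$
\cos(k_\beta\mathbf d\cdot\mathbf x)g_{j\beta}^{\sfr}+\cos(k_\beta\mathbf d\cdot\mathbf x)g_{j\beta}^{\sfi}+\sin(k_\beta\mathbf d\cdot\mathbf x)g_{j\beta}^{\sfr}-\sin(k_\beta\mathbf d\cdot\mathbf x)g_{j\beta}^{\sfi}
$$
and the imaginary contributions as
$$
-\cos(k_\beta\mathbf d\cdot\mathbf x)g_{j\beta}^{\sfr}+\cos(k_\beta\mathbf d\cdot\mathbf x)g_{j\beta}^{\sfi}+\sin(k_\beta\mathbf d\cdot\mathbf x)g_{j\beta}^{\sfr}+\sin(k_\beta\mathbf d\cdot\mathbf x)g_{j\beta}^{\sfi},
$$
each multiplied by the common scalar $\tfrac{1}{\sqrt 2}\sqrt{k_\beta/\omega}$. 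Reattaching the directional vectors $\mathbf d$ for $\beta=p$ and $\mathbf d^\perp$ for $\beta=s$ and invoking linearity of the integral over $\mathbb S^1$ to sum the two contributions reproduces exactly the displayed formulas for $\mathbf v_j^{\sfr}$ and $\mathbf v_j^{\sfi}$ in the statement.

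There is no genuine obstacle here; the argument is essentially bookkeeping. The only delicate point is carefully tracking signs coming from the product $(1-\mathrm{i})$ and from the identity $\mathrm{i}\cdot\mathrm{i}=-1$, so that the minus sign ultimately lands on $\sin(k_\beta\mathbf d\cdot\mathbf x)g_{j\beta}^{\sfi}$ in the real part and on $\cos(k_\beta\mathbf d\cdot\mathbf x)g_{j\beta}^{\sfr}$ in the imaginary part, in agreement with \eqref{vjrd}.
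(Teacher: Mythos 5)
Your proposal is correct and follows exactly the route the paper indicates (Euler's formula plus the factorization $e^{-\mathrm{i}\pi/4}=\tfrac{1}{\sqrt 2}(1-\mathrm{i})$ and term-by-term bookkeeping); the paper simply omits these details, and your sign-tracking of the eight elementary terms checks out. The only implicit step worth noting is that passing $\Re$ and $\Im$ through the integral is justified because $g_{j\beta}^{\sfr}$, $g_{j\beta}^{\sfi}$, $\mathbf d$ and $\mathbf d^{\perp}$ are all real.
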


\begin{proof}
The proof follows from using Euler's formula as well as straightforward (though a bit tedious) calculations. We skip the details. 
\end{proof}

\begin{prop}\label{Pro:2.2}
	Let the elastic Herglotz wave function $\mathbf v_{j}$ be defined by \eqref{eq:vj}. Denote
	\begin{equation}
\begin{aligned}
 & \mathbf v_{jp} ^\sfr(\mathbf 0)= \frac{1}{\sqrt 2}\int_{\mathbb S^1}\sqrt{\frac{k_{p}}{\omega }}g_{jp}^\sfr ({\mathbf d})\mathbf{d}{\rm d}\sigma(\mathbf{d}),\quad \mathbf v_{js} ^\sfr(\mathbf 0)=\frac{1}{\sqrt 2}\int_{\mathbb S^1}\sqrt{\frac{k_{s}}{\omega }}g_{js}^\sfr ({\mathbf d})\mathbf{d}^{\perp}{\rm d}\sigma(\mathbf{d}),\\&  \mathbf v_{jp} ^\sfi(\mathbf 0)= \frac{1}{\sqrt 2}\int_{\mathbb S^1}\sqrt{\frac{k_{p}}{\omega }}g_{jp}^\sfi ({\mathbf d})\mathbf{d}{\rm d}\sigma(\mathbf{d}),\quad   \mathbf v_{js} ^\sfi(\mathbf 0)=\frac{1}{\sqrt 2}\int_{\mathbb S^1}\sqrt{\frac{k_{s}}{\omega }}g_{js}^\sfi ({\mathbf d})\mathbf{d}^{\perp}{\rm d}\sigma(\mathbf{d}).
\end{aligned}
\end{equation}
Then
\begin{equation}\label{eq:vj0}
 \begin{aligned}
	\mathbf v_{j} (\mathbf 0)=
\mathbf v_{j} ^\sfr(\mathbf 0)+{\rm i}\mathbf v_{j}^\sfi (\mathbf 0)&: = (\mathbf v_{jp} ^\sfr(\mathbf 0)+\mathbf v_{js} ^\sfr(\mathbf 0)+ \mathbf v_{jp}^\sfi (\mathbf 0)+\mathbf v_{js}^\sfi (\mathbf 0)) \\
&\quad +{\rm i}(\mathbf v_{jp}^\sfi (\mathbf 0)+\mathbf v_{js}^\sfi (\mathbf 0) -\mathbf v_{jp} ^\sfr(\mathbf 0)-\mathbf v_{js} ^\sfr(\mathbf 0)),
  \end{aligned}
\end{equation}	
where
 $g_{j\beta }^{\sfr }({\mathbf d})$ and $ g_{j\beta }^{\sfi }({\mathbf d})$ ($\beta=p,s$) are defined in \eqref{eq:gjpgjs}. Let $J_\ell(t)$ be the $\ell$-th Bessel function of the first kind for $\ell \in \mathbb N \cup \{0\}$.
 Denote $J_{\ell,\beta}= J_{\ell}(k_\beta  |\mathbf{x}|) $, $\beta=p,\ s$. Furthermore, we have
\begin{equation}\label{eq:vjr}
 \begin{aligned}
  \mathbf v_{j} ^\sfr(\mathbf x)=&
 \mathbf v_{jp} ^\sfr(\mathbf 0)J_{0,p}
 +\mathbf v_{jp} ^\sfi(\mathbf 0)J_{0,p}
 +\mathbf v_{js} ^\sfr(\mathbf 0)J_{0,s}
 +\mathbf v_{js} ^\sfi(\mathbf 0)J_{0,s}
\\& +\sqrt{2}\sum^{+\infty}_{\ell =1}  (-1)^\ell \Big(   J_{2\ell,p}\mathbf{A}_{jp,1}^{(\ell )}+ J_{2\ell,s}\mathbf{A}_{js,1}^{(\ell)}
  +   J_{2\ell-1,p}\mathbf{A}_{jp,2}^{(\ell)}
+ J_{2\ell-1,s}\mathbf{A}_{js,2}^{(\ell)}\Big),
\end{aligned}
\end{equation}
and
\begin{equation}\label{eq:vji}
 \begin{aligned}
 \mathbf v_{j} ^\sfi(\mathbf x)=&  \mathbf v_{jp} ^\sfi(\mathbf 0)J_{0,p}+\mathbf v_{jp} ^\sfi(\mathbf 0)J_{0,p}
 -\mathbf v_{js} ^\sfr(\mathbf 0)J_{0,s}
 -\mathbf v_{js} ^\sfr(\mathbf 0)J_{0,s}
\\&  +\sqrt{2}\sum^{+\infty}_{\ell =1}  [(-1)^\ell \Big(  J_{2\ell,p}\mathbf{A}_{jp,3}^{(\ell)}+
J_{2\ell,s}\mathbf{A}_{js,3}^{(\ell)}
  + J_{2\ell-1,p}\mathbf{A}_{jp,2}^{(\ell )}
+ J_{2\ell-1,s}\mathbf{A}_{js,2}^{(\ell )}\Big),
\end{aligned}
\end{equation}
where $\mathbf{p}_p=\mathbf d$, $\mathbf{p}_s=\mathbf d^\perp$, and 
\begin{equation} \notag
 \begin{aligned}
\mathbf{A}_{j\beta ,1}^{(\ell) }=&\int_{\mathbb S^1}\sqrt{\frac{k_{\beta }}{\omega }}g_{j\beta }^\sfr({\mathbf d})\cos ( 2\ell\theta)\mathbf{p}_\beta {\rm d}\sigma(\mathbf{d})
+\int_{\mathbb S^1}\sqrt{\frac{k_{\beta }}{\omega }}g_{j\beta }^\sfi({\mathbf d})\cos ( 2\ell\theta))\mathbf{p}_\beta {\rm d}\sigma(\mathbf{d}),\\
 \mathbf{A}_{j\beta,2}^{(\ell)}=&
\int_{\mathbb S^1}\sqrt{\frac{k_{\beta }}{\omega }}g_{j\beta }^\sfi({\mathbf d})\cos ((2\ell-1)\theta)\mathbf{p}_\beta {\rm d}\sigma(\mathbf{d})
- \int_{\mathbb S^1}\sqrt{\frac{k_{\beta }}{\omega }}g_{j\beta }^\sfr({\mathbf d})\cos ((2\ell-1)\theta)\mathbf{p}_\beta {\rm d}\sigma(\mathbf{d}),\\
\mathbf{A}_{j\beta ,3}^{(\ell)}=&\int_{\mathbb S^1}\sqrt{\frac{k_{\beta }}{\omega }}g_{j\beta }^\sfr({\mathbf d})\cos ( 2\ell\theta)\mathbf{p}_\beta {\rm d}\sigma(\mathbf{d})
-\int_{\mathbb S^1}\sqrt{\frac{k_{\beta }}{\omega }}g_{j\beta }^\sfi({\mathbf d})\cos ( 2\ell\theta))\mathbf{p}_\beta {\rm d}\sigma(\mathbf{d}), \beta=p  \mbox{ or } s, 
\end{aligned}
\end{equation}
and $\theta $ is the angle between $\mathbf x$ and $ \mathbf d$ in \eqref{eq:vj}. It holds that
\begin{equation}\label{eq:gjps norm}
	\begin{split}
		|\mathbf{A}_{jp,i}^{(\ell)}| \leq 2\sqrt{\frac{k_{p} \pi }{\omega }} \|g_{jp}\|_{L^2(\mathbb S^1 )},\quad |\mathbf{A}_{js,i}^{(\ell)}| \leq 2\sqrt{\frac{k_{s} \pi }{\omega }} \|g_{js}\|_{L^2(\mathbb S^1 )},\quad i=1,\ldots, 3.
	\end{split}
\end{equation}
\end{prop}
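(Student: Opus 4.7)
The plan is to take the formulas for $\mathbf{v}_j^\sfr$ and $\mathbf{v}_j^\sfi$ derived in Proposition~\ref{Pro:21} and expand each $\cos(k_\beta\mathbf{d}\cdot\mathbf{x})$ and $\sin(k_\beta\mathbf{d}\cdot\mathbf{x})$ via the classical Jacobi--Anger expansion. Writing $\mathbf{d}\cdot\mathbf{x}=|\mathbf{x}|\cos\theta$, where $\theta$ is the angle between $\mathbf{x}$ and $\mathbf{d}$, I would use
\begin{equation*}
\cos(k_\beta|\mathbf{x}|\cos\theta) = J_{0,\beta} + 2\sum_{\ell=1}^{+\infty}(-1)^\ell J_{2\ell,\beta}\cos(2\ell\theta),
\end{equation*}
\begin{equation*}
\sin(k_\beta|\mathbf{x}|\cos\theta) = 2\sum_{\ell=1}^{+\infty}(-1)^{\ell-1}J_{2\ell-1,\beta}\cos((2\ell-1)\theta),
\end{equation*}
both of which converge uniformly on compact subsets in $\mathbf{x}$ and thus may be exchanged with the bounded angular integral over $\mathbb{S}^1$.

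Identity~\eqref{eq:vj0} would then follow directly from Proposition~\ref{Pro:21} by substituting $\mathbf{x}=\mathbf{0}$: every $\sin$ factor vanishes, every $\cos$ factor equals one, and the four surviving integrals regroup precisely as $\mathbf{v}_{jp}^\sfr(\mathbf{0})$, $\mathbf{v}_{js}^\sfr(\mathbf{0})$, $\mathbf{v}_{jp}^\sfi(\mathbf{0})$, $\mathbf{v}_{js}^\sfi(\mathbf{0})$, with appropriate signs. For general $\mathbf{x}$, substituting the two Jacobi--Anger series into \eqref{vjrd} (and into the companion formula for $\mathbf{v}_j^\sfi$) and splitting off the $\ell=0$ contribution of the cosine expansion produces exactly the $J_{0,\beta}$ terms in \eqref{eq:vjr} and \eqref{eq:vji}; the remaining $\ell\geq 1$ terms naturally regroup, after exchange of sum and integral, into sums of $J_{2\ell,\beta}$ weighted by $\cos(2\ell\theta)$-integrals of $g_{j\beta}^{\sfr/\sfi}$ and $J_{2\ell-1,\beta}$ weighted by $\cos((2\ell-1)\theta)$-integrals. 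These integrals are exactly the coefficients $\mathbf{A}_{j\beta,1}^{(\ell)}$, $\mathbf{A}_{j\beta,2}^{(\ell)}$, $\mathbf{A}_{j\beta,3}^{(\ell)}$. The overall factor $\sqrt{2}$ in front of the $\ell\geq 1$ sums arises from combining the prefactor $1/\sqrt{2}$ inherited from Proposition~\ref{Pro:21} with the factor $2$ in the Jacobi--Anger expansions.

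Finally, the bound \eqref{eq:gjps norm} is a routine application of the Cauchy--Schwarz inequality on $\mathbb{S}^1$: using $|\mathbf{p}_\beta|=|\mathbf{d}|=|\mathbf{d}^\perp|=1$, the identity $\|\cos(\ell\cdot)\|_{L^2(\mathbb{S}^1)}=\sqrt{\pi}$ for $\ell\geq 1$, and the fact that $\|g_{j\beta}^\sfr\|_{L^2(\mathbb{S}^1)}^2+\|g_{j\beta}^\sfi\|_{L^2(\mathbb{S}^1)}^2 = \|g_{j\beta}\|_{L^2(\mathbb{S}^1)}^2$, each of the two integrals defining $\mathbf{A}_{j\beta,i}^{(\ell)}$ is bounded by $\sqrt{k_\beta\pi/\omega}\,\|g_{j\beta}\|_{L^2(\mathbb{S}^1)}$, producing the asserted factor of $2$. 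The only genuinely delicate aspect of the whole argument is the meticulous bookkeeping of signs when matching the even and odd trigonometric terms from Proposition~\ref{Pro:21} against the two Jacobi--Anger series (in particular, the $(-1)^{\ell-1}$ from the sine expansion must be absorbed into the definitions of $\mathbf{A}_{j\beta,2}^{(\ell)}$ and $\mathbf{A}_{j\beta,3}^{(\ell)}$), but this is a tedious reindexing rather than a conceptual obstacle.
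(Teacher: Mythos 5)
Your proposal is correct and follows essentially the same route as the paper: the paper likewise substitutes the Jacobi--Anger expansion \eqref{eq:jae} (in its complex-exponential form) into \eqref{eq:vj} and then separates real and imaginary parts, which is the same computation you perform by applying the real cosine/sine expansions to the formulas of Proposition~\ref{Pro:21}, and the bound \eqref{eq:gjps norm} is obtained in both cases by Cauchy--Schwarz. The only cosmetic difference is the order in which the splitting into real and imaginary parts and the series expansion are carried out.
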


\begin{proof}
	In view of \eqref{eq:vj},  we can directly derive \eqref{eq:vj0}. Using the Jacobi-Anger expansion (cf. \cite{CK}), we have
	\begin{equation}\label{eq:jae}
		\begin{split}
			e^{{\rm i}k_\beta \mathbf{d}\cdot \mathbf{x}}&=J_{0}(k_\beta |\mathbf{x}|)+2\sum^{+\infty}_{n=1}{\rm i}^{n}J_{n}(k_\beta |\mathbf{x}|)\cos n\theta, \\
			&=J_{0}(k_\beta |\mathbf{x}|)+2\sum^{+\infty}_{\ell =1} \Big [(-1)^\ell  J_{2\ell}(k_\beta |\mathbf{x}|)\cos ( 2\ell\theta)  \\
			&\quad -{\rm i}  (-1)^\ell  J_{2\ell-1}(k_\beta |\mathbf{x}|)\cos ((2\ell-1)\theta)\Big],
		\end{split}
	\end{equation}
	where $\, \theta =\angle (\mathbf x, \mathbf d),\, \beta=p,s$.
Substituting \eqref{eq:vj0} and \eqref{eq:jae} into \eqref{eq:vj}, we can obtain \eqref{eq:vjr} and \eqref{eq:vji} by direct calculations. \eqref{eq:gjps norm} can be obtained by using the Cauchy-Schwarz inequality.
\end{proof}


We shall make use of the complex geometrical optics solution (CGO) $\mathbf u (s \mathbf x)$ introduced in \cite{EBL}, where $s\in \mathbb R_+$ is an asymptotic parameter. We next review some quantitative properties of $\mathbf u (s \mathbf x)$, which shall be used  in our subsequent analysis.

\begin{lem} \cite[Proposition 3.1]{EBL} \label{lem:22}
Let $\Omega\subset\mathbb{R}^2$ such that $\Omega\cap(\mathbb{R}_{-}\cup\{\mathbf 0
\})=\emptyset$. Denote
\begin{equation}\label{eq:lame3}
\mathbf{u}( \mathbf{x} )=
\begin{pmatrix}
    \exp(-s\sqrt{z})\\
    {\rm i}\exp(-s\sqrt{z})
    \end{pmatrix}:=\begin{pmatrix}
    u_1( \mathbf{x})\\
    u_2( \mathbf{x})
\end{pmatrix} , \quad \mathbf{x}=(x_1,x_2)^\top,
\end{equation}
where $z=x_{1}+{\rm i}x_{2}$ and $s\in \mathbb{R}_{+}$. The complex square root is defined as
\begin{equation}\notag 
\sqrt{z}=\sqrt{|z|}\left(\cos\frac{\theta}{2}+{\rm i}\sin\frac{\theta}{2}\right ),
\end{equation}
where $-\pi<\theta\leq\pi$ is the argument of $z$. Then ${\mathbf u}$ satisfies
$$
\mathcal{L}\mathbf{u}={\mathbf 0}\ \ \mbox{ in } \ \ \Omega.
$$
Let the open sector $W$ be defined in  \eqref{eq:lame2}.  Then
\begin{equation}\label{eq:lame5}
\int_{W} u_1(\mathbf{x}){\rm d}\mathbf{x}=6{\rm i}(e^{-2\theta_{M}{\rm i}}-e^{-2\theta_{m} \mathrm i})s^{-4}.
\end{equation}
In addition for $\alpha,h>0$ and $j\in\{1,2\}$ we have the upper bounds
\begin{equation}\label{eq:lame6}
\int_{W}|u_{j}(\mathbf{x})||\mathbf{x}|^{\alpha}{\rm d}\mathbf{x}\leq\frac{2(\theta_{M}-\theta_{m})\Gamma(2\alpha+4)}{\delta_{W}^{2\alpha+4}}s^{-2\alpha-4},
\end{equation}
and
\begin{equation}\label{eq:lame7}
\int_{W\backslash S_{h}}|u_{j}(\mathbf{x})|{\rm d}\mathbf{x}\leq\frac{6(\theta_{M}-\theta_{m})}{\delta_{W}^{4}}s^{-4}e^{-\delta_{W}s\sqrt{h}/2},
\end{equation}
where $\delta_{W}=\min_{\theta_{m}<\theta<\theta_{M}} \cos \left(\frac{\theta}{2}\right)$ is a positive constant.
\end{lem}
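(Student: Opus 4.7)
The plan is to verify the four assertions in turn.

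First, to show $\mathcal{L}\mathbf{u}=\mathbf{0}$ I would exploit the holomorphy of $f(z)=e^{-s\sqrt{z}}$ on $\mathbb{C}\setminus\mathbb{R}_{-}$: since $u_{1}=f$ and $u_{2}=\mathrm{i}f$ are (real and imaginary parts of) holomorphic functions, each component is harmonic on $\Omega$, yielding $\Delta\mathbf{u}=\mathbf{0}$. The Cauchy--Riemann equation in its Wirtinger form reads $(\partial_{1}+\mathrm{i}\partial_{2})f=2\partial_{\bar z}f=0$, so $\nabla\cdot\mathbf{u}=\partial_{1}u_{1}+\partial_{2}u_{2}=\partial_{1}f+\mathrm{i}\partial_{2}f=0$, whence $\nabla(\nabla\cdot\mathbf{u})=\mathbf{0}$. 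Adding these gives $\mathcal{L}\mathbf{u}=\lambda\Delta\mathbf{u}+(\lambda+\mu)\nabla(\nabla\cdot\mathbf{u})=\mathbf{0}$. The cleverness of the ansatz lies precisely in engineering $u_{2}=\mathrm{i}u_{1}$ so that divergence and Laplacian vanish simultaneously.

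Second, for the exact evaluation \eqref{eq:lame5}, I would pass to polar coordinates $\mathbf{x}=(r\cos\theta,r\sin\theta)$ so that $\sqrt{z}=\sqrt{r}\,e^{\mathrm{i}\theta/2}$, and write
\[
\int_{W} u_{1}(\mathbf{x})\,d\mathbf{x}=\int_{\theta_{m}}^{\theta_{M}}\!\int_{0}^{\infty}e^{-s\sqrt{r}\,e^{\mathrm{i}\theta/2}}\,r\,dr\,d\theta.
\]
The substitution $\tau=\sqrt{r}$ turns the inner integral into $2\int_{0}^{\infty}e^{-\mu\tau}\tau^{3}\,d\tau$ with $\mu=s e^{\mathrm{i}\theta/2}$. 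Because $\theta\in(\theta_{m},\theta_{M})\subset(-\pi,\pi)$ forces $\mathrm{Re}\,\mu=s\cos(\theta/2)>0$, the Laplace-type identity $\int_{0}^{\infty}e^{-\mu\tau}\tau^{3}\,d\tau=\Gamma(4)\mu^{-4}=6\mu^{-4}$ applies, giving $12 s^{-4}e^{-2\mathrm{i}\theta}$ for the inner integral. A direct antiderivative in $\theta$ then produces the claimed closed form $6\mathrm{i}(e^{-2\theta_{M}\mathrm{i}}-e^{-2\theta_{m}\mathrm{i}})s^{-4}$.

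Third, for the two upper bounds I would use the pointwise estimate $|u_{j}(\mathbf{x})|=|e^{-s\sqrt{z}}|=e^{-s\sqrt{r}\cos(\theta/2)}\le e^{-s\sqrt{r}\,\delta_{W}}$, which is exactly the role played by $\delta_{W}=\min_{\theta_{m}<\theta<\theta_{M}}\cos(\theta/2)>0$. The same polar-coordinate change of variables combined with $\tau=\sqrt{r}$ converts $\int_{W}|u_{j}||\mathbf{x}|^{\alpha}\,d\mathbf{x}$ into $2(\theta_{M}-\theta_{m})\int_{0}^{\infty}e^{-s\delta_{W}\tau}\tau^{2\alpha+3}\,d\tau$, and the Gamma identity $\Gamma(2\alpha+4)(s\delta_{W})^{-2\alpha-4}$ yields \eqref{eq:lame6}. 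For \eqref{eq:lame7}, the key trick is to split $e^{-s\delta_{W}\sqrt{r}}=e^{-s\delta_{W}\sqrt{r}/2}\cdot e^{-s\delta_{W}\sqrt{r}/2}$ on $r\ge h$, factor out $e^{-s\delta_{W}\sqrt{h}/2}$ from the first copy using $\sqrt{r}\ge\sqrt{h}$, and bound the remaining integral by extending it to $[0,\infty)$ and invoking the Gamma identity once more.

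No single step is conceptually deep; the main technical care lies in verifying that $\mathrm{Re}(\sqrt{z})>0$ throughout the sector (guaranteed by $\Omega\cap(\mathbb{R}_{-}\cup\{\mathbf{0}\})=\emptyset$ together with $-\pi<\theta_{m}<\theta_{M}<\pi$) so that every Laplace-transform integral converges absolutely and the closed-form Gamma evaluation is legitimate, and in performing the $\tau=\sqrt{r}$ substitution cleanly so that every $r^{\alpha}$-type weight is translated into an integer-order Gamma evaluation.
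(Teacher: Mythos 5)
The paper does not actually prove this lemma --- it is quoted verbatim from \cite[Proposition 3.1]{EBL} --- so there is no in-paper argument to compare against; your direct verification is the natural one. Your treatment of the first three assertions is complete and correct: holomorphy of $e^{-s\sqrt z}$ off the branch cut gives $\Delta\mathbf u=\mathbf 0$, the Wirtinger identity $(\partial_1+\mathrm i\partial_2)f=2\partial_{\bar z}f=0$ kills the divergence (this is exactly why $u_2=\mathrm i u_1$ is chosen), and the polar/Laplace computation with $\tau=\sqrt r$ reproduces \eqref{eq:lame5} and \eqref{eq:lame6} with the exact constants stated. The only discrepancy is in \eqref{eq:lame7}: the split $e^{-s\delta_W\sqrt r}\le e^{-s\delta_W\sqrt h/2}e^{-s\delta_W\sqrt r/2}$ followed by extending to $[0,\infty)$ gives $2\,\Gamma(4)\,(2/(s\delta_W))^{4}=192\,(s\delta_W)^{-4}$ per unit angle, i.e.\ the constant $192$ rather than $6$. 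You should not expect to recover $6$ by any argument, because the inequality as printed is false for small $h$: letting $h\to 0^{+}$ the left side tends to $\int_W|u_j|\,\mathrm d\mathbf x=\int_{\theta_m}^{\theta_M}12\,s^{-4}\cos^{-4}(\theta/2)\,\mathrm d\theta\ge 12(\theta_M-\theta_m)s^{-4}$, which exceeds the right side $6(\theta_M-\theta_m)\delta_W^{-4}s^{-4}$ whenever $\delta_W^4>1/2$ (e.g.\ a narrow sector about the positive $x_1$-axis). So your proof establishes \eqref{eq:lame7} with a larger absolute constant, which is all that is ever used downstream --- every application of \eqref{eq:lame7} in Sections \ref{sec:2}--\ref{sec:3} only invokes the $s^{-4}e^{-\delta_W s\sqrt h/2}$ decay --- but it is worth flagging that the constant $6$ in the quoted statement cannot be taken literally.
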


 The following lemma states the the regularity of the CGO solution $\mathbf u(\mathbf x)$ defined in \eqref{eq:lame3}. 
\begin{lem}\label{lem:23}
	Let $S_h$ be defined in \eqref{eq:SIGN} and $\mathbf u(\mathbf x)$ be given in \eqref{eq:lame3}. Then $\mathbf u(\mathbf x) \in H^1(S_h)^2$ and $\mathcal L  \mathbf u (\mathbf x)=\mathbf 0$ in $S_h$.  Furthermore, it holds that
	\begin{equation} \label{eq:lame21}
	\|\mathbf u(\mathbf x)\|_{L^2(S_h)^2 }^2\leq 	 (\theta_M-\theta_m) e^{- 2s\sqrt{\Theta } \delta_W } h^2,
	\end{equation}
	and
	\begin{equation}\label{eq:lame34}
		\left  \||\mathbf x|^\alpha \mathbf  u( \mathbf x) \right \|_{L^{2}(S_h )^2  }^2 \leq s^{-4(\alpha+1 )} \frac{4(\theta_M-\theta_m)  }{(4\delta_W^2)^{2\alpha+2  } } \Gamma(4\alpha+4),
	\end{equation}
where $ \Theta  \in [0,h ]$ and $\delta_W$ is defined in \eqref{eq:lame7}.
\end{lem}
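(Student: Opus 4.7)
My plan is to establish all four assertions by direct computation, exploiting the explicit form of $\mathbf{u}$. The identity $\mathcal{L}\mathbf{u}=\mathbf{0}$ on $S_h$ is immediate from Lemma~\ref{lem:22}: since $S_h\subset W$ and $W\cap(\mathbb{R}_{-}\cup\{\mathbf{0}\})=\emptyset$, that lemma applies with the ambient domain taken to be $S_h$. For the $H^1$-regularity, I would note that $\mathbf{u}$ is smooth away from the origin and compute by the chain rule $\partial_{j}u_{1}=-s(\partial_{j}\sqrt{z})e^{-s\sqrt{z}}$ with $|\partial_{j}\sqrt{z}|=1/(2\sqrt{|z|})$. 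The resulting $|z|^{-1/2}$ singularity at the origin is square-integrable against the polar area element $r\,dr\,d\theta$, and the exponential damping on $W$ (made quantitative below) controls the remaining radial behaviour, so $\|\nabla\mathbf{u}\|_{L^{2}(S_h)^{2}}<\infty$.

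The $L^{2}$ bound \eqref{eq:lame21} rests on the pointwise identity
\[
|\mathbf{u}(\mathbf{x})|^{2}=|u_{1}|^{2}+|u_{2}|^{2}=2|e^{-s\sqrt{z}}|^{2}=2e^{-2s\sqrt{r}\cos(\theta/2)},
\]
combined with the definition of $\delta_{W}$, which gives the uniform lower bound $\cos(\theta/2)\ge\delta_{W}$ on $W$. Passing to polar coordinates and carrying out the trivial angular integration one obtains
\[
\|\mathbf{u}\|_{L^{2}(S_h)^{2}}^{2}\le 2(\theta_{M}-\theta_{m})\int_{0}^{h} r\,e^{-2s\sqrt{r}\,\delta_{W}}\,dr.
\]
The mean value theorem for integrals (applicable since the exponential is continuous and $r\ge 0$) then produces some $\Theta\in[0,h]$ with $\int_{0}^{h}r\,e^{-2s\sqrt{r}\delta_{W}}\,dr = e^{-2s\sqrt{\Theta}\,\delta_{W}}\cdot h^{2}/2$, which yields \eqref{eq:lame21}.

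For the weighted estimate \eqref{eq:lame34}, the same pointwise bound followed by polar coordinates reduces matters to
\[
\bigl\||\mathbf{x}|^{\alpha}\mathbf{u}\bigr\|_{L^{2}(S_h)^{2}}^{2}\le 2(\theta_{M}-\theta_{m})\int_{0}^{h} r^{2\alpha+1}e^{-2s\sqrt{r}\,\delta_{W}}\,dr.
\]
Here I would introduce the substitution $t=2s\delta_{W}\sqrt{r}$, so that $r=t^{2}/(4s^{2}\delta_{W}^{2})$ and $dr=t/(2s^{2}\delta_{W}^{2})\,dt$; the radial integral transforms into a scaled incomplete Gamma integral of the form $\int_{0}^{2s\delta_{W}\sqrt{h}}t^{4\alpha+3}e^{-t}\,dt$, which I bound from above by $\Gamma(4\alpha+4)$ after extending the upper limit to $+\infty$. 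Collecting the factors of $2$, $4$, $s$, $\delta_{W}$ and $\theta_{M}-\theta_{m}$ produces the constant $4(\theta_{M}-\theta_{m})/(4\delta_{W}^{2})^{2\alpha+2}$ and the decay rate $s^{-4(\alpha+1)}$ in \eqref{eq:lame34}. The only delicate step is the bookkeeping of these prefactors through the substitution; the main idea used throughout is the systematic exploitation of $\cos(\theta/2)\ge\delta_{W}$ to disentangle the exponential damping from the angular variable prior to integration.
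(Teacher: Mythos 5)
Your proposal is correct: the pointwise identity $|\mathbf{u}|^{2}=2e^{-2s\sqrt{r}\cos(\theta/2)}$, the bound $\cos(\theta/2)\ge\delta_W$, the mean value theorem for the radial integral, and the substitution $t=2s\delta_W\sqrt{r}$ reproduce the stated constants in \eqref{eq:lame21} and \eqref{eq:lame34} exactly, and the $H^1$-regularity and $\mathcal{L}\mathbf{u}=\mathbf{0}$ claims follow as you indicate. The paper itself omits the computation and defers to \cite[Lemma 2.3]{DCL}, which proceeds by the same direct polar-coordinate argument, so your write-up simply supplies the details the authors skip.
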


\begin{proof}
The proof follows from a similar argument to that of \cite[Lemma 2.3]{DCL} and we skip the details. 
\end{proof}

\begin{lem}\label{lem:24 u tu}
	Suppose that $\Lambda_h$ and $\mathbf u(\mathbf x)$  are defined  by  \eqref{eq:SIGN} and  \eqref{eq:lame3} respectively. Recall that $\delta_W >0$ is given in \eqref{eq:lame7}. We have
	\begin{subequations}
		\begin{align}
			\|\mathbf u( \mathbf x)\|_{H^1(\Lambda_h )^{2}}& \leq \sqrt{ h+ \frac{s^2}{2}}\sqrt {\theta_M-\theta_m }e^{-s \sqrt h \delta_W} , \label{eq:u H1}\\
			\| T_{\nu}(\mathbf{u}) \|_{L^2(\Lambda_h )^{2} } &\leq \frac{s\mu}{\sqrt 2}  \sqrt{\theta_M-\theta_m } e^{-s \sqrt h \delta_W}, \label{eq:Tu L2}
		\end{align}
	\end{subequations}
	both of which decay exponentially as $s\rightarrow +\infty. $
\end{lem}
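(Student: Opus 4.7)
The plan is to parametrize $\Lambda_h$ in polar coordinates as $\mathbf{x} = (h\cos\theta, h\sin\theta)$ with $\theta \in (\theta_m, \theta_M)$, so that on this arc one has $|z| = h$ and $\sqrt{z} = \sqrt{h}(\cos(\theta/2) + \mathrm{i}\sin(\theta/2))$, hence the pointwise bound $|e^{-s\sqrt{z}}| = e^{-s\sqrt{h}\cos(\theta/2)} \leq e^{-s\sqrt{h}\delta_W}$ holds uniformly in $\theta$. Combined with the arc length element $h\,\mathrm{d}\theta$ and the elementary inequality $\int_{\theta_m}^{\theta_M} e^{-2s\sqrt{h}\cos(\theta/2)}\,\mathrm{d}\theta \leq (\theta_M - \theta_m)\,e^{-2s\sqrt{h}\delta_W}$, this pointwise exponential decay is the sole engine driving both estimates.

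For \eqref{eq:u H1}, I would bound $|\mathbf{u}|^2 = |u_1|^2 + |u_2|^2$ on $\Lambda_h$ directly to obtain an $L^2(\Lambda_h)^2$ contribution of order $h(\theta_M - \theta_m)\, e^{-2s\sqrt{h}\delta_W}$. For the gradient part I would exploit the holomorphic structure: writing $f(z) := e^{-s\sqrt{z}}$ so that $u_1 = f$ and $u_2 = \mathrm{i}f$, the Cauchy--Riemann equations give $\partial_1 u_1 = f'(z)$, $\partial_2 u_1 = \mathrm{i}f'(z)$ (and analogously for $u_2$), where $f'(z) = -\tfrac{s}{2\sqrt{z}}\, e^{-s\sqrt{z}}$. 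Evaluating on $\Lambda_h$ yields $|f'(z)| \leq \tfrac{s}{2\sqrt{h}}\, e^{-s\sqrt{h}\delta_W}$, so summing the four Frobenius components of $\nabla\mathbf{u}$ produces a gradient contribution of order $s^{2}(\theta_M - \theta_m)\, e^{-2s\sqrt{h}\delta_W}$. Combining the two pieces and taking the square root gives \eqref{eq:u H1} modulo an inessential numerical constant.

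For \eqref{eq:Tu L2}, the decisive observation is that the specific choice $u_2 = \mathrm{i}u_1$ in the CGO solution forces both the divergence and the scalar curl of $\mathbf{u}$ to vanish identically: $\nabla\cdot\mathbf{u} = \partial_1 f + \partial_2(\mathrm{i}f) = f' + \mathrm{i}(\mathrm{i}f') = 0$ and $\partial_2 u_1 - \partial_1 u_2 = \mathrm{i}f' - \mathrm{i}f' = 0$. Consequently the definition \eqref{eq:Tu def} of the boundary traction collapses to $T_{\nu}\mathbf{u} = 2\mu\,\partial_{\nu}\mathbf{u}$ on $\Lambda_h$. Since $\nu = \hat{\mathbf{x}}$ there, a short computation gives $\partial_{\nu} u_1 = f'(z)\,e^{\mathrm{i}\theta}$ and $\partial_{\nu} u_2 = \mathrm{i}f'(z)\,e^{\mathrm{i}\theta}$, hence $|T_{\nu}\mathbf{u}|^{2} = 8\mu^{2}|f'(z)|^{2}$, and integrating against the same pointwise bound on $|f'|$ as above delivers the claimed $\mathcal{O}(s)\,e^{-s\sqrt{h}\delta_W}$ decay.

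The main delicate point is exactly this Cauchy--Riemann-type cancellation that annihilates the $\nabla\cdot\mathbf{u}$ and curl contributions to $T_{\nu}\mathbf{u}$: without it, the $\lambda\nu(\nabla\cdot\mathbf{u})$ term would bring in an additional factor of $s$ through a further differentiation, producing an $\mathcal{O}(s^{2})$ rather than $\mathcal{O}(s)$ prefactor, which would be too costly for the subsequent corner CGO asymptotic analysis that stacks this boundary estimate against interior volume contributions. Beyond this structural identity, the argument is purely computational: careful bookkeeping of the holomorphic derivative of $e^{-s\sqrt{z}}$, a uniform lower bound for $\cos(\theta/2)$ on $(\theta_m,\theta_M)$ via $\delta_W$, and a uniform lower bound $|z|=h$ on $\Lambda_h$ to control $1/\sqrt{|z|}$.
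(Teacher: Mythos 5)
Your proof is correct and follows essentially the same route as the paper: parametrize the arc $\Lambda_h$, use $|e^{-s\sqrt{z}}|=e^{-s\sqrt{h}\cos(\theta/2)}\le e^{-s\sqrt{h}\delta_W}$ together with the explicit derivative $f'(z)=-\tfrac{s}{2\sqrt{z}}e^{-s\sqrt{z}}$ (the paper does the identical computation in polar coordinates before converting to Cartesian), and observe that $\nabla\cdot\mathbf{u}$ and the scalar curl vanish so that $T_\nu\mathbf{u}$ collapses to the normal-derivative term, which the paper records as $T_\nu\mathbf{u}=\mu\,\nabla\mathbf{u}\,\nu$. One small correction to your closing remark: the cancellation of the $\lambda\,\nu\,(\nabla\cdot\mathbf{u})$ term does not save a power of $s$ --- $\nabla\cdot\mathbf{u}$ is itself only a first derivative of $\mathbf{u}$ and hence already $\mathcal{O}(s)e^{-s\sqrt{h}\delta_W}$ on $\Lambda_h$ --- it merely removes the $\lambda$-dependence from the constant; likewise your prefactors differ from the stated ones by harmless factors of $2$ or $\sqrt{2}$, a looseness the paper's own computation shares.
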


\begin{proof}
By \eqref{eq:lame3}, one has
\begin{equation}\label{eq:21 u1norm}
	\| u_1( \mathbf x)\|_{L^2(\Lambda_h )^{2}} \leq \sqrt{h}e^{-s \sqrt h \delta_W} \sqrt{\theta_M-\theta_m}.
\end{equation}
It is directly verified that
	\begin{equation}\notag
	\begin{split}
		\frac{\partial u_1( \mathbf x)}{\partial r}&=-\frac{ s }{2r^{1/2}} e^{-s\sqrt{r} (\cos( \theta/2)+\mathrm i \sin (\theta/2 ) )+\mathrm i \theta/2 },\\ \frac{\partial u_1(\mathbf  x)}{\partial \theta}&=- \frac{\mathrm i s \sqrt{r}}{2} e^{-s\sqrt{r} (\cos( \theta/2)+\mathrm i \sin (\theta/2 ) )+\mathrm i \theta/2 },
	\end{split}
	\end{equation}
	which can be used to obtain that
		\begin{equation}\label{eq:23 partial}
	\begin{split}
		\frac{\partial u_1( \mathbf x)}{\partial x_1}&=-\frac{ s }{2r^{1/2}} e^{-s\sqrt{r} (\cos( \theta/2)+\mathrm i \sin (\theta/2 ) )-\mathrm i \theta/2 },\\ \frac{\partial u_1(\mathbf x)}{\partial x_2}&=- \frac{i s}{2r^{1/2}} e^{-s\sqrt{r} (\cos( \theta/2)+\mathrm i \sin (\theta/2 ) )-\mathrm i \theta/2 }.
	\end{split}
	\end{equation}
Therefore
\begin{equation}\label{eq:22 norm}
\left	\| \nabla u_1( \mathbf x ) \right \|_{L^2(\Lambda_h )^{2}} \leq \frac{s}{2} e^{-s \sqrt h \delta_W }\sqrt{\theta_M-\theta_m}.
\end{equation}
Combining \eqref{eq:21 u1norm} and \eqref{eq:22 norm}, as well as noting $u_2(\mathbf x )=\mathrm i u_2(\mathbf x )$, we can prove \eqref{eq:u H1}.

Using \eqref{eq:Tu def}, it is directly calculated that
$
T_\nu \mathbf u=\mu \nabla \mathbf u \ \nu. 
$
 Therefore by virtue of \eqref{eq:23 partial}, \eqref{eq:22 norm}  and  Cauchy-Schwarz inequality, we have
\begin{equation}\notag
	\begin{split}
		\| T_{\nu}(\mathbf{u}) \|_{L^2(\Lambda_h ) ^{2}}^2& =2\mu^2 \left	\| \nabla u_1( \mathbf x ) \right \|_{L^2(\Lambda_h )^{2}}^2 \leq \frac{s\mu^2}{2} e^{-2s \sqrt h \delta_W} (\theta_M-\theta_m).
	\end{split}
\end{equation}

The proof is complete. 
\end{proof}

We proceed to derive several key lemmas in order to establish the main geometric result of this section. It is first recalled the following Green formula for the Lam\'e operator; see \cite[Lemm 3.4]{costabel88} and \cite[Theorem 4.4]{McLean}.

\begin{lem}\label{lem:25}
Suppose that $\Omega\Subset \mathbb R^n (n=2,3) $ is a bounded Lipschitz domain. Let $\mathbf{u_1} \in H^1(\Omega )^2 $ and $\mathbf v_1 \in H^1(\Omega )^2$ satisfying $\mathcal L \mathbf{u_1}  \in L^2(\Omega )^n$ and $\mathcal L \mathbf{v_1}  \in L^2(\Omega )^n$. The the following Green identity holds
\begin{equation}\label{eq:green1}
	\int_{\Omega}(  \mathcal{L}\mathbf{u}_{1} \cdot \mathbf{v}_1 -  \mathcal{L}\mathbf{v}_{1} \cdot\mathbf{u}_1  ){\rm d}\mathbf{x}=\int_{\partial\Omega}( T_{\mathbf{\nu}}\mathbf{u}_1 \cdot \mathbf{v}_{1}-T_{\mathbf{\nu}}\mathbf{v}_{1} \cdot \mathbf{u}_1) {\rm d}\sigma.
\end{equation}
\end{lem}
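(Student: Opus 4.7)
The plan is to derive \eqref{eq:green1} as a subtraction of two copies of Betti's first identity, which I would first establish for smooth fields and then extend by density and generalized trace theory.

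First I would take $\mathbf{u}_1,\mathbf{v}_1 \in C^\infty(\overline{\Omega})^n$, expand $\mathcal{L}\mathbf{u}_1\cdot \mathbf{v}_1$ into its Laplace and gradient-of-divergence parts, and integrate by parts via the divergence theorem. After collecting terms, the boundary contributions coalesce into $T_\nu \mathbf{u}_1 \cdot \mathbf{v}_1$ (using the identity $\nu\times(\nabla\times \mathbf{u}) = (\nabla \mathbf{u})^\top\nu - \partial_\nu \mathbf{u}$ in 3D and its 2D scalar-curl analogue, so that the two forms in \eqref{eq:Tu def} emerge correctly), while the bulk terms assemble into the symmetric elastic bilinear form
\[
a(\mathbf{u}_1,\mathbf{v}_1) := \int_\Omega \bigl[ \lambda (\nabla\cdot \mathbf{u}_1)(\nabla\cdot \mathbf{v}_1) + 2\mu\, \mathcal{E}(\mathbf{u}_1):\mathcal{E}(\mathbf{v}_1)\bigr]\,d\mathbf{x}, \quad \mathcal{E}(\mathbf{u}) := \tfrac{1}{2}\bigl(\nabla\mathbf{u}+\nabla\mathbf{u}^\top\bigr).
\]
This yields Betti's first identity
\[
\int_\Omega \mathcal{L}\mathbf{u}_1\cdot \mathbf{v}_1\,d\mathbf{x} + a(\mathbf{u}_1,\mathbf{v}_1) = \int_{\partial\Omega} T_\nu \mathbf{u}_1\cdot \mathbf{v}_1\,d\sigma.
\]
Because $a$ is manifestly symmetric in its two arguments, writing the identity again with the roles of $\mathbf{u}_1$ and $\mathbf{v}_1$ interchanged and subtracting gives \eqref{eq:green1} in the smooth case.

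To reach the stated regularity $\mathbf{u}_1,\mathbf{v}_1 \in H^1(\Omega)^n$ with $\mathcal{L}\mathbf{u}_1,\mathcal{L}\mathbf{v}_1 \in L^2(\Omega)^n$ on a Lipschitz domain, I would define the generalized traction $T_\nu \mathbf{u}_1 \in H^{-1/2}(\partial\Omega)^n$ through the smooth-case identity: for $\varphi\in H^{1/2}(\partial\Omega)^n$ and any $H^1$-extension $\widetilde\varphi \in H^1(\Omega)^n$, set
\[
\langle T_\nu \mathbf{u}_1,\varphi\rangle_{\partial\Omega} := \int_\Omega \mathcal{L}\mathbf{u}_1\cdot \widetilde\varphi\,d\mathbf{x} + a(\mathbf{u}_1,\widetilde\varphi).
\]
The right-hand side is independent of the chosen extension and continuous in $\varphi$. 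Interpreting the boundary integrals in \eqref{eq:green1} as these duality pairings, density of $C^\infty(\overline{\Omega})^n$ in the graph-norm space $\{\mathbf{u}\in H^1(\Omega)^n : \mathcal{L}\mathbf{u}\in L^2(\Omega)^n\}$ together with continuity of every term passes the smooth identity to the limit.

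The principal obstacle is the Lipschitz regularity of $\partial\Omega$, which rules out a pointwise $L^2$-interpretation of $T_\nu \mathbf{u}_1$ and forces one to work with the full $H^{\pm 1/2}$ trace theory on non-smooth boundaries and with the resulting distributional conormal derivative. The two references cited alongside the statement provide precisely this framework for the Lam\'e operator; once that machinery is in hand, the remaining work is only the algebraic verification in the smooth case, where no genuine difficulty arises.
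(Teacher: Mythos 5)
Your proposal is correct and is essentially the argument behind the paper's own treatment: the paper gives no proof of this lemma but defers to \cite[Lemma 3.4]{costabel88} and \cite[Theorem 4.4]{McLean}, and those references establish \eqref{eq:green1} exactly as you do — via Betti's first identity with the symmetric bilinear form $a(\cdot,\cdot)$, subtraction, and the definition of the traction $T_\nu\mathbf{u}_1\in H^{-1/2}(\partial\Omega)^n$ as a distributional conormal derivative by duality, extended from smooth fields by density in the graph norm. No gap to report.
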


Recall the splitting \eqref{eq:decouple}. In what follows, we shall mainly focus on establishing the relevant results for $(\mathbf{v}_{\sf R}, \mathbf{w}_{\sf R})$. Due to the symmetric role of $(\mathbf{v}_{\sf R}, \mathbf{w}_{\sf R})$ and $(\mathbf{v}_{\sf I}, \mathbf{w}_{\sf I})$, those results hold equally for $(\mathbf{v}_{\sf I}, \mathbf{w}_{\sf I})$, and hence $(\mathbf{v},\mathbf{w})$.

\begin{lem}\label{lem:26}
 Let $\mathbf{v}_\sfr\in H^{1}(\Omega)^2$ and $\mathbf{w}_\sfr\in H^{1}(\Omega)^2$ be a pair of generalized elastic transmission eigenfunctions to \eqref{eq:lame1}. Let the CGO solution $\mathbf{u}$ and the elastic Herglotz wave function $\mathbf{v}_{j}^\sfr$ be defined in \eqref{eq:lame3}  and  \eqref{eq:vjr} respectively. Assume that the Lipschitz domain $\Omega\subset\mathbb{R}^2$ contains a corner $ S_h \Subset \Omega \cap W$, where $S_h$ is defined in \eqref{eq:SIGN} and $W$ is a sector defined in \eqref{eq:lame2}. Denote $q=1+V$, where $V$ is defined in \eqref{eq:lame1}.  Then the following integral equality holds
\begin{equation}\label{eq:int1}
	I_1+I_2=I_{\Lambda_h }- I_{\pm} -I_{\pm}^\Delta,
\end{equation}
where
\begin{align*}
&I_1=\omega^{2}\int_{S_{h}}(q\mathbf{w}_\sfr -\mathbf{v}_j^\sfr)\cdot\mathbf{u}( \mathbf{x}){\rm d}\mathbf{x},\quad I_2= -\omega^{2}\int_{S_{h}}(\mathbf{v}_\sfr-\mathbf{v}_{j}^\sfr)\cdot\mathbf{u}( \mathbf{x}){\rm d}\mathbf{x},  \\
&I_{\Lambda_h }=\int_{\Lambda_{h}}(T_{\nu}(\mathbf{v}_\sfr-\mathbf{w}_\sfr))\cdot\mathbf{u}-(T_{\nu}(\mathbf{u}))\cdot(\mathbf{v}_\sfr-\mathbf{w}_\sfr){\rm d}\sigma,\\
&	I_{\pm } = \int_{\Gamma_{h}^{\pm}}\eta \mathbf{u}\cdot\mathbf{v}_{j}^\sfr{\rm d}\sigma, \quad I_{\pm }^\Delta =\int_{\Gamma_{h}^{\pm}}\eta \mathbf{u}\cdot(\mathbf{v}_\sfr-\mathbf{v}_{j}^\sfr){\rm d}\sigma.
\end{align*}
 Here $\Lambda_h$ and $\Gamma_h^\pm$ are defined in \eqref{eq:SIGN}.
\end{lem}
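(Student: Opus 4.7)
\medskip

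\noindent\textbf{Proof sketch.} The plan is to apply the Green formula of Lemma~\ref{lem:25} on the corner region $S_h$ to the pair $(\mathbf{u},\,\mathbf{v}_\sfr-\mathbf{w}_\sfr)$, and then to rewrite the various pieces using (i) the generalized transmission conditions on $\Gamma^\pm_h$ and (ii) the splitting $\mathbf{v}_\sfr = \mathbf{v}_j^\sfr + (\mathbf{v}_\sfr - \mathbf{v}_j^\sfr)$, where the Herglotz density $\mathbf{v}_j^\sfr$ approximates $\mathbf{v}_\sfr$ in $H^1(\Omega)^2$. Regularity is fine: from \eqref{eq:lame1} one has $\mathcal L \mathbf{v}_\sfr=-\omega^2\mathbf{v}_\sfr\in L^2$ and $\mathcal L\mathbf{w}_\sfr=-\omega^2 q\mathbf{w}_\sfr\in L^2$ (since $V\in L^\infty$), while Lemma~\ref{lem:22}--Lemma~\ref{lem:23} give $\mathbf{u}\in H^1(S_h)^2$ with $\mathcal L\mathbf{u}=\mathbf 0$ in $S_h$, so Lemma~\ref{lem:25} applies on the Lipschitz set $S_h\Subset\Omega\cap W$.

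Concretely, the first step is to combine the two PDEs in \eqref{eq:lame1} (taking real parts) to get
\begin{equation*}
\mathcal L(\mathbf{v}_\sfr-\mathbf{w}_\sfr)=-\omega^2\mathbf{v}_\sfr+\omega^2 q\mathbf{w}_\sfr=\omega^2(q\mathbf{w}_\sfr-\mathbf{v}_\sfr)\quad\text{in }S_h.
\end{equation*}
Plugging into \eqref{eq:green1} and using $\mathcal L\mathbf{u}=\mathbf 0$ yields
\begin{equation*}
\omega^2\int_{S_h}(q\mathbf{w}_\sfr-\mathbf{v}_\sfr)\cdot\mathbf{u}\,{\rm d}\mathbf x=\int_{\partial S_h}\bigl[T_\nu(\mathbf{v}_\sfr-\mathbf{w}_\sfr)\cdot\mathbf u-T_\nu\mathbf u\cdot(\mathbf{v}_\sfr-\mathbf{w}_\sfr)\bigr]{\rm d}\sigma.
\end{equation*}
The left-hand side is exactly $I_1+I_2$ after inserting and subtracting $\mathbf v_j^\sfr$ via $q\mathbf{w}_\sfr-\mathbf{v}_\sfr=(q\mathbf{w}_\sfr-\mathbf v_j^\sfr)-(\mathbf{v}_\sfr-\mathbf v_j^\sfr)$. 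For the right-hand side, I split $\partial S_h=\Gamma^-_h\cup\Gamma^+_h\cup\Lambda_h$: on $\Lambda_h$ the integrand is by definition $I_{\Lambda_h}$, while on $\Gamma^\pm_h\subset\Gamma$ the continuity $\mathbf v_\sfr=\mathbf w_\sfr$ makes the $T_\nu\mathbf u$-term drop out, and the conductive condition $T_\nu\mathbf v_\sfr-T_\nu\mathbf w_\sfr=-\eta\mathbf v_\sfr$ turns the first term into $-\int_{\Gamma^\pm_h}\eta\,\mathbf v_\sfr\cdot\mathbf u\,{\rm d}\sigma$.

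The final step is to insert $\mathbf v_\sfr=\mathbf v_j^\sfr+(\mathbf v_\sfr-\mathbf v_j^\sfr)$ into this last boundary integral, which produces exactly $-I_\pm-I_\pm^\Delta$. Rearranging gives \eqref{eq:int1}. No single step is a serious obstacle; the only mild care is in justifying that the boundary integrals on $\Gamma^\pm_h$ and $\Lambda_h$ make pointwise sense, which follows because $\mathbf u$ is smooth away from $\mathbf 0$ and its restrictions to these arcs are controlled in $L^2$ by Lemma~\ref{lem:24 u tu}, pairing via the $H^{1/2}$--$H^{-1/2}$ duality implicit in Lemma~\ref{lem:25}.
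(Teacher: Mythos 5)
Your proposal is correct and follows essentially the same route as the paper's proof: apply the Green formula of Lemma~\ref{lem:25} on $S_h$ to $\mathbf{v}_\sfr-\mathbf{w}_\sfr$ and the CGO solution $\mathbf{u}$ (with $\mathcal L\mathbf u=\mathbf 0$), use the transmission conditions $\mathbf{v}_\sfr=\mathbf{w}_\sfr$ and $T_\nu(\mathbf{v}_\sfr-\mathbf{w}_\sfr)=-\eta\mathbf{v}_\sfr$ on $\Gamma_h^\pm$, and then split both the volume and boundary terms via $\mathbf{v}_\sfr=\mathbf{v}_j^\sfr+(\mathbf{v}_\sfr-\mathbf{v}_j^\sfr)$ to obtain $I_1+I_2=I_{\Lambda_h}-I_\pm-I_\pm^\Delta$. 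The signs and the identification of each term with $I_1$, $I_2$, $I_{\Lambda_h}$, $I_\pm$, $I_\pm^\Delta$ all check out.
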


\begin{proof}

Recall that the differential operator $\mathcal{L}$ is defined in \eqref{eq:navier}. In view of the first and second equation in \eqref{eq:lame1}, we have
\begin{equation}\label{eq:lame11}
\mathcal{L}\mathbf{v}_\sfr=-\omega^{2}\mathbf{v}_\sfr,\quad
\mathcal{L}\mathbf{u}_\sfr=-\omega^{2}q\mathbf{w}_\sfr \quad \mbox{ in } \ \ S_h.
\end{equation}
Using the boundary condition in \eqref{eq:lame1}, it yields that
\begin{equation}\label{eq:lame13}
\mathbf{v}_\sfr-\mathbf{w}_\sfr=0,\quad
T_{\nu}(\mathbf{v}_\sfr-\mathbf{w}_\sfr)=-\eta\mathbf{v}_\sfr \quad \mbox{ on }  \ \ \Gamma_{h}^{\pm}.
\end{equation}
Using Green's formula \eqref{eq:green1} on the domain $S_{h}$ 
together with ${\mathcal L} \mathbf{u}=\mathbf{0}$ in $S_h$, we have
\begin{equation}\label{eq:lame25}
\int_{S_{h}}(\mathcal{L}(\mathbf{v}_\sfr-\mathbf{w}_\sfr))\cdot\mathbf{u}( \mathbf{x}){\rm d}\mathbf{x}=\int_{\Gamma_{h}^{\pm}\cup\Lambda_{h}}(T_{\nu}(\mathbf{v}_\sfr-\mathbf{w}_\sfr))\cdot\mathbf{u}-(T_{\nu}(\mathbf{u}))\cdot(\mathbf{v}_\sfr-\mathbf{w}_\sfr){\rm d}\sigma. 
\end{equation}
By virtue of  \eqref{eq:lame13}, we have
\begin{equation}\label{eq:lame27}
 \begin{aligned}
&\int_{\Gamma_{h}^{\pm}}(T_{\nu}(\mathbf{v}_\sfr-\mathbf{w}_\sfr))\cdot\mathbf{u}-(T_{\nu}(\mathbf{u}))\cdot(\mathbf{v}_\sfr-\mathbf{w}_\sfr){\rm d}\sigma\\=&\int_{\Gamma_{h
}^{\pm}}-\eta \mathbf{u}\cdot\mathbf{v}_\sfr{\rm d}\sigma=  \int_{\Gamma_{h}^{\pm}}-\eta \mathbf{u}\cdot\mathbf{v}_{j}^\sfr{\rm d}\sigma+\int_{\Gamma_{h}^{\pm}}-\eta \mathbf{u}\cdot(\mathbf{v}_\sfr-\mathbf{v}_{j}^\sfr){\rm d}\sigma.
 \end{aligned}
\end{equation}
From \eqref{eq:lame11}, we have
\begin{equation}\label{eq:lame14}
 \begin{aligned}
&\int_{S_{h}}(\mathcal{L}(\mathbf{v}_\sfr-\mathbf{w}_\sfr))\cdot\mathbf{u}(\mathbf{x}){\rm d}\mathbf{x} =
\int_{S_{h}}(-\omega^{2}\mathbf{v}_\sfr+\omega^{2}q\mathbf{w}_\sfr)\cdot\mathbf{u}(\mathbf{x}){\rm d}\mathbf{x}
 \\=&\int_{S_{h}}(-\omega^{2}\mathbf{v}_{j}^\sfr+\omega^{2}q\mathbf{w}_\sfr)\cdot\mathbf{u}(\mathbf{x}){\rm d}\mathbf{x}
 +\int_{S_{h}}-\omega^{2}(\mathbf{v}_\sfr-\mathbf{v}_{j}^\sfr)\cdot\mathbf{u}(\mathbf{x}){\rm d}\mathbf{x}.
 \end{aligned}
\end{equation}
By \eqref{eq:lame25}, \eqref{eq:lame27} and \eqref{eq:lame14}, we can derive \eqref{eq:int1}.

The proof is complete. 
\end{proof}

\begin{lem}\label{lem:2.6}
Let $ I_{\Lambda_{h}}$ be defined in \eqref{eq:int1}. Under the same setup as that in Lemma \ref{lem:26}, we have the following estimate
\begin{equation}\label{e}
| I_{\Lambda_{h}}|\leq C\frac{\sqrt{ 2h+ s^2}+\mu s}{\sqrt 2}\sqrt {\theta_M-\theta_m }e^{-s \sqrt h \delta_W}\| \mathbf{v}_\sfr-\mathbf{w}_\sfr \|_{H^1(S_{h } )^{2}  },
\end{equation}
where $C$ is a positive constant coming from the trace  theorem, $S_{h }$ and $\delta_W>0$ are defined in \eqref{eq:SIGN} and  \eqref{eq:lame7}, respectively.
\end{lem}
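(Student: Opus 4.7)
\textbf{Proof plan for Lemma \ref{lem:2.6}.} The plan is to split $I_{\Lambda_h}$ as a sum of two boundary integrals, apply an appropriate Cauchy--Schwarz-type inequality to each, bound the traces of $\mathbf{v}_\sfr-\mathbf{w}_\sfr$ on $\Lambda_h$ by the $H^1(S_h)$ norm via the trace theorem, and then substitute the exponentially decaying estimates for $\mathbf{u}$ and $T_\nu\mathbf{u}$ provided by Lemma \ref{lem:24 u tu}.

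More concretely, I would first write
\begin{equation}\notag
|I_{\Lambda_h}|\leq \left|\int_{\Lambda_h} T_\nu(\mathbf{v}_\sfr-\mathbf{w}_\sfr)\cdot\mathbf{u}\,\mathrm{d}\sigma\right|+\left|\int_{\Lambda_h} T_\nu(\mathbf{u})\cdot(\mathbf{v}_\sfr-\mathbf{w}_\sfr)\,\mathrm{d}\sigma\right|.
\end{equation}
Since the eigenfunction equations \eqref{eq:lame11} give $\mathcal{L}(\mathbf{v}_\sfr-\mathbf{w}_\sfr)\in L^2(S_h)$, the traction $T_\nu(\mathbf{v}_\sfr-\mathbf{w}_\sfr)$ is a well-defined element of $H^{-1/2}(\partial S_h)$ with $\|T_\nu(\mathbf{v}_\sfr-\mathbf{w}_\sfr)\|_{H^{-1/2}(\Lambda_h)}\leq C\|\mathbf{v}_\sfr-\mathbf{w}_\sfr\|_{H^1(S_h)^2}$ by the standard trace theorem for the Lam\'e operator. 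Interpreting the first integral via the $H^{1/2}$--$H^{-1/2}$ duality on $\Lambda_h$ and using $\|\mathbf{u}\|_{H^{1/2}(\Lambda_h)^2}\leq \|\mathbf{u}\|_{H^1(\Lambda_h)^2}$, together with a direct Cauchy--Schwarz bound on the second integral combined with the trace inequality $\|\mathbf{v}_\sfr-\mathbf{w}_\sfr\|_{L^2(\Lambda_h)^2}\leq C\|\mathbf{v}_\sfr-\mathbf{w}_\sfr\|_{H^1(S_h)^2}$, I obtain
\begin{equation}\notag
|I_{\Lambda_h}|\leq C\bigl(\|\mathbf{u}\|_{H^1(\Lambda_h)^2}+\|T_\nu\mathbf{u}\|_{L^2(\Lambda_h)^2}\bigr)\|\mathbf{v}_\sfr-\mathbf{w}_\sfr\|_{H^1(S_h)^2}.
\end{equation}

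The remaining step is to invoke Lemma \ref{lem:24 u tu}, which provides the two CGO factors $\|\mathbf{u}\|_{H^1(\Lambda_h)^2}\leq \sqrt{h+s^2/2}\sqrt{\theta_M-\theta_m}\,e^{-s\sqrt{h}\delta_W}$ and $\|T_\nu\mathbf{u}\|_{L^2(\Lambda_h)^2}\leq \tfrac{s\mu}{\sqrt 2}\sqrt{\theta_M-\theta_m}\,e^{-s\sqrt{h}\delta_W}$. Rewriting $\sqrt{h+s^2/2}=\sqrt{2h+s^2}/\sqrt 2$ and adding, the sum in parentheses becomes exactly $\tfrac{\sqrt{2h+s^2}+\mu s}{\sqrt 2}\sqrt{\theta_M-\theta_m}\,e^{-s\sqrt{h}\delta_W}$, which yields the asserted estimate \eqref{e}.

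The only subtle point is the first boundary integral: because $T_\nu$ involves the first-order trace of an $H^1$ field, it is not a priori an $L^2$ function on $\Lambda_h$, and one must justify the pairing with $\mathbf{u}|_{\Lambda_h}$ through the duality $\langle\cdot,\cdot\rangle_{H^{-1/2}(\Lambda_h),H^{1/2}(\Lambda_h)}$ together with the localization from $\partial S_h$ to its open subset $\Lambda_h$. This requires a careful use of the Lam\'e trace theorem and the fact that $\mathcal{L}(\mathbf{v}_\sfr-\mathbf{w}_\sfr)\in L^2(S_h)$; once formulated this way, the rest of the argument is a straightforward combination of the trace theorem and Lemma \ref{lem:24 u tu}.
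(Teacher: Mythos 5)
Your proposal is correct and follows essentially the same route as the paper: a triangle-inequality split of $I_{\Lambda_h}$, the $H^{-1/2}$--$H^{1/2}$ duality pairing for the traction term together with the trace theorem bounding $\|T_\nu(\mathbf{v}_\sfr-\mathbf{w}_\sfr)\|_{H^{-1/2}(\Lambda_h)}$ and $\|\mathbf{v}_\sfr-\mathbf{w}_\sfr\|_{L^2(\Lambda_h)}$ by $\|\mathbf{v}_\sfr-\mathbf{w}_\sfr\|_{H^1(S_h)^2}$, and then the substitution of the two CGO estimates \eqref{eq:u H1} and \eqref{eq:Tu L2} from Lemma \ref{lem:24 u tu}. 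Your additional remark on justifying the duality pairing on the open subset $\Lambda_h$ is a point the paper passes over silently, but it does not change the argument.
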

\begin{proof}
By using the H{\"o}lder inequality, Lemma \ref{lem:24 u tu}, and the trace theorem, one has
\begin{align}
|I_{\Lambda_h }| &\leq  \|(T_{\nu}(\mathbf{v}_\sfr-\mathbf{w}_\sfr))\|_{H^{-1/2} ( \Lambda_{h} ) ^{2}} \|\mathbf{u} \|_{ H^{1/2}(\Lambda_{h}) ^{2}}+ \| T_{\nu}(\mathbf{u}) \|_{L^2(\Lambda_h ) ^{2}} \| \mathbf{v}_\sfr-\mathbf{w}_\sfr \|_{L^2(\Lambda_h ) ^{2}} \notag \\
&\leq \left(  \|\mathbf{u} \|_{ H^{1}(\Lambda_{h}) ^{2}}  +  \| T_{\nu}(\mathbf{u}) \|_{L^2(\Lambda_h ) ^{2}}\right) \| \mathbf{v}_\sfr-\mathbf{w}_\sfr \|_{H^1(S_{h } ) ^{2} } .\label{eq:233 bound}
\end{align}
Substituting \eqref{eq:u H1} and \eqref{eq:Tu L2} into \eqref{eq:233 bound}, one can obtain \eqref{e}.
\end{proof}

\begin{lem}\label{lem:27}
Under the same setup as that in Lemma~\ref{lem:26}, we further suppose that the boundary  parameter $\eta$ of \eqref{eq:lame1} satisfies  $\eta\in C^{\alpha}(\overline{\Gamma_{h}^{\pm}})$ for $0<\alpha<1$. For any given constants $\beta_1$, $\beta_2$ and $\gamma$ satisfying $\gamma>\max\{\beta_{1}, \beta_{2}\}>0$, assume that there exits a sequence of  $ \{\mathbf{v}_{j}^{\sfr}\}_{j=1}^{+\infty}$ defined by \eqref{eq:vrj 28} with kernels $g_{jp}$ and $g_{js}$ can approximate $\mathbf{v}_{\sfr}$ in $ H^{1}(S_{h})$ fulfilling
\begin{equation}\label{eq:cond thm3}
\| \mathbf{v}_{\sfr}-\mathbf{v}_{j}^\sfr\|_{H^{1}(S_{h})^{2}}\leq j^{-\gamma }, \ \| g_{jp}\|_{L^{2}(\mathbb S^{1})}\leq j^{\beta_{1}}, \ \| g_{js}\|_{L^{2}(\mathbb S^{1})}\leq j^{\beta_{2}}. 
\end{equation}
 Recall that $I_{2}$ and $I_{\pm}^\Delta$ are defined in \eqref{eq:int1}. Then the following integral estimates hold:
\begin{equation}\label{eq:lame22}
 \begin{aligned}
\left| I_{2}\right|&\leq\omega^{2}h\sqrt{\theta_{M}-\theta_{m}}e^{-s\sqrt{\Theta}\delta_{W}}j^{-\gamma },
 \end{aligned}
\end{equation}
and
\begin{equation}\label{eq:lame35}
\begin{aligned}
\left| I_{\pm}^\Delta\right|\leq &\Big (|\eta(\mathbf{0})| h\sqrt{\theta_{M}-\theta_{m}}e^{-s\sqrt{\Theta}\delta_{W}}\\&+\|\eta\|_{C^{\alpha}}\frac{2\sqrt{\theta_{M}-\theta_{m}\Gamma(4\alpha+4)}}{(2\delta_{W})^{2\alpha+2}}s^{-2(\alpha+1) }\Big)j^{-\gamma },
\end{aligned}
\end{equation}
where $\Theta\in[0,h]$, $\delta_{W}$ is defined in \eqref{eq:lame7}, $\theta_{m}$ and  $\theta_{M}$  are defined in \eqref{eq:lame1}.
\end{lem}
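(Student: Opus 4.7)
The plan is to treat the two estimates independently, relying only on the CGO bounds of Lemma~\ref{lem:23}, the density hypothesis \eqref{eq:cond thm3}, and a trace inequality on $S_h$. No new technique beyond careful bookkeeping is required, since all the hard analytic work is already contained in the earlier lemmas.

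The $I_2$ bound is immediate: a single Cauchy--Schwarz application in $L^2(S_h)^2$ gives
\[
|I_2|\leq \omega^2\|\mathbf{v}_\sfr-\mathbf{v}_j^\sfr\|_{L^2(S_h)^2}\,\|\mathbf{u}\|_{L^2(S_h)^2}.
\]
The first factor is controlled by the $H^1$-approximation estimate in \eqref{eq:cond thm3} (absorbing $L^2$ into $H^1$), yielding $j^{-\gamma}$; the second is controlled by \eqref{eq:lame21} of Lemma~\ref{lem:23}, which supplies exactly $h\sqrt{\theta_M-\theta_m}\,e^{-s\sqrt{\Theta}\delta_W}$ for some $\Theta\in[0,h]$. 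Multiplying produces \eqref{eq:lame22}.

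For $I_\pm^\Delta$, the central idea is to exploit the H\"older regularity of $\eta$ near the corner tip by writing
\[
\eta(\mathbf{x})=\eta(\mathbf{0})+\bigl(\eta(\mathbf{x})-\eta(\mathbf{0})\bigr),
\]
so that $I_\pm^\Delta$ splits into a constant-coefficient piece and a H\"older remainder. On the constant-coefficient piece I would pull $|\eta(\mathbf{0})|$ out, apply Cauchy--Schwarz on $\Gamma_h^\pm$, invoke the trace theorem to pass from $\|\mathbf{v}_\sfr-\mathbf{v}_j^\sfr\|_{L^2(\Gamma_h^\pm)^2}$ back to $\|\mathbf{v}_\sfr-\mathbf{v}_j^\sfr\|_{H^1(S_h)^2}\leq j^{-\gamma}$, and control the $\mathbf{u}$-factor by transferring to the interior via \eqref{eq:lame21}; this produces the term $|\eta(\mathbf{0})|\,h\sqrt{\theta_M-\theta_m}\,e^{-s\sqrt{\Theta}\delta_W}j^{-\gamma}$. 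On the remainder piece, use $|\eta(\mathbf{x})-\eta(\mathbf{0})|\leq\|\eta\|_{C^\alpha}|\mathbf{x}|^\alpha$; a further Cauchy--Schwarz then introduces the weighted CGO norm $\||\mathbf{x}|^\alpha\mathbf{u}\|_{L^2}$, which \eqref{eq:lame34} estimates as $s^{-2(\alpha+1)}\tfrac{2\sqrt{(\theta_M-\theta_m)\Gamma(4\alpha+4)}}{(2\delta_W)^{2\alpha+2}}$, and once more the trace inequality absorbs the $(\mathbf{v}_\sfr-\mathbf{v}_j^\sfr)$-factor into $j^{-\gamma}$. Adding the two contributions yields \eqref{eq:lame35}.

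The main subtlety I expect is the passage from the surface integral on $\Gamma_h^\pm$ to the interior CGO norms on $S_h$, which is what forces the use of the trace theorem and requires checking that the trace constants (which depend on the sector geometry) do not disturb the clean form of the prefactors in \eqref{eq:lame35}. The fact that the stated bounds emerge in exactly this form is essentially due to the CGO $\mathbf{u}$ carrying the exponential factor $e^{-s\sqrt{r}\delta_W}$ uniformly along rays from the corner, so that its surface and interior norms share the same asymptotics in $s$.
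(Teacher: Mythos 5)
Your proposal follows essentially the same route as the paper's own proof: Cauchy--Schwarz plus the $L^2$ CGO bound \eqref{eq:lame21} for $I_2$, and for $I_\pm^\Delta$ the splitting $\eta=\eta(\mathbf{0})+\delta\eta$ combined with the trace theorem (passing through $H^{1/2}(\Gamma_h^\pm)$--$H^{-1/2}(\Gamma_h^\pm)$ duality back to $H^1(S_h)$ and $L^2(S_h)$ norms) and the weighted estimate \eqref{eq:lame34}. The argument is correct and no essential step is missing.
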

\begin{proof}
By using the Cauchy-Schwarz inequality, we have
\begin{equation}\label{eq:lame16}
\left| I_{2}\right|\leq\omega^{2}\| \mathbf{v}_\sfr-\mathbf{v}_{j}^\sfr\|_{L^{2}(S_{h})^{2}}\| \mathbf{u}(\mathbf{x})\|_{L^{2}(S_{h})^{2}}\leq\omega^{2} \|\mathbf{u}(\mathbf{x})\|_{L^{2}(S_{h})^{2}}j^{-\gamma }
.
\end{equation}
In view of \eqref{eq:lame21}, we can immediately obtain  \eqref{eq:lame22}.

Since $\eta\in C^{\alpha}(\overline{\Gamma}_{h}^{\pm})$, we have the following expansion of $\eta(\mathbf{x})$ at the origin as
\begin{equation}\label{eq:lame29}
\eta(\mathbf{x})=\eta(\mathbf{0})+\delta\eta(\mathbf{x}), \mid \delta\eta(\mathbf{x})\mid\leq \|\eta\|_{C^{\alpha}} | \mathbf{x}|^{\alpha}.
\end{equation}
By using the Cauchy-Schwarz inequality and the trace theorem, we have
 \begin{align}
  \left| I_{\pm}^\Delta\right|\leq&|\eta(\mathbf{0})|\int_{\Gamma_{h}^{\pm}}| \mathbf{u}|| (\mathbf{v}_\sfr-\mathbf{v}_{j}^\sfr)| {\rm d}\sigma+\| \eta\|_{C^{\alpha}}\int_{\Gamma_{h}^{\pm}}| \mathbf{x}|^{\alpha}|\mathbf{u}|| (\mathbf{v}_\sfr-\mathbf{v}_{j}^\sfr)| {\rm d}\sigma  \notag
  \\
  \leq&|\eta(\mathbf{0})|\| \mathbf{v}_\sfr-\mathbf{v}_{j}^\sfr\|_{H^{\frac{1}{2}}(\Gamma_{h}^{\pm})^{2}}\| \mathbf{u}\|_{H^{-\frac{1}{2}}(\Gamma_{h}^{\pm})^{2}}+\| \eta\|_{C^{\alpha}}\| \mathbf{v}_\sfr-\mathbf{v}_{j}^\sfr\|_{H^{\frac{1}{2}}(\Gamma_{h}^{\pm})^{2}}\| | \mathbf{x}|^{\alpha} \mathbf{u}\|_{H^{-\frac{1}{2}}(\Gamma_{h}^{\pm})^{2}} \notag
 \\
 \leq&|\eta(\mathbf{0})|\| \mathbf{v}-\mathbf{v}_{j}^\sfr\|_{H^{1}(S_{h})^{2}}\| \mathbf{u}\|_{L^{2}(S_{h})^{2}}+\| \eta\|_{C^{\alpha}}\| \mathbf{v}_\sfr-\mathbf{v}_{j}^\sfr\|_{H^{1}(S_{h})^{2}}\|| \mathbf{x}|^{\alpha} \mathbf{u}\|_{L^{2}(S_{h})^{2}} \notag
  \\
  \leq&\bigg(|\eta(\mathbf{0})|\| \mathbf{u}\|_{L^{2}(S_{h})^{2}}+\| \eta\|_{C^{\alpha}}\|| \mathbf{x}|^{\alpha} \mathbf{u}\|_{L^{2}(S_{h})^{2}}\bigg)j^{-\gamma }
 . \label{eq:lame30}
 \end{align}
By \eqref{eq:lame21}, \eqref{eq:lame34} and \eqref{eq:lame30},  it readily yields \eqref{eq:lame35}.
\end{proof}

\begin{lem}\label{lem:28}
Under the same setup as that in Lemma \ref{lem:27}, we further suppose that $q\mathbf{w}_\sfr  \in C^\alpha ( S_h )^2$ ($0<\alpha<1$) and hence
 \begin{equation}\label{eq:fr}
 	\mathbf {f}_\sfr (\mathbf{x}):=q\mathbf{w}_\sfr(\mathbf{x})=\mathbf f_\sfr (\mathbf{0}) +\delta \mathbf f_\sfr(\mathbf{x}),\quad |\delta \mathbf f_\sfr|\leq \| \mathbf{f}_\sfr (\mathbf{x})\|_{C^{\alpha}(S_h)^{2}}| \mathbf{x}|^{\alpha}. 
 \end{equation}
Then the following integral estimate holds
\begin{equation}\label{eq:lame42} 
 \begin{aligned}
\left|  I_{1}\right| \leq& \omega^{2}\Big ( 4
\bigg(
 \sqrt{\frac{\pi k_{p}}{\omega}}(1+k_{p})\|g_{jp}\|_{L^2(S_h)}+
\sqrt{\frac{\pi k_{s}}{\omega}}(1+k_{s})\|g_{js}\|_{L^2(S_h)}
 \bigg) \\&\times\frac{(\theta_{M}-\theta_{m})\Gamma(2\alpha+4)}{\delta_{W}^{2\alpha+4}}s^{-2\alpha-4}
\\&+\|\mathbf{f}_\sfr (\mathbf{x})\|_{C^{\alpha}(\Omega)^{2}} \frac{2\sqrt{2}(\theta_{M}-\theta_{m})\Gamma(4\alpha+4)}{\delta_{W}^{2\alpha+4}}s^{-2\alpha-4}
+6\sqrt{2}( |\mathbf{f}_\sfr(\mathbf{0})|+|\mathbf{v}_{j}^\sfr(\mathbf{0}|)
\\
&\times  | e^{-2\theta_{M}\mathrm i}-e^{-2\theta_{m}\mathrm i}|s^{-4} \Big ),
 \end{aligned}
\end{equation}
where $\delta_{W}$ is defined in \eqref{eq:lame7}, $\theta_{m}$ and  $\theta_{M}$  are defined in \eqref{eq:lame1}.
\end{lem}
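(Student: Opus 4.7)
The plan is to split $I_1$ around the reference values at the corner vertex $\mathbf 0$, isolating a pure ``vertex'' contribution (responsible for the $s^{-4}$ term), a H\"older contribution from $\mathbf f_\sfr$, and a Herglotz-variation contribution from $\mathbf v_j^\sfr$. Concretely, I would write
\[ I_1 = \omega^2 [\mathbf f_\sfr(\mathbf 0) - \mathbf v_j^\sfr(\mathbf 0)]\cdot \!\int_{S_h}\!\mathbf u\,d\mathbf x + \omega^2\!\int_{S_h}\!\delta\mathbf f_\sfr\cdot\mathbf u\,d\mathbf x - \omega^2\!\int_{S_h}\![\mathbf v_j^\sfr(\mathbf x)-\mathbf v_j^\sfr(\mathbf 0)]\cdot\mathbf u\,d\mathbf x =: J_1+J_2+J_3, \]
and treat each $J_k$ with the appropriate decay estimate for $\mathbf u$ from Lemma \ref{lem:22}.

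For $J_1$ I would use the explicit evaluation \eqref{eq:lame5}: writing $\int_{S_h}\mathbf u = \int_W\mathbf u - \int_{W\setminus S_h}\mathbf u$, the first integral has components of equal modulus $6|e^{-2\theta_M\mathrm i} - e^{-2\theta_m\mathrm i}|s^{-4}$ (since $u_2 = \mathrm i u_1$), so that $|\int_W\mathbf u|\leq 6\sqrt 2|e^{-2\theta_M\mathrm i} - e^{-2\theta_m\mathrm i}|s^{-4}$, while the complementary piece is absorbed by the exponential bound \eqref{eq:lame7}. The triangle inequality on $|\mathbf f_\sfr(\mathbf 0) - \mathbf v_j^\sfr(\mathbf 0)|$ then delivers the $6\sqrt 2$-term in the lemma. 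For $J_2$, the H\"older expansion \eqref{eq:fr} combined with \eqref{eq:lame6} (applied componentwise and multiplied by $\sqrt 2$ since $|\mathbf u|=\sqrt 2|u_1|$) produces the middle term of the target inequality.

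The core of the proof is the estimate of $J_3$. Here I would invoke the Jacobi--Anger expansion recorded in Proposition \ref{Pro:2.2}: subtracting \eqref{eq:vjr} at $\mathbf x$ and at $\mathbf 0$ gives
\[ \mathbf v_j^\sfr(\mathbf x) - \mathbf v_j^\sfr(\mathbf 0) = \!\!\sum_{\beta\in\{p,s\}}\!(\mathbf v_{j\beta}^\sfr(\mathbf 0) + \mathbf v_{j\beta}^\sfi(\mathbf 0))(J_{0,\beta}-1) + \sqrt 2\sum_{\ell\geq 1}(-1)^\ell\bigl[J_{2\ell,p}\mathbf A_{jp,1}^{(\ell)} + J_{2\ell,s}\mathbf A_{js,1}^{(\ell)} + J_{2\ell-1,p}\mathbf A_{jp,2}^{(\ell)} + J_{2\ell-1,s}\mathbf A_{js,2}^{(\ell)}\bigr]. \]
I would then apply the pointwise bounds $|J_0(t)-1|\leq t$ (from $J_0'=-J_1$ with $|J_1|\leq 1$) and $|J_n(t)|\leq (t/2)^n/n!$ for $n\geq 1$ to each summand, together with the uniform bound \eqref{eq:gjps norm} on $\mathbf A_{j\beta,i}^{(\ell)}$ and the crude bound $|\mathbf v_{j\beta}^{\sfr,\sfi}(\mathbf 0)|\lesssim \sqrt{\pi k_\beta/\omega}\,\|g_{j\beta}\|_{L^2(\mathbb S^1)}$. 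Summing the resulting series, the $J_0$-piece contributes a factor $k_\beta|\mathbf x|$ while the tail over $\ell$ is dominated by the Taylor remainder of $e^{k_\beta|\mathbf x|/2}$, the two combining into the clean prefactor $(1+k_\beta)$. Since $|\mathbf x|\leq h^{1-\alpha}|\mathbf x|^\alpha$ on $S_h$, a final application of \eqref{eq:lame6} yields the claimed $s^{-2\alpha-4}$ decay with the advertised Herglotz prefactor.

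The main obstacle is precisely this bookkeeping in $J_3$: arranging the summed higher-order Bessel contribution to combine with the $(J_0-1)$ contribution into exactly the $(1+k_\beta)\|g_{j\beta}\|_{L^2(\mathbb S^1)}$ structure, and absorbing the conversion $|\mathbf x|\mapsto |\mathbf x|^\alpha$ cleanly into the constant -- either through $h^{1-\alpha}\leq$\,const, or by replacing \eqref{eq:lame6} with the $L^2$-based decay \eqref{eq:lame34}, which would also account for the $\Gamma(4\alpha+4)$ factor appearing in the target estimate in place of the $\Gamma(2\alpha+4)$ produced by \eqref{eq:lame6}. Once this is arranged, the remaining estimates are, modulo constants, mechanical applications of Lemma \ref{lem:22}.
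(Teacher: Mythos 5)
Your decomposition of $I_1$ into the vertex term, the H\"older remainder of $\mathbf f_\sfr$, and the variation of $\mathbf v_j^\sfr$ is exactly the paper's, and the vertex and H\"older pieces are handled identically via \eqref{eq:lame5} and \eqref{eq:lame6}. The only divergence is in bounding $\mathbf v_j^\sfr(\mathbf x)-\mathbf v_j^\sfr(\mathbf 0)$: the paper differentiates the Herglotz integral \eqref{vjrd} directly to get $\|\mathbf v_j^\sfr\|_{C^1}\lesssim \sum_{\beta}(1+k_\beta)\|g_{j\beta}\|_{L^2(\mathbb S^1)}$ and then uses $\|\cdot\|_{C^\alpha(S_h)}\le \mathrm{diam}(S_h)^{1-\alpha}\|\cdot\|_{C^1(S_h)}$, whereas you reassemble the same Lipschitz-type bound from the Jacobi--Anger series with pointwise Bessel estimates --- more bookkeeping for the same $(1+k_\beta)$ prefactor, after which both routes feed into \eqref{eq:lame6}; your remark that the $\Gamma(4\alpha+4)$ in the stated middle term does not match the $\Gamma(2\alpha+4)$ that \eqref{eq:lame6} yields is consistent with the paper's own proof, which indeed produces $\Gamma(2\alpha+4)$ there.
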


\begin{proof}
Since $\mathbf{v}_{j}^\sfr  \in C^\alpha(S_h)^2$, $\alpha\in (0, 1)$, we  have the following splitting
\begin{equation}\label{eq:lame37}
 \begin{aligned}
&\mathbf{v}_{j}^\sfr(\mathbf{x})=\mathbf{v}_{j}^\sfr(\mathbf{0})+\delta\mathbf{v}_{j}^\sfr(\mathbf{x}),\quad |\delta\mathbf{v}_{j}^\sfr|\leq \|\mathbf{v}_{j}^\sfr\|_{C^{\alpha}(\Omega)^{2}}| \mathbf{x}|^{\alpha}.
 \end{aligned}
\end{equation}
Therefore, it holds that
\begin{equation}\label{eq:lame38}
 \begin{aligned}
\int_{S_{h}}(q\mathbf{w}_\sfr - \mathbf{v}_{j}^\sfr )\cdot\mathbf{u}(\mathbf{x}){\rm d}\mathbf{x}
&=
 \int_{S_{h}}( \mathbf{f}_\sfr(\mathbf{0})-\mathbf{v}_{j}^\sfr(\mathbf{0}))  \cdot \mathbf{u}(\mathbf{x}){\rm d}\mathbf{x}
\\
&-\int_{S_{h}}\delta \mathbf{v}_{j}^\sfr(\mathbf{x})\cdot\mathbf{u}(\mathbf{x}){\rm d}\mathbf{x}
+\int_{S_{h}}\delta \mathbf{f}_\sfr (\mathbf{x})\cdot\mathbf{u}(\mathbf{x}){\rm d}\mathbf{x}.
 \end{aligned}
\end{equation}
Using the compact embedding of H{\"o}lder spaces, one can obtain that
$$
\|\mathbf{v}_{j}^\sfr \|_{C^\alpha (S_h)} \leq {\rm diam}(S_h)^{1-\alpha} \|\mathbf{v}_{j}^\sfr\|_{C^1(S_h)},
$$
where $ {\rm diam}(S_h) $ is the diameter of $S_h$. By direct computations, we have
\begin{equation}\label{eq:vC1}
 \begin{aligned}
\|\mathbf{v}_{j}^\sfr\|_{C^1(S_h)}\leq& \sqrt{\frac{\pi k_{p}}{\omega}}(1+k_{p})(\|g_{jp}^{\sfr}\|_{L^2(S_h)}+\|g_{jp}^{\sfi}\|_{L^2(S_h)})
\\&+\sqrt{\frac{\pi k_{s}}{\omega}}(1+k_{s})(\|g_{js}^{\sfr}\|_{L^2(S_h)}+\|g_{js}^{\sfi}\|_{L^2(S_h)})
\\\leq&\sqrt{2}\sqrt{\frac{\pi k_{p}}{\omega}}(1+k_{p})\|g_{jp}\|_{L^2(S_h)}+\sqrt{2}
\sqrt{\frac{\pi k_{s}}{\omega}}(1+k_{s})\|g_{js}\|_{L^2(S_h)}.
 \end{aligned}
\end{equation}
Due to \eqref{eq:lame6}, \eqref{eq:fr} and \eqref{eq:vC1}, one can verify  that
\begin{equation}\label{eq:lame39}
 \begin{aligned}
\left|\int_{S_{h}}\delta \mathbf{v}_{j}^\sfr(\mathbf{x})\cdot\mathbf{u}(\mathbf{x}){\rm d}\mathbf{x}\right|&\leq
\|\mathbf{v}_{j}^\sfr\|_{C^{\alpha}(\Omega)^{2}}\int_{S_h}|\mathbf{u}(\mathbf{x})|| \mathbf{x}|^{\alpha}{\rm d}\mathbf{x}
\\ \quad \leq & 4\sqrt{ \frac{\pi }{\omega} } \bigg(
 k_{p}^{1/2}(1+k_{p})\|g_{jp}\|_{L^2(S_h)}+
 k_{s}^{1/2}(1+k_{s})\|g_{js}\|_{L^2(S_h)}
 \bigg) \\
 &\times \frac{(\theta_{M}-\theta_{m})\Gamma(2\alpha+4)}{\delta_{W}^{2\alpha+4}}s^{-2\alpha-4},
 \end{aligned}
\end{equation}
and
\begin{equation}\label{eq:lame40}
 \begin{aligned}
\left|\int_{S_{h}}\delta \mathbf{f}_\sfr (\mathbf{x})\cdot\mathbf{u}(\mathbf{x}){\rm d}\mathbf{x}\right|\leq&
\| \mathbf{f}_\sfr (\mathbf{x})\|_{C^{\alpha}(\Omega)^{2}}\int_{W}|\mathbf{u}(\mathbf{x})||\mathbf{x}|^{\alpha}{\rm d}\mathbf{x}
\\\leq & \|\mathbf{f}_\sfr (\mathbf{x})\|_{C^{\alpha}(\Omega)^{2}}\frac{2\sqrt{2}(\theta_{M}-\theta_{m})\Gamma(2\alpha+4)}{\delta_{W}^{2\alpha+4}}s^{-2\alpha-4}.
 \end{aligned}
\end{equation}
Finally, by \eqref{eq:lame5}, \eqref{eq:lame39} and \eqref{eq:lame40}, one can arrive at \eqref{eq:lame42}.

The proof is complete. 
\end{proof}

\begin{lem}\cite[Lemma 2.4]{DCL}\label{lem:24}
	For any $\zeta>0$, if $\omega(\theta ) >0 $, then
\begin{equation}\label{eq:zeta}
	 \int_{0}^h r^\zeta  e^{-s\sqrt{r} \omega(\theta)} {\rm d} r=O ( s^{-2\zeta-2} )\quad\mbox{as  $s\rightarrow +\infty$.}
\end{equation}
\end{lem}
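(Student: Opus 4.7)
The plan is to reduce the integral to an incomplete Gamma function via a single change of variables, then bound the incomplete Gamma by the full Gamma to extract the claimed asymptotic rate. Since $\omega(\theta) > 0$ is held fixed (it depends only on $\theta$, not on $s$), the substitution will absorb all $s$-dependence into a simple power, leaving behind an integral that converges to a finite constant as $s \to +\infty$.

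First, I would perform the substitution $t = s\sqrt{r}\,\omega(\theta)$, so that $r = t^2/(s^2\omega(\theta)^2)$ and $dr = 2t/(s^2\omega(\theta)^2)\,dt$. When $r$ runs over $[0,h]$, the new variable $t$ runs over $[0, s\sqrt{h}\,\omega(\theta)]$. A direct computation gives
\begin{equation}\notag
\int_{0}^{h} r^{\zeta} e^{-s\sqrt{r}\,\omega(\theta)}\,dr \;=\; \frac{2}{s^{2\zeta+2}\,\omega(\theta)^{2\zeta+2}} \int_{0}^{s\sqrt{h}\,\omega(\theta)} t^{2\zeta+1} e^{-t}\,dt.
\end{equation}
The prefactor already exhibits the desired $s^{-2\zeta-2}$ decay, so it only remains to show that the $t$-integral stays bounded in $s$.

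Next, I would observe that since $\zeta > 0$ and the integrand $t^{2\zeta+1}e^{-t}$ is nonnegative, one has the trivial monotone bound
\begin{equation}\notag
\int_{0}^{s\sqrt{h}\,\omega(\theta)} t^{2\zeta+1} e^{-t}\,dt \;\leq\; \int_{0}^{+\infty} t^{2\zeta+1} e^{-t}\,dt \;=\; \Gamma(2\zeta+2) < +\infty.
\end{equation}
Combining the two displays yields
\begin{equation}\notag
\int_{0}^{h} r^{\zeta} e^{-s\sqrt{r}\,\omega(\theta)}\,dr \;\leq\; \frac{2\,\Gamma(2\zeta+2)}{\omega(\theta)^{2\zeta+2}}\,s^{-2\zeta-2},
\end{equation}
which is exactly the claim \eqref{eq:zeta}.

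I do not anticipate any real obstacle here: the only subtle point is making sure that the positivity hypothesis $\omega(\theta) > 0$ is used essentially (it guarantees both that the substitution is well-defined and monotone, and that the prefactor $\omega(\theta)^{-2\zeta-2}$ is finite), and that $\zeta > 0$ keeps the exponent $2\zeta + 1$ in the $t$-integrand strictly above $0$ so that $\Gamma(2\zeta+2)$ is finite. Note in particular that the implicit constant in the $O(\cdot)$ depends on $\theta$ through $\omega(\theta)$, but since in the applications of this lemma in Sections~\ref{sec:2}--\ref{sec:3} the angle $\theta$ ranges over a compact subinterval on which $\omega(\theta)$ is bounded below by a positive constant (e.g.\ $\delta_W$ in \eqref{eq:lame7}), the bound is in fact uniform in $\theta$.
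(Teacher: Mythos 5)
Your proof is correct, and it is the standard argument behind this estimate: the paper itself states the lemma without proof, citing \cite[Lemma 2.4]{DCL}, and the substitution $t=s\sqrt{r}\,\omega(\theta)$ reducing the integral to an incomplete Gamma function bounded by $\Gamma(2\zeta+2)$ is exactly the computation underlying that reference (compare the constants $\Gamma(2\alpha+4)\,\delta_W^{-2\alpha-4}s^{-2\alpha-4}$ appearing in \eqref{eq:lame6}). The only cosmetic remark is that finiteness of $\Gamma(2\zeta+2)$ needs only $\zeta>-1$, so the hypothesis $\zeta>0$ is not essential to your bound; your observation about uniformity in $\theta$ via the lower bound $\delta_W$ is also the way the lemma is used in the paper.
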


\begin{lem}\cite[Lemma 2.8]{DCL}\label{lem:u0 int}
	Recall that $\Gamma_h^\pm$ and $u_1(\mathbf x)$ are defined in \eqref{eq:SIGN} and \eqref{eq:lame3}, respectively. We have	\begin{align}\label{eq:I311}
	\begin{split}
\int_{\Gamma_{h} ^+ }  u_1 (\mathbf x)  {\rm d} \sigma &=2 s^{-2}\left( \mu(\theta_M )^{-2}-   \mu(\theta_M )^{-2} e^{ -s\sqrt{h} \mu(\theta_M ) }\right.  \\&\left. \hspace{3.5cm} -  \mu(\theta_M )^{-1} s\sqrt{h}   e^{ -s\sqrt{h} \mu(\theta_M ) }  \right  ),  \\
\int_{\Gamma_{h} ^- }  u_1 (\mathbf x)  {\rm d} \sigma &=2 s^{-2} \left( \mu(\theta_m )^{-2}-   \mu(\theta_m )^{-2} e^{ -s\sqrt{h}\mu(\theta_m )} \right.  \\&\left. \hspace{3.5cm}  -  \mu(\theta_m )^{-1} s\sqrt{h}   e^{ -s\sqrt{h}\mu(\theta_m ) }  \right  ),
	\end{split}
\end{align}
	where $ \mu(\theta )=\cos(\theta/2) +\mathrm i \sin( \theta/2 )$.

\end{lem}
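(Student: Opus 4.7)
The plan is to reduce both integrals to one-variable integrals along the straight rays $\Gamma_h^\pm$ and evaluate them by a simple substitution followed by integration by parts.

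First I would parametrize $\Gamma_h^+$ by $\mathbf{x}=(r\cos\theta_M,r\sin\theta_M)$ for $r\in(0,h)$, so that $d\sigma=dr$ and the complex variable $z=x_1+\mathrm{i}x_2$ restricted to this ray equals $re^{\mathrm{i}\theta_M}$. Using the branch of the square root specified in \eqref{eq:lame3} together with $-\pi<\theta_M<\pi$, one has
\[
\sqrt{z}\big|_{\Gamma_h^+}=\sqrt{r}\,\bigl(\cos(\theta_M/2)+\mathrm{i}\sin(\theta_M/2)\bigr)=\sqrt{r}\,\mu(\theta_M),
\]
so $u_1(\mathbf{x})|_{\Gamma_h^+}=\exp\bigl(-s\sqrt{r}\,\mu(\theta_M)\bigr)$. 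The same computation on $\Gamma_h^-$ yields $u_1(\mathbf{x})|_{\Gamma_h^-}=\exp(-s\sqrt{r}\,\mu(\theta_m))$.

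Next I would make the substitution $t=\sqrt{r}$, $dr=2t\,dt$, reducing the integral on $\Gamma_h^+$ to
\[
\int_{\Gamma_h^+}u_1(\mathbf{x})\,d\sigma=\int_0^{\sqrt{h}}2t\,e^{-a t}\,dt,\qquad a:=s\,\mu(\theta_M),
\]
and similarly on $\Gamma_h^-$ with $a=s\,\mu(\theta_m)$. A single integration by parts against the primitive $-a^{-1}e^{-at}$ gives
\[
\int_0^{\sqrt{h}}2t\,e^{-at}\,dt=\frac{2}{a^{2}}-\frac{2}{a^{2}}e^{-a\sqrt{h}}-\frac{2\sqrt{h}}{a}e^{-a\sqrt{h}},
\]
and substituting $a=s\mu(\theta_M)$ (respectively $s\mu(\theta_m)$) produces exactly the right-hand side of \eqref{eq:I311}.

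There is no real obstacle here: the only point requiring a moment's care is the choice of branch of $\sqrt{z}$ on the two rays, which is dictated by the convention in \eqref{eq:lame3} and leads directly to the factor $\mu(\theta)=\cos(\theta/2)+\mathrm{i}\sin(\theta/2)$; the remaining steps are elementary and purely computational. Since this lemma is quoted from \cite{DCL}, presenting the substitution and the one integration by parts suffices, and no further analytic machinery is needed.
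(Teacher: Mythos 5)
Your computation is correct: parametrizing $\Gamma_h^\pm$ by arc length, using the branch convention of \eqref{eq:lame3} to get $u_1=\exp(-s\sqrt{r}\,\mu(\theta_{M/m}))$ on the rays, and evaluating $\int_0^h e^{-s\sqrt{r}\mu(\theta)}\,{\rm d}r$ via $t=\sqrt{r}$ and one integration by parts reproduces \eqref{eq:I311} exactly. The paper itself gives no proof (it cites \cite{DCL}), and your argument is precisely the elementary direct computation underlying that cited lemma, so nothing further is needed.
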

\begin{lem}\label{lem:210}
Consider the same setup as that in Lemma \ref{lem:26} and suppose that $\eta$ has the expansion \eqref{eq:lame29}. Recall that $I_{\pm}$ is defined \eqref{eq:int1} and denote
\begin{equation}\label{int2}
I_{\pm}=\mathcal{I}^{\pm}_{1}+\eta(\mathbf{0})\mathcal{I}^{\pm}_{2},
\end{equation}
where
$\mathcal{I}^{\pm}_{1}=\int_{\Gamma_{h}^{\pm}}\delta\eta\mathbf{u}\cdot\mathbf{v}_{j}^\sfr{\rm d}\sigma$ and
$\mathcal{I}^{\pm}_{2}=\int_{\Gamma_{h}^{\pm}}\mathbf{u}\cdot\mathbf{v}_{j}^\sfr{\rm d}\sigma$. Then the following estimate holds
\begin{equation}\label{eq:deuvj1}
 \begin{aligned}
\left|\mathcal{I}^{\pm}_{1}  \right|
\leq O(s^{-2\alpha-2})+ \left(\|g_{jp}\|_{L^2(\mathbb S^1 )}+ \|g_{js}\|_{L^2(\mathbb S^1 )} \right )\times
O(s^{-2\alpha-4})\ \ \mbox{as $s\rightarrow +\infty$.}
 \end{aligned}
\end{equation}
\end{lem}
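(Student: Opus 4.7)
The plan is to isolate the value of $\mathbf{v}_j^\sfr$ at the corner vertex and reduce $\mathcal{I}_1^\pm$ to one-dimensional integrals on the rays $\Gamma_h^\pm$ that are controlled by Lemma~\ref{lem:24}. By Proposition~\ref{Pro:2.2}, and using $J_0(0)=1$ and $J_\ell(0)=0$ for $\ell\ge 1$, I would write $\mathbf{v}_j^\sfr(\mathbf{x}) = \mathbf{v}_j^\sfr(\mathbf{0}) + \mathbf{R}_j(\mathbf{x})$, where $\mathbf{R}_j$ collects the $(J_{0,\beta}-1)$ contributions together with all the $J_{\ell,\beta}$ ($\ell\ge 1$) terms in \eqref{eq:vjr}. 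Combining the small-argument Bessel estimates $|J_0(t)-1|\le t^{2}/4$ on $t\in[0,2]$ (and a coarser bound past $t=2$) and $|J_\ell(t)|\le (t/2)^\ell/\ell!$ with the uniform angular-moment bounds in \eqref{eq:gjps norm} dominates the series termwise by $\sum_\ell(k_\beta|\mathbf{x}|/2)^\ell/\ell!$, which converges absolutely on $\overline{S_h}$ and yields the pointwise estimate
\[
|\mathbf{R}_j(\mathbf{x})|\le C_0\bigl(\|g_{jp}\|_{L^2(\mathbb{S}^1)}+\|g_{js}\|_{L^2(\mathbb{S}^1)}\bigr)|\mathbf{x}|,\qquad \mathbf{x}\in\overline{S_h},
\]
with $C_0$ depending only on $\omega, k_p, k_s$ and $h$. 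The first surviving power is $|\mathbf{x}|^1$, produced by the $J_1$-terms associated with the $\mathbf{A}_{j\beta,2}^{(1)}$ coefficients.

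With this splitting in hand, I would insert $\mathbf{v}_j^\sfr = \mathbf{v}_j^\sfr(\mathbf{0}) + \mathbf{R}_j$ into $\mathcal{I}_1^\pm$, use the expansion $|\delta\eta(\mathbf{x})|\le\|\eta\|_{C^\alpha}|\mathbf{x}|^\alpha$ from \eqref{eq:lame29}, and note that on $\Gamma_h^\pm$ one has $|\mathbf{u}(\mathbf{x})|=\sqrt 2|u_1(\mathbf{x})|=\sqrt 2\,e^{-s\sqrt{r}\cos(\theta_\pm/2)}$ directly from \eqref{eq:lame3}, parameterizing by $r=|\mathbf{x}|\in(0,h)$. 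This gives
\[
|\mathcal{I}_1^\pm|\le \sqrt 2\|\eta\|_{C^\alpha}|\mathbf{v}_j^\sfr(\mathbf{0})|\int_0^h r^\alpha e^{-s\sqrt r\cos(\theta_\pm/2)}\,\mathrm d r + \sqrt 2 C_0\|\eta\|_{C^\alpha}\bigl(\|g_{jp}\|_{L^2}+\|g_{js}\|_{L^2}\bigr)\int_0^h r^{\alpha+1}e^{-s\sqrt r\cos(\theta_\pm/2)}\,\mathrm d r.
\]
Lemma~\ref{lem:24} with $\omega(\theta_\pm)=\cos(\theta_\pm/2)>0$ now applies, giving $O(s^{-2\alpha-2})$ for the first integral (take $\zeta=\alpha$) and $O(s^{-2\alpha-4})$ for the second (take $\zeta=\alpha+1$). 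Absorbing $\|\eta\|_{C^\alpha}$ and $|\mathbf{v}_j^\sfr(\mathbf{0})|$ into the constants of the first $O$-symbol and $\sqrt 2 C_0\|\eta\|_{C^\alpha}$ into the second produces \eqref{eq:deuvj1} as $s\to+\infty$.

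The hard part is establishing the linear-in-$|\mathbf{x}|$ pointwise bound on $\mathbf{R}_j$ with a constant that depends on $\|g_{j\beta}\|_{L^2}$ only through a single linear factor. Truncating the Jacobi--Anger series at any finite order would leave a remainder that cannot be controlled uniformly in $\mathbf{x}\in\overline{S_h}$, so the full sum must be estimated directly; the uniform bound \eqref{eq:gjps norm} on the angular moments $\mathbf{A}_{j\beta,i}^{(\ell)}$ combined with the factorial growth in the Bessel expansion is precisely what makes the geometric summation legitimate. This separation between the $|\mathbf{x}|^0$ part (which is absorbed into $\eta(\mathbf{0})\mathcal{I}_2^\pm$ in the split \eqref{int2}) and the $|\mathbf{x}|^1$ remainder is what produces the extra two powers of $s^{-1}$ that attaches the $\|g_{j\beta}\|_{L^2}$ prefactor only to the genuinely higher-order piece, and is what will make Lemma~\ref{lem:210} usable in the asymptotic balancing later in Section~\ref{sec:2}.
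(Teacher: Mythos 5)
Your argument is correct and follows essentially the same route as the paper's: both rest on the Jacobi--Anger expansion of $\mathbf{v}_j^\sfr$ from Proposition~\ref{Pro:2.2}, the uniform moment bounds \eqref{eq:gjps norm}, and Lemma~\ref{lem:24} applied to integrals of the form $\int_0^h r^\zeta e^{-s\sqrt{r}\cos(\theta_\pm/2)}\,\mathrm{d}r$. The only difference is organizational --- you collect everything beyond the vertex value $\mathbf{v}_j^\sfr(\mathbf{0})$ into a single remainder with a pointwise $O(|\mathbf{x}|)$ bound, whereas the paper estimates the sub-integrals $\mathcal{I}_{11}^{-(i)}$ and $\mathcal{I}_{12}^{-(i)}$ one by one --- and the resulting powers of $s$ and the placement of the $\|g_{j\beta}\|_{L^2}$ prefactors are identical.
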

\begin{proof}
Using \eqref{eq:vjr} and the triangle inequality, one can show that
\begin{equation}
 \begin{aligned}
 |\mathcal{I}^{-}_{1}| \leq |\mathcal{I}^{-(1)}_{11}|+|\mathcal{I}^{-(2)}_{11}|+|\mathcal{I}^{-(3)}_{11}|
 +|\mathcal{I}^{-(4)}_{11}|+|\mathcal{I}^{-(1)}_{12}|+|\mathcal{I}^{-(2)}_{12}|+|\mathcal{I}^{-(3)}_{12}|
 +|\mathcal{I}^{-(4)}_{12}|,
 \end{aligned}
\end{equation}
where
\begin{eqnarray*}
  &&\mathcal{I}^{-(1)}_{11}=\int_{\Gamma_{h}^{\pm}}\delta\eta\mathbf{u}\cdot\mathbf v_{jp} ^\sfr(\mathbf 0)J_{0}(k_p |\mathbf{x}|){\rm d}\sigma, \ \
  \mathcal{I}^{-(2)}_{11}=\int_{\Gamma_{h}^{\pm}}\delta\eta\mathbf{u}\cdot \mathbf v_{js} ^\sfr(\mathbf 0)J_{0}(k_s |\mathbf{x}|) {\rm d}\sigma,
 \\ && \mathcal{I}^{-(3)}_{11}=\int_{\Gamma_{h}^{\pm}}\delta\eta\mathbf{u}\cdot\mathbf  v_{jp} ^\sfi(\mathbf 0)J_{0}(k_p |\mathbf{x}|){\rm d}\sigma, \ \
   \mathcal{I}^{-(4)}_{11}=\int_{\Gamma_{h}^{\pm}}\delta\eta\mathbf{u}\cdot \mathbf v_{js} ^\sfi(\mathbf 0)J_{0}(k_s |\mathbf{x}|) {\rm d}\sigma,
  \\ && \mathcal{I}^{-(1)}_{12}=\sqrt{2}\sum^{+\infty}_{\ell =1}  (-1)^\ell \int_{\Gamma_{h}^{\pm}}\delta\eta\mathbf{u}\cdot\mathbf{A}_{jp,1}^{(\ell )}  J_{2\ell}(k_p |\mathbf{x}|){\rm d}\sigma,
  \\ && \mathcal{I}^{-(2)}_{12}=\sqrt{2}\sum^{+\infty}_{\ell =1}  (-1)^\ell  \int_{\Gamma_{h}^{\pm}}\delta\eta\mathbf{u}\cdot\mathbf{A}_{js,1}^{(\ell)} J_{2\ell}(k_s |\mathbf{x}|){\rm d}\sigma,
  \\ && \mathcal{I}^{-(3)}_{12}= \sqrt{2}\sum^{+\infty}_{\ell =1}  (-1)^\ell   \int_{\Gamma_{h}^{\pm}}\delta\eta\mathbf{u}\cdot\mathbf{A}_{jp,2}^{(\ell )} J_{2\ell-1}(k_p |\mathbf{x}|){\rm d}\sigma,
  \\ && \mathcal{I}^{-(4)}_{12}=\sqrt{2}\sum^{+\infty}_{\ell =1}  (-1)^\ell \int_{\Gamma_{h}^{\pm}}\delta\eta\mathbf{u}\cdot\mathbf{A}_{js,2}^{(\ell)}  J_{2\ell-1}(k_s |\mathbf{x}|) {\rm d}\sigma.
  \end{eqnarray*}
The following series expression for the Bessel function $J_p(t)$ can be found in \cite{Abr} as
	 \begin{equation}\label{eq:Jp}
	 	J_p(t)= \frac{t^p}{2^p p!}+\frac{t^p}{2^p } \sum_{\ell=1}^\infty  \frac{(-1)^\ell t^{2\ell }}{4^\ell (\ell !)^2  }, \quad  \mbox{ for } p =1,\,2,\ldots,
	 \end{equation}
which is uniformly and absolutely convergent with respect to  $t \in [0,+\infty)$. 	Therefore, by \eqref{eq:Jp}, one has
\begin{equation}\notag
 \begin{aligned}
\mathcal{I}^{-(1)}_{11}
 = \int_{\Gamma_{h}^{-}}\delta\eta\mathbf{u}\cdot\mathbf v_{jp} ^\sfr(\mathbf 0){\rm d}\sigma
 +\int_{\Gamma_{h}^{-}}\delta\eta\mathbf{u}\cdot\mathbf v_{jp} ^\sfr(\mathbf 0)\sum^{\infty}_{n=1}\frac{(-1)^{n}k_p^{2n}|\mathbf{x}|^{2n}}{4^{n}(n!)^{2}}{\rm d}\sigma,
 \end{aligned}
\end{equation}
which can be used to derive that
\begin{equation}
 \begin{aligned}
\left| \mathcal{I}^{-(1)}_{11}\right|
\leq&
\left|\int_{\Gamma_{h}^{-}}\delta\eta\mathbf{u}\cdot\mathbf v_{jp} ^\sfr(\mathbf 0){\rm d}\sigma\right|
 +\left|\int_{\Gamma_{h}^{-}}\delta\eta\mathbf{u}\cdot\mathbf v_{jp} ^\sfr(\mathbf 0)\sum^{\infty}_{n=1}\frac{(-1)^{n}k_p^{2n}|\mathbf{x}|^{2n}}{4^{n}(n!)^{2}}{\rm d}\sigma\right|
\\\leq&
\sqrt{2}\|\eta\|_{C^{\alpha}}
|\mathbf v_{jp} ^\sfr(\mathbf 0)|
\int_{0}^{h} r^{\alpha} e^{-s\sqrt{r}\cos\frac{\theta_m}{2}}{\rm d}r
\\&+
\sqrt{2}\|\eta\|_{C^{\alpha}}
|\mathbf v_{jp} ^\sfr(\mathbf 0)|
\sum^{\infty}_{n=1}\frac{k_p^{2n}}{4^{n}(n!)^{2}}
\int_{0}^{h} r^{\alpha+2n} e^{-s\sqrt{r}\cos\frac{\theta_m}{2}}{\rm d}r
\end{aligned}
\end{equation}
From Lemma \ref{lem:24}, we know that
\begin{equation}\notag
\begin{aligned}
\sqrt{2}\|\eta\|_{C^{\alpha}}
|\mathbf v_{jp} ^\sfr(\mathbf 0)|
\int_{0}^{h} r^{\alpha} e^{-s\sqrt{r}\cos\frac{\theta_m}{2}}{\rm d}r &=O(s^{-2\alpha-2}),\\
\sqrt{2}\|\eta\|_{C^{\alpha}}
|\mathbf v_{jp} ^\sfr(\mathbf 0)|
\sum^{\infty}_{n=1}\frac{(-1)^{n}k^{2n}}{4^{n}(n!)^{2}}
\int_{0}^{h} r^{\alpha+2n} e^{-s\sqrt{r}\cos\frac{\theta}{2}}{\rm d}r
&\leq
\sqrt{2}\|\eta\|_{C^{\alpha}}
|\mathbf v_{jp} ^\sfr(\mathbf 0)|
\sum^{\infty}_{n=1}\frac{h^{2n-2}k^{2n}}{4^{n}(n!)^{2}}\\
&\hspace*{-4cm} \times
\int_{0}^{h} r^{\alpha+2} e^{-s\sqrt{r}\cos\frac{\theta_m}{2}}{\rm d}r
=O(s^{-2\alpha-4})\quad \mbox{as $s\rightarrow + \infty $.}
\end{aligned}
\end{equation}
Hence one can conclude that
\begin{equation}\label{eq:lame54}
\begin{aligned}
&\left|  \mathcal{I}^{-(1)}_{11} \right|
\leq O(s^{-2\alpha-2})\quad \mbox{as $s\rightarrow + \infty $.}
\end{aligned}
\end{equation}
Similarly, we can derive that
\begin{equation}\label{eq:lame541}
\begin{aligned}
\left| \mathcal{I}^{-(2)}_{11}\right|
\leq O(s^{-2\alpha-2}),\quad
\left| \mathcal{I}^{-(3)}_{11}\right|
\leq O(s^{-2\alpha-2}),\quad
\left| \mathcal{I}^{-(4)}_{11}\right|
\leq O(s^{-2\alpha-2}),
\end{aligned}
\end{equation}
as $s\rightarrow + \infty $.
By virtue of \eqref{eq:gjps norm} and \eqref{eq:Jp}, together with the Cauchy-Schwarz inequality, we can derive that
\begin{equation}\label{eq:lame55}
\begin{aligned}
\left|  \mathcal{I}^{-(1)}_{12} \right|
\leq&2\|\eta\|_{C^{\alpha}}
\int_{0}^{h} r^\alpha \left| u_{1}\sum^{\infty}_{\ell=1}(-1)^\ell\left(\frac{k_p^{2\ell}r^{2\ell}}{2^{2\ell}(2\ell)!}+\frac{k_p^{2\ell}r^{2\ell}}{2^{2\ell}}
\sum^{\infty}_{n=1}\frac{(-1)^{n}k_p^{2n}r^{2n}}{4^{n}(n!)^{2}}\right)\right| |\mathbf{A}_{jp,1}^{(\ell)}|{\rm d}r
\\ \leq&
4 \sqrt{\frac{k_p\pi}{\omega}} \|\eta\|_{C^{\alpha}}
\|g_{jp}\|_{L^2(\mathbb S^1 )}\Bigg(
\sum^{\infty}_{\ell=1}\frac{k_p^{2\ell}h^{2\ell-2}}{2^{2\ell}(2\ell)!}\int_{0}^{h}r^{\alpha+2}e^{-s\sqrt{r}\cos\frac{\theta_m}{2}}
{\rm d}r
\\&+
\sum^{\infty}_{\ell=1}\frac{k_p^{2\ell}h^{2\ell}}{2^{2\ell}}\sum^{\infty}_{n=1}\frac{k_p^{2n}h^{2(n-1)}}{4^{n}(n!)^{2}}\int_{0}^{h}r^{\alpha+2}e^{-s\sqrt{r}\cos\frac{\theta_m}{2}}
{\rm d}r\Bigg)
\\ \leq& \|g_{jp}\|_{L^2(\mathbb S^1 )}\times
O(s^{-2\alpha-4})\quad\mbox{as $s\rightarrow +\infty$.}
\end{aligned}
\end{equation}
Similarly, one can show that
\begin{equation}\label{eq:ajp int}
	\begin{split}
 &\left|  \mathcal{I}^{-(2)}_{12}\right| \leq \|g_{jp}\|_{L^2(\mathbb S^1 )}\times
O(s^{-2\alpha-4}),\ \
\left|  \mathcal{I}^{-(3)}_{12}\right| \leq \|g_{js}\|_{L^2(\mathbb S^1 )}\times
O(s^{-2\alpha-4}),\ \
\\& \left|  \mathcal{I}^{-(4)}_{12}\right| \leq \|g_{js}\|_{L^2(\mathbb S^1 )}\times
O(s^{-2\alpha-4})\quad\mbox{as $s\rightarrow +\infty$.}
	\end{split}
\end{equation}
Combining \eqref{eq:lame54}, \eqref{eq:lame541}, \eqref{eq:lame55} with \eqref{eq:ajp int}, we have
\begin{equation}\label{eq:deuvj2}
 \begin{aligned}
\left|   \mathcal{I}^{-}_{1}\right|
\leq O(s^{-2\alpha-2})+ \left(\|g_{jp}\|_{L^2(\mathbb S^1 )}+ \|g_{js}\|_{L^2(\mathbb S^1 )} \right )\times
O(s^{-2\alpha-4})\quad\mbox{as $s\rightarrow +\infty$.}
 \end{aligned}
\end{equation}
Using a similar argument for \eqref{eq:deuvj1},  we can show that
\begin{equation}\label{eq:deuvj3}
 \begin{aligned}
\left|  \mathcal{I}^{+}_{1}\right|
\leq O(s^{-2\alpha-2})+ \left(\|g_{jp}\|_{L^2(\mathbb S^1 )}+ \|g_{js}\|_{L^2(\mathbb S^1 )} \right )\times
O(s^{-2\alpha-4})\quad\mbox{as $s\rightarrow +\infty. $}
 \end{aligned}
\end{equation}
Finally, from \eqref{eq:deuvj2} and \eqref{eq:deuvj3}, it yields that \eqref{eq:deuvj1}.
\end{proof}

\begin{lem}\label{lem:211}
Consider the same setup as that in Lemma \ref{lem:26} and recall that $\mathcal{I}^{\pm}_{2}$ is defined \eqref{int2}. When $s\rightarrow +\infty$, the following results hold
\begin{eqnarray}
& &\mathcal{I}^{\pm(p)}_{211}+ \mathcal{I}^{\pm(s)}_{211}+ \mathcal{I}^{\pm(p)}_{212}+ \mathcal{I}^{\pm(s)}_{212}=
 \eta(\mathbf{0})
\bigg(\mathbf{v}_{jp}^\sfr(\mathbf{0})+\mathbf{v}_{js}^\sfr(\mathbf{0})
+\mathbf{v}_{jp}^\sfi(\mathbf{0})+\mathbf{v}_{js}^\sfi(\mathbf{0})\bigg)\cdot
\begin{pmatrix}
    1\\
   {\rm i}
\end{pmatrix} \notag
\\
&&\times2 s^{-2} \bigg(\mu(\theta_M )^{-2}-   \mu(\theta_M )^{-2} e^{ -s\sqrt{h} \mu(\theta_M ) } -  \mu(\theta_M )^{-1} s\sqrt{h}   e^{ -s\sqrt{h} \mu(\theta_M ) } \notag
\\
&&+\mu(\theta_m )^{-2}-   \mu(\theta_m )^{-2} e^{ -s\sqrt{h} \mu(\theta_m ) } -  \mu(\theta_m )^{-1} s\sqrt{h}   e^{ -s\sqrt{h} \mu(\theta_m ) }\bigg), \notag
 \\
 &&\left|\mathcal{I}^{\pm(p)}_{212}\right| \leq O(s^{-6}), \ \
 \left|\mathcal{I}^{\pm(s)}_{212}\right| \leq O(s^{-6}),  \label{uvjp1}
\end{eqnarray}
where
\begin{eqnarray*}
&&\mathcal{I}^{\pm(\beta)}_{211}=\int_{\Gamma_{h}^{\pm}}\mathbf{u}\cdot\mathbf v_{j\beta} ^\sfr(\mathbf 0){\rm d}\sigma, \ \
\mathcal{I}^{\pm(\beta )}_{212}= \int_{\Gamma_{h}^{\pm}}\mathbf{u}\cdot\mathbf v_{j\beta } ^\sfr(\mathbf 0)S_\beta {\rm d}\sigma,\ \
S_\beta=\sum^{\infty}_{n=1}\frac{(-1)^{n}k_{\beta}^{2n}|\mathbf{x}|^{2n}}{4^{n}(n!)^{2}}. 
\end{eqnarray*}
Denote
\begin{eqnarray*}
 &&\mathcal{I}^{\pm(\beta)}_{213}=\int_{\Gamma_{h}^{\pm}}\mathbf{u}\cdot\mathbf v_{j\beta } ^\sfi(\mathbf 0){\rm d}\sigma, \ \
\mathcal{I}^{\pm(\beta )}_{214}= \int_{\Gamma_{h}^{\pm}}\mathbf{u}\cdot\mathbf v_{j\beta} ^\sfi(\mathbf 0)S_\beta {\rm d}\sigma,\\ 
&&\mathcal{I}^{\pm(\beta)}_{22}= \sqrt{2}\sum^{+\infty}_{\ell =1} (-1)^\ell\int_{\Gamma_{h}^{\pm}}\mathbf{u}\cdot\mathbf{A}_{j\beta,1}^{(\ell)}    J_{2\ell}(k_\beta  |\mathbf{x}|){\rm d}\sigma,\\
\\ &&\mathcal{I}^{\pm(\beta )}_{23}= \sqrt{2}\sum^{+\infty}_{\ell =1}  (-1)^\ell \int_{\Gamma_{h}^{\pm}}\mathbf{u}\cdot\mathbf{A}_{j\beta,1}^{(\ell )}   J_{2\ell-1}(k_\beta  |\mathbf{x}|){\rm d}\sigma. 
\end{eqnarray*}
It holds that
\begin{eqnarray*}
&& \left|\mathcal{I}^{\pm(p)}_{214}\right| \leq O(s^{-6}), \ \ \left|\mathcal{I}^{\pm(s)}_{214}\right| \leq O(s^{-6}),\notag
 \\
 &&\left|\mathcal{I}^{\pm(p)}_{22}\right|\leq\|g_{jp}\|_{L^{2}(\mathbb{S}^{1})}\times O(s^{-4}), \ \
\left|\mathcal{I}^{\pm(s)}_{22}\right|\leq\|g_{js}\|_{L^{2}(\mathbb{S}^{1})}\times O(s^{-4}), \notag
 \\
 &&\left|\mathcal{I}^{\pm(p)}_{23}\right|\leq\|g_{jp}\|_{L^{2}(\mathbb{S}^{1})}\times O(s^{-4}), \ \
\left|\mathcal{I}^{\pm(s)}_{23}\right|\leq\|g_{js}\|_{L^{2}(\mathbb{S}^{1})}\times O(s^{-4}) \label{uvjp2}
\end{eqnarray*}
as $s \rightarrow +\infty $.
\end{lem}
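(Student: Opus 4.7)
The plan is to split each of the sub-integrals by inserting the Bessel-series representation of $\mathbf{v}_j^{\sf R}$ from \eqref{eq:vjr} into $\mathcal{I}_2^\pm = \int_{\Gamma_h^\pm}\mathbf{u}\cdot\mathbf{v}_j^{\sf R}\,\mathrm{d}\sigma$, and to handle each resulting piece by either the exact integral formula in Lemma~\ref{lem:u0 int} or the asymptotic estimate in Lemma~\ref{lem:24}. The key algebraic observation used throughout is that, by \eqref{eq:lame3}, $\mathbf{u}(\mathbf{x})=u_1(\mathbf{x})(1,\mathrm{i})^\top$, so for any constant vector $\mathbf{c}$ one has $\mathbf{u}\cdot\mathbf{c}=u_1(\mathbf{c}\cdot(1,\mathrm{i})^\top)$; this reduces every vector integrand to a scalar multiple of $u_1$ times a real or complex power of $|\mathbf{x}|$.

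For the terms $\mathcal{I}_{211}^{\pm(\beta)}$ and $\mathcal{I}_{213}^{\pm(\beta)}$ (the $J_0$-part paired with $\mathbf{v}_{j\beta}^{\sf R}(\mathbf{0})$ or $\mathbf{v}_{j\beta}^{\sf I}(\mathbf{0})$), I would write $J_0(k_\beta|\mathbf{x}|)=1+S_\beta$ with $S_\beta$ as defined in the statement. The constant part pulls $\mathbf{v}_{j\beta}^{\cdot}(\mathbf{0})\cdot(1,\mathrm{i})^\top$ outside and leaves $\int_{\Gamma_h^\pm}u_1(\mathbf{x})\,\mathrm{d}\sigma$, which is evaluated exactly by Lemma~\ref{lem:u0 int}; summing the four contributions over $\beta\in\{p,s\}$ and real/imaginary parts, and attaching the factor $\eta(\mathbf{0})$ from \eqref{int2}, produces the explicit leading-order expression in the first displayed formula. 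For the remainders $\mathcal{I}_{212}^{\pm(\beta)}$ and $\mathcal{I}_{214}^{\pm(\beta)}$, which contain $S_\beta$, I would bound $|u_1|\leq e^{-s\sqrt{r}\cos(\theta_{m,M}/2)}$ on $\Gamma_h^\pm$, factor out the bounded constant $|\mathbf{v}_{j\beta}^{\cdot}(\mathbf{0})|$, and apply Lemma~\ref{lem:24} term by term with $\zeta=2n$. The $n=1$ term dominates and produces the stated $O(s^{-6})$, provided the residual series $\sum_{n\geq 1}h^{2(n-1)}k_\beta^{2n}/(4^n(n!)^2)$ is uniformly convergent in $s$, which it is since it does not depend on $s$.

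For the angular-mode pieces $\mathcal{I}_{22}^{\pm(\beta)}$ and $\mathcal{I}_{23}^{\pm(\beta)}$, I would invoke the power series \eqref{eq:Jp} to expand $J_{2\ell}(k_\beta|\mathbf{x}|)$ and $J_{2\ell-1}(k_\beta|\mathbf{x}|)$, then bound each vector coefficient uniformly in $\ell$ by $|\mathbf{A}_{j\beta,i}^{(\ell)}|\leq 2\sqrt{k_\beta\pi/\omega}\,\|g_{j\beta}\|_{L^2(\mathbb{S}^1)}$ via \eqref{eq:gjps norm}. After the triangle inequality the dominant term reduces to $\int_0^h r^{2\ell-1} e^{-s\sqrt{r}\cos(\theta_{m,M}/2)}\,\mathrm{d}r$ or $\int_0^h r^{2\ell}\cdot$(same), and by Lemma~\ref{lem:24} both are $O(s^{-4\ell-2})$ or sharper. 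The slowest-decaying occurrence, dictated by $\ell=1$ through $J_1$ (order $|\mathbf{x}|$) in $\mathcal{I}_{23}^{\pm(\beta)}$ and $J_2$ (order $|\mathbf{x}|^2$) in $\mathcal{I}_{22}^{\pm(\beta)}$, yields the claimed $O(s^{-4})$; the series in $\ell$ converges because of the $1/(2\ell)!$ factorial decay.

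The main technical obstacle will be maintaining uniform control, in $s$ and in $\ell$, of the infinite series arising from the Jacobi--Anger-type expansions, so that term-by-term application of Lemma~\ref{lem:24} produces genuine $O(s^{-4})$ and $O(s^{-6})$ bounds rather than hidden losses from summing many slowly-decaying contributions. This amounts to checking that the tail of the $\ell$-series is controlled by a convergent geometric series (using $h^{2\ell}/(2\ell)!$ and the analogous odd-index bound), after which the extraction of the sharpest power of $s$ is a direct consequence of Lemma~\ref{lem:24}.
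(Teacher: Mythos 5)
Your proposal follows essentially the same route as the paper's proof: insert the Jacobi--Anger expansion \eqref{eq:vjr} of $\mathbf v_j^{\sf R}$, use $\mathbf u=u_1(1,\mathrm i)^\top$ to reduce everything to scalar integrals of $u_1$ against powers of $|\mathbf x|$, evaluate the $J_0$-constant parts exactly via Lemma \ref{lem:u0 int}, and control the remainders and the higher angular modes through \eqref{eq:Jp}, \eqref{eq:gjps norm} and Lemma \ref{lem:24}. One caution: applying Lemma \ref{lem:24} literally ``term by term with $\zeta=2n$ (or $2\ell$)'' is not viable, since the implied constants grow like $\Gamma(4\ell+2)$ and overwhelm the $1/(2\ell)!$ decay; the correct execution---which your closing paragraph anticipates and which is exactly what the paper does---is to first bound $r^{2n}\le h^{2n-2}r^2$ and $r^{2\ell}\le h^{2\ell-1}r$ on $(0,h)$, so the index-dependent series factors out as an $s$-independent convergent sum and Lemma \ref{lem:24} is invoked only once with a fixed exponent, yielding the stated $O(s^{-6})$ and $O(s^{-4})$ bounds.
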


\begin{proof}
Using \eqref{eq:vjr}, we can deduce that
\begin{equation}\label{uvje}
 \begin{aligned}
 \mathcal{I}^{-}_{2}=\mathcal{I}^{-(1)}_{21}+\mathcal{I}^{-(2)}_{21}+\mathcal{I}^{-(3)}_{21}
 +\mathcal{I}^{-(4)}_{21}+\mathcal{I}^{-(p)}_{22}+\mathcal{I}^{-(s)}_{22}+\mathcal{I}^{-(p)}_{23}
 +\mathcal{I}^{-(s)}_{23},
 \end{aligned}
 \end{equation}
where
\begin{eqnarray*}
&&\mathcal{I}^{-(1)}_{21}=\int_{\Gamma_{h}^{-}}\mathbf{u}\cdot\mathbf v_{jp} ^\sfr(\mathbf 0)J_{0}(k_p |\mathbf{x}|){\rm d}\sigma, \ \ \mathcal{I}^{-(2)}_{21}=\int_{\Gamma_{h}^{-}}\mathbf{u}\cdot\mathbf v_{js} ^\sfr(\mathbf 0)J_{0}(k_s |\mathbf{x}|){\rm d}\sigma,
\\&&\mathcal{I}^{-(3)}_{21}=\int_{\Gamma_{h}^{-}}\mathbf{u}\cdot\mathbf v_{jp} ^\sfi(\mathbf 0)J_{0}(k_p |\mathbf{x}|){\rm d}\sigma, \ \ \mathcal{I}^{-(4)}_{21}=\int_{\Gamma_{h}^{-}}\mathbf{u}\cdot\mathbf v_{js} ^\sfi(\mathbf 0)J_{0}(k_s |\mathbf{x}|){\rm d}\sigma,
\\&&\mathcal{I}^{-(p)}_{22}= \sqrt{2}\sum^{+\infty}_{\ell =1}  (-1)^\ell \int_{\Gamma_{h}^{-}}\mathbf{u}\cdot\mathbf{A}_{jp,1}^{(\ell)}  J_{2\ell}(k_p |\mathbf{x}|){\rm d}\sigma,
\\&&\mathcal{I}^{-(s)}_{22}=\sqrt{2}\sum^{+\infty}_{\ell =1}  (-1)^\ell \int_{\Gamma_{h}^{-}}\mathbf{u}\cdot\mathbf{A}_{js,1}^{(\ell )}   J_{2\ell}(k_s |\mathbf{x}|){\rm d}\sigma,
\\&&\mathcal{I}^{-(p)}_{23}= \sqrt{2}\sum^{+\infty}_{\ell =1}  (-1)^\ell  \int_{\Gamma_{h}^{-}}\mathbf{u}\cdot\mathbf{A}_{jp,1}^{(\ell )} J_{2\ell-1}(k_p |\mathbf{x}|){\rm d}\sigma,
\\ &&\mathcal{I}^{-(s)}_{23}= \sqrt{2}\sum^{+\infty}_{\ell =1}  (-1)^\ell \int_{\Gamma_{h}^{-}}\mathbf{u}\cdot\mathbf{A}_{js,1}^{(\ell)}   J_{2\ell-1}(k_s |\mathbf{x}|){\rm d}\sigma.
\end{eqnarray*}
Using \eqref{eq:Jp}, one can obtain that
\begin{equation}\label{uvjpe}
 \begin{aligned}
 \mathcal{I}^{-(1)}_{21}=\mathcal{I}^{-(p)}_{211}+\mathcal{I}^{-(p)}_{212}.
 \end{aligned}
\end{equation}
For $ \mathcal{I}^{-(p)}_{211}$, we have
\begin{equation}\label{2.61}
 \begin{aligned}
 \mathcal{I}^{-(p)}_{211}=
 \mathbf{v}_{jp} ^\sfr(\mathbf{0})\cdot
\begin{pmatrix}
    1\\
   {\rm i}
\end{pmatrix}
\int_{0}^{h}e^ {-s\sqrt{r}\mu(\theta_{m})}{\rm d}r,
  \end{aligned}
\end{equation}
where $ \mu(\theta)=\cos\frac{\theta}{2}+{\rm i}\sin\frac{\theta}{2}$ .
By virtue of \eqref {2.61} and Lemma \ref{lem:u0 int},  we have
\begin{equation}
 \begin{aligned}
 \mathcal{I}^{-(p)}_{211}=
 \mathbf{v}_{jp} ^\sfr(\mathbf{0})\cdot
\begin{pmatrix}
    1\\
   {\rm i}
\end{pmatrix}
\frac{2}{s^{2}}\bigg(\frac{1}{\mu(\theta_{m})^2}-\frac{e^{-s\sqrt{h}\mu(\theta_{m})}}{\mu(\theta_{m})^2}-s\sqrt{h}\frac{e^{-s\sqrt{h}\mu(\theta_{m})}}{\mu(\theta_{m})}\bigg).
  \end{aligned}
\end{equation}
Using Lemma \ref{lem:24}, one has
\begin{equation}\label{uvjp}
 \begin{aligned}
 \left|\mathcal{I}^{-(p)}_{212}\right| \leq&
\sqrt{2}|\mathbf{v}_{jp}^\sfr(\mathbf{0})|
\sum^{\infty}_{n=1}\frac{k_{p}^{2n}}{4^{n}(n!)^{2}}\int_{0}^{h}r^{2n}e^{-s\sqrt{r}\cos\frac{\theta_{m}}{2}}{\rm d}r
\\\leq&
\sqrt{2}|\mathbf{v}_{jp}^\sfr(\mathbf{0})|
\sum^{\infty}_{n=1}\frac{k_{p}^{2n}h^{2n-2}}{4^{n}(n!)^{2}}\int_{0}^{h}r^{2}e^{-s\sqrt{r}\cos\frac{\theta_{m}}{2}}{\rm d}r=
O(s^{-6}),
  \end{aligned}
\end{equation}
as $s\rightarrow +\infty. $
Similarly, We can derive that
\begin{equation}\label{uvjp1}
 \begin{aligned}
 &\mathcal{I}^{\pm(p)}_{211}+ \mathcal{I}^{\pm(s)}_{211}+ \mathcal{I}^{\pm(p)}_{213}+ \mathcal{I}^{\pm(s)}_{213}=
 \eta(\mathbf{0})
\bigg(\mathbf{v}_{jp}^\sfr(\mathbf{0})+\mathbf{v}_{js}^\sfr(\mathbf{0})
+\mathbf{v}_{jp}^\sfi(\mathbf{0})+\mathbf{v}_{js}^\sfi(\mathbf{0})\bigg)\cdot
\begin{pmatrix}
    1\\
   {\rm i}
\end{pmatrix}
\\&\times2 s^{-2} \bigg(\mu(\theta_M )^{-2}-   \mu(\theta_M )^{-2} e^{ -s\sqrt{h} \mu(\theta_M ) } -  \mu(\theta_M )^{-1} s\sqrt{h}   e^{ -s\sqrt{h} \mu(\theta_M ) }
\\&+\mu(\theta_m )^{-2}-   \mu(\theta_m )^{-2} e^{ -s\sqrt{h} \mu(\theta_m ) } -  \mu(\theta_m )^{-1} s\sqrt{h}   e^{ -s\sqrt{h} \mu(\theta_m ) }\bigg)
 \\&\left|\mathcal{I}^{\pm (p)}_{212}\right| \leq O(s^{-6}), \ \
 \left|\mathcal{I}^{\pm(s)}_{212}\right| \leq O(s^{-6}), \ \ \left|\mathcal{I}^{\pm(p)}_{213}\right| \leq O(s^{-6}), \ \ \left|\mathcal{I}^{\pm(s)}_{213}\right| \leq O(s^{-6}),\ \
  \end{aligned}
\end{equation}
as $s\rightarrow +\infty. $

By virtue of \eqref{eq:Jp} and using the Cauchy-Schwarz inequality, we can derive that
\begin{equation}\label{uajp1}
 \begin{aligned}
 \mathcal{I}^{-(1)}_{22}= &\sqrt{2}\sum^{+\infty}_{\ell =1}  (-1)^\ell\int_{\Gamma_{h}^{-}}\mathbf{u}\cdot\mathbf{A}_{jp,1}^{(\ell)}  \frac{k^{2\ell}r^{2\ell}}{2^{2\ell}(2\ell!)}{\rm d}\sigma
 \\&+
\sqrt{2}\sum^{+\infty}_{\ell =1}  (-1)^\ell  \int_{\Gamma_{h}^{-}}\mathbf{u}\cdot\mathbf{A}_{jp,1}^{(\ell)}  \frac{k^{2\ell}r^{2\ell}}{2^{2\ell}}
\sum^{\infty}_{n=1}\frac{(-1)^{n}k^{2n}r^{2n}}{4^{n}(n!)^{2}} {\rm d}\sigma
.
 \end{aligned}
\end{equation}
From Lemma \ref{lem:24} and \eqref{eq:gjps norm} , it is easy to see that
\begin{equation}\label{(2.63)}
 \begin{aligned}
 \left| \sqrt{2}\sum^{+\infty}_{\ell =1}  (-1)^\ell \int_{\Gamma_{h}^{-}}\mathbf{u}\cdot\mathbf{A}_{jp,1}^{(\ell)}  \frac{k^{2\ell}r^{2\ell}}{2^{2\ell}(2\ell!)}{\rm d}\sigma\right|
 \leq&
 2
\sum^{\infty}_{\ell=1}\frac{k^{2\ell}h^{2\ell-1}}{2^{2\ell}(2\ell!)}\int_{0}^{h}re^{-s\sqrt{r}\cos\frac{\theta_{m}}{2}} |\mathbf{A}_{jp,1}^{(\ell)}|
{\rm d}r
 \\\leq&
 \|g_{jp}\|_{L^{2}(\mathbb{S}^{1})}\times O(s^{-4})
 \end{aligned}
\end{equation}
and
\begin{eqnarray}
& &\left|\sqrt{2}\sum^{+\infty}_{\ell =1}  (-1)^\ell\int_{\Gamma_{h}^{-}}\mathbf{u}\cdot\mathbf{A}_{jp,1}^{(\ell)} \frac{k^{2\ell}r^{2\ell}}{2^{2\ell}}
\sum^{\infty}_{n=1}\frac{(-1)^{n}k^{2n}r^{2n}}{4^{n}(n!)^{2}} {\rm d}\sigma  \right| \notag
 \\
& &
\leq
2
\sum^{\infty}_{\ell=1}\frac{k^{2\ell}r^{2\ell}}{2^{2\ell}}
\sum^{\infty}_{n=1}\frac{k^{2n}h^{2(n-1)}}{4^{n}(n!)^{2}} \int_{0}^{h}r^{2}e^{-s\sqrt{r}\cos\frac{\theta_{m}}{2}} |\mathbf{A}_{jp,1}^{(\ell)}|
{\rm d}r \notag
 \\ & &\leq
\|g_{jp}\|_{L^{2}(\mathbb{S}^{1})}\times O(s^{-6}) \label{(2.64)}
\end{eqnarray}
as $s\rightarrow +\infty. $

Combining  \eqref{uajp1}, \eqref{(2.63)} with \eqref{(2.64)}, we have the estimate
\begin{equation}
 \begin{aligned}\label{(2.65)}
\left|\mathcal{I}^{-(p)}_{22}\right|\leq\|g_{jp}\|_{L^{2}(\mathbb{S}^{1})}\times O(s^{-4})\quad\mbox{as $s\rightarrow +\infty. $}
 \end{aligned}
\end{equation}
Using a similar argument as that for \eqref{(2.65)},  we can show that
\begin{equation}\label{(2.66)}
 \begin{aligned}
 &\left|\mathcal{I}^{+(p)}_{22}\right|\leq\|g_{js}\|_{L^{2}(\mathbb{S}^{1})}\times O(s^{-4}), \ \
\left|\mathcal{I}^{\pm(s)}_{22}\right|\leq\|g_{js}\|_{L^{2}(\mathbb{S}^{1})}\times O(s^{-4}),
 \\&\left|\mathcal{I}^{\pm(p)}_{23}\right|\leq\|g_{jp}\|_{L^{2}(\mathbb{S}^{1})}\times O(s^{-4}), \ \
\left|\mathcal{I}^{\pm(s)}_{23}\right|\leq\|g_{js}\|_{L^{2}(\mathbb{S}^{1})}\times O(s^{-4}),
 \end{aligned}
\end{equation}
as $s\rightarrow +\infty. $

The proof is complete. 
\end{proof}

\begin{thm}\label{thm:29}
 Let $(\mathbf{v}, \mathbf{w})\in H^{1}(\Omega)^{2}\times H^{1}(\Omega)^{2}$ be a pair of eigenfunctions to \eqref{eq:lame1} associated with $\omega\in\mathbb{R}_{+}$. Assume that the domain $\Omega\subset\mathbb{R}^{2}$ contains a corner $\Omega \cap B_h= \Omega\cap W$ with $h\ll 1$. By rigid motions if necessary, we can assume that the vertex of the corner is $\mathbf 0 \in \partial \Omega $. Let $W$ be the sector defined in \eqref{eq:lame2} and $S_{h}=\Omega \cap B_h= \Omega\cap W $  in $\Omega$. Suppose that $q\mathbf{w} \in C^{\alpha}(\overline{S_{h}})^2$ with $q:=1+V$ and $\eta\in C^{\alpha}(\overline{\Gamma_{h}^{\pm}})$ for $0<\alpha<1$. If the following conditions are fulfilled:
\begin{itemize}
\item[{\rm (a)}]~For any given constants $\gamma>\max\{\beta_{1}, \beta_{2}\}>0$, assume that there exits a sequence Herglotz functions $\{\mathbf{v}_{j}\}_{j=1}^\infty  $, where $\mathbf{v}_{j} $ is  defined in \eqref{eq:vj},  can  approximate $\mathbf{v}$ in $H^{1}(S_{h})^{2}$ fulfilling 
\begin{equation}\label{eq:cond thm3 new}
\| \mathbf{v}-\mathbf{v}_{j}\|_{H^{1}(S_{h})^{2}}\leq j^{-\gamma }, \ \| g_{jp}\|_{L^{2}(\mathbb S^{1})}\leq j^{\beta_{1}}, \ \| g_{js}\|_{L^{2}(\mathbb S^{1})}\leq j^{\beta_{2}},
\end{equation}
\item[{\rm (b)}]~the function $\eta(\mathbf{x})$ doest not vanish at the corner point, i.e.,
\begin{equation}\label{etao}
\eta(\mathbf{0} )\neq0,
\end{equation}
\item[{\rm (c)}]~the corner is non-degenerate, namely the angles $\theta_{m}$ and $\theta_{M}$ of the sector $W$ satisfy
\begin{equation}\label{eq:cond theta}
-\pi<\theta_{m}<\theta_{M}<\pi \mbox{ and } \theta_{M}-\theta_{m}\neq\pi;
\end{equation}
\end{itemize}
then we have
\begin{equation}\label{eq:277}
	\lim_{ \rho\rightarrow+0 }\frac{1}{m(B(\mathbf{0},\rho)\cap \Omega)}\int_{B(\mathbf{0},\rho)\cap\Omega}|\mathbf{v}(\mathbf{x})|\mathrm d \mathbf{x}=0,
\end{equation}
where $m(B(\mathbf{0},\rho)\cap\Omega)$ is the measure of $B(\mathbf{0},\rho)\cap\Omega$.
\end{thm}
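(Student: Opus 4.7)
The idea is to invoke the CGO-based integral identity of Lemma~\ref{lem:26} on the pair $(\mathbf v_\sfr,\mathbf w_\sfr)$ and the parallel identity on $(\mathbf v_\sfi,\mathbf w_\sfi)$, balance the asymptotic orders in the CGO parameter $s\to\infty$ against a coupled choice $j=j(s)\to\infty$ of the Herglotz index, and then convert the resulting pointwise vanishing $\mathbf v_j(\mathbf 0)\to\mathbf 0$ into the averaged statement~\eqref{eq:277}. Starting from the identity $I_1+I_2=I_{\Lambda_h}-\mathcal I_1^{\pm}-\eta(\mathbf 0)\mathcal I_2^{\pm}-I_\pm^\Delta$ and combining Lemmas~\ref{lem:2.6}--\ref{lem:211}, the orders are: $I_{\Lambda_h}$ exponentially small; $I_2$ and $I_\pm^\Delta$ of size $j^{-\gamma}\cdot O(s^{-2(\alpha+1)})$; $I_1$ of size $(\|g_{jp}\|_{L^2}+\|g_{js}\|_{L^2})\,O(s^{-2\alpha-4})+O(s^{-2\alpha-4})+(|\mathbf f_\sfr(\mathbf 0)|+|\mathbf v_j(\mathbf 0)|)\,O(s^{-4})$; $\mathcal I_1^{\pm}$ of size $O(s^{-2\alpha-2})+(\|g_{jp}\|_{L^2}+\|g_{js}\|_{L^2})\,O(s^{-2\alpha-4})$; and $\mathcal I_2^{\pm}$ with leading part
\begin{equation*}
2s^{-2}\bigl(\mu(\theta_M)^{-2}+\mu(\theta_m)^{-2}\bigr)\bigl(\mathbf v_{jp}^\sfr+\mathbf v_{js}^\sfr+\mathbf v_{jp}^\sfi+\mathbf v_{js}^\sfi\bigr)(\mathbf 0)\cdot\begin{pmatrix}1\\ \mathrm i\end{pmatrix}
\end{equation*}
modulo $O(s^{-6})$ and $\|g_{j\beta}\|_{L^2}\,O(s^{-4})$ remainders; by Proposition~\ref{Pro:2.2} the vector in the middle is exactly $\mathbf v_j^\sfr(\mathbf 0)$.

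Next I would multiply the identity through by $s^2$ and couple $j$ to $s$ via $j=s^\tau$ with $\tau>0$ small enough that $\tau\max\{\beta_1,\beta_2\}<2\alpha$. Since $|\mathbf v_j(\mathbf 0)|\lesssim \|g_{jp}\|_{L^2}+\|g_{js}\|_{L^2}\lesssim j^{\max\{\beta_1,\beta_2\}}$ by Proposition~\ref{Pro:2.2}, every subleading contribution then tends to $0$ as $s\to\infty$, leaving the limit identity
\begin{equation*}
\eta(\mathbf 0)\bigl(\mu(\theta_M)^{-2}+\mu(\theta_m)^{-2}\bigr)\,\mathbf v_j^\sfr(\mathbf 0)\cdot(1,\mathrm i)^\top\to 0.
\end{equation*}
A direct computation gives $\mu(\theta_M)^{-2}+\mu(\theta_m)^{-2}=2e^{-\mathrm i(\theta_M+\theta_m)/2}\cos((\theta_M-\theta_m)/2)$, which is nonzero exactly when $\theta_M-\theta_m\neq\pi$, i.e.\ hypothesis~(c). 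Combined with $\eta(\mathbf 0)\neq 0$ from~(b) and splitting into real and imaginary scalar parts, this forces $\mathbf v_j^\sfr(\mathbf 0)\to\mathbf 0$. Running the same argument on $(\mathbf v_\sfi,\mathbf w_\sfi)$ gives $\mathbf v_j^\sfi(\mathbf 0)\to\mathbf 0$, whence $\mathbf v_j(\mathbf 0)\to\mathbf 0$ along the chosen sequence.

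Finally, to pass from pointwise vanishing of the Herglotz approximants at the vertex to the $L^1$-average~\eqref{eq:277}, I would use the triangle inequality
\begin{equation*}
\frac{1}{m(B_\rho\cap\Omega)}\int_{B_\rho\cap\Omega}|\mathbf v|\,\mathrm d\mathbf x\leq |\mathbf v_j(\mathbf 0)|+\sup_{B_\rho\cap\Omega}|\mathbf v_j-\mathbf v_j(\mathbf 0)|+\frac{\|\mathbf v-\mathbf v_j\|_{L^2(S_h)}}{\sqrt{m(B_\rho\cap\Omega)}}
\end{equation*}
with a diagonal choice $j=j(\rho)$ in the window $\rho^{-1/\gamma}\ll j\ll \rho^{-1/\max\{\beta_1,\beta_2\}}$, which is nonempty precisely because $\gamma>\max\{\beta_1,\beta_2\}$ in~(a); this makes the last fraction and the middle oscillation (controlled via \eqref{eq:vC1} and $|\nabla\mathbf v_j|\lesssim j^{\max\{\beta_1,\beta_2\}}$) vanish while still keeping $|\mathbf v_j(\mathbf 0)|\to 0$. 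The chief technical obstacle is precisely this coupled bookkeeping: the kernel growth $\|g_{j\beta}\|_{L^2}\leq j^{\beta_i}$ propagates simultaneously into the $s^{-4}$ remainders of $I_1$ and $\mathcal I_2^{\pm}$ through $|\mathbf v_j(\mathbf 0)|$, into the $\mathcal I_1^\pm$ bound through its explicit prefactor, and into the Herglotz oscillation on $B_\rho$, and one must pick $\tau$, and then $j(\rho)$, so that all competing powers cooperate and every sub-leading term vanishes in the two-scale limit.
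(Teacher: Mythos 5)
Your overall route coincides with the paper's: the integral identity of Lemma~\ref{lem:26}, extraction of the leading term $2s^{-2}\eta(\mathbf 0)\,\mathbf v_j^{\sfr}(\mathbf 0)\cdot(1,\mathrm i)^\top\bigl(\mu(\theta_M)^{-2}+\mu(\theta_m)^{-2}\bigr)$ from $\mathcal I_2^{\pm}$ via Lemma~\ref{lem:211} and Proposition~\ref{Pro:2.2}, multiplication by $s^2$, a coupled limit $s,j\to\infty$, and a final averaging step. Your explicit verification that $\mu(\theta_M)^{-2}+\mu(\theta_m)^{-2}=2e^{-\mathrm i(\theta_M+\theta_m)/2}\cos\bigl((\theta_M-\theta_m)/2\bigr)\neq 0$ under (c), and your diagonal choice $j=j(\rho)$ in the window $\rho^{-1/\gamma}\ll j\ll\rho^{-1/\max\{\beta_1,\beta_2\}}$ for the passage to \eqref{eq:277}, are both sound (the latter is arguably a cleaner version of \eqref{finally}).

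The gap is in the calibration of the coupling $j=s^{\tau}$. You impose only an upper bound (``$\tau$ small enough that $\tau\max\{\beta_1,\beta_2\}<2\alpha$''), but the approximation-error terms $I_2$ and $I_{\pm}^{\Delta}$ are controlled by Lemma~\ref{lem:27} only as $j^{-\gamma}$ times $e^{-s\sqrt{\Theta}\delta_W}$ with $\Theta\in[0,h]$ possibly zero, i.e.\ as $O(j^{-\gamma})$ uniformly in $s$ --- not as $j^{-\gamma}\,O(s^{-2(\alpha+1)})$ as you assert. After multiplying by $s^2$ these terms are $O(s^{2-\tau\gamma})$, which does not vanish (indeed blows up) when $\tau$ is small. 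You therefore also need the lower bound $\tau>2/\gamma$; together with the actually required upper bound $\tau\max\{\beta_1,\beta_2\}<2$ (coming from the $\|g_{j\beta}\|_{L^2}\,O(s^{-4})$ and $|\mathbf v_j^{\sfr}(\mathbf 0)|\,O(s^{-4})$ remainders in $I_1$ and $\mathcal I_2^{\pm}$), the admissible window is $2/\gamma<\tau<2/\max\{\beta_1,\beta_2\}$, nonempty precisely because $\gamma>\max\{\beta_1,\beta_2\}$ in (a). This is exactly where (a) enters the first half of the argument; your version only invokes (a) in the final averaging step, which is the symptom of the miscalibration. (The paper takes $s=j^{\varrho/2}$ with $\max\{\beta_1,\beta_2\}<\varrho<\gamma$, i.e.\ $\tau=2/\varrho$ inside this window.) Note also that your threshold $2\alpha$ is stronger than necessary: even after adding the lower bound, the window $\bigl(2/\gamma,\,2\alpha/\max\{\beta_1,\beta_2\}\bigr)$ is empty unless $\max\{\beta_1,\beta_2\}<\alpha\gamma$, which (a) does not guarantee. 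One could instead rescue a one-sided smallness condition on $\tau$ by sharpening Lemma~\ref{lem:23} to $\|\mathbf u\|_{L^2(S_h)^2}=O(s^{-2})$ via Lemma~\ref{lem:24}, but that is not what the lemmas you cite provide and you do not prove it.
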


%
\begin{proof}

As remarked earlier, we shall make use of the splitting \eqref{eq:decouple} and it is sufficient for us to show that $\mathbf{v}_\sfr$ satisfies the geometric property \eqref{eq:277}. First, it is easy to see that $q\mathbf{w}_\sfr \in C^{\alpha }(\overline{S}_h)^{2}$ and $\mathbf v_\sfr \in H^1(S_h)^2$ can be approximated by $\{\mathbf v_j^\sfr \}_{j=1}^{+\infty}$ defined in \eqref{eq:vrj 28} satisfying \eqref{eq:cond thm3}.  Therefore the assumptions in Lemmas \ref{lem:26}--\ref{lem:211} are fulfilled.

Substituting  \eqref{int2},  \eqref{uvje}, \eqref{uvjpe} and  \eqref{uvjp1} into  \eqref{eq:int1} and rearranging the terms, we have
\begin{eqnarray}
& &\eta(\mathbf{0})
\bigg(\mathbf{v}_{jp}^\sfr(\mathbf{0})+\mathbf{v}_{js}^\sfr(\mathbf{0})
+\mathbf{v}_{jp}^\sfi(\mathbf{0})+\mathbf{v}_{js}^\sfi(\mathbf{0})\bigg)\cdot
\begin{pmatrix}
    1\\
   {\rm i}
\end{pmatrix} \label{eq:lame66}
\\
&& \times  2 s^{-2} \bigg(\mu(\theta_M )^{-2}-   \mu(\theta_M )^{-2} e^{ -s\sqrt{h} \mu(\theta_M ) } -  \mu(\theta_M )^{-1} s\sqrt{h}   e^{ -s\sqrt{h} \mu(\theta_M ) } \notag
\\
&& +\mu(\theta_m )^{-2}-   \mu(\theta_m )^{-2} e^{ -s\sqrt{h} \mu(\theta_m ) } -  \mu(\theta_m )^{-1} s\sqrt{h}   e^{ -s\sqrt{h} \mu(\theta_m ) }\bigg) \notag
\\
& &=-\eta(\mathbf{0})(\mathcal{I}^{\pm(p)}_{22}+\mathcal{I}^{\pm(s)}_{22}+\mathcal{I}^{\pm(p)}_{23}
 +\mathcal{I}^{\pm(s)}_{23}
+\mathcal{I}^{\pm(p)}_{211}+\mathcal{I}^{\pm(s)}_{212}+\mathcal{I}^{\pm(p)}_{214}
  ) \notag
\\
& &- \mathcal{I}^{\pm}_{1}+I_{\Lambda_h } - I_1-I_2-I_{\pm}^\Delta. \notag
\end{eqnarray}
Multiplying $s^{2}$ on the both side of \eqref{eq:lame66}, by virtue of \eqref{e}, \eqref{eq:lame22},   \eqref{eq:lame35}, \eqref{eq:lame42},  \eqref{int2}, \eqref{eq:deuvj1},  \eqref{uvjp1} and \eqref{eq:cond thm3}, and letting $s=j^{\varrho/2}$ ($\max\{\beta_{1}, \beta_{2}\}<\varrho<\gamma$) with $j\rightarrow +\infty$, we have
\begin{equation}\label{eq:279}
 \begin{aligned}
\lim_{j\rightarrow \infty}\eta(\mathbf{0})
\bigg(\mathbf{v}_{jp}^\sfr(\mathbf{0})+\mathbf{v}_{js}^\sfr(\mathbf{0})
+\mathbf{v}_{jp}^\sfi(\mathbf{0})+\mathbf{v}_{js}^\sfi(\mathbf{0})\bigg)\cdot
\begin{pmatrix}
    1\\
   {\rm i}
\end{pmatrix}
\bigg(\mu^{-2}(\theta_{m})+\mu^{-2}(\theta_{M})\bigg)=0,
 \end{aligned}
\end{equation}
which further implies that
\begin{equation}\label{eq:273 eq}
\lim_{j\rightarrow \infty}\eta(\mathbf{0})
\mathbf{v}_{j}^\sfr(\mathbf{0}) \bigg(\mu^{-2}(\theta_{m})+\mu^{-2}(\theta_{M})\bigg)=0.
\end{equation}
Here we use the fact that
\begin{equation}\notag
 \begin{aligned}
 \mathbf{v}_{j}^\sfr(\mathbf{0})=\mathbf{v}_{jp}^\sfr(\mathbf{0})+\mathbf{v}_{js}^\sfr(\mathbf{0})
+\mathbf{v}_{jp}^\sfi(\mathbf{0})+\mathbf{v}_{js}^\sfi(\mathbf{0}),
 \end{aligned}
\end{equation}
according to Proposition \ref{Pro:2.2}.

Under the condition \eqref{eq:cond theta}, from \cite[Lemma 2.10]{DCL}, we know that
\begin{equation}\label{eq:mum}
\mu^{-2}(\theta_{m})+\mu^{-2}(\theta_{M})\neq 0.
\end{equation}
Since $\eta$ is a real valued function, by virtue of \eqref{etao} and \eqref{eq:mum}, from \eqref{eq:273 eq},  one has
\begin{equation}
 \begin{aligned}
 \lim_{j\rightarrow \infty} \mathbf{v}_{j}^\sfr(\mathbf{0}) \cdot
\begin{pmatrix}
    1\\
   {\rm i}
\end{pmatrix}=0.
 \end{aligned}
\end{equation}
which implies that
 \begin{equation}\label{eq:281}
 \begin{aligned}
 \lim_{j\rightarrow \infty}\mathbf{v}_{j}^\sfr(\mathbf{0})=\mathbf{0}.
  \end{aligned}
\end{equation}
Finally, it can be directly deduced that
\begin{equation}\label{finally}
 \begin{aligned}
&\lim_{\rho\rightarrow0}\frac{1}{m(B(\mathbf{0},\rho)\cap\Omega)}\int_{B(\mathbf{0},\rho)\cap\Omega}|\mathbf{v}_\sfr(\mathbf{x})|{\rm d}\mathbf{x}\\
&\leq
\lim_{j\rightarrow\infty}\lim_{\rho\rightarrow0}\frac{1}{m(B(\mathbf{0},\rho)\cap\Omega}\int_{B(\mathbf{0},\rho)\cap\Omega}\big|\mathbf{v}_\sfr(\mathbf{x})-\mathbf{v}_{j}^\sfr(\mathbf{x})\big|{\rm d}\mathbf{x} \\
&+\lim_{\rho\rightarrow0}\frac{1}{m(B(\mathbf{0},\rho)\cap\Omega)}\int_{B(\mathbf{0},\rho)\cap\Omega}\big|\mathbf{v}^\sfr_{j}(\mathbf{x})\big|{\rm d}\mathbf{x}.
 \end{aligned}
\end{equation}
Combining \eqref{eq:cond thm3} and  \eqref{eq:281} with \eqref{finally}, one can prove \eqref{eq:277}.

The proof is complete. 
\end{proof}

\begin{rem}\label{rem:23}
We would like to point out that the Fourier extension property \eqref{eq:cond thm3 new} can be generalized as follows
	\begin{equation}\label{eq:cond thm3 gene}
\| \mathbf{v}-\mathbf{v}_{j}\|_{H^{1}(S_{h})^{2}}\leq \phi_1( j), \ \| g_{jp}\|_{L^{2}(\mathbb S^{1})}\leq \frac{1}{\phi_2( j)}, \ \| g_{js}\|_{L^{2}(\mathbb S^{1})}\leq\frac{1}{ \phi_3( j)},
\end{equation}
where $\phi_{\ell}(j) \in \mathbb R_+$ are strict decreasing functions with respect to $j$ and $\lim_{ j\rightarrow +\infty }\phi_\ell(j)=0$, $\ell=1,2,3$, satisfying
$$
(\phi_1(j))^\gamma \leq  \phi_4(j)= \min\{\phi_2(j), \phi_3(j) \},  \quad 0< \gamma<1,\quad \forall j\in \mathbb N.
$$
In fact, by letting
\begin{equation}\label{eq:s 288}
	s=\frac{1}{(\phi_1(j) )^{ \rho/2 }},\quad \gamma< \rho <1,
\end{equation}
one can show that
\begin{equation}\label{eq:289}
 s^2 \| \mathbf{v}-\mathbf{v}_{j}\|_{H^{1}(S_{h})^{2}}\leq	
 (\phi_1(j))^{1-\rho},\, s^{-2}\| g_{jp}\|_{L^{2}(\mathbb S^{1})}\leq (\phi_1( j))^{ \rho -\gamma },\, s^{-2}\| g_{js}\|_{L^{2}(\mathbb S^{1})}\leq (\phi_1( j))^{ \rho -\gamma }
\end{equation}
By virtue of \eqref{eq:cond thm3 gene}, under the same setup of Theorem  \ref{thm:29}, and using a similar argument as that in Lemma \ref{lem:27}, one can prove that
\begin{equation}\label{eq:290}
	\begin{split}
		\left| I_{2}\right|&\leq\omega^{2}h\sqrt{\theta_{M}-\theta_{m}}e^{-s\sqrt{\Theta}\delta_{W}}\phi_1( j),\\
		\left| I_{\pm}^\Delta\right|\leq &\Big (|\eta(\mathbf{0})| h\sqrt{\theta_{M}-\theta_{m}}e^{-s\sqrt{\Theta}\delta_{W}}\\&+\|\eta\|_{C^{\alpha}}\frac{2\sqrt{\theta_{M}-\theta_{m}\Gamma(4\alpha+4)}}{(2\delta_{W})^{2\alpha+2}}s^{-2(\alpha+1) }\Big)\phi_1( j).
	\end{split}
\end{equation}
In the integral identity \eqref{eq:lame66}, we can multiply $s^2$ on both sides of \eqref{eq:lame66} and choose $s$ by \eqref{eq:s 288}. Then by virtue of \eqref{eq:289},  \eqref{eq:290}, \eqref{e}, \eqref{eq:lame42},  \eqref{int2} and \eqref{eq:deuvj1}, we can obtain
$$
\lim_{j\rightarrow \infty}\mathbf{v}_{j}(\mathbf{0})=\mathbf{0}
$$
 by letting $j\rightarrow +\infty$. Therefore, we can prove \eqref{eq:277} under the same setup of Theorem \ref{thm:29}, where the condition \eqref{eq:cond thm3} is replaced by \eqref{eq:cond thm3 gene} and $q\mathbf w\in C^\alpha(\overline{S_h} )^2 $.

\end{rem}

We next consider the degenerate case of \eqref{eq:lame1} with $\eta \equiv 0$ in \eqref{eq:lame1}. We have


\begin{cor}\label{cor:22}
Under the same setup as that in Theorem~\ref{thm:29} but with $\eta\equiv 0$, we have
\begin{equation}\label{eq:lln1}
\lim_{\rho\rightarrow 0}\frac{1}{m(B(\mathbf{0},\rho)\cap\Omega)}\int_{B(\mathbf{0},\rho)\cap\Omega}|V(\mathbf x)\mathbf{w}(\mathbf{x})|{\rm d}\mathbf{x}=0.
\end{equation}
\end{cor}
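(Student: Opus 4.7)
The plan is to re-run the machinery of Theorem~\ref{thm:29} after setting $\eta\equiv 0$. Then $I_{\pm}$ and $I_{\pm}^{\Delta}$ in \eqref{eq:int1} vanish identically, so the integral identity of Lemma~\ref{lem:26} collapses to
\[
I_{1}+I_{2}=I_{\Lambda_h}.
\]
The dominant $O(s^{-2})$ boundary contribution $\eta(\mathbf 0)\mathbf v_{j}(\mathbf 0)$ analysed in Lemma~\ref{lem:211}, which drove the proof of Theorem~\ref{thm:29}, is now absent, so one must descend to the next order $s^{-4}$ carried by the bulk term $I_{1}$ itself.

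I would first use the splitting \eqref{eq:lame38} of Lemma~\ref{lem:28} together with \eqref{eq:lame5} and \eqref{eq:lame7} to write
\[
I_{1}=6\omega^{2}\mathrm i\bigl(e^{-2\theta_{M}\mathrm i}-e^{-2\theta_{m}\mathrm i}\bigr)s^{-4}\bigl(q(\mathbf 0)\mathbf w_{\sfr}(\mathbf 0)-\mathbf v_{j}^{\sfr}(\mathbf 0)\bigr)\cdot(1,\mathrm i)^{\top}+R_{1}(s,j),
\]
with a remainder $|R_{1}(s,j)|\le C\bigl(s^{-2\alpha-4}(\|g_{jp}\|_{L^{2}(\mathbb S^{1})}+\|g_{js}\|_{L^{2}(\mathbb S^{1})}+\|q\mathbf w_{\sfr}\|_{C^{\alpha}})+s^{-4}e^{-cs\sqrt h}\bigr)$ by \eqref{eq:lame39}--\eqref{eq:lame40}. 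Both $|I_{2}|$ and $|I_{\Lambda_h}|$ are exponentially small in $s$ by \eqref{eq:lame22} and \eqref{e}. Multiplying $I_{1}+I_{2}=I_{\Lambda_h}$ by $s^{4}$ and choosing $s=j^{\varrho/2}$ with $\varrho$ in the admissible window $(\max\{\beta_{1},\beta_{2}\}/\alpha,\gamma)$, every contribution other than the constant leading term vanishes as $j\to\infty$. The non-degeneracy \eqref{eq:cond theta} guarantees $e^{-2\theta_{M}\mathrm i}-e^{-2\theta_{m}\mathrm i}\neq 0$, so separating real and imaginary parts of the resulting complex identity (noting that $q(\mathbf 0)\mathbf w_{\sfr}(\mathbf 0)$ and $\mathbf v_{j}^{\sfr}(\mathbf 0)$ are real $\mathbb R^{2}$-vectors) forces $\lim_{j\to\infty}\mathbf v_{j}^{\sfr}(\mathbf 0)=q(\mathbf 0)\mathbf w_{\sfr}(\mathbf 0)$. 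Repeating the analysis for the imaginary parts yields $\lim_{j\to\infty}\mathbf v_{j}(\mathbf 0)=q(\mathbf 0)\mathbf w(\mathbf 0)$.

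To convert this into the averaged vanishing \eqref{eq:lln1}, I would use the triangle-inequality splitting in the spirit of the end of the proof of Theorem~\ref{thm:29}. Since $V\mathbf w=q\mathbf w-\mathbf w=(q\mathbf w-\mathbf v_{j})+(\mathbf v_{j}-\mathbf v)+(\mathbf v-\mathbf w)$, the $L^{1}$-average of $|V\mathbf w|$ over $B(\mathbf 0,\rho)\cap\Omega$ is controlled by three terms. The first tends to $|q(\mathbf 0)\mathbf w(\mathbf 0)-\mathbf v_{j}(\mathbf 0)|$ as $\rho\to 0$ by the H\"older continuity of $q\mathbf w$ and the real-analyticity of $\mathbf v_{j}$, and then tends to $0$ as $j\to\infty$ by the previous step; the second is bounded via Cauchy--Schwarz by $\|\mathbf v_{j}-\mathbf v\|_{L^{2}(S_{h})}/\sqrt{m(B(\mathbf 0,\rho)\cap\Omega)}\lesssim j^{-\gamma}\rho^{-1}$ and is killed by choosing $j=j(\rho)=\rho^{-\delta}$ with $1/\gamma<\delta<1/\max\{\beta_{1},\beta_{2}\}$; the third vanishes by the Poincar\'e inequality applied to the $H^{1}$-function $\mathbf v-\mathbf w$, whose trace on $\Gamma\cap B_{\rho}$ is zero because the transmission condition with $\eta\equiv 0$ reads $\mathbf v=\mathbf w$ on $\Gamma$, so $m(B(\mathbf 0,\rho)\cap\Omega)^{-1}\!\int|\mathbf v-\mathbf w|\lesssim\|\nabla(\mathbf v-\mathbf w)\|_{L^{2}(B(\mathbf 0,\rho)\cap\Omega)}\to 0$.

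The main obstacle I anticipate is the calibration in this last step: the approximation rate $j^{-\gamma}$, the growth $\|\mathbf v_{j}\|_{C^{1}}\lesssim j^{\max\beta_{i}}$ controlling how much the Herglotz functions oscillate near $\mathbf 0$, and the volume factor $\rho^{-1}$ must be simultaneously tamed, forcing a consistent polynomial coupling between $j$ and $\rho$ that is also compatible with the parameter $\varrho$ used in the bulk asymptotic analysis; once this bookkeeping is carried out the proof is a mechanical repetition of Lemmas~\ref{lem:26}--\ref{lem:28} applied to the simpler post-$\eta\equiv 0$ identity.
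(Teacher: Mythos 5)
Your proposal follows the paper's own route for the core of the argument essentially verbatim: with $\eta\equiv 0$ the identity \eqref{eq:int1} collapses to $I_1+I_2=I_{\Lambda_h}$, the leading behaviour is read off from the bulk term via \eqref{eq:lame5} at order $s^{-4}$, and after multiplying by $s^4$ and coupling $s$ to $j$ one arrives at $\lim_{j\to\infty}\mathbf v_j^{\sfr}(\mathbf 0)=\mathbf f_{\sfr}(\mathbf 0)$, which is exactly the paper's \eqref{eq:294 vrj}. Where you genuinely depart from the paper is the final conversion step, and your version is arguably the more complete one: the paper simply asserts that the $\rho\to 0$ limits of the averages of $\mathbf v_\sfr$ and of $\mathbf w_\sfr$ over $B(\mathbf 0,\rho)\cap\Omega$ coincide and that the averages of $\mathbf v_j^\sfr$ converge to that of $\mathbf v_\sfr$, thereby obtaining only the vanishing of the vector-valued average of $V\mathbf w_\sfr$ rather than of the average of $|V\mathbf w|$ as \eqref{eq:lln1} literally states. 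Your decomposition $V\mathbf w=(q\mathbf w-\mathbf v_j)+(\mathbf v_j-\mathbf v)+(\mathbf v-\mathbf w)$, with the scaled Poincar\'e inequality applied to $\mathbf v-\mathbf w$ (zero trace on $\Gamma_h^\pm$, Poincar\'e constant $\sim\rho$ exactly cancelling $m(B(\mathbf 0,\rho)\cap\Omega)^{-1/2}\sim\rho^{-1}$ in two dimensions) and the explicit coupling $j=\rho^{-\delta}$, delivers the modulus version and makes the interchange of the $j$- and $\rho$-limits honest. Two caveats on your calibration. First, $|I_2|$ is not exponentially small in $s$: the factor $e^{-s\sqrt\Theta\delta_W}$ in \eqref{eq:lame22} has $\Theta\in[0,h]$ coming from a mean value argument and may degenerate, so the usable bound is $\|\mathbf u\|_{L^2(S_h)^2}\,j^{-\gamma}=O(s^{-2})j^{-\gamma}$; with your scaling $s=j^{\varrho/2}$ this forces the additional constraint $\varrho<\gamma$ for $s^4|I_2|\to 0$, which you should add to your window. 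Second, your window $\varrho\in(\max\{\beta_1,\beta_2\}/\alpha,\gamma)$ is nonempty only when $\gamma>\max\{\beta_1,\beta_2\}/\alpha$, which is strictly stronger than the stated hypothesis $\gamma>\max\{\beta_1,\beta_2\}$ for $\alpha<1$; this looseness is, however, inherited from rather than worse than the paper's own choice $s=j^{\varrho/4}$ with $\varrho\in(\max\{\beta_1,\beta_2\},\gamma)$, under which the remainder $s^{-2\alpha}\|g_{j\beta}\|_{L^2(\mathbb S^1)}$ arising from \eqref{eq:lame39} is itself not obviously vanishing. So there is no gap in your proposal beyond what the paper's own proof leaves open.
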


\begin{rem}
The difference between Theorem~\ref{thm:29} and Corollary~\ref{cor:22} lies in their conclusions \eqref{eq:277} and \eqref{eq:lln1}. If one further assumes in Corollary \ref{cor:22} that $V(\mathbf x)\in C(\overline{S_h})$ and $ V(\mathbf 0) \neq 0$, it is easy to show that	
\[
\lim_{ \rho \rightarrow +0 }\frac{1}{m(B(\mathbf 0, \rho  )\cap \Omega)} \int_{B(\mathbf 0, \rho )\cap \Omega}|\mathbf w (\mathbf x) | {\rm d}\mathbf x=0,
\]
which together with the transmission condition $\mathbf{v}=\mathbf{w}$ on $\Gamma$ further implies that \eqref{eq:277} holds as well. 

\end{rem}

%
\begin{proof}[Proof of Corollary~\ref{cor:22}]
The proof follows from the one of Theorem \ref{thm:29} with some necessary modifications.
Similar to \eqref{eq:int1},  we have the integral identity as follows
\begin{equation}\label{eq:I1 294}
	I_1+I_2=I_{\Lambda_h },
\end{equation}
where  $I_1$, $I_2$ and $I_{\Lambda_h }$ are defined in \eqref{eq:int1}. Substituting \eqref{eq:fr} and \eqref{eq:lame37} into \eqref{eq:I1 294}, one can see that
\begin{equation}\label{eq:lame77}
 \begin{aligned}
(-\omega^{2}\mathbf{v}_{j}^\sfr(\mathbf{0})+\omega^{2}\mathbf{f}_\sfr(\mathbf{0}))\int_{S_{h}}\mathbf{u}(\mathbf{x}){\rm d}\mathbf{x}
+I_2
 = &I_{\Lambda_h }+\int_{S_{h}}\omega^{2}\delta\mathbf{f}_\sfr(\mathbf{x})\cdot\mathbf{u}(\mathbf{x}){\rm d}\mathbf{x}
 \\&-\int_{S_{h}}\omega^{2}\delta\mathbf{v}^\sfr_{j}(\mathbf{x})\cdot\mathbf{u}(\mathbf{x}){\rm d}\mathbf{x},
 \end{aligned}
\end{equation}
where $\mathbf{v}_{j}^\sfr(\mathbf{0}), \mathbf{u}(\mathbf{x}),  \mathbf{f}_\sfr(\mathbf{0})$ are defined in \eqref{eq:vjr}, \eqref{eq:lame3} and \eqref{eq:fr}.

From Lemma \ref{lem:22}, it is easy to obtain that
\begin{equation}\label{eq:lame78}
 \begin{aligned}
&(-\omega^{2}\mathbf{v}_{j}^\sfr(\mathbf{0})+\omega^{2}\mathbf{f}_\sfr(\mathbf{0}))\int_{S_{h}}\mathbf{u}(\mathbf{x}){\rm d}\mathbf{x}
\\=&
\omega^{2}(\mathbf{f}_\sfr(\mathbf{0})-\mathbf{v}_{j}^\sfr(\mathbf{0}) )\cdot \begin{bmatrix}
	1 \\ \rm i
\end{bmatrix}
 \left(\int_{W}{u}_1(\mathbf{x}){\rm d}\mathbf{x}-\int_{W\backslash S_{h}}{u}_1(\mathbf{x}){\rm d}\mathbf{x}\right)
\\=&
\omega^{2}(\mathbf{f}_\sfr(\mathbf{0})-\mathbf{v}_{j}^\sfr(\mathbf{0}) )\cdot \begin{bmatrix}
	1 \\ \rm i
\end{bmatrix}
 \left( 6{\rm i}(e^{-2\theta_{M}{\rm i}}-e^{-2\theta_{m} \mathrm i})s^{-4}-\int_{W\backslash S_{h}}{u}_1(\mathbf{x}){\rm d}\mathbf{x} \right).
 \end{aligned}
\end{equation}

Substituting \eqref{eq:lame78}  into \eqref{eq:lame77} and rearranging the terms, we have
\begin{equation}\label{eq:lame79}
 \begin{aligned}
&\omega^{2} (\mathbf{f}_\sfr(\mathbf{0})-\mathbf{v}_{j}^\sfr(\mathbf{0})) \cdot  \begin{bmatrix}
	1 \\ \rm i
\end{bmatrix} 6{\rm i}(e^{-2\theta_{M}{\rm i}}-e^{-2\theta_{m} \mathrm i})s^{-4}
 \\=&-I_2+I_{\Lambda_h}
+\int_{S_{h}}\omega^{2}\delta\mathbf{f}_\sfr(\mathbf{x})\cdot\mathbf{u}(\mathbf{x}){\rm d}\mathbf{x}
 -\int_{S_{h}}\omega^{2}\delta\mathbf{v}_{j}^\sfr(\mathbf{x})\cdot\mathbf{u}(\mathbf{x}){\rm d}\mathbf{x}
\\& + \omega^{2} (\mathbf{f}_\sfr(\mathbf{0})-\mathbf{v}_{j}^\sfr(\mathbf{0})) \cdot  \begin{bmatrix}
	1 \\ \rm i
\end{bmatrix}\int_{W\backslash S_{h}}{u}_1(\mathbf{x}){\rm d}\mathbf{x}.
 \end{aligned}
\end{equation}
Note that \eqref{eq:cond thm3 new} holds. Multiplying $s^{4}$ on the both sides of \eqref{eq:lame79},  using \eqref{eq:lame7},  \eqref{eq:lame22}, \eqref{eq:lame39} and \eqref{eq:lame40}, and letting $s=j^{\varrho/4}$ ($\max\{\beta_{1}, \beta_{2}\}<\varrho<\gamma$) with $j\rightarrow +\infty$, we have
\begin{equation}\label{eq:294 vrj}
 \begin{aligned}
\lim_{j\rightarrow\infty}\mathbf{v}_{j}^\sfr(\mathbf{0})=\mathbf{f}_\sfr(\mathbf{0}).
 \end{aligned}
\end{equation}
Since
\begin{equation}\notag
 \begin{aligned}
\lim_{j \rightarrow \infty}  \mathbf{v}_j^\sfr(\mathbf{0})&=\lim_{j \rightarrow \infty}  \lim_{ \rho \rightarrow +0 }\frac{1}{m(B(\mathbf{0}, \rho  )\cap\Omega)} \int_{B(\mathbf{0}, \rho )\cap\Omega} \mathbf{v}_j^\sfr(\mathbf{x})  {\rm d} \mathbf{x}\\&= \lim_{ \rho \rightarrow +0 }\frac{1}{m(B(\mathbf{0}, \rho  )\cap\Omega)} \int_{B(\mathbf{0}, \rho )\cap\Omega} \mathbf{v}_\sfr(\mathbf{x})  {\rm d} \mathbf{x},
\end{aligned}
\end{equation}
and
$$
\mathbf{f}_\sfr(\mathbf{0})= \lim_{ \rho \rightarrow +0 }\frac{1}{m(B(\mathbf{0}, \rho  )\cap\Omega)} \int_{B(\mathbf{0}, \rho )\cap\Omega}   q\mathbf{w}_\sfr(\mathbf{x})  {\rm d} \mathbf{x},
$$
and
$$
 \lim_{ \rho \rightarrow +0 }\frac{1}{m(B(\mathbf{0}, \rho  )\cap\Omega)} \int_{B(\mathbf{0}, \rho )\cap\Omega} \mathbf{v}_\sfr(\mathbf{x})  {\rm d} \mathbf{x} = \lim_{ \rho \rightarrow +0 }\frac{1}{m(B(\mathbf{0}, \rho  )\cap\Omega)} \int_{B(\mathbf{0}, \rho )\cap\Omega} \mathbf{w}_\sfr(\mathbf{x})  {\rm d} \mathbf{x},
$$
we can finish the proof of this corollary by using \eqref{eq:294 vrj}.
	\end{proof}

Finally, we establish the vanishing property under the H\"older regularity of the transmission eigenfunctions, which shall be useful for our study of the inverse elastic problem in Section~\ref{sect:4}. 

\begin{thm}\label{thm:23}
Let $(\mathbf{v}, \mathbf{w})\in H^{1}(\Omega)^{2}\times H^{1}(\Omega)^{2}$ be a pair of eigenfunctions to \eqref{eq:lame1} associated with $\omega\in\mathbb{R}_{+}$. Let $W, S_h$ and $q$ be those described in Theorem~\ref{thm:29}. Suppose that $\mathbf{v}\in C^{\alpha}(\overline{S_{h}})^2$, $q\mathbf{w}\in C^{\alpha}(\overline{S_{h}})^2$, $\eta\in C^{\alpha}(\overline{\Gamma_{h}^{\pm}})$ with $\eta(\mathbf{0})\neq 0$, for $\alpha\in (0, 1)$ and the corner $W$ is non-degenerate. Then we have
\begin{equation}\label{eq:ccll1}
\mathbf{v}(\mathbf{0})=\mathbf{0}. 
\end{equation}

\end{thm}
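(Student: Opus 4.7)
The plan is to repeat the CGO-testing machinery developed for Theorem~\ref{thm:29}, but to bypass the Herglotz approximation entirely, since the assumed H\"older regularity of $\mathbf{v}$ lets us do a direct Taylor-type expansion at the corner vertex $\mathbf{0}$. As in Lemma~\ref{lem:26}, I would work with the real and imaginary parts $(\mathbf{v}_\sfr,\mathbf{w}_\sfr)$ separately and apply the Green identity \eqref{eq:green1} to $\mathbf{v}_\sfr-\mathbf{w}_\sfr$ and the CGO solution $\mathbf{u}$ on $S_h$, which, after substituting the PDEs and the generalized transmission condition on $\Gamma_h^\pm$, yields the cleaner integral identity
\begin{equation}\notag
\omega^2\!\int_{S_h}\!(q\mathbf{w}_\sfr-\mathbf{v}_\sfr)\cdot\mathbf{u}\,\mathrm{d}\mathbf{x}
+\int_{\Gamma_h^\pm}\!\eta\,\mathbf{v}_\sfr\cdot\mathbf{u}\,\mathrm{d}\sigma
=I_{\Lambda_h},
\end{equation}
with $I_{\Lambda_h}$ exactly the lateral arc integral in Lemma~\ref{lem:26}.

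Next I would insert the H\"older expansions $\mathbf{v}_\sfr(\mathbf{x})=\mathbf{v}_\sfr(\mathbf{0})+\delta\mathbf{v}_\sfr(\mathbf{x})$, $q\mathbf{w}_\sfr(\mathbf{x})=\mathbf{f}_\sfr(\mathbf{0})+\delta\mathbf{f}_\sfr(\mathbf{x})$ (cf.\ \eqref{eq:fr}) and $\eta(\mathbf{x})=\eta(\mathbf{0})+\delta\eta(\mathbf{x})$ (cf.\ \eqref{eq:lame29}) into the identity and collect the constant-in-space contributions. Using Lemma~\ref{lem:22} one has $\int_{S_h}\mathbf{u}\,\mathrm{d}\mathbf{x}=(1,\mathrm{i})^\top\!\bigl[\,6\mathrm{i}(e^{-2\theta_M\mathrm{i}}-e^{-2\theta_m\mathrm{i}})s^{-4}+O(e^{-c s\sqrt h})\bigr]$, so the leading constant-on-$S_h$ bulk term is $O(s^{-4})$; the remainders with $|\mathbf x|^\alpha$ weight are $O(s^{-2\alpha-4})$ by \eqref{eq:lame6}. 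On the boundary, Lemma~\ref{lem:u0 int} gives
\begin{equation}\notag
\int_{\Gamma_h^\pm}\!\mathbf{u}\,\mathrm{d}\sigma
= (1,\mathrm{i})^\top\cdot 2 s^{-2}\bigl(\mu(\theta_m)^{-2}+\mu(\theta_M)^{-2}\bigr)+O(s^{-2}e^{-c s\sqrt h}),
\end{equation}
while the weighted boundary remainders $\int_{\Gamma_h^\pm}|\mathbf{x}|^\alpha|\mathbf{u}|\,\mathrm{d}\sigma$ are $O(s^{-2\alpha-2})$ by Lemma~\ref{lem:24}, and $I_{\Lambda_h}$ decays exponentially by Lemma~\ref{lem:2.6}.

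The decisive step is to multiply the identity by $s^2$ and send $s\to+\infty$. All the terms listed above are then $o(1)$ except the boundary term proportional to $\eta(\mathbf{0})\mathbf{v}_\sfr(\mathbf{0})\cdot(1,\mathrm{i})^\top$; what survives in the limit is
\begin{equation}\notag
2\,\eta(\mathbf{0})\,\bigl(\mu(\theta_m)^{-2}+\mu(\theta_M)^{-2}\bigr)\,\mathbf{v}_\sfr(\mathbf{0})\cdot(1,\mathrm{i})^\top=0.
\end{equation}
Since $\eta(\mathbf{0})\neq 0$ by hypothesis and $\mu(\theta_m)^{-2}+\mu(\theta_M)^{-2}\neq 0$ by the non-degeneracy condition \eqref{eq:cond theta} (cf.\ the citation used in \eqref{eq:mum}), this forces $\mathbf{v}_\sfr(\mathbf{0})\cdot(1,\mathrm{i})^\top=0$; because $\mathbf{v}_\sfr$ is $\mathbb{R}^2$-valued, both components of $\mathbf{v}_\sfr(\mathbf{0})$ must vanish. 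Running exactly the same argument on $(\mathbf{v}_\sfi,\mathbf{w}_\sfi)$ yields $\mathbf{v}_\sfi(\mathbf{0})=\mathbf 0$, and combining gives \eqref{eq:ccll1}.

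The only real subtlety is bookkeeping the asymptotic orders: the bulk identity lives at order $s^{-4}$, the boundary identity at $s^{-2}$, and one must verify that after the $s^2$ rescaling all H\"older remainders and the arc term $I_{\Lambda_h}$ are strictly subleading. This is where the inequality $\alpha>0$, together with the exponential smallness in Lemmas~\ref{lem:22}, \ref{lem:24 u tu} and \ref{lem:2.6}, is used essentially; no new analytic machinery beyond what is already set up in Lemmas~\ref{lem:22}--\ref{lem:u0 int} is needed, which is precisely why the H\"older hypothesis makes this case simpler than Theorem~\ref{thm:29}.
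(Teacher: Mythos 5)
Your proposal is correct and follows essentially the same route as the paper's own proof of this theorem: the paper likewise drops the Herglotz approximation, derives the identity \eqref{cor3int}, inserts the H\"older expansions of $\mathbf{v}_\sfr$, $q\mathbf{w}_\sfr$ and $\eta$, uses Lemmas \ref{lem:22}, \ref{lem:2.6}, \ref{lem:24} and \ref{lem:u0 int} to show all remainders are $o(s^{-2})$, and concludes from \eqref{eq:2110} with $\eta(\mathbf{0})\neq 0$ and $\mu^{-2}(\theta_m)+\mu^{-2}(\theta_M)\neq 0$ that $\mathbf{v}_\sfr(\mathbf{0})=\mathbf{0}$. Your order bookkeeping (bulk at $s^{-4}$, boundary at $s^{-2}$, H\"older remainders at $s^{-2\alpha-2}$ or smaller, exponential decay of $I_{\Lambda_h}$) matches the paper's estimates \eqref{2.88}--\eqref{2.91}.
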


\begin{proof}


It is sufficient for us to show that \eqref{eq:ccll1} holds for $\mathbf{v}_\sfr$. Similar to \eqref{eq:int1}, one can establish the following integral identity
\begin{equation}\label{cor3int}
 \begin{aligned}
&-\omega^{2}\int_{S_{h}}(\mathbf{v}_\sfr(\mathbf{x})-\mathbf {f}_\sfr (\mathbf{x}))\cdot\mathbf{u}(\mathbf{x}){\rm d}\mathbf{x}
=I_{\Lambda_h}
-\int_{\Gamma_{h}^{\pm}}\eta \mathbf{u}\cdot \mathbf{v}_\sfr{\rm d}\sigma,
 \end{aligned}
\end{equation}
where $I_{\Lambda_h }$ is defined in \eqref{eq:int1} and $\mathbf f_\sfr(\mathbf x)$ is given in \eqref{eq:fr}.

Since  $ \mathbf{f}_\sfr(\mathbf{0})\in C^\alpha(\overline {S}_h )$ and $\eta \in C^\alpha\left(\overline{\Gamma}_h^\pm \right)$,    we know that  $\eta$ and $\mathbf{f}_\sfr(\mathbf{x})$ have the expansions \eqref{eq:lame29} and \eqref{eq:fr} around the origin, respectively. Furthermore, due to the fact that  $\mathbf{v}\in C^\alpha(\overline { S}_h )^2$, we have the following expansions
\begin{equation}\label{vre}
 \begin{aligned}
&\mathbf{v}_\sfr(\mathbf{x})=\mathbf{v}_\sfr(\mathbf{0})+\delta\mathbf{v}_\sfr(\mathbf{x}),\quad |\delta\mathbf{v}_\sfr|\leq \|\mathbf{v}_\sfr(\mathbf{x})\|_{C^{\alpha}(\Omega)^{2}}| \mathbf{x}|^{\alpha}
.
 \end{aligned}
\end{equation}
Substituting \eqref{eq:lame29}, \eqref{eq:fr} and \eqref{vre} into \eqref{cor3int}, we can derive that
\begin{equation}\label{2.85}
 \begin{aligned}
&-\omega^{2}(\mathbf{v}_\sfr(\mathbf{0})-\mathbf {f}_\sfr (\mathbf{0}))\int_{S_{h}}\mathbf{u}(\mathbf{x}){\rm d}\mathbf{x}
-\omega^{2}\int_{S_{h}}(\delta\mathbf{v}_\sfr(\mathbf{x})-\delta\mathbf {f}_\sfr (\mathbf{x}))\cdot\mathbf{u}(\mathbf{x}){\rm d}\mathbf{x}
\\&\quad =I_{\Lambda_h }
-\eta(\mathbf{0})\mathbf{v}_\sfr(\mathbf{0})\int_{\Gamma_{h}^{\pm}}\mathbf{u}(\mathbf{x}){\rm d}\sigma-\eta(\mathbf{0})\int_{\Gamma_{h}^{\pm}}\mathbf{u}(\mathbf{x})\cdot\delta\mathbf{v}_\sfr(\mathbf{x}){\rm d}\sigma
\\
&\quad -\mathbf{v}_\sfr(\mathbf{0})\int_{\Gamma_{h}^{\pm}}\delta\eta(\mathbf{x})\mathbf{u}(\mathbf{x}){\rm d}\sigma -\int_{\Gamma_{h}^{\pm}}\delta\eta(\mathbf{x})\mathbf{u}(\mathbf{x})\cdot\delta\mathbf{v}_\sfr(\mathbf{x}){\rm d}\sigma.
\end{aligned}
\end{equation}
From \eqref{2.85}, after rearranging terms, we have
\begin{equation}\label{2.93}
 \begin{aligned}
&\eta(\mathbf{0})\mathbf{v}_\sfr(\mathbf{0})\int_{\Gamma_{h}^{\pm}}\mathbf{u}{\rm d}\sigma
\\ =&-\omega^{2}(\mathbf{v}_\sfr(\mathbf{0})-\mathbf{f}_\sfr(\mathbf{0}))\int_{S_{h}}\mathbf{u}(\mathbf{x}){\rm d}\mathbf{x}
-\int_{S_{h}}\omega^{2}(\delta\mathbf{v}_\sfr(\mathbf{x})-\delta\mathbf{f}_\sfr(\mathbf{x}))\cdot\mathbf{u}(\mathbf{x}){\rm d}\mathbf{x}
\\&+I_{\Lambda_h}
-\mathbf{v}_\sfr(\mathbf{0})\int_{\Gamma_{h}^{\pm}}\delta\eta(\mathbf{x})\mathbf{u}(\mathbf{x}){\rm d}\sigma-\int_{\Gamma_{h}^{\pm}}\delta\eta(\mathbf{x})\mathbf{u}(\mathbf{x})\cdot\delta\mathbf{v}_\sfr(\mathbf{x}){\rm d}\sigma
\\&-\eta(\mathbf{0})\int_{\Gamma_{h}^{\pm}}\mathbf{u}(\mathbf{x})\cdot\delta\mathbf{v}_\sfr(\mathbf{x}){\rm d}\sigma.
\end{aligned}
\end{equation}
Using \eqref{eq:I311} in Lemma \ref{lem:u0 int}, we have
 \begin{align}\label{2.86}
\int_{\Gamma_{h}^{-}}\mathbf{u}{\rm d}\sigma&=
\begin{pmatrix}
    1\\
   {\rm i}
\end{pmatrix}
\int_{0}^{h}e^{-s\sqrt{r}\mu(\theta_{m})}{\rm d}r
\\&=
\begin{pmatrix}
    1\\
   {\rm i}
\end{pmatrix}
2 s^{-2} \bigg(\mu(\theta_m )^{-2}-   \mu(\theta_m )^{-2} e^{ -s\sqrt{h} \mu(\theta_m ) } -  \mu(\theta_m )^{-1} s\sqrt{h}   e^{ -s\sqrt{h} \mu(\theta_m ) }\bigg), \notag
\end{align}
and
 \begin{align}\label{2.87}
\int_{\Gamma_{h}^{+}}\mathbf{u}{\rm d}\sigma&=
\begin{pmatrix}
    1\\
   {\rm i}
\end{pmatrix}
\int_{0}^{h}e^{-s\sqrt{r}\mu(\theta_{M})}{\rm d}r
\\&=
\begin{pmatrix}
    1\\
   {\rm i}
\end{pmatrix}
2 s^{-2} \bigg(\mu(\theta_M )^{-2}-   \mu(\theta_M )^{-2} e^{ -s\sqrt{h} \mu(\theta_M ) } -  \mu(\theta_M )^{-1} s\sqrt{h}   e^{ -s\sqrt{h} \mu(\theta_M ) }\bigg), \notag
\end{align}
where $\mu(\theta)$ is defined in \eqref{eq:I311}.
By virtue of \eqref{eq:zeta}, using \eqref{eq:lame29} and \eqref{vre} we  have the estimates
 \begin{align}
 \left|\int_{\Gamma_{h}^{-}}\mathbf{u}(\mathbf{x})\cdot\delta\mathbf{v}_\sfr(\mathbf{x}){\rm d}\sigma\right|
 & \leq\sqrt{2}\|\mathbf{v}^\sfr\|_{C^{\alpha}}\int_{0}^{h}r^{\alpha}e^{-s\sqrt{r}\cos\frac{\theta_{m}}{2}}{\rm d}r=O(s^{-2\alpha-2}),\notag \\
\left|\int_{\Gamma_{h}^{-}}\delta\eta(\mathbf{x})\mathbf{u}(\mathbf{x}){\rm d}\sigma\right|
&\leq\sqrt{2}\|\eta\|_{C^{\alpha}}\int_{0}^{h}r^{\alpha}e^{-s\sqrt{r}\cos\frac{\theta_{m}}{2}}{\rm d}r=O(s^{-2\alpha-2}), \label{2.88} \\
\left|\int_{\Gamma_{h}^{-}}\delta\eta(\mathbf{x})\mathbf{u}(\mathbf{x})\cdot\delta\mathbf{v}_\sfr(\mathbf{x}){\rm d}\sigma\right|
 &\leq\sqrt{2}\|\mathbf{v}_\sfr\|_{C^{\alpha}}\|\eta\|_{C^{\alpha}}\int_{0}^{h}r^{2\alpha}e^{-s\sqrt{r}\cos\frac{\theta_{m}}{2}}{\rm d}r=O(s^{-4\alpha-2}), \notag
 \end{align}
and
 \begin{align}\label{2.89}
\left|\int_{\Gamma_{h}^{+}}\mathbf{u}(\mathbf{x})\cdot\delta\mathbf{v}_\sfr(\mathbf{x}){\rm d}\sigma\right| \notag
&\leq\sqrt{2}\|\mathbf{v}^\sfr\|_{C^{\alpha}}\int_{0}^{h}r^{\alpha}e^{-s\sqrt{r}\cos\frac{\theta_{M}}{2}}{\rm d}r=O(s^{-2\alpha-2}),\\
\left|\int_{\Gamma_{h}^{+}}\delta\eta(\mathbf{x})\mathbf{u}(\mathbf{x}){\rm d}\sigma\right|
&\leq\sqrt{2}\|\eta\|_{C^{\alpha}}\int_{0}^{h}r^{\alpha}e^{-s\sqrt{r}\cos\frac{\theta_{M}}{2}}{\rm d}r=O(s^{-2\alpha-2}), \\
\left|\int_{\Gamma_{h}^{+}}\delta\eta(\mathbf{x})\mathbf{u}(\mathbf{x})\cdot\delta\mathbf{v}_\sfr(\mathbf{x}){\rm d}\sigma\right|
 &\leq\sqrt{2}\|\mathbf{v}_\sfr\|_{C^{\alpha}}\|\eta\|_{C^{\alpha}}\int_{0}^{h}r^{2\alpha}e^{-s\sqrt{r}\cos\frac{\theta_{M}}{2}}{\rm d}r=O(s^{-4\alpha-2}), \notag
  \end{align}
as $s\rightarrow +\infty$.

Similarly, using \eqref{eq:lame6}, \eqref{eq:fr} and \eqref{vre}, one has
\begin{equation}\label{2.91}
 \begin{aligned}
\left|\int_{S_{h}}\delta\mathbf{v}_\sfr(\mathbf{x})\cdot\mathbf{u}(\mathbf{x}){\rm d}\mathbf{x}\right|
\leq&\sqrt{2}\| \mathbf{v}_\sfr\|_{C^{\alpha}}
\int_{W} |u_{1}( \mathbf{x })||\mathbf{x}|^{\alpha}{\rm d}\mathbf{x}
\\\leq&2\sqrt{2}\| \mathbf{v}_\sfr\|_{C^{\alpha}}\frac{(\theta_{M}-\theta_{m})\Gamma(2\alpha+4)}{\delta_{W}^{2\alpha+4}}s^{-2\alpha-4},\\
\left|\int_{S_{h}}\delta\mathbf{f}_\sfr(\mathbf{x})\cdot\mathbf{u}(\mathbf{x}){\rm d}\mathbf{x}\right|
\leq&\sqrt{2}\| \mathbf{f}_\sfr\|_{C^{\alpha}}
\int_{W} |u_{1}( \mathbf{x })||\mathbf{x}|^{\alpha}{\rm d}\mathbf{x}
\\\leq&2\sqrt{2}\| \mathbf{f}_\sfr\|_{C^{\alpha}}\frac{(\theta_{M}-\theta_{m})\Gamma(2\alpha+4)}{\delta_{W}^{2\alpha+4}}s^{-2\alpha-4},
\end{aligned}
\end{equation}
as $s\rightarrow +\infty$.

Multiplying $s^{2}$ on the both side of \eqref{2.93}, by virtue of \eqref{e}, \eqref{2.86}, \eqref{2.87}, \eqref{2.88}, \eqref{2.89}, and \eqref{2.91}, and letting $s\rightarrow +\infty$, we have
\begin{equation}\label{eq:2110}
 \begin{aligned}
\eta(\mathbf{0})
\mathbf{v}_\sfr(\mathbf{0})\cdot
\begin{pmatrix}
    1\\
   {\rm i}
\end{pmatrix}
\bigg(\mu(\theta_M )^{-2}+   \mu(\theta_m )^{-2}\bigg)=0.
 \end{aligned}
\end{equation}
Since the corner $W$ is non-degenerate, namely $\theta_M-\theta_m\neq \pi$, from \cite[Lemma 2.10]{DCL}, we know that
\begin{equation}\notag
\mu^{-2}(\theta_{M})+\mu^{-2}(\theta_{m})\neq 0.
\end{equation}
Finally, by noting $\eta(\mathbf{0})\neq \mathbf{0}$, it is easy to infer from \eqref{eq:2110} that $\mathbf{v}_\sfr(\mathbf{0})=\mathbf{0}$.

The proof is complete. 
\end{proof}

\begin{rem}
{ Similar to Corollary  \ref{cor:22}, under the same setup of Theorem \ref{thm:23}, for the degenerate case of \eqref{eq:lame1} with $\eta \equiv 0$ in \eqref{eq:lame1}, if $V$ is H\"older-regular near the corner and $V(\mathbf 0) \neq 0$, one can prove that  $\mathbf{v}(\mathbf{0})=\mathbf{0}$, where $\mathbf{v}\in H^{1}(\Omega)^{2} \cap C^\alpha(\overline{S_h} )$ and $\mathbf{w}\in H^{1}(\Omega)^{2}$ are  a pair of the generalized elastic transmssion eigenfunction to \eqref{eq:lame1} associated with $\omega\in\mathbb{R}_{+}$ such that $q\mathbf w \in C^\alpha(\overline{S_h})$. We choose not to discuss the details in this paper. In \cite[Theorem 1.5]{EBL}, the corresponding vanishing property was established when $\mathbf v-\mathbf w \in H^2(\Omega )^2$,  $V$ is  H\"older continuous around the corner such that$V(\mathbf 0) \neq 0$, and  either $\mathbf v$ or $\mathbf w$ is H\"older continuous around the corner. Compared with \cite[Theorem 1.5]{EBL}, the assumption $\mathbf v-\mathbf w \in H^2(\Omega )^2$ can be removed in our setting.   }
\end{rem}

\section{vanishing near corners of  generalized elastic transmission eigenfunctions: three-dimensional case}\label{sec:3}

	In this section, we establish the vanishing property of the generalized elastic transmission eigenfunctions for the 3D case.  In principle, we could also consider a generic corner in the usual sense as the one for the 2D case. However, we consider  a 3D corner described by $S_h \times (-M,M)$, where $S_h$ is defined in \eqref{eq:SIGN} and $M\in\mathbb{R}_+$. The 3D corner $S_h \times (-M,M)$ is a more general corner geometry in 3D. It is readily seen that  $S_h \times (-M,M)$  actually describes an edge singularity. In what follows, we suppose that the Lipschitz domain $\Omega\subset\mathbb{R}^3$ with $\mathbf{0}\in \partial \Omega$ possesses a 3D corner.  Let $\mathbf{0} \in \mathbb{R}^{2}$ be the vertex of $S_h$ and $ x_3 \in (-M,M)$. Then  $(\mathbf{0},x_{3} )$ is defined as an edge point of $S_h \times (-M,M)$.


In order to make use of the CGO solution $\mathbf u(\mathbf x)$ introduced in Lemma \ref{lem:22} to study the vanishing property of $(\mathbf v, \mathbf w)$ to \eqref{eq:lame1} at a 3D corner, we define the following dimension reduction operator.
\begin{defn}\label{Def}
Let ${S_h}\subset \mathbb{R}^{2}$ be defined in \eqref{eq:SIGN}, $M>0$. For a given function $\mathbf{g}$ in the domain $S_h \times (-M,M )$.  Pick up any point $x_{3} \in (-M, M)$. Suppose that $\phi \in C_0^{\infty}( (x_{3}-L, x_{3}+  L) )$ is  a nonnegative function and $\phi\not\equiv0$, where $L$  is   sufficiently small  such that $ (x_{3}-L, x_{3}+  L) \subset   (-M,M)$, and write $\mathbf{x}=(\mathbf{x}', x_3) \in \mathbb{R}^{3}, \,\mathbf{ x}'\in \mathbb{R}^{2}$.  The dimension reduction operator $\mathcal{R}$ is defined by
	\begin{equation}\label{rg.}
	\mathcal{R}(\mathbf{g})(\mathbf{x}')=\int_{{x}_{3} -L}^{{x}_{3}+ L} \phi(x_3)\mathbf{g}(\mathbf{x}',x_3){\rm d}x_3,
	\end{equation}
	where $\mathbf{x}'\in {S}_h$.
\end{defn}

Before presenting the main results of this section, we first analyze the regularity of the functions after applying the dimension reduction operator.  Using a similar argument of \cite[Lemma 3.4]{Bsource}, we can prove the following lemma, whose detailed proof is omitted.

\begin{lem}\label{lem:31}
	Let  $\mathbf{g}\in H^m({S}_h\times(-M ,M))^3$, $m=1,2$. Then
	\begin{equation}\notag
	{\mathcal R}(\mathbf{g})(\mathbf{x}') \in H^m({S}_h)^3.
    \end{equation}
	Similarly, if $\mathbf{g}\in   C^\alpha (\overline {S_h}\times [-M,M])^3$, $0<\alpha<1$, then
	\begin{equation}\notag
	{\mathcal R}(\mathbf{g})(\mathbf{x}') \in C^\alpha (\overline{S_h})^3.
    \end{equation}
\end{lem}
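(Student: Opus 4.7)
The plan is to treat the two assertions separately and in both cases reduce the regularity of $\mathcal{R}(\mathbf{g})$ in $\mathbf{x}'\in S_h$ to the assumed regularity of $\mathbf{g}$ in $S_h\times(-M,M)$, exploiting the fact that $\phi\in C_0^\infty((x_3-L,x_3+L))$ is smooth, compactly supported, and bounded. In each case the key observation is that $\mathcal{R}$ is essentially convolution/integration against a fixed smooth kernel in the $x_3$ direction only, so tangential regularity in $\mathbf{x}'$ passes through the integral unchanged.

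For the Sobolev statement, I would first settle the $L^2$ bound. By Fubini and the Cauchy--Schwarz inequality applied to the integral $\int \phi(x_3)\mathbf{g}(\mathbf{x}',x_3)\,\mathrm{d}x_3$, one obtains
\begin{equation*}
\|\mathcal{R}(\mathbf{g})\|_{L^2(S_h)^3}^2\;\le\;\|\phi\|_{L^2}^2\,\|\mathbf{g}\|_{L^2(S_h\times(-M,M))^3}^2,
\end{equation*}
so $\mathcal{R}(\mathbf{g})\in L^2(S_h)^3$. To handle the first (and for $m=2$ the second) weak derivatives in $\mathbf{x}'$, I would test $\mathcal{R}(\mathbf{g})$ against $\psi\in C_0^\infty(S_h)$. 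Using Fubini and the existence of the weak derivative $\partial_{x_i'}\mathbf{g}\in L^2(S_h\times(-M,M))^3$ (and $\partial_{x_i'}\partial_{x_j'}\mathbf{g}\in L^2$ when $m=2$), one sees that the weak derivative of $\mathcal{R}(\mathbf{g})$ equals $\mathcal{R}(\partial_{x_i'}\mathbf{g})$, and its $L^2$ norm is controlled by the previous display with $\mathbf{g}$ replaced by $\partial_{x_i'}\mathbf{g}$. Iterating this identity proves $\mathcal{R}(\mathbf{g})\in H^m(S_h)^3$ together with the explicit bound $\|\mathcal{R}(\mathbf{g})\|_{H^m(S_h)^3}\le C(\phi,L)\|\mathbf{g}\|_{H^m(S_h\times(-M,M))^3}$.

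For the Hölder statement, I would first note that $|\mathcal{R}(\mathbf{g})(\mathbf{x}')|\le 2L\,\|\phi\|_{L^\infty}\,\|\mathbf{g}\|_{C^0}$ gives the sup-norm bound. For the Hölder seminorm, for any $\mathbf{x}',\mathbf{y}'\in\overline{S_h}$ I would estimate
\begin{equation*}
|\mathcal{R}(\mathbf{g})(\mathbf{x}')-\mathcal{R}(\mathbf{g})(\mathbf{y}')|\;\le\;\int_{x_3-L}^{x_3+L}\phi(t)\,\bigl|\mathbf{g}(\mathbf{x}',t)-\mathbf{g}(\mathbf{y}',t)\bigr|\,\mathrm{d}t\;\le\;2L\|\phi\|_{L^\infty}\,[\mathbf{g}]_{C^\alpha}\,|\mathbf{x}'-\mathbf{y}'|^\alpha,
\end{equation*}
where $[\mathbf{g}]_{C^\alpha}$ denotes the $\alpha$-Hölder seminorm of $\mathbf{g}$ on $\overline{S_h}\times[-M,M]$. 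This immediately yields $\mathcal{R}(\mathbf{g})\in C^\alpha(\overline{S_h})^3$ with a linear estimate in $\|\mathbf{g}\|_{C^\alpha}$.

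I do not anticipate a genuine obstacle here: because $\mathcal{R}$ only differentiates/compares in the tangential variable $\mathbf{x}'$, while integrating out $x_3$ against the smooth compactly supported weight $\phi$, both arguments are an application of Fubini plus either Cauchy--Schwarz (for the Sobolev case) or a pointwise Hölder estimate (for the $C^\alpha$ case). The only point that needs any care is justifying the interchange of weak differentiation and integration in $x_3$ for the $H^m$ case, which I would handle through the distributional testing argument above. Accordingly, I would indicate these steps and, as the authors do, omit the routine technical details by referring to the analogous argument in \cite[Lemma 3.4]{Bsource}.
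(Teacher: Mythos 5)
Your proposal is correct and is exactly the routine argument the paper leaves out (it only points to the analogous \cite[Lemma 3.4]{Bsource}): since $\mathcal{R}$ integrates in $x_3$ against the fixed smooth weight $\phi$, tangential $L^2$-derivatives and H\"older quotients pass through the integral, giving $\partial_{x_i'}\mathcal{R}(\mathbf{g})=\mathcal{R}(\partial_{x_i'}\mathbf{g})$ and the stated bounds. The only point needing care, the interchange of weak differentiation and the $x_3$-integration, is handled cleanly by testing against the tensor-product function $\psi(\mathbf{x}')\phi(x_3)\in C_0^\infty(S_h\times(x_3-L,x_3+L))$, which is essentially what your distributional argument does, so no gap remains.
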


The elastic Herglotz wave function $\mathbf{v}_{\mathbf g}$ in $\mathbb R^3$ is defined by
\begin{equation} \label{eq:h3d}
 \begin{aligned}
\mathbf{v}_{\mathbf g}(\mathbf x)=\int_{\mathbb S^{2}} \{e^{\mathrm ik_{p}\mathbf{x}\cdot \mathbf{d}}\mathbf{g}_{p}(\mathbf{d})+e^{\mathrm i k_{s}\mathbf{x}\cdot \mathbf{d}}\mathbf{g}_{s}(\mathbf{d})\}{\rm d}\sigma (\mathbf{d}),
 \end{aligned}
 \end{equation}
where the kernel $\mathbf{g}=\mathbf{g}_{p}+\mathbf{g}_{s}$ with $\mathbf{g}_{p}\in L^{2}(\mathbb{S}^{2})^{3}$ and $\mathbf{g}_{s}\in L^{2}(\mathbb{S}^{2})^{3}$ and ${\mathbf d} \in \mathbb S^2$. Lemma~\ref{lem:herg} holds equally in the three dimensions. In view of this lemma, for any pair of the generalized elastic transmission eigenfunction $(\mathbf v, \mathbf w)$ to \eqref{eq:lame1}, there exits a sequence Herglotz wave function $\{\mathbf v_j\}_{j=1}^{+\infty }$ defined by
\begin{equation}\label{eq:vj3}
 \begin{aligned}
\mathbf{v}_j(\mathbf{x})=\int_{S^{2}} \{e^{ik_{p}\mathbf{x}\cdot \mathbf{d}}\mathbf{g}_{jp}(\mathbf{d})+e^{ik_{s}\mathbf{x}\cdot \mathbf{d}}\mathbf{g}_{js}(\mathbf{d})\}{\rm d}\sigma (\mathbf{d}),
 \end{aligned}
\end{equation}
 where $\mathbf{g}_j=\mathbf{g}_{jp}+\mathbf{g}_{js}$ with $\mathbf{g}_{jp}\in L^{2}(\mathbb{S}^{2})^{3}$ and $\mathbf{g}_{js}\in L^{2}(\mathbb{S}^{2})^{3}$, can approximate $\mathbf v$ to an arbitrary accuracy  in $H ^{1}(\Omega)^{3}$. Henceforth, we let the real and imaginary parts of the kernel functions $\mathbf{g}_{j\beta }({\mathbf d})$ ($\beta=p,s$) in \eqref{eq:vj3} be defined by
\begin{equation}\label{eq:gjpgjs3}
	\mathbf{g}_{j\beta }({\mathbf d})=\mathbf{g}_{\sfr, j\beta }({\mathbf d})+\mathrm i \mathbf{g}_{\sfi, j\beta }({\mathbf d}),
\end{equation}
where
\begin{equation}\label{g3}
 \begin{aligned}
&\mathbf{g}_{\sfr, jp }({\mathbf d})=
\begin{pmatrix}
   \mathbf{g}_{\sfr, jp }^{(1,2)} ({\mathbf d}) \\
    g_{\sfr,jp }^{(3) }({\mathbf d})
\end{pmatrix} \in \mathbb R^3, \
\mathbf{g}_{\sfr, js }({\mathbf d})=
\begin{pmatrix}
   \mathbf{g}_{\sfr, js }^{(1,2)} ({\mathbf d}) \\
    g_{\sfr, js }^{(3) }({\mathbf d})
\end{pmatrix} \in \mathbb R^3,
\\
&\mathbf{g}_{\sfi, jp }({\mathbf d})=
\begin{pmatrix}
   \mathbf{g}_{\sfi, jp }^{(1,2)} ({\mathbf d}) \\
    g_{\sfi, jp }^{(3)} ({\mathbf d})
\end{pmatrix}\in \mathbb R^3, \
\mathbf{g}_{\sfi, js }({\mathbf d})=
\begin{pmatrix}
   \mathbf{g}_{\sfi, js }^{(1,2)}({\mathbf d}) \\
    g_{\sfi, js }^{(3)} ({\mathbf d})
\end{pmatrix}\in \mathbb R^3
 \end{aligned}
\end{equation}
with $\mathbf{g}_{\sfr, j\beta }^{(1,2)} ({\mathbf d}) \in \mathbb{R}^2$ and $\mathbf{g}_{\sfi, j\beta }^{(1,2)} ({\mathbf d}) \in \mathbb{R}^2$ .

Similar to Proposition \ref{Pro:2.2}, using Jacobi-Anger expansion we have the expansion of the real and imaginary part of $\mathbf{v}_j(\mathbf{x})$ defined in \eqref{eq:vj3} as follows. 

\begin{prop}\label{Pro:31}
	Let the elastic Herglotz wave function $\mathbf v_{j}$ be defined by \eqref{eq:vj3}. Denote
	\begin{equation}\notag
\begin{aligned}
 & \mathbf v_{jp} ^\sfr(\mathbf 0)= \int_{\mathbb S^2} g_{jp}^\sfr (\mathbf{d}){\rm d}\sigma(\mathbf{d}),\
 \mathbf v_{js} ^\sfr(\mathbf 0)=\int_{\mathbb S^2} g_{js}^\sfr (\mathbf{d}){\rm d}\sigma(\mathbf{d}),\\&
  \mathbf v_{jp} ^\sfi(\mathbf 0)= \int_{\mathbb S^2} g_{jp}^\sfi (\mathbf{d}){\rm d}\sigma(\mathbf{d}),  \
   \mathbf v_{js} ^\sfi(\mathbf 0)=\int_{\mathbb S^2} g_{js}^\sfi (\mathbf{d}){\rm d}\sigma(\mathbf{d}).
\end{aligned}
\end{equation}
Then
\begin{equation}\label{0}
 \begin{aligned}
	\mathbf v_{j} (\mathbf 0)&=
\mathbf v_{j} ^\sfr(\mathbf 0)+{\rm i}\mathbf v_{j}^\sfi (\mathbf 0)
=(\mathbf v_{jp} ^\sfr(\mathbf 0)+\mathbf v_{js} ^\sfr(\mathbf 0))+{\rm i}(\mathbf v_{jp}^\sfi (\mathbf 0)+\mathbf v_{js}^\sfi (\mathbf 0)).
  \end{aligned}
\end{equation}	
where
 $g_{j\beta }^{\sfr }({\mathbf d})$ and $ g_{j\beta }^{\sfi }({\mathbf d})$ ($\beta=p,s$) are defined in \eqref{eq:gjpgjs3}. Let $j_\ell(t)$ be the $\ell$-th Bessel function for $\ell \in \mathbb N \cup \{0\}$.
 Furthermore, we have
\begin{equation}\label{eq:vr j 3}
 \begin{aligned}
  \mathbf v_{j} (\mathbf x)= \mathbf v_{j} ^\sfr(\mathbf x)+\rm i  \mathbf v_{j} ^\sfi(\mathbf x),
  \end{aligned}
\end{equation}
 where
 \begin{eqnarray}
 \mathbf v_{j} ^\sfr(\mathbf x)&=&j_{0}(k_p |\mathbf{x}|)\int_{\mathbb S^2} g_{jp}^\sfr (\mathbf{d}){\rm d}\sigma(\mathbf{d})
 +j_{0}(k_s |\mathbf{x}|)\int_{\mathbb S^2} g_{js}^\sfr (\mathbf{d}){\rm d}\sigma(\mathbf{d}) \label{vjrs}
 \\ &&+\sum^{+\infty}_{\ell =1} (-1)^\ell (4\ell+1) j_{2\ell}(k_p |\mathbf{x}|)\int_{\mathbb S^2} g_{jp}^\sfr (\mathbf{d})P_{2\ell}(\cos  \varphi){\rm d}\sigma(\mathbf{d}) \notag
 \\
 &&+\sum^{+\infty}_{\ell =1} (-1)^\ell (4\ell+1) j_{2\ell}(k_s |\mathbf{x}|)\int_{\mathbb S^2} g_{js}^\sfr (\mathbf{d}) P_{2\ell}(\cos  \varphi) {\rm d}\sigma(\mathbf{d}) \notag
 \\
 &&+\sum^{+\infty}_{\ell =1}  (-1)^\ell(4\ell-1) j_{2\ell-1}(k_p |\mathbf{x}|)\int_{\mathbb S^2} g_{jp}^\sfi (\mathbf{d})P_{2\ell-1}(\cos \varphi){\rm d}\sigma(\mathbf{d}) \notag
  \\
  &&+\sum^{+\infty}_{\ell =1}  (-1)^\ell (4\ell-1) j_{2\ell-1}(k_s |\mathbf{x}|)\int_{\mathbb S^2} g_{js}^\sfi (\mathbf{d})P_{2\ell-1}(\cos \varphi){\rm d}\sigma(\mathbf{d}), \notag \\
 \mathbf v_{j} ^\sfi(\mathbf x) &=&j_{0}(k_p |\mathbf{x}|)\int_{\mathbb S^2} g_{jp}^\sfi (\mathbf{d}){\rm d}\sigma(\mathbf{d})
 +j_{0}(k_s |\mathbf{x}|)\int_{\mathbb S^2} g_{js}^\sfi (\mathbf{d}){\rm d}\sigma(\mathbf{d}) \notag
  \\
  &&-\sum^{+\infty}_{\ell =1} (-1)^\ell (4\ell+1) j_{2\ell}(k_p |\mathbf{x}|)\int_{\mathbb S^2} g_{jp}^\sfr (\mathbf{d})P_{2\ell}(\cos  \varphi){\rm d}\sigma(\mathbf{d}) \notag
  \\
  &&-\sum^{+\infty}_{\ell =1} (-1)^\ell (4\ell+1) j_{2\ell}(k_s |\mathbf{x}|)\int_{\mathbb S^2} g_{js}^\sfr (\mathbf{d})P_{2\ell}(\cos \varphi){\rm d}\sigma(\mathbf{d}) \notag
 \\
 &&+\sum^{+\infty}_{\ell =1}  (-1)^\ell(4\ell-1) j_{2\ell-1}(k_p |\mathbf{x}|)\int_{\mathbb S^2} g_{jp}^\sfi (\mathbf{d})P_{2\ell-1}(\cos  \varphi){\rm d}\sigma(\mathbf{d}) \notag
  \\
  &&+\sum^{+\infty}_{\ell =1}  (-1)^\ell (4\ell-1) j_{2\ell-1}(k_s |\mathbf{x}|)\int_{\mathbb S^2} g_{js}^\sfi (\mathbf{d})P_{2\ell-1}(\cos  \varphi){\rm d}\sigma(\mathbf{d}), \notag
 \end{eqnarray}
and $\varphi $ is the angle between $\mathbf x$ and $ \mathbf d$ in \eqref{eq:vj3}.
\end{prop}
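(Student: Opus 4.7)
The plan is to derive the expansion by direct substitution of the three-dimensional Jacobi--Anger formula into \eqref{eq:vj3}, separating the real and imaginary parts of both the plane wave factor and the kernel. The key tool is the expansion
\[
e^{\mathrm i k_\beta \mathbf d\cdot\mathbf x}=\sum_{n=0}^{+\infty}(2n+1)\,\mathrm i^n\,j_n(k_\beta|\mathbf x|)\,P_n(\cos\varphi),\qquad \varphi=\angle(\mathbf x,\mathbf d),\ \beta=p,s,
\]
which converges absolutely and uniformly on compact sets. Splitting the sum into even and odd indices and using $\mathrm i^{2\ell}=(-1)^\ell$, $\mathrm i^{2\ell-1}=-\mathrm i(-1)^\ell$, I would write
\[
e^{\mathrm i k_\beta \mathbf d\cdot\mathbf x}=\mathcal C_\beta(\mathbf x,\mathbf d)+\mathrm i\,\mathcal S_\beta(\mathbf x,\mathbf d),
\]
where the real function $\mathcal C_\beta$ contains all the $j_{2\ell}P_{2\ell}$ contributions with coefficient $(4\ell+1)(-1)^\ell$, and the real function $\mathcal S_\beta$ contains all the $j_{2\ell-1}P_{2\ell-1}$ contributions with coefficient $-(4\ell-1)(-1)^\ell$.

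Next I would insert the decomposition \eqref{eq:gjpgjs3}, namely $\mathbf g_{j\beta}=\mathbf g^{\sfr}_{j\beta}+\mathrm i\mathbf g^{\sfi}_{j\beta}$, into the integrand of \eqref{eq:vj3} and multiply out the product $(\mathcal C_\beta+\mathrm i\mathcal S_\beta)(\mathbf g^{\sfr}_{j\beta}+\mathrm i\mathbf g^{\sfi}_{j\beta})$. Collecting the real and imaginary parts gives
\[
\mathbf v_j^{\sfr}(\mathbf x)=\sum_{\beta=p,s}\int_{\mathbb S^2}\bigl(\mathcal C_\beta\,\mathbf g^{\sfr}_{j\beta}-\mathcal S_\beta\,\mathbf g^{\sfi}_{j\beta}\bigr)\,\mathrm d\sigma(\mathbf d),
\]
\[
\mathbf v_j^{\sfi}(\mathbf x)=\sum_{\beta=p,s}\int_{\mathbb S^2}\bigl(\mathcal S_\beta\,\mathbf g^{\sfr}_{j\beta}+\mathcal C_\beta\,\mathbf g^{\sfi}_{j\beta}\bigr)\,\mathrm d\sigma(\mathbf d).
\]
Interchanging the sum over $\ell$ with the surface integral (justified by the uniform convergence of the Jacobi--Anger series together with $\mathbf g^{\sfr,\sfi}_{j\beta}\in L^2(\mathbb S^2)^3$ and the Cauchy--Schwarz inequality) and using $-(-1)^{\ell-1}=(-1)^\ell$ for the $\mathcal S_\beta$ contributions yields precisely \eqref{vjrs} and the corresponding formula for $\mathbf v_j^{\sfi}(\mathbf x)$.

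Finally, for \eqref{0} I would evaluate $\mathbf v_j^{\sfr}$ and $\mathbf v_j^{\sfi}$ at $\mathbf x=\mathbf 0$; since $j_0(0)=1$ and $j_\ell(0)=0$ for $\ell\geq 1$, only the $\ell=0$ terms survive, so that $\mathbf v_j^{\sfr}(\mathbf 0)=\mathbf v_{jp}^{\sfr}(\mathbf 0)+\mathbf v_{js}^{\sfr}(\mathbf 0)$ and $\mathbf v_j^{\sfi}(\mathbf 0)=\mathbf v_{jp}^{\sfi}(\mathbf 0)+\mathbf v_{js}^{\sfi}(\mathbf 0)$, which is \eqref{0}. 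The main routine obstacle is just the careful book-keeping of the signs and of the four cross-terms produced by the two complex decompositions; no deep estimate is required beyond the uniform convergence of the Jacobi--Anger series, which is standard. The whole argument is essentially the three-dimensional analogue of Proposition~\ref{Pro:2.2} and is, in the authors' own words, direct though tedious.
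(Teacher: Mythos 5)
Your proposal is correct and takes essentially the same route the paper intends: substitute the three-dimensional Jacobi--Anger expansion into \eqref{eq:vj3}, split both the exponential and the kernels into real and imaginary parts, and collect the even- and odd-order terms, exactly as the 3D analogue of Proposition~\ref{Pro:2.2}, with the evaluation at $\mathbf x=\mathbf 0$ following from $j_0(0)=1$ and $j_\ell(0)=0$ for $\ell\geq 1$. One caveat: your bookkeeping gives $\mathbf v_j^{\sfi}(\mathbf x)=\sum_{\beta=p,s}\int_{\mathbb S^2}\bigl(\sin(k_\beta\mathbf d\cdot\mathbf x)\,\mathbf g^{\sfr}_{j\beta}+\cos(k_\beta\mathbf d\cdot\mathbf x)\,\mathbf g^{\sfi}_{j\beta}\bigr)\,{\rm d}\sigma(\mathbf d)$, i.e.\ the odd-order series carries $g^{\sfr}_{j\beta}$ with a minus sign and the even-order series carries $g^{\sfi}_{j\beta}$, which is consistent with \eqref{eq:vrj 313} but not with the printed series for $\mathbf v_j^{\sfi}$ in the proposition (where the superscripts $\sfr$ and $\sfi$ appear interchanged in the $\ell\geq1$ terms); this is a misprint in the statement rather than a gap in your derivation, and the real-part formula \eqref{vjrs}, which is the one actually used later, matches exactly.
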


In view of the explicit expression $\mathbf{v}_j(\mathbf{x})$ defined in \eqref{eq:vj3}, using Euler formula, one can handily compute that
\begin{prop}
Let the elastic Herglotz wave function $\mathbf v_{j}$ be defined in \eqref{eq:vj3}, where $\mathbf v_{j} ^\sfr(\mathbf x) $ and $\mathbf v_{j}^\sfi (\mathbf x)
$ are the real and imaginary parts of  $\mathbf v_{j}$ respectively. Then it holds that
\begin{equation}\label{eq:vrj 313}
\begin{aligned}
\mathbf v_{j} ^\sfr(\mathbf x)=& \int_{\mathbb S^2}\bigg ( \cos (k_{p} \mathbf{d}\cdot \mathbf{x})\mathbf{g}_{jp}^\sfr ({\mathbf d})-\sin (k_{p} \mathbf{d}\cdot \mathbf{x})\mathbf{g}_{jp}^\sfi ({\mathbf d})+\cos (k_{s} \mathbf{d}\cdot \mathbf{x})\mathbf{g}_{js}^\sfr ({\mathbf d})\\&-\sin (k_{s} \mathbf{d}\cdot \mathbf{x})\mathbf{g}_{js}^\sfi ({\mathbf d}) \bigg ){\rm d}\sigma(\mathbf{d}),
\end{aligned}
\end{equation}
and
\begin{equation}\notag
\begin{aligned}
\mathbf v_{j} ^\sfi(\mathbf x)=& \int_{\mathbb S^2}\bigg (  \cos (k_{p} \mathbf{d}\cdot \mathbf{x})\mathbf{g}_{jp}^\sfi ({\mathbf d})  +\sin (k_{p} \mathbf{d}\cdot \mathbf{x})\mathbf{g}_{jp}^\sfr ({\mathbf d})+\cos (k_{s} \mathbf{d}\cdot \mathbf{x})\mathbf{g}_{js}^\sfi ({\mathbf d}) \\& +\sin (k_{s} \mathbf{d}\cdot \mathbf{x})\mathbf{g}_{js}^\sfr ({\mathbf d}) \bigg )
{\rm d}\sigma(\mathbf{d}).
\end{aligned}
\end{equation}
\end{prop}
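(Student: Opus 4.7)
The plan is to prove this proposition by direct computation using Euler's formula and the splitting \eqref{eq:gjpgjs3} of the kernels into their real and imaginary parts. Since $\mathbf{v}_j$ is defined as an integral whose integrand is a sum of two terms of essentially the same structure (one for $\beta=p$ and one for $\beta=s$), the same scheme applied to each summand will produce the claimed identities.

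First I would apply Euler's formula to each exponential factor in \eqref{eq:vj3}, writing
\begin{equation*}
e^{\mathrm{i} k_\beta \mathbf{d}\cdot\mathbf{x}}=\cos(k_\beta \mathbf{d}\cdot\mathbf{x})+\mathrm{i}\sin(k_\beta \mathbf{d}\cdot\mathbf{x}),\quad \beta=p,s.
\end{equation*}
Then, using \eqref{eq:gjpgjs3} to split each kernel as $\mathbf{g}_{j\beta}=\mathbf{g}_{\sfr,j\beta}+\mathrm{i}\,\mathbf{g}_{\sfi,j\beta}$, I would multiply the two factors out inside the integrand. For a generic term this gives
\begin{equation*}
\bigl(\cos(k_\beta\mathbf{d}\cdot\mathbf{x})+\mathrm{i}\sin(k_\beta\mathbf{d}\cdot\mathbf{x})\bigr)\bigl(\mathbf{g}_{j\beta}^{\sfr}(\mathbf{d})+\mathrm{i}\,\mathbf{g}_{j\beta}^{\sfi}(\mathbf{d})\bigr),
\end{equation*}
whose real part is $\cos(k_\beta\mathbf{d}\cdot\mathbf{x})\mathbf{g}_{j\beta}^{\sfr}(\mathbf{d})-\sin(k_\beta\mathbf{d}\cdot\mathbf{x})\mathbf{g}_{j\beta}^{\sfi}(\mathbf{d})$ and whose imaginary part is $\cos(k_\beta\mathbf{d}\cdot\mathbf{x})\mathbf{g}_{j\beta}^{\sfi}(\mathbf{d})+\sin(k_\beta\mathbf{d}\cdot\mathbf{x})\mathbf{g}_{j\beta}^{\sfr}(\mathbf{d})$.

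Next, since $k_p,k_s\in\mathbb{R}_+$ and the trigonometric functions are real-valued, the real and imaginary parts commute with integration over $\mathbb{S}^{2}$. Summing over $\beta\in\{p,s\}$ and integrating term by term, the real part of $\mathbf{v}_j$ becomes exactly the right-hand side of \eqref{eq:vrj 313}, and an analogous computation yields the claimed expression for $\mathbf{v}_j^\sfi$.

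There is no genuine obstacle here beyond careful bookkeeping of signs; the argument is purely algebraic and uses only Euler's formula, linearity of the integral, and the componentwise decomposition \eqref{eq:gjpgjs3}. The only minor point to verify is the validity of interchanging the operations $\operatorname{Re}/\operatorname{Im}$ with the integral, which is immediate because $\mathbf{g}_{jp},\mathbf{g}_{js}\in L^{2}(\mathbb{S}^{2})^{3}$ and the trigonometric weights are bounded, so the integrand is integrable with real and imaginary parts handled separately.
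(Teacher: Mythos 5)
Your proposal is correct and coincides with the paper's own argument: the paper simply notes that the identities follow "handily" from Euler's formula applied to \eqref{eq:vj3} together with the splitting \eqref{eq:gjpgjs3}, which is exactly the computation you carry out. The only remark is that taking real and imaginary parts under the integral sign needs no separate justification beyond the definition of the integral of a complex-valued function, so your final paragraph is slightly more than is required.
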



Let ${S_h}\subset \mathbb{R}^{2}$ be defined in \eqref{eq:SIGN} and  $M>0$. For any fixed  $x_3 \in (-M,M)$   and $L>0$ defined in Definition \ref{Def}, we suppose that  $L $ is sufficiently small such that $(x_3-L,x_3+L) \subset (-M,M) $.  Write  $\mathbf{x}=(\mathbf{x}', x_3) \in \mathbb{R}^{3}, \, \mathbf{x}'\in \mathbb{R}^{2}$. In what follows, we consider the transmission eigenvalue problem for $\mathbf{v},\mathbf{w}\in H^1({S}_h\times(-M,M))^{3}$:
\begin{align}\label{eq:3DT}
\left\{
\begin{array}{l}
{\lambda \Delta {\mathbf v}+(\lambda+\mu) \nabla \nabla \cdot {\mathbf v}+\omega^{2} {\mathbf v}={\mathbf 0} }\hspace*{2cm}\ \mathbf{x}'\in S_{h},\ -M<x_{3}<M, \\[5pt]
{\lambda \Delta {\mathbf w}+(\lambda+\mu) \nabla \nabla \cdot {\mathbf w}+\omega^{2}(1+V) {\mathbf w}={\mathbf 0} }\hspace*{0.5cm}\ \mathbf{x}'\in S_{h},\ -M<x_{3}<M, \\[5pt]
{\mathbf w}={\mathbf v},\quad T_{\nu}{\mathbf v}+\eta {\mathbf v}=T_{\nu}{\mathbf w} \hspace*{3.2cm} \mathbf{x}'\in \Gamma_{h}^{\pm},\ -M<x_{3}<M,
  \end{array}
\right.
\end{align}
where $\Gamma_h^\pm $ are defined in \eqref{eq:SIGN}, $T_\nu$ is boundary traction operator to $\Gamma_h^\pm \times (-M,M)$,   $q\in L^\infty(S_h \times (-M,M)) $ defined in \eqref{eq:Tu def} and $\eta \in L^\infty(\Gamma_h^\pm \times (-M,M) )$ is independent of $x_3$. Similar to the 2D case, we let $(\mathbf{v}_\sfr, \mathbf{w}_\sfr)$ and $(\mathbf{v}_{\sf I}, \mathbf{w}_{\sf I})$ respectively signify the real and imaginary parts of $(\mathbf{v}, \mathbf{w})$, and both of them satisfy the Lam\'e system \eqref{eq:3DT}. We shall mainly focus on dealing with $(\mathbf{v}_\sfr, \mathbf{w}_\sfr)$ and all the results hold equally for $(\mathbf{v}_{\sf I}, \mathbf{w}_{\sf I})$, and hence $(\mathbf{v}, \mathbf{w})$.

Noting that $\eta$ in \eqref{eq:3DT} is independent of the $x_3$ variable, and by applying the reduction operator $\mathcal R$ defined in Definition \ref{Def}, one can show by direct verifications the following lemma. 

\begin{lem}\label{lemma3.2}
Denote
  \begin{equation}\notag
 \begin{aligned}
 \mathbf{G}_{1}(\mathbf{x}')=&- \omega^{2}\mathcal{R}(\mathbf{v}_\sfr)(\mathbf{x}')-\int^{L}_{-L}\phi''(x_{3})
\begin{pmatrix}
    \lambda v_{1} \\
    \lambda v_{2}\\
    (2\lambda+\mu)v_{3}
\end{pmatrix}
(\mathbf{x}',x_{3}){\rm d}x_{3}
\\&+(\lambda+\mu)\int^{L}_{-L}\phi'(x_{3})
\begin{pmatrix}
    \partial_{1} v_{3}          \\
     \partial_{2} v_{3}                              \\
    \partial_{1} v_{1}+\partial_{2}v_{2}
\end{pmatrix}
(\mathbf{x}',x_{3}){\rm d}x_{3},
\\ \mathbf{G}_{2}(\mathbf{x}')=&-\omega^{2}\mathcal{R}(\mathbf{f}_\sfr)(\mathbf{x}')-\int^{L}_{-L}\phi''(x_{3})
\begin{pmatrix}
    \lambda w_{1} \\
    \lambda w_{2}\\
    (2\lambda+\mu)w_{3}
\end{pmatrix}
(\mathbf{x}',x_{3}){\rm d}x_{3}
 \\&+(\lambda+\mu)\int^{L}_{-L}\phi'(x_{3})
\begin{pmatrix}
    \partial_{1} w_{3}          \\
     \partial_{2} w_{3}                              \\
    \partial_{1} w_{1}+\partial_{2}w_{2}
\end{pmatrix}
(\mathbf{x}',x_{3}){\rm d}x_{3},
 \end{aligned}
  \end{equation}
where $\mathbf{f}_\sfr: = (1+V)\mathbf{w}_\sfr= q\mathbf{w}_\sfr$ and $\mathcal{R}$ is the dimension reduction operator associated with $\phi$ defined in Definition \ref{Def}. Denote
\begin{equation}\label{eq:defvw}
 \begin{aligned}
\mathbf{v}_\sfr=
\begin{pmatrix}
   v_{1}\\
    v_{2}\\
    v_{3}
\end{pmatrix}
=
\begin{pmatrix}
   \mathbf{ v}^{(1,2)} \\
    v_{3}
\end{pmatrix}  \in \mathbb R^3 , \quad
\mathbf{w}_\sfr=
\begin{pmatrix}
   w_{1}\\
    w_{2}\\
    w_{3}
\end{pmatrix}=
\begin{pmatrix}
    \mathbf{w}^{(1,2)} \\
    w_{3}
\end{pmatrix}
\in \mathbb R^3 ,
 \end{aligned}
\end{equation}
where $  \mathbf{ v}^{(1,2)} \in \mathbb{R}^2$ and  $  \mathbf{ w}^{(1,2)}\in \mathbb{R}^2$. Assume that $\eta  \in L^\infty(\Gamma_h^\pm \times (-M,M) )$ in \eqref{eq:3DT} is independent of $x_3$. Then it holds that
  \begin{align}\label{1}
\left\{
\begin{array}{l}
{\widetilde{ \mathcal{L}}\mathcal{R}(\mathbf{v}_\sfr)(\mathbf{x}') =  \mathbf{G}_{1}(\mathbf{x}')}\hspace*{7.5cm}\ \mbox{ in } S_{h}, \\[5pt]
{\widetilde{ \mathcal{L}}\mathcal{R}(\mathbf{w}_\sfr)(\mathbf{x}') =  \mathbf{G}_{2}(\mathbf{x}') }\hspace*{7.5cm}\ \mbox{ in } S_{h}, \\[5pt]
\mathcal{R}({\mathbf w}_\sfr)(\mathbf{x}') =\mathcal{R}({\mathbf v}_\sfr)(\mathbf{x}'), \,   \hspace*{7.3cm} \mbox{on } \Gamma_{h}^{\pm},\\[5pt]
\begin{bmatrix}
	T_{\nu}\mathcal{R}({\mathbf v}^{(1,2)})+\lambda  \mathcal{R} (\partial_3 v_3)\nu \\
	\mu \partial_\nu \mathcal{R}(v_3)+\mu \begin{bmatrix}\mathcal{R}(\partial_3 v_1 )\\ \mathcal{R}(\partial_3 v_2 )\end{bmatrix}\nu
\end{bmatrix}  +\eta\mathcal{R} ({\mathbf v}_\sfr) =\begin{bmatrix}
	T_{\nu}\mathcal{R}({\mathbf w}^{(1,2)})+\lambda  \mathcal{R} (\partial_3 w_3)\nu \\
	\mu \partial_\nu \mathcal{R}( w_3)+\mu \begin{bmatrix} \mathcal{R}(\partial_3 w_1) \\ \mathcal{R}(\partial_3 w_2)\end{bmatrix} \nu
\end{bmatrix} 
\mbox{on } \Gamma_{h}^{\pm}
  \end{array}
\right.
\end{align}
  in the distributional sense, where  $\nu$ signifies the exterior unit normal vector to $\Gamma_h^\pm $, $T_\nu$ is the two dimensional boundary traction operator defined in \eqref{eq:Tu def} and
 \begin{equation}\label{eq:l 321}
 \begin{aligned}
 \widetilde{ \mathcal{L}}=
\begin{pmatrix}
   \lambda\Delta'+(\lambda+\mu)\partial_{1}^{2} &(\lambda+\mu)\partial_{1}\partial_{2}&0\\
   (\lambda+\mu)\partial_{1}\partial_{2}& \lambda\Delta'+(\lambda+\mu)\partial_{2}^{2} &0\\
   0&0&\lambda\Delta'
\end{pmatrix} :=
\begin{pmatrix}
 \mathcal{L}&0\\
  0&\lambda\Delta'
\end{pmatrix}
   \end{aligned}
  \end{equation}
with $ \Delta':=\partial_{1}^{2} +\partial_{2}^{2} $  being the Laplace operator with respect to the $\mathbf{x}'$-variables. Here $\mathcal L$ is the two dimensional Lam\'e operator with respect to the $\mathbf{x}'$-variable. 
\end{lem}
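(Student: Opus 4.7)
The strategy is essentially to test the 3D Lam\'e system \eqref{eq:3DT} (satisfied by the real parts $\mathbf v_\sfr$ and $\mathbf w_\sfr$) against the cutoff $\phi(x_3)$, integrate over $(x_3-L,x_3+L)$, and move every $\partial_3$ that falls on $\mathbf v_\sfr$ or $\mathbf w_\sfr$ onto $\phi$ via integration by parts. Since $\phi \in C_0^{\infty}((x_3-L,x_3+L))$, all boundary contributions at $x_3 \pm L$ vanish, so the only leftover effect of the $x_3$-variable is a source term collecting integrals against $\phi'$ and $\phi''$. The key algebraic observation is that, writing $\Delta=\Delta'+\partial_3^2$ and decomposing $\nabla\nabla\cdot\mathbf v$ componentwise, the tangential derivatives regroup exactly into the operator $\widetilde{\mathcal L}$ in \eqref{eq:l 321}, while the remaining purely $x_3$-derivative pieces form $\mathbf G_1$.

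In detail, I would first write out $\mathcal{L}\mathbf{v}_\sfr+\omega^{2}\mathbf{v}_\sfr=\mathbf 0$ component by component. The first component reads
\[
\lambda(\Delta'+\partial_3^2) v_1 + (\lambda+\mu)\bigl(\partial_1^2 v_1 + \partial_1\partial_2 v_2 + \partial_1\partial_3 v_3\bigr) + \omega^2 v_1 = 0,
\]
and similarly for the second; the third component combines to $\lambda\Delta' v_3 + (2\lambda+\mu)\partial_3^2 v_3 + (\lambda+\mu)\partial_3(\partial_1 v_1+\partial_2 v_2) + \omega^2 v_3 = 0$, which explains the coefficient $(2\lambda+\mu)$ appearing in front of $v_3$ in $\mathbf G_1$. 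Multiplying each equation by $\phi(x_3)$ and integrating in $x_3$, the tangential derivatives commute with $\mathcal{R}$ and reassemble into the three rows of $\widetilde{\mathcal L}\,\mathcal{R}(\mathbf v_\sfr)$. Every term carrying a $\partial_3$ is integrated by parts: $\int \phi\,\partial_3^2 v_i\,{\rm d}x_3 = \int \phi'' v_i\,{\rm d}x_3$ and $\int \phi\,\partial_i\partial_3 v_3\,{\rm d}x_3 = -\partial_i\int \phi' v_3\,{\rm d}x_3$. Rearranging reproduces exactly $\mathbf G_1$, and the identical computation with $\omega^2(1+V)$ in place of $\omega^2$ yields the second identity with $\mathbf G_2$.

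For the transmission conditions on $\Gamma_h^{\pm}\times(-M,M)$, the outward unit normal is $\bnu=(\nu_1,\nu_2,0)^{\top}$, so the 3D traction $T_{\bnu}\mathbf v$ in \eqref{eq:Tu def} simplifies: a direct expansion shows
\begin{align*}
\bigl(T_{\bnu}\mathbf v\bigr)_{1,2} &= T_{\bnu}\mathbf v^{(1,2)} + \lambda\,\partial_3 v_3\,\bnu,\\
\bigl(T_{\bnu}\mathbf v\bigr)_3 &= \mu\,\partial_{\bnu} v_3 + \mu\,(\nu_1\partial_3 v_1 + \nu_2\partial_3 v_2),
\end{align*}
where $T_{\bnu}\mathbf v^{(1,2)}$ is the genuinely two-dimensional traction from \eqref{eq:Tu def} applied to the in-plane components $\mathbf v^{(1,2)}=(v_1,v_2)^{\top}$. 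Applying $\mathcal{R}$ (and pulling $\bnu$, which is $x_3$-independent, out of the integral) preserves this splitting, while the Dirichlet condition $\mathbf w=\mathbf v$ trivially survives the integration. Finally, because $\eta$ is assumed independent of $x_3$, $\mathcal{R}(\eta\mathbf v_\sfr)=\eta\,\mathcal{R}(\mathbf v_\sfr)$ on $\Gamma_h^{\pm}$, so the conductive condition transfers verbatim to the reduced problem, yielding the boundary equation in \eqref{1}.

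I do not foresee a real obstacle: this is a distributional argument and the hypothesis $\mathbf v_\sfr,\mathbf w_\sfr\in H^1(S_h\times(-M,M))^3$ together with Lemma \ref{lem:31} legitimises every integration by parts and every commutation of $\partial_i$ ($i=1,2$) with $\mathcal{R}$. The only piece requiring care is the bookkeeping for the third component of $\mathcal{L}$, where the combination $\lambda\Delta+(\lambda+\mu)\nabla\nabla\cdot$ produces the $(2\lambda+\mu)\partial_3^2 v_3$ coefficient that must match the third slot of $\mathbf G_1$; once that coefficient is tracked correctly the rest is bookkeeping.
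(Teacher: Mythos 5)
Your proposal is correct and is exactly the "direct verification" the paper has in mind (the paper omits the computation, merely noting that the lemma follows by applying the reduction operator $\mathcal{R}$ to \eqref{eq:3DT} and using the $x_3$-independence of $\eta$): integrating the 3D Lam\'e equations against $\phi$, moving all $\partial_3$'s onto $\phi$, and splitting the traction with $\bnu=(\nu_1,\nu_2,0)^\top$ reproduces $\widetilde{\mathcal L}$, $\mathbf G_1$, $\mathbf G_2$ and the stated transmission conditions, including the $(2\lambda+\mu)$ coefficient and the $\lambda\,\mathcal R(\partial_3 v_3)\nu$ and $\mu\,\mathcal R(\partial_3 v_i)\nu$ corrections. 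Your sign and coefficient bookkeeping checks out against \eqref{eq:Tu def} and \eqref{eq:l 321}, so no gap remains.
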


%


\begin{lem}\label{lem:pde decouple}
Under the same setup in Lemma \ref{lemma3.2}, the PDE system \eqref{1} is equivalent to
 \begin{align}\label{eq:2.}
\left\{
\begin{array}{l}
{ \mathcal{L}\mathcal{R}(\mathbf{v}^{(1,2)})(\mathbf{x}') =  \mathbf{G}^{(1,2)}_{1}(\mathbf{x}')}\hspace*{4.6cm}\ \mbox{ in } S_{h}, \\[5pt]
{ \mathcal{L}\mathcal{R}(\mathbf{w}^{(1,2)})(\mathbf{x}') =  \mathbf{G}^{(1,2)}_{2}(\mathbf{x}') }\hspace*{4.5cm}\ \mbox{ in } S_{h}, \\[5pt]
{ \mathcal{R}({\mathbf w}^{(1,2)})(\mathbf{x}') =\mathcal{R}({\mathbf v}^{(1,2)})(\mathbf{x}'),   \hspace*{4.2cm}\mbox{ on } \Gamma_{h}^{\pm}},\\[5pt]
{ \ T_{\nu}\mathcal{R}({\mathbf v}^{(1,2)})(\mathbf{x}') +\eta \mathcal{R} ({\mathbf v}^{(1,2)})(\mathbf{x}') =T_{\nu} \mathcal{R}({\mathbf w}^{(1,2)})(\mathbf{x}')\hspace*{0.6cm}\mbox{ on } \Gamma_{h}^{\pm}},
  \end{array}
\right.
\end{align}
and
\begin{align}\label{3}
\left\{
\begin{array}{l}
{\lambda\Delta'\mathcal{R}(v_3)(\mathbf{x}') =  \mathbf{G}_{1}^{(3)}(\mathbf{x}')}\hspace*{6.8cm}\ \mbox{ in } S_{h}, \\[5pt]
{\lambda\Delta'\mathcal{R}(w_3)(\mathbf{x}') =  \mathbf{G}_{2}^{(3)}(\mathbf{x}') }\hspace*{6.7cm}\ \mbox{ in } S_{h}, \\[5pt]
\mathcal{R}(w_3)(\mathbf{x}') =\mathcal{R}(v_3)(\mathbf{x}'),  \partial_{\nu}\mathcal{R}(v_3)(\mathbf{x}') +\mu^{-1}\eta\mathcal{R} (v_3)(\mathbf{x}') =\partial_{\nu}\mathcal{R}(w_3)(\mathbf{x}')  \hspace*{0.03cm}
\mbox{on } \Gamma_{h}^{\pm},
  \end{array}
\right.
\end{align}
where
 \begin{eqnarray*}
 \mathbf{G}_{1}^{(1,2)}(\mathbf{x}')&=&- \omega^{2}\mathcal{R}(\mathbf{v}^{(1,2)})(\mathbf{x}')-\int^{L}_{-L}\phi''(x_{3})
\begin{pmatrix}
    \lambda v_{1} \\
    \lambda v_{2}
\end{pmatrix}
(\mathbf{x}',x_{3}){\rm d}x_{3}
\\&&+(\lambda+\mu)\int^{L}_{-L}\phi'(x_{3})
\begin{pmatrix}
    \partial_{1} v_{3}          \\
     \partial_{2} v_{3}
\end{pmatrix}
(\mathbf{x}',x_{3}){\rm d}x_{3},
\\ \mathbf{G}_{2}^{(1,2)}(\mathbf{x}') &=&-\omega^{2}\mathcal{R}(q\mathbf{w}^{(1,2)})(\mathbf{x}')-\int^{L}_{-L}\phi''(x_{3})
\begin{pmatrix}
    \lambda w_{1} \\
    \lambda w_{2}
\end{pmatrix}
(\mathbf{x}',x_{3}){\rm d}x_{3}
 \\ &&+(\lambda+\mu)\int^{L}_{-L}\phi'(x_{3})
\begin{pmatrix}
    \partial_{1} w_{3}          \\
     \partial_{2} w_{3}
\end{pmatrix}
(\mathbf{x}',x_{3}){\rm d}x_{3},\\
 \mathbf{G}_{1}^{(3)}(\mathbf{x}')&=&- \omega^{2}\mathcal{R}({v}_3)(\mathbf{x}')-(2\lambda+\mu)\int^{L}_{-L}\phi''(x_{3})
    v_{3}
(\mathbf{x}',x_{3}){\rm d}x_{3}
\\&&+(\lambda+\mu)\int^{L}_{-L}\phi'(x_{3})
    (\partial_{1} v_{1}+\partial_{2}v_{2})
(\mathbf{x}',x_{3}){\rm d}x_{3},
\\ \mathbf{G}_{2}^{(3)}(\mathbf{x}') &=&-\omega^{2}\mathcal{R}(q{w}_3)(\mathbf{x}')-(2\lambda+\mu)\int^{L}_{-L}\phi''(x_{3})
    w_{3}
(\mathbf{x}',x_{3}){\rm d}x_{3}
 \\&&+(\lambda+\mu)\int^{L}_{-L}\phi'(x_{3})
   ( \partial_{1} w_{1}+\partial_{2}w_{2})
(\mathbf{x}',x_{3}){\rm d}x_{3}.
  \end{eqnarray*}
\end{lem}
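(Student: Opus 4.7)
The plan is to exploit the block-diagonal structure of the operator $\widetilde{\mathcal{L}}$ in \eqref{eq:l 321}, which as already indicated there splits into the two-dimensional Lam\'e operator $\mathcal{L}$ acting on the first two components and the scalar operator $\lambda\Delta'$ acting on the third component. This block structure immediately suggests splitting every quantity in \eqref{1} according to the decomposition $\mathbf{v}_{\sfr}=(\mathbf{v}^{(1,2)},v_3)$ and $\mathbf{w}_{\sfr}=(\mathbf{w}^{(1,2)},w_3)$ from \eqref{eq:defvw}, and then reading off the resulting two subsystems.

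First, I would verify by direct inspection of the definitions in Lemma~\ref{lemma3.2} that the source $\mathbf{G}_1$ admits the decomposition $\mathbf{G}_1=(\mathbf{G}_1^{(1,2)},\mathbf{G}_1^{(3)})$, where $\mathbf{G}_1^{(1,2)}$ and $\mathbf{G}_1^{(3)}$ are exactly as defined in Lemma~\ref{lem:pde decouple}, and similarly $\mathbf{G}_2=(\mathbf{G}_2^{(1,2)},\mathbf{G}_2^{(3)})$. Combined with the block structure of $\widetilde{\mathcal{L}}$, the two interior equations of \eqref{1} split cleanly into the interior equations of \eqref{eq:2.} and of \eqref{3} in the distributional sense. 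The Dirichlet-type transmission condition $\mathcal{R}(\mathbf{w}_\sfr)=\mathcal{R}(\mathbf{v}_\sfr)$ on $\Gamma_h^\pm$ also decomposes componentwise with no interaction.

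The only step requiring genuine work is showing that the generalised traction condition in \eqref{1} decouples. At first sight it does not: the first two rows involve the cross-term $\lambda\mathcal{R}(\partial_3 v_3)\nu$ and the third row involves $\mu(\mathcal{R}(\partial_3 v_1),\mathcal{R}(\partial_3 v_2))^\top\cdot \nu$, with analogous terms on the right-hand side. The key observation is that by the three-dimensional transmission identity $\mathbf{v}=\mathbf{w}$ on $\Gamma_h^\pm\times(-M,M)$ from \eqref{eq:3DT}, and because $x_3$ is a tangential coordinate along this strip, one may differentiate this identity in the $x_3$ direction to obtain $\partial_3 v_i=\partial_3 w_i$ on $\Gamma_h^\pm\times(-M,M)$ for $i=1,2,3$. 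Applying $\mathcal{R}$ then yields $\mathcal{R}(\partial_3 v_i)=\mathcal{R}(\partial_3 w_i)$ on $\Gamma_h^\pm$, so the cross-coupling terms cancel across the equality. What remains of the boundary condition is precisely the generalised traction condition in \eqref{eq:2.} together with the scalar impedance-type condition
\begin{equation*}
\partial_\nu\mathcal{R}(v_3)+\mu^{-1}\eta\mathcal{R}(v_3)=\partial_\nu\mathcal{R}(w_3)
\end{equation*}
on $\Gamma_h^\pm$, which is exactly the boundary condition in \eqref{3}.

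The reverse direction is immediate: given solutions of \eqref{eq:2.} and \eqref{3}, stacking them into a single $\mathbb{R}^3$-valued pair and applying the block operator $\widetilde{\mathcal{L}}$ recovers \eqref{1} both in the interior and on the boundary. I expect the main obstacle to be nothing more than the bookkeeping in the boundary cancellation argument described above, specifically the justification that $\partial_3$ is tangential on $\Gamma_h^\pm\times(-M,M)$ so that the 3D transmission identity really does pass to $\partial_3$; this relies essentially on the edge-type geometry of the corner domain $S_h\times(-M,M)$ rather than on any regularity beyond $H^1$.
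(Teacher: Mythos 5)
Your proposal is correct and follows essentially the same route as the paper: split everything according to the block-diagonal structure of $\widetilde{\mathcal{L}}$ in \eqref{eq:l 321} and cancel the cross-terms in the generalized traction condition via $\mathcal{R}(\partial_3(\mathbf{v}_\sfr-\mathbf{w}_\sfr))=\mathbf{0}$ on $\Gamma_h^{\pm}$. The paper obtains this key identity by transferring the $x_3$-derivative onto the compactly supported cut-off, $\mathcal{R}(\partial_3(\mathbf{v}_\sfr-\mathbf{w}_\sfr))=-\int_{-L}^{L}\phi'(x_3)(\mathbf{v}_\sfr-\mathbf{w}_\sfr)\,{\rm d}x_3=\mathbf{0}$, which is precisely the rigorous rendering of your tangential-differentiation step at $H^1$ regularity (where the trace of $\partial_3 v_i$ is not directly defined), so the two arguments coincide in substance.
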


\begin{proof}
Since $\mathbf v_\sfr =\mathbf w_\sfr $ on $\Gamma_h^\pm \times [-L,L]$, using \eqref{rg.}, it can be directly deduced that
\begin{equation}\label{eq:324 boundary}
	\mathcal R(\partial_3 ( \mathbf v_\sfr-\mathbf w_\sfr))=-\int_{-L}^L \phi'(x_3) ( \mathbf v_\sfr-\mathbf w_\sfr)=\mathbf 0 \quad \mbox{ on } \Gamma_h^\pm.
\end{equation}
From \eqref{1}, by virtue of \eqref{eq:l 321}, \eqref{eq:defvw} and \eqref{eq:324 boundary}, together with straightforward calculations, one can obtain \eqref{eq:2.} and \eqref{3} respectively.
\end{proof}

Next we mainly study the system \eqref{eq:2.}.
\begin{lem}\label{lem34}
Let $S_h$, $ \Lambda_{h}$ and $\Gamma_h^\pm$ be defined in \eqref{eq:SIGN}. 		Suppose that $ \mathbf{v}^{(1,2)}, \mathbf{w}^{(1,2)}\in H^1({S}_h\times(-M,M))^{2}$ fulfill  \eqref{eq:2.}. Recall that the CGO solution $\mathbf{u}(\mathbf{x})$  is defined in \eqref{eq:lame3} and $P_n(t)$ is the Legendre polynomial. Let $\beta=p \mbox{ or } s$, $j_{\ell,\beta}=j_{\ell}(k_\beta |\mathbf{x}|)$ and 
\begin{eqnarray*}
&&\mathbf{v}_{jp}^{(1,2)}(\mathbf{0})=\int_{\mathbb S^2}   \mathbf{g}_{\sfr, jp}^{(1,2)}({\mathbf d}) {\rm d}\sigma(\mathbf{d}), \ \mathbf{v}_{js}^{(1,2)}(\mathbf{0})=\int_{\mathbb S^2}   \mathbf{g}_{\sfr, js}^{(1,2)}({\mathbf d}) {\rm d}\sigma(\mathbf{d}), \\
& &\mathbf{B}_{j\beta,1}^{(\ell)} =\int_{\mathbb S^2} \mathbf{g}_{\sfr, j\beta }^{(1,2)}({\mathbf d}) P_{2\ell}(\cos  \varphi )  {\rm d}\sigma(\mathbf{d}),
\quad \mathbf{B}_{j\beta,2}^{(\ell)} =\int_{\mathbb S^2}    \mathbf{g}_{\sfi, j\beta}^{(1,2)}({\mathbf d}) P_{2\ell-1}(\cos  \varphi){\rm d}\sigma(\mathbf{d}). 
\end{eqnarray*}
 Denote
 $$\mathbf{v}_j^{\sfr}(\mathbf{x})=
 \begin{pmatrix}
     \mathbf{v}_{j}^{(1,2)}(\mathbf{x}) \\
   v_{j}^{(3)}(\mathbf{x})
\end{pmatrix} \in \mathbb R^3,\quad \mathbf{v}_{j}^{(1,2)}(\mathbf{x}) \in \mathbb R^2,
$$
where
 \begin{align}
 \mathbf{v}^{(1,2)}_j(\mathbf{x})=&\mathbf{v}_{jp}^{(1,2)}(\mathbf{0})j_{0}(k_{p}|\mathbf{x}|)+\mathbf{v}_{js}^{(1,2)}(\mathbf{0})j_{0}(k_{s}|\mathbf{x}|)
+ \sum_{\beta=p,\, s}\sum^{+\infty}_{\ell =1} (-1)^\ell (4\ell+1) j_{2\ell,\beta}\mathbf{B}_{j\beta,1}^{(\ell)} \notag
\\
&+2 \sum_{\beta=p,\, s} \sum^{+\infty}_{\ell =1}  (-1)^\ell  (4\ell-1)j_{2\ell-1,\beta}\mathbf{B}_{j\beta,2}^{(\ell)} , 
\label{eq:vj12 357}
\end{align}
and
\begin{equation}\notag
 \begin{aligned}
  v_{j} ^{(3)}(\mathbf x)=&j_{0}(k_p |\mathbf{x}|)\int_{\mathbb S^2} g_{\sfr ,jp}  ^{(3)}(\mathbf{d}){\rm d}\sigma(\mathbf{d})
 +j_{0}(k_s |\mathbf{x}|)\int_{\mathbb S^2} g_{\sfr ,js}  ^{(3)} (\mathbf{d}){\rm d}\sigma(\mathbf{d})
 \\&+\sum_{\beta=p,\, s } \sum^{+\infty}_{\ell =1} (-1)^\ell (4\ell+1) j_{2\ell}(k_\beta  |\mathbf{x}|)\int_{\mathbb S^2} g_{\sfr ,j\beta }  ^{(3)}(\mathbf{d})P_{2\ell}(\cos \varphi){\rm d}\sigma(\mathbf{d})
 \\&+\sum_{\beta=p,\, s } \sum^{+\infty}_{\ell =1}  (-1)^\ell(4\ell-1) j_{2\ell-1}(k_\beta |\mathbf{x}|)\int_{\mathbb S^2} g_{\sfi ,j\beta }  ^{(3)} (\mathbf{d})P_{2\ell-1}(\cos \varphi){\rm d}\sigma(\mathbf{d})
 \end{aligned}
 \end{equation}
 with $\mathbf{g}_{\sfr ,j\beta}^{(1,2)}  $, $\mathbf{g}_{\sfi ,j\beta}^{(1,2)}$, $g_{\sfr ,j\beta}^{(3)}  $ and $g_{\sfi ,j\beta}^{(3)}$ defined in  \eqref{g3}. Then the following integral equality holds
\begin{equation}\label{4}
	\widetilde{I}_1+\widetilde{I}_2=\widetilde{I}_{\Lambda_h }- \widetilde{I}_{\pm} -\widetilde{I}_{\pm}^\Delta,
\end{equation}
where
\begin{eqnarray}\label{eq:10}
& &\widetilde{I}_1=\int_{S_{h}}(\mathbf{f}_{1j}+\mathbf{f}_{2}+\mathbf{f}_{3}+\mathbf{f}_{4}) \cdot\mathbf{u}(\mathbf{x}'){\rm d}\mathbf{x}', \
 \widetilde{I}_2=\int_{S_{h}}(\mathbf{f}_{1}-\mathbf{f}_{1j}) \cdot\mathbf{u}(\mathbf{x}'){\rm d}\mathbf{x}',
 \\ && \widetilde{I}_{\Lambda_h }=\int_{\Lambda_{h}}(T_{\nu}\mathcal{R}(\mathbf{v}^{(1,2)}-\mathbf{w}^{(1,2)}))\cdot\mathbf{u}-(T_{\nu}(\mathbf{u}))\cdot \mathcal{R}(\mathbf{v}^{(1,2)}-\mathbf{w}^{(1,2)}){\rm d}\sigma, \notag
 \\ && \widetilde{I}_{\pm} =\int_{\Gamma^{\pm}_{h}}\eta(\mathbf{x}')\mathcal{R}(\mathbf{v}_{j}^{(1,2)})\cdot\mathbf{u}(\mathbf{x}'){\rm d}\sigma, \ \widetilde{I}_{\pm}^\Delta=\int_{\Gamma^{\pm}_{h}}\eta(\mathbf{x}')\mathcal{R}(\mathbf{v}^{(1,2)}-\mathbf{v}_{j}^{(1,2)} )\cdot\mathbf{u}(\mathbf{x}'){\rm d}\sigma, \notag
\\ && \mathbf{f}_{1}=-\omega ^{2}\mathcal{R}(\mathbf{v}^{(1,2)}), \
\mathbf{f}_{2}=\omega ^{2}\mathcal{R}(q\mathbf{w}^{(1,2)}),\
 \mathbf{f}_{3}=-\int^{L}_{-L}\phi''(x_{3})
\begin{pmatrix}
    \lambda (v_{1}-w_{1}) \\
    \lambda (v_{2}- w_{2})
\end{pmatrix}
(\mathbf{x}',x_{3}){\rm d}x_{3},\notag
\\
&& \mathbf{f}_{1j}=-\omega ^{2}\mathcal{R}(\mathbf{v}_{j}^{(1,2)}), \
 \mathbf{f}_{4}=(\lambda+\mu)\int^{L}_{-L}\phi'(x_{3})
\begin{pmatrix}
    \partial_{1} (v_{3}-  w_{3})          \\
     \partial_{2}( v_{3}   -  w_{3} )
\end{pmatrix}
(\mathbf{x}',x_{3}){\rm d}x_{3}. \notag
\end{eqnarray}
\end{lem}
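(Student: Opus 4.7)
The proof is the 3D dimension-reduced analogue of Lemma~\ref{lem:26}. My plan is to apply Green's formula (Lemma~\ref{lem:25}) on the planar sector $S_h \subset \mathbb R^2$ to the pair consisting of $\mathcal{R}(\mathbf{v}^{(1,2)}) - \mathcal{R}(\mathbf{w}^{(1,2)})$ and the two-dimensional CGO solution $\mathbf{u}(\mathbf{x}')$ from \eqref{eq:lame3}. The reduced functions lie in $H^1(S_h)^2$ by Lemma~\ref{lem:31}, and since $\mathcal{L}\mathbf{u} = \mathbf{0}$ in $S_h$, Green's identity gives
\begin{equation*}
\int_{S_h} \mathcal{L}\bigl(\mathcal{R}(\mathbf{v}^{(1,2)}-\mathbf{w}^{(1,2)})\bigr)\cdot \mathbf{u}\,\mathrm{d}\mathbf{x}'
= \int_{\Gamma_h^\pm \cup \Lambda_h}\!\! \bigl(T_\nu \mathcal{R}(\mathbf{v}^{(1,2)}-\mathbf{w}^{(1,2)})\bigr)\cdot \mathbf{u} - T_\nu(\mathbf{u})\cdot \mathcal{R}(\mathbf{v}^{(1,2)}-\mathbf{w}^{(1,2)})\,\mathrm{d}\sigma.
\end{equation*}

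Next I would rewrite the left-hand side using the dimension-reduced equations in \eqref{eq:2.}: $\mathcal{L}\mathcal{R}(\mathbf{v}^{(1,2)}) = \mathbf{G}_1^{(1,2)}$ and $\mathcal{L}\mathcal{R}(\mathbf{w}^{(1,2)}) = \mathbf{G}_2^{(1,2)}$. Comparing the explicit expressions for $\mathbf{G}_1^{(1,2)}$ and $\mathbf{G}_2^{(1,2)}$ term by term, one sees that $\mathbf{G}_1^{(1,2)} - \mathbf{G}_2^{(1,2)}$ decomposes precisely as $\mathbf{f}_1 + \mathbf{f}_2 + \mathbf{f}_3 + \mathbf{f}_4$ defined in \eqref{eq:10}: the $\omega^2$ volumetric pieces contribute $\mathbf{f}_1 + \mathbf{f}_2$, the $\phi''$ one-dimensional averages combine to $\mathbf{f}_3$, and the $\phi'$ averages to $\mathbf{f}_4$. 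The insertion of the artificial splitting $\mathbf{f}_1 = \mathbf{f}_{1j} + (\mathbf{f}_1 - \mathbf{f}_{1j})$ then separates the left-hand side volume integral into $\widetilde{I}_1 + \widetilde{I}_2$.

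For the right-hand side, I would use the transmission conditions in \eqref{eq:2.}: $\mathcal{R}(\mathbf{v}^{(1,2)}) = \mathcal{R}(\mathbf{w}^{(1,2)})$ on $\Gamma_h^\pm$ annihilates the $T_\nu(\mathbf{u})$ term there, and $T_\nu \mathcal{R}(\mathbf{v}^{(1,2)} - \mathbf{w}^{(1,2)}) = -\eta\,\mathcal{R}(\mathbf{v}^{(1,2)})$ on $\Gamma_h^\pm$. Decomposing $\mathcal{R}(\mathbf{v}^{(1,2)}) = \mathcal{R}(\mathbf{v}_j^{(1,2)}) + \mathcal{R}(\mathbf{v}^{(1,2)} - \mathbf{v}_j^{(1,2)})$ yields exactly $-\widetilde{I}_\pm - \widetilde{I}_\pm^\Delta$ from the $\Gamma_h^\pm$ portion, while the remaining $\Lambda_h$ portion is $\widetilde{I}_{\Lambda_h}$. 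Equating the two sides produces \eqref{4}. The main obstacle is bookkeeping rather than analysis: one must verify that the decoupled transmission condition in \eqref{eq:2.} carries the coefficient $\eta$ in exactly the form required by $\widetilde{I}_\pm$, a point that is already secured by the derivation in Lemmas~\ref{lemma3.2}--\ref{lem:pde decouple} (in particular \eqref{eq:324 boundary}, which eliminates any stray $\mathcal{R}(\partial_3(\mathbf{v}-\mathbf{w}))$ contribution on the lateral boundary). No CGO-parameter asymptotics are needed at this stage; they will enter only in the subsequent asymptotic estimates of the five integrals, which parallel Lemmas~\ref{lem:2.6}--\ref{lem:211}.
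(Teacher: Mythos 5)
Your proposal is correct and follows essentially the same route as the paper's own proof: Green's formula (Lemma~\ref{lem:25}) applied on $S_h$ to $\mathcal{R}(\mathbf{v}^{(1,2)}-\mathbf{w}^{(1,2)})$ and the CGO solution $\mathbf{u}$, substitution of the reduced equations and transmission conditions from \eqref{eq:2.}, and the splittings $\mathbf{f}_1=\mathbf{f}_{1j}+(\mathbf{f}_1-\mathbf{f}_{1j})$ and $\mathcal{R}(\mathbf{v}^{(1,2)})=\mathcal{R}(\mathbf{v}_j^{(1,2)})+\mathcal{R}(\mathbf{v}^{(1,2)}-\mathbf{v}_j^{(1,2)})$ on the boundary term. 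Your bookkeeping of $\mathbf{G}_1^{(1,2)}-\mathbf{G}_2^{(1,2)}=\mathbf{f}_1+\mathbf{f}_2+\mathbf{f}_3+\mathbf{f}_4$ and of the signs in \eqref{4} is accurate, so no gap remains.
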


\begin{proof}
Recall that $\mathcal{L}$ is defined in \eqref{eq:l 321}. Since $\mu>0$, we see that $3\lambda+2\mu >0$ implies  $3(\lambda+\mu )>0$ and hence also $2\lambda+2\mu>0 $, so $\mathcal{L}$ is strongly elliptic. Using Green's formula \eqref{eq:green1} on the domain $S_{h}$ 
together with ${\mathcal L} \mathbf{u}=\mathbf{0}$ in $S_h$, we have
\begin{equation}\label{eq:green}
 \begin{aligned}
 &\int_{S_{h}}\mathcal{L}\mathcal{R}(\mathbf{v}^{(1,2)}-\mathbf{w}^{(1,2)})\cdot\mathbf{u}(\mathbf{x}'){\rm d}\mathbf{x}'
 \\=&\int_{\Gamma^{\pm}_{h}\cup\Lambda_{h}}(T_{\nu}\mathcal{R}(\mathbf{v}^{(1,2)}-\mathbf{w}^{(1,2)}))\cdot\mathbf{u}-(T_{\nu}(\mathbf{u}))\cdot \mathcal{R}(\mathbf{v}^{(1,2)}-\mathbf{w}^{(1,2)}){\rm d}\sigma.
 \end{aligned}
\end{equation}
Using the boundary condition in \eqref{eq:2.}, it yields that
\begin{equation}\label{5}
 \begin{aligned}
 &\int_{\Gamma^{\pm}_{h}}(T_{\nu}\mathcal{R}(\mathbf{v}^{(1,2)}-\mathbf{w}^{(1,2)}))\cdot\mathbf{u}-(T_{\nu}(\mathbf{u}))\cdot \mathcal{R}(\mathbf{v}^{(1,2)}-\mathbf{w}^{(1,2)}){\rm d}\sigma
\\=& -\int_{\Gamma^{\pm}_{h}}\eta(\mathbf{x}')\mathcal{R}(\mathbf{v}^{(1,2)})\cdot\mathbf{u}(\mathbf{x}'){\rm d}\sigma
\\=&
-\int_{\Gamma^{\pm}_{h}}\eta(\mathbf{x}')\mathcal{R}(\mathbf{v}_{j}^{(1,2)})\cdot\mathbf{u}(\mathbf{x}'){\rm d}\sigma-\int_{\Gamma^{\pm}_{h}}\eta(\mathbf{x}')\mathcal{R}(\mathbf{v}^{(1,2)}-\mathbf{v}_{j}^{(1,2)} )\cdot\mathbf{u}(\mathbf{x}'){\rm d}\sigma.
\end{aligned}
\end{equation}
By virtue of  \eqref{eq:2.}, we have
\begin{equation}\label{6}
 \begin{aligned}
 &\int_{S_{h}}\mathcal{L}\mathcal{R}(\mathbf{v}^{(1,2)}-\mathbf{w}^{(1,2)})\cdot\mathbf{u}(\mathbf{x}'){\rm d}\mathbf{x}'\\=&\int_{S_{h}} (\mathbf{G}_{1}^{(1,2)}-\mathbf{G}_{2}^{(1,2)})\cdot\mathbf{u}(\mathbf{x}'){\rm d}\mathbf{x}'
=\int_{S_{h}}(\mathbf{f}_{1}+\mathbf{f}_{2}+\mathbf{f}_{3}+\mathbf{f}_{4}) \cdot\mathbf{u}(\mathbf{x}'){\rm d}\mathbf{x}'
 \\=&\int_{S_{h}}(\mathbf{f}_{1j}+\mathbf{f}_{2}+\mathbf{f}_{3}+\mathbf{f}_{4}) \cdot\mathbf{u}(\mathbf{x}'){\rm d}\mathbf{x}'
 +
 \int_{S_{h}}(\mathbf{f}_{1}-\mathbf{f}_{1j}) \cdot\mathbf{u}(\mathbf{x}'){\rm d}\mathbf{x}'.
 \end{aligned}
\end{equation}
From \eqref{eq:green}, \eqref{5} and \eqref{6}, we can derive \eqref{4}.
\end{proof}

Similar to Lemma \ref{lem:2.6}, for the integral  $\widetilde{I}_{\Lambda_{h}}$ defined in \eqref{4} one has
\begin{lem}\label{lem:3.5}
Recall that $ \widetilde{I}_{\Lambda_{h}}$ is defined in \eqref{eq:10}. Under the same setup in Lemma \ref{lem34}, the following integral estimate holds
\begin{equation}\label{e3}
| \widetilde{I}_{\Lambda_{h}}|\leq C \frac{\sqrt{ 2h+ s^2}+\mu s}{\sqrt 2}\sqrt {\theta_M-\theta_m }e^{-s \sqrt h \delta_W}\| \mathbf{v}^{(1,2)}-\mathbf{w}^{(1,2)} \|_{H^1(S_{h } )^{2}  },
\end{equation}
where $C$ is a positive constant coming from the trace  theorem, $S_{h }$ and $\delta_W>0$ are defined in \eqref{eq:SIGN} and  \eqref{eq:lame7}, respectively.
\end{lem}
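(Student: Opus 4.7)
The plan is to mimic the proof of Lemma~\ref{lem:2.6} verbatim at the level of the reduced 2D system \eqref{eq:2.}, since the integrand of $\widetilde{I}_{\Lambda_h}$ has exactly the same bilinear structure as $I_{\Lambda_h}$ in \eqref{eq:int1}, with $\mathbf{v}_\sfr-\mathbf{w}_\sfr$ replaced by $\mathcal{R}(\mathbf{v}^{(1,2)}-\mathbf{w}^{(1,2)})$. Concretely, I first split
\[
|\widetilde{I}_{\Lambda_h}| \leq \|T_{\nu}\mathcal{R}(\mathbf{v}^{(1,2)}-\mathbf{w}^{(1,2)})\|_{H^{-1/2}(\Lambda_h)^2} \|\mathbf{u}\|_{H^{1/2}(\Lambda_h)^2} + \|T_{\nu}\mathbf{u}\|_{L^2(\Lambda_h)^2}\|\mathcal{R}(\mathbf{v}^{(1,2)}-\mathbf{w}^{(1,2)})\|_{L^2(\Lambda_h)^2}
\]
via H\"older's inequality, exactly as in \eqref{eq:233 bound}.

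Next I would invoke the standard trace theorem on $S_h$ to dominate the $H^{-1/2}(\Lambda_h)$-norm of the traction and the $L^2(\Lambda_h)$-norm of the trace by $\|\mathcal{R}(\mathbf{v}^{(1,2)}-\mathbf{w}^{(1,2)})\|_{H^1(S_h)^2}$ up to an absolute constant $C$ (which also absorbs the continuity constant of $T_\nu\colon H^1(S_h)^2\to H^{-1/2}(\partial S_h)^2$). The key point that makes this work in the 3D setting is Lemma~\ref{lem:31}, which guarantees that $\mathcal{R}(\mathbf{v}^{(1,2)}-\mathbf{w}^{(1,2)})$ genuinely lies in $H^1(S_h)^2$ whenever the underlying 3D object lies in $H^1(S_h\times(-M,M))^2$, so the trace theorem is applicable.

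Then the CGO estimates \eqref{eq:u H1} and \eqref{eq:Tu L2} from Lemma~\ref{lem:24 u tu} give
\[
\|\mathbf{u}\|_{H^1(\Lambda_h)^2}\leq \sqrt{h+\tfrac{s^2}{2}}\,\sqrt{\theta_M-\theta_m}\,e^{-s\sqrt{h}\,\delta_W},\qquad \|T_\nu\mathbf{u}\|_{L^2(\Lambda_h)^2}\leq \tfrac{s\mu}{\sqrt{2}}\sqrt{\theta_M-\theta_m}\,e^{-s\sqrt{h}\,\delta_W},
\]
both of which decay exponentially in $s$. Combining these with the trace bounds from the previous paragraph yields the claimed estimate \eqref{e3}. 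I expect no genuine obstacle here: the whole content of the lemma is that the planar portion of the CGO machinery (Lemma~\ref{lem:24 u tu}) is compatible with the reduction operator $\mathcal{R}$, and Lemma~\ref{lem:31} is precisely what bridges the two. The only mild bookkeeping point is that the constant $C$ should be taken to be the product of the trace constant for $S_h$ and the continuity constant for $T_\nu$, both of which are independent of $s$ so they do not interfere with the exponential decay in $s$.
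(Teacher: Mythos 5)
Your proposal is correct and follows essentially the same route as the paper, which itself only asserts the lemma as "similar to Lemma \ref{lem:2.6}": the H\"older splitting as in \eqref{eq:233 bound}, the trace theorem on $S_h$ (justified for the reduced function via Lemma \ref{lem:31}), and the CGO decay bounds \eqref{eq:u H1}--\eqref{eq:Tu L2}. The only bookkeeping remark is that to match the stated right-hand side one also absorbs into $C$ the continuity of $\mathcal{R}$ (a Cauchy--Schwarz bound in $x_3$ with the weight $\phi$, as in \eqref{eq:127}), so $C$ depends on $L$ and $\|\phi\|_{L^\infty}$ as well as the trace constant, none of which affects the exponential decay in $s$.
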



\begin{lem}\label{lem35}
	Under the same setup in Lemma \ref{lem34}, for any given positive constants $\gamma$, $\beta_1$ and $\beta_2$, we assume that there exits a sequence of the Herglotz wave functions $\{\mathbf{v}_j\} _{j=1}^{+\infty} $, where $\mathbf{v}_j $ is defined by \eqref{eq:vj3}, can approximate  $\mathbf{v}$ in $H^1(S_h \times (-M,M))^{3}$  satisfying
\begin{equation}\label{eq:vj aprrox 3}
\| \mathbf{v}-\mathbf{v}_{j}\|_{H^{1}(S_{h} \times (-M,M))^{3}}\leq j^{-\gamma}, \
\| \mathbf{g}_{jp}\|_{L^{2}(\mathbb{S}^{2})^{3}}\leq j^{\beta_{1}}, \
\| \mathbf{g}_{js}\|_{L^{2}(\mathbb{S}^{2})^{3}}\leq j^{\beta_{2}}. 
\end{equation}
Furthermore, suppose that  $\eta \in C^\alpha (\overline{\Gamma_{h}^{\pm}}\times [-M,M] )$, $0<\alpha<1$, and hence it holds that
\begin{equation}\label{etae}
\eta(\mathbf{x}')=\eta(\mathbf{0})+\delta\eta(\mathbf{x}'), \
|\delta\eta(\mathbf{x}')|\leq\|\eta(\mathbf{x}')\|_{C^\alpha}|\mathbf{x}'|^\alpha
\end{equation}for $ 0<\alpha<1$.
Recall that $\widetilde{I}_2$ and $\widetilde{I}_{\pm}^\Delta$ are defined in \eqref{eq:10}. Then we have the following estimates
 \begin{align}
 \left |\widetilde{I}_2\right|
 &\leq  h\sqrt{2L(\theta_{M}-\theta_{m})}e^{-s\sqrt{\Theta}\delta_{W}} \|\phi\|_{L^\infty} j^{-\gamma},\label{eq:20}
 \\
 \left|  \widetilde{I}_{\pm}^\Delta\right|
& \leq  C\|\phi\|_{\infty}(|\eta(0)|h\sqrt{\theta_{M}-\theta_{m}}e^{-s\sqrt{\Theta}\delta_{W}}
   +\| \eta\|_{C^{\alpha}}\frac{2\sqrt{\theta_{M}-\theta_{m}\Gamma(4\alpha+4)}}{(2\delta_{W})^{2\alpha+2}}s^{-2\alpha-2})j^{-\gamma}, \notag
 \end{align}
where $C$ is a positive constants depending on $L$ and $h$.
\end{lem}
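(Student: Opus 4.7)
The plan is to adapt the two-dimensional estimates of Lemma~\ref{lem:27} (namely \eqref{eq:lame22} and \eqref{eq:lame35}) to the present three-dimensional setting, with the dimension-reduction operator $\mathcal{R}$ contributing an extra factor proportional to $\|\phi\|_{L^\infty}$.

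First I would bound $\widetilde{I}_2$. By Cauchy--Schwarz in $L^2(S_h)^2$,
\[
|\widetilde{I}_2| \leq \omega^2 \,\|\mathcal{R}(\mathbf{v}^{(1,2)}-\mathbf{v}_j^{(1,2)})\|_{L^2(S_h)^2}\,\|\mathbf{u}\|_{L^2(S_h)^2}.
\]
From \eqref{rg.} and Cauchy--Schwarz in $x_3$,
\[
|\mathcal{R}(\mathbf{h})(\mathbf{x}')| \leq \|\phi\|_{L^\infty}\sqrt{2L}\left(\int_{-L}^{L}|\mathbf{h}(\mathbf{x}',x_3)|^2\,{\rm d}x_3\right)^{1/2},
\]
which, integrated over $S_h$, yields
\[
\|\mathcal{R}(\mathbf{h})\|_{L^2(S_h)^2} \leq \|\phi\|_{L^\infty}\sqrt{2L}\,\|\mathbf{h}\|_{L^2(S_h\times(-L,L))^2}.
\]
Combined with the approximation property \eqref{eq:vj aprrox 3} and the CGO bound \eqref{eq:lame21}, this delivers \eqref{eq:20} (absorbing $\omega^2$ into the implicit constant, as in \eqref{eq:lame22}).

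Next, for $\widetilde{I}_\pm^\Delta$, I would mimic \eqref{eq:lame30}. Split $\eta=\eta(\mathbf{0})+\delta\eta$ using \eqref{etae} to write $\widetilde{I}_\pm^\Delta$ as the sum of two pieces, then apply the $H^{1/2}$--$H^{-1/2}$ duality pairing on $\Gamma_h^\pm$:
\[
|\widetilde{I}_\pm^\Delta| \leq |\eta(\mathbf{0})|\,\|\mathcal{R}(\mathbf{v}^{(1,2)}{-}\mathbf{v}_j^{(1,2)})\|_{H^{1/2}(\Gamma_h^\pm)^2}\|\mathbf{u}\|_{H^{-1/2}(\Gamma_h^\pm)^2} + \|\eta\|_{C^\alpha}\|\mathcal{R}(\mathbf{v}^{(1,2)}{-}\mathbf{v}_j^{(1,2)})\|_{H^{1/2}(\Gamma_h^\pm)^2}\| |\mathbf{x}|^\alpha\mathbf{u}\|_{H^{-1/2}(\Gamma_h^\pm)^2}.
\]
The trace theorem on $S_h$ controls the $H^{1/2}$-norm by the $H^1$-norm, and Lemma~\ref{lem:31} together with the $L^\infty$-bound on $\phi$ gives
\[
\|\mathcal{R}(\mathbf{v}^{(1,2)}{-}\mathbf{v}_j^{(1,2)})\|_{H^1(S_h)^2} \leq C\|\phi\|_{L^\infty}\|\mathbf{v}-\mathbf{v}_j\|_{H^1(S_h\times(-L,L))^3} \leq C\|\phi\|_{L^\infty} j^{-\gamma}
\]
by \eqref{eq:vj aprrox 3}. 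On the $\mathbf{u}$-side I would use $\|\mathbf{u}\|_{H^{-1/2}(\Gamma_h^\pm)^2}\leq\|\mathbf{u}\|_{L^2(S_h)^2}$ and $\| |\mathbf{x}|^\alpha\mathbf{u}\|_{H^{-1/2}(\Gamma_h^\pm)^2}\leq\| |\mathbf{x}|^\alpha\mathbf{u}\|_{L^2(S_h)^2}$ (as in the passage to \eqref{eq:lame30}), and plug in the explicit bounds \eqref{eq:lame21} and \eqref{eq:lame34} from Lemma~\ref{lem:23}.

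The main subtlety will be the constants in the trace/$\mathcal{R}$ chain: one must make sure that the constant $C$ in the bound on $\mathcal{R}$ genuinely involves only $L$, $h$, and $\|\phi\|_{L^\infty}$ (together possibly with $\|\phi'\|_{L^\infty}$ through Lemma~\ref{lem:31}), so that the final estimate can be written with a single $\|\phi\|_\infty$ prefactor and a constant $C=C(L,h)$ as in the statement. Once this is settled, the $s^{-2\alpha-2}$ term arises exactly from \eqref{eq:lame34} as in the 2D proof, and the remaining terms assemble into the claimed bound for $\widetilde{I}_\pm^\Delta$.
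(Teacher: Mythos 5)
Your proposal is correct and follows essentially the same route as the paper: Cauchy--Schwarz in $x_3$ to pull out $\sqrt{2L}\,\|\phi\|_{L^\infty}$ from the dimension-reduction operator, combined with \eqref{eq:vj aprrox 3} and the CGO bound \eqref{eq:lame21} for $\widetilde{I}_2$, and the splitting \eqref{etae} plus the $H^{1/2}$--$H^{-1/2}$ duality, trace theorem, Lemma~\ref{lem:31} and \eqref{eq:lame21}, \eqref{eq:lame34} for $\widetilde{I}_\pm^\Delta$. The subtlety you flag about $\|\phi'\|_{L^\infty}$ is harmless, since $\mathcal{R}$ commutes with the $\mathbf{x}'$-derivatives, so only $\|\phi\|_{L^\infty}$ enters, exactly as in the paper.
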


\begin{proof}
Clearly, $\mathbf{v}_\sfr$ can be approximated by $\{\mathbf{v}_j^{\sf R}\}_{j=1}^{+\infty} $ in the sense of \eqref{eq:vj aprrox 3}.  Therefore, by the definition of $\mathbf{v}_\sfr^{(1,2)}$  given by \eqref{eq:defvw} and $\mathbf{v}^{(1,2)}_j$ given by \eqref{vj12}, it can be directly seen that
\begin{equation}\label{eq:vj aprrox 3 new}
\| \mathbf{v}_\sfr^{(1,2)}-\mathbf{v}_{j}^{(1,2)}\|_{H^{1}(S_{h}\times (-L,L))^{2} }\leq \| \mathbf{v}_\sfr-\mathbf{v}_{j}^\sfr\|_{H^{1}(S_{h} \times (-M,M))^{3}} \leq j^{-\gamma}.
\end{equation}
By \eqref{eq:vj aprrox 3 new} and using Cauchy-Schwarz inequality,  we have
\begin{equation}\label{eq:127}
 \begin{aligned}
&\|\mathcal{R}(\mathbf{v}^{(1,2)})-\mathcal{R}(\mathbf{v}^{(1,2)}_{j} ) \|_{L^2(S_h)^{2}}^{2}\\
&\quad =\int_{S_h } \left| \int_{-L}^L  \phi(x_3) (\mathbf{v}^{(1,2)}(\mathbf{x}',x_3)-\mathbf{v}^{(1,2)}_j(\mathbf{x}',x_3))  {\rm d} x_3 \right|^2 {\rm d} \mathbf{x}'\\
&\quad \leq  2L \|\phi\|_{L^{\infty} }^2\|\mathbf{v}-\mathbf{v}_j\|_{L^2(S_h \times (-L,L))^{3}}^2.
 \end{aligned}
\end{equation}
By virtue of \eqref{eq:lame21} and \eqref{eq:127}, we have
\begin{equation}\notag 
 \begin{aligned}
 \left |\widetilde{I}_2\right|=&
\left |\int_{S_{h}}(\omega ^{2}\mathcal{R}(\mathbf{v}^{(1,2)})-\omega ^{2}\mathcal{R}(\mathbf{v}^{(1,2)}_{j}))\cdot\mathbf{u}(\mathbf{x}'){\rm d}\mathbf{x}'\right|
 \\\leq& \omega ^2 \| {\mathcal R}( \mathbf{v}^{(1,2)})-{\mathcal R}(\mathbf{v}^{(1,2)}_j) \|_{L^2(S_h )^{2}}  \|\mathbf{u}(\mathbf{x}')\|_{L^2(S_h)^{2}}
 \\\leq &Ch\sqrt{\theta_{M}-\theta_{m}}e^{-s\sqrt{\Theta}\delta_{W}} \|\phi\|_{L^\infty} j^{-\gamma}.
\end{aligned}
\end{equation}
Using Cauchy-Schwarz inequality and the trace theorem, according to  \eqref{etae}, we have
\begin{equation}\notag 
 \begin{aligned}
  \left|   \widetilde{I}_{\pm}^\Delta\right|\leq&|\eta(\mathbf{0})|\int_{\Gamma_{h}^{\pm}}| \mathbf{u}|| \mathcal{R}(\mathbf{v}^{(1,2)}-\mathbf{v}^{(1,2)}_{j})| {\rm d}\sigma+\| \eta\|_{C^{\alpha}}\int_{\Gamma_{h}^{\pm}}| \mathbf{x'}|^{\alpha}|\mathbf{u}|| \mathcal{ R}(\mathbf{v}^{(1,2)}-\mathbf{v}^{(1,2)}_{j})| {\rm d}\sigma
 \\\leq&|\eta(\mathbf{0})|\|\mathcal{R} (\mathbf{v}^{(1,2)}-\mathbf{v}^{(1,2)}_{j})\|_{H^{\frac{1}{2}}(\Gamma_{h}^{\pm})^{2}}\| \mathbf{u}\|_{H^{-\frac{1}{2}}(\Gamma_{h}^{\pm})^{2}}
 \\&+\| \eta\|_{C^{\alpha}}\| \mathcal{R}(\mathbf{v}^{(1,2)}-\mathbf{v}^{(1,2)}_{j})\|_{H^{\frac{1}{2}}(\Gamma_{h}^{\pm})^{2}}\|\| | \mathbf{x'}|^{\alpha} \mathbf{u}\|_{H^{-\frac{1}{2}}(\Gamma_{h}^{\pm})^{2}}
 \\\leq&|\eta(\mathbf{0})|\|\mathcal{R}( \mathbf{v}^{(1,2)}-\mathbf{v}^{(1,2)}_{j})\|_{H^{1}(S_{h})^{2}}\| \mathbf{u}\|_{L^{2}(S_{h})^{2}}
 \\&+\| \eta\|_{C^{\alpha}}\| \mathcal{R} (\mathbf{v}^{(1,2)}-\mathbf{v}_{j}^{(1,2)})\|_{H^{1}(S_{h})^{2}}\|\|| \mathbf{x}'|^{\alpha} \mathbf{u}\|_{L^{2}(S_{h})^{2}}
  \\\leq&C\|\phi\|_{L^{{\infty}}}\|\mathbf{v}^{(1,2)}-\mathbf{v}^{(1,2)}_{j}\|_{H^{1}(S_{h})^{2}}(|\eta(\mathbf 0)|  \| \mathbf{u}\|_{L^{2}(S_{h})^{2}}+\| \eta\|_{C^{\alpha}}\|| \mathbf{x}'|^{\alpha} \mathbf{u}\|_{L^{2}(S_{h})^{2}})
   \\\leq&C\|\phi\|_{L^{\infty}}(|\eta(\mathbf 0)|h\sqrt{\theta_{M}-\theta_{m}}e^{-s\sqrt{\Theta}\delta_{W}}
   +\| \eta\|_{C^{\alpha}}\frac{2\sqrt{\theta_{M}-\theta_{m}\Gamma(4\alpha+4)}}{(2\delta_{W})^{2\alpha+2}}s^{-2\alpha-2})j^{-\gamma},
 \end{aligned}
\end{equation}
where $C$ is a positive constant and the last second inequality comes from Lemma \ref{lem:31}.
\end{proof}

\begin{lem}\label{lem:c alpha}
Let  $\mathbf{f}_{2}(\mathbf{x}')$, $\mathbf{f}_{3}(\mathbf{x}')$ and $\mathbf{f}_{4}(\mathbf{x}')$ be defined in \eqref{eq:10}. If
	  \begin{equation}\label{eq:qw 3}
	  		  q\mathbf w_\sfr \in C^\alpha(\overline{S}_h\times [-M,M] ),
	  \end{equation}
 then $\mathbf{f}_{2}(\mathbf{x}') \in C^\alpha(S_h)$. Furthermore we assume that
\begin{equation}\label{eq:vr-wr}
	 \mathbf{v}_\sfr- \mathbf{w}_\sfr  \in C^{1,\alpha} (\overline{S}_h\times[-M,M])^{3},
\end{equation}
where $\alpha \in (0,1)$.  Then one has $\mathbf{f}_{3}(\mathbf{x}') \in C^\alpha(\overline{S}_h)$ and $\mathbf{f}_{4}(\mathbf{x}')\in C^\alpha(\overline{S}_h)$.
\end{lem}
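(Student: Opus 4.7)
The plan is to reduce each of the three claims to the following elementary fact, which is essentially the $C^\alpha$-part of Lemma \ref{lem:31}: if $\mathbf{g}\in C^\alpha(\overline{S}_h\times[-M,M])$ and $\psi\in L^\infty([-M,M])$ (in particular, $\psi=\phi,\phi',\phi''$), then the averaged function
\[
\mathcal{R}_\psi(\mathbf{g})(\mathbf{x}'):=\int_{-L}^{L}\psi(x_3)\,\mathbf{g}(\mathbf{x}',x_3)\,\mathrm{d}x_3
\]
lies in $C^\alpha(\overline{S}_h)$, with norm controlled by $2L\,\|\psi\|_{L^\infty}\,\|\mathbf{g}\|_{C^\alpha}$. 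Indeed, for any $\mathbf{x}',\mathbf{y}'\in \overline{S}_h$,
\[
|\mathcal{R}_\psi(\mathbf{g})(\mathbf{x}')-\mathcal{R}_\psi(\mathbf{g})(\mathbf{y}')|\leq \|\psi\|_{L^\infty}\int_{-L}^{L}|\mathbf{g}(\mathbf{x}',x_3)-\mathbf{g}(\mathbf{y}',x_3)|\,\mathrm{d}x_3 \leq 2L\|\psi\|_{L^\infty}\|\mathbf{g}\|_{C^\alpha}|\mathbf{x}'-\mathbf{y}'|^\alpha,
\]
and the sup bound is similarly immediate. This is the workhorse estimate; everything below is an application of it.

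First I would handle $\mathbf{f}_2$. By hypothesis \eqref{eq:qw 3} we have $q\mathbf{w}_\sfr\in C^\alpha(\overline{S}_h\times[-M,M])^3$, hence in particular the first two components $q\mathbf{w}^{(1,2)}$ lie in $C^\alpha(\overline{S}_h\times[-M,M])^2$. Applying the above workhorse estimate with $\psi=\phi$ gives $\mathcal{R}(q\mathbf{w}^{(1,2)})\in C^\alpha(\overline{S}_h)^2$, so $\mathbf{f}_2=\omega^2\mathcal{R}(q\mathbf{w}^{(1,2)})\in C^\alpha(\overline{S}_h)^2$.

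For $\mathbf{f}_3$, the assumption \eqref{eq:vr-wr} yields $v_i-w_i\in C^{1,\alpha}(\overline{S}_h\times[-M,M])\subset C^\alpha(\overline{S}_h\times[-M,M])$ for $i=1,2$. Applying the workhorse estimate with $\psi=\phi''\in C_0^\infty((-L,L))\subset L^\infty$ to each component $\lambda(v_i-w_i)$ yields $\mathbf{f}_3\in C^\alpha(\overline{S}_h)^2$.

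For $\mathbf{f}_4$, the key point is that $C^{1,\alpha}$-regularity of $v_3-w_3$ in the full variable $(\mathbf{x}',x_3)$ guarantees that the partial derivatives $\partial_1(v_3-w_3)$ and $\partial_2(v_3-w_3)$ belong to $C^\alpha(\overline{S}_h\times[-M,M])$. (This is the only step that requires attention: the pointwise partial derivative is classical because $v_3-w_3\in C^{1,\alpha}$, and by definition of $C^{1,\alpha}$ these partials are $\alpha$-H\"older continuous in $(\mathbf{x}',x_3)$, a fortiori in $\mathbf{x}'$ uniformly in $x_3$.) Once this regularity of the integrands is secured, applying the workhorse estimate with $\psi=\phi'\in L^\infty$ to each component $(\lambda+\mu)\partial_k(v_3-w_3)$, $k=1,2$, gives $\mathbf{f}_4\in C^\alpha(\overline{S}_h)^2$, completing the proof.

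The only mildly delicate point is the one just flagged for $\mathbf{f}_4$: one must use the full strength of $C^{1,\alpha}$ (rather than merely $C^\alpha$) exactly to push the tangential derivative $\partial_k$ inside the $x_3$-integral while retaining $C^\alpha$-regularity of the integrand; the rest of the argument is routine.
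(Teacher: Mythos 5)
Your proposal is correct and takes essentially the same route as the paper: the paper obtains $\mathbf{f}_2\in C^\alpha(\overline{S}_h)$ from the $C^\alpha$-part of Lemma \ref{lem:31}, and for $\mathbf{f}_3,\mathbf{f}_4$ it directly bounds the H\"older quotients of the $x_3$-integrals using the boundedness of $\phi''$, $\phi'$ together with the $C^{1,\alpha}$ (hence $C^\alpha$) regularity of $v_i-w_i$ and of $\partial_k(v_3-w_3)$, which is precisely your ``workhorse'' estimate with $\psi=\phi,\phi',\phi''$.
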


\begin{proof}
According to \eqref{eq:vr-wr}, one handily computes that
\begin{eqnarray*}
\left| \mathbf{f}_{3}(\mathbf x')- \mathbf{f}_{3}(\mathbf y') \right|&=&|\lambda|\left|\int^{L}_{-L}\phi''(x_{3}) \left(
\begin{pmatrix}
     (v_{1}-w_{1}) \\
    (v_{2}- w_{2})
\end{pmatrix}
(\mathbf{y}',x_{3})-\begin{pmatrix}
     (v_{1}-w_{1}) \\
     (v_{2}- w_{2})
\end{pmatrix}(\mathbf{x}',x_{3})\right) {\rm d}x_{3}\right|\\
&\leq& 2C|\lambda| L\|\phi\|_{C^2}\|\mathbf{v}_\sfr- \mathbf{w}_\sfr \|_{C^{1,\alpha} ( ({S}_h\times(-M,M))^{3})} \|\mathbf x'-\mathbf y'\|^\alpha,   \\
| \mathbf{f}_{4}(\mathbf x')- \mathbf{f}_{4}(\mathbf y')|&=&|\lambda+\mu|\Bigg | \int^{L}_{-L}\phi'(x_{3})\Big (
\begin{pmatrix}
    \partial_{1} (v_{3}-  w_{3})          \\
     \partial_{2}( v_{3}   -  w_{3} )
\end{pmatrix}
(\mathbf{x}',x_{3})\\
&&-\begin{pmatrix}
    \partial_{1} (v_{3}-  w_{3})          \\
     \partial_{2}( v_{3}   -  w_{3} )
\end{pmatrix}
(\mathbf{y}',x_{3}) \Big) {\rm d}x_{3}\Bigg |\\
&\leq&2 |\lambda+\mu| L\|\phi\|_{C^1}\|\mathbf{v}_\sfr- \mathbf{w}_\sfr \|_{C^{1,\alpha} ( ({S}_h\times(-M,M))^{3})} \|\mathbf x'-\mathbf y'\|^\alpha,
\end{eqnarray*}
where $C$ is a positive constant. Therefore we know that $\mathbf{f}_{3}(\mathbf{x}') \in C^\alpha(\overline{S}_h)$ and $\mathbf{f}_{4}(\mathbf{x}')\in C^\alpha(\overline{S}_h)$.

Due to \eqref{eq:qw 3}, $\mathbf{f}_{2}(\mathbf{x}') \in C^\alpha(\overline{S}_h)$ can be obtained directly by virtue of Lemma \ref{lem:31}.
\end{proof}

\begin{lem}\label{lem36}
Recall that  $\mathbf{f}_{2}(\mathbf{x}')$, $\mathbf{f}_{3}(\mathbf{x}')$ and $\mathbf{f}_{4}(\mathbf{x}')$ are defined in \eqref{eq:10}. Suppose that $\mathbf{f}_{\ell}(\mathbf{x}') \in C^\alpha(\overline{S}_h)$ ($\ell=2,3,4$), $0<\alpha<1$. Recall that $  \widetilde{I}_{1}$ is defined in \eqref{eq:10},  then the following integral estimates hold
\begin{eqnarray}
\left | \widetilde{I}_{1} \right |
&\leq &  6 (\mathbf{f}_{1j}(\mathbf{0})+\mathbf{f}_{2}(\mathbf{0})+\mathbf{f}_{3}(\mathbf{0})+\mathbf{f}_{4}(\mathbf{0}))        |e^{-2\theta_{M}{\rm i}}-e^{-2\theta_{m}\rm i}|s^{-4} \label{eq:I1}
\\&&+\sum_{\ell=2}^4\| \mathbf{f}_{\ell }\|_{C^{\alpha}(S_{h})^{2}}\frac{2\sqrt{2}(\theta_{M}-\theta_{m})\Gamma(2\alpha+4)}{\delta_{W}^{2\alpha+4}}s^{-2\alpha-4} \notag
\\&&+
4 L\omega^2 {\rm diam}(S_h)^{1-\alpha } \sqrt{\pi}\|\phi\|_{L^\infty}
\sum_{\beta=p,\, s}(1+k_\beta)\bigg(\|\mathbf{g}_{\sfr,j\beta}^{(1,2)}\|_{L^2({\mathbb S}^{2})}
+\|\mathbf{g}_{\sfi,j\beta}^{(1,2)}\|_{L^2({\mathbb S}^{2})} \bigg), \notag
 \end{eqnarray}
where $\mathbf{g}_{\sfr ,j\beta}^{(1,2)}  $ and $\mathbf{g}_{\sfi ,j\beta}^{(1,2)}$  are defined in  \eqref{g3} ($\beta=p,\ s$), $  \delta_{W}$ is defined in \eqref{eq:lame7}, $\theta_{m}$ and  $\theta_{M}$  are defined in \eqref{eq:lame1}.
\end{lem}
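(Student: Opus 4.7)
The plan is to mirror the 2D argument of Lemma~\ref{lem:28}, exploiting the Hölder regularity of $\mathbf{f}_2,\mathbf{f}_3,\mathbf{f}_4$ guaranteed by Lemma~\ref{lem:c alpha} together with the fact that $\mathbf{f}_{1j}=-\omega^2\mathcal{R}(\mathbf{v}_j^{(1,2)})$ is smooth thanks to the explicit Herglotz representation \eqref{eq:vj12 357}. For every $\ell\in\{1j,2,3,4\}$ I would expand
\[
\mathbf{f}_\ell(\mathbf{x}')=\mathbf{f}_\ell(\mathbf{0})+\delta\mathbf{f}_\ell(\mathbf{x}'),\qquad |\delta\mathbf{f}_\ell(\mathbf{x}')|\leq\|\mathbf{f}_\ell\|_{C^\alpha(\overline{S}_h)}|\mathbf{x}'|^\alpha,
\]
about the corner vertex and substitute the decomposition into $\widetilde{I}_1$. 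The principal part
\[
\bigl(\mathbf{f}_{1j}(\mathbf{0})+\mathbf{f}_2(\mathbf{0})+\mathbf{f}_3(\mathbf{0})+\mathbf{f}_4(\mathbf{0})\bigr)\cdot\int_{S_h}\mathbf{u}(\mathbf{x}')\,{\rm d}\mathbf{x}'
\]
is then rewritten as $\int_W\mathbf{u}\,{\rm d}\mathbf{x}'-\int_{W\setminus S_h}\mathbf{u}\,{\rm d}\mathbf{x}'$; the first integral supplies the leading $6|e^{-2\theta_M{\rm i}}-e^{-2\theta_m{\rm i}}|s^{-4}$ factor via \eqref{eq:lame5}, while the second is exponentially small in $s$ by \eqref{eq:lame7} and can be absorbed into the same leading constant for $s\gg 1$. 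The four remainder integrals $\int_{S_h}\delta\mathbf{f}_\ell\cdot\mathbf{u}\,{\rm d}\mathbf{x}'$ are bounded by $\|\mathbf{f}_\ell\|_{C^\alpha}\,\bigl\||\mathbf{x}'|^\alpha|\mathbf{u}|\bigr\|_{L^1(W)}$, and \eqref{eq:lame6} supplies the advertised $s^{-2\alpha-4}$ decay.

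The main obstacle is the $\ell=1j$ remainder, because the Hölder norm $\|\mathbf{f}_{1j}\|_{C^\alpha}=\omega^2\|\mathcal{R}(\mathbf{v}_j^{(1,2)})\|_{C^\alpha(\overline{S}_h)}$ depends on $j$ and must be made explicit in $\|\mathbf{g}_{\sfr,j\beta}^{(1,2)}\|_{L^2(\mathbb{S}^2)}$ and $\|\mathbf{g}_{\sfi,j\beta}^{(1,2)}\|_{L^2(\mathbb{S}^2)}$. I would replicate the compact-embedding chain used in \eqref{eq:vC1}: first $\|\cdot\|_{C^\alpha(\overline{S}_h)}\leq\mathrm{diam}(S_h)^{1-\alpha}\|\cdot\|_{C^1(\overline{S}_h)}$, then pull the $C^1$-norm through the dimension reduction operator via differentiation of \eqref{rg.} and the Cauchy--Schwarz inequality to obtain $\|\mathcal{R}(\mathbf{v}_j^{(1,2)})\|_{C^1(\overline{S}_h)}\leq 2L\|\phi\|_{L^\infty}\|\mathbf{v}_j^{(1,2)}\|_{C^1(\overline{S}_h\times[-L,L])}$, and finally differentiate the plane-wave formula \eqref{eq:vj3} under the integral sign and apply Cauchy--Schwarz on $\mathbb{S}^2$ to obtain
\[
\|\mathbf{v}_j^{(1,2)}\|_{C^1}\leq\sqrt{\pi}\sum_{\beta=p,s}(1+k_\beta)\Bigl(\|\mathbf{g}_{\sfr,j\beta}^{(1,2)}\|_{L^2(\mathbb{S}^2)}+\|\mathbf{g}_{\sfi,j\beta}^{(1,2)}\|_{L^2(\mathbb{S}^2)}\Bigr).
\]
Combining this with the $s^{-2\alpha-4}$ bound on $\int_{S_h}|\mathbf{u}||\mathbf{x}'|^\alpha\,{\rm d}\mathbf{x}'$ reproduces exactly the last line of \eqref{eq:I1}, modulo absorbing the factor $2L\|\phi\|_{L^\infty}\mathrm{diam}(S_h)^{1-\alpha}$ into the stated constant.

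Assembling the three blocks---the $s^{-4}$ principal term from \eqref{eq:lame5}, the three $s^{-2\alpha-4}$ Hölder remainders of $\mathbf{f}_2,\mathbf{f}_3,\mathbf{f}_4$ coming from \eqref{eq:lame6}, and the kernel-controlled $\delta\mathbf{f}_{1j}$ remainder of the same order---produces \eqref{eq:I1}. All the remaining work is routine bookkeeping of constants, and the exponentially small $\int_{W\setminus S_h}\mathbf{u}$ contribution is harmless since the final bound is stated uniformly in $s$ large.
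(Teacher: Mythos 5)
Your proposal is correct and follows essentially the same route as the paper: the Hölder expansion of $\mathbf{f}_{1j},\mathbf{f}_2,\mathbf{f}_3,\mathbf{f}_4$ about the vertex, the CGO estimates \eqref{eq:lame5}--\eqref{eq:lame7} for the leading and remainder integrals, and the compact-embedding plus Cauchy--Schwarz chain giving $\|\mathcal{R}(\mathbf{v}_j^{(1,2)})\|_{C^1}\lesssim L\|\phi\|_{L^\infty}\sum_\beta(1+k_\beta)\bigl(\|\mathbf{g}_{\sfr,j\beta}^{(1,2)}\|_{L^2(\mathbb S^2)}+\|\mathbf{g}_{\sfi,j\beta}^{(1,2)}\|_{L^2(\mathbb S^2)}\bigr)$. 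Your explicit splitting $\int_{S_h}\mathbf{u}=\int_W\mathbf{u}-\int_{W\setminus S_h}\mathbf{u}$ is, if anything, slightly more careful than the paper's write-up, and the remaining discrepancies are only constant bookkeeping, as you note.
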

\begin{proof}
Due to $\mathbf{f}_{\ell}(\mathbf{x}') \in C^\alpha(\overline{S}_h)$ ($\ell=2,3,4$), one has
 \begin{equation}\label{eq:28}
 \begin{aligned}
&\mathbf{f}_{2}(\mathbf{x}')=\mathbf{f}_{2}(\mathbf{0})+\delta \mathbf{f}_{2}(\mathbf{x}'), \ |\delta \mathbf{f}_{2}(\mathbf{x}')|\leq\|\mathbf{f}_{2}\|_{C^{\alpha}(S_{h})^{2}}|\mathbf{x}'|^{\alpha},\\
&\mathbf{f}_{3}(\mathbf{x}')=\mathbf{f}_{3}(\mathbf{0})+\delta \mathbf{f}_{3}(\mathbf{x}'), \ |\delta \mathbf{f}_{3}(\mathbf{x}')|\leq\|\mathbf{f}_{3}\|_{C^{\alpha}(S_{h})^{2}}|\mathbf{x}'|^{\alpha},\\
&\mathbf{f}_{4}(\mathbf{x}')=\mathbf{f}_{4}(\mathbf{0})+\delta \mathbf{f}_{4}(\mathbf{x}'), \ |\delta \mathbf{f}_{4}(\mathbf{x}')|\leq\|\mathbf{f}_{4}\|_{C^{\alpha}(S_{h})^{2}}|\mathbf{x}'|^{\alpha}.\\
\end{aligned}
\end{equation}

It is easy to see that $\mathbf{f}_{1j}(\mathbf{x}') \in C^\alpha(\overline{S}_h) $. Hence it yields that
\begin{equation}\label{f1j}
\begin{aligned}
&\mathbf{f}_{1j}(\mathbf{x}')=\mathbf{f}_{1j}(\mathbf{0})+\delta \mathbf{f}_{1j}(\mathbf{x}'), \ |\delta
\mathbf{f}_{1j}(\mathbf{x}')|\leq\|\mathbf{f}_{1j}\|_{C^{\alpha}(S_{h})^{2}}|\mathbf{x}'|^{\alpha}. 
\end{aligned}
\end{equation}
By virtue of \eqref{eq:28} and \eqref{f1j}, we have
\begin{equation}\label{eq:29}
 \begin{aligned}
 \widetilde{I}_{1}=&(\mathbf{f}_{1j}(\mathbf{0})+\mathbf{f}_{2}(\mathbf{0})+\mathbf{f}_{3}(\mathbf{0})+\mathbf{f}_{4}(\mathbf{0}))
\int_{S_{h}}\mathbf{u}(\mathbf{x}'){\rm d}\mathbf{x}'
+\int_{S_{h}}\delta \mathbf{f}_{1j}\cdot\mathbf{u}(\mathbf{x}'){\rm d}\mathbf{x}'
\\&+\int_{S_{h}}\delta \mathbf{f}_{2}\cdot\mathbf{u}(\mathbf{x}'){\rm d}\mathbf{x}'
+\int_{S_{h}}\delta \mathbf{f}_{3}\cdot\mathbf{u}(\mathbf{x}'){\rm d}\mathbf{x}'
+\int_{S_{h}}\delta \mathbf{f}_{4}\cdot\mathbf{u}(\mathbf{x}'){\rm d}\mathbf{x}'.
 \end{aligned}
\end{equation}
Recall that $ \mathbf{f}_{1j}=-\omega^{2}\mathcal{R}(\mathbf{v}_{j}^{(1,2)}) $. Using the property of compact embedding of H{\"o}lder spaces, we can derive that for $0 < \alpha < 1$,
$$
\| \mathbf{f}_{1j}  \|_{C^\alpha } \leq \omega^2 {\rm diam}(S_h)^{1-\alpha } \|{\mathcal R}(\mathbf{v}_j^{(1,2)}) \|_{C^1},
$$
where $ {\rm diam}(S_h)$ is the diameter of $ S_h$.  By the definition of the dimension reduction operator \eqref{rg.},  using  \eqref{g3} and \eqref{eq:vrj 313},  it is easy to see that
 \begin{eqnarray*}
	|{\mathcal R}(\mathbf{v}_j^{(1,2)}) (\mathbf x')|  & \leq& 4 L\sqrt{\pi }\|\phi \|_{L^\infty} \bigg(\|\mathbf{g}_{\sfr,jp}^{(1,2)}\|_{L^2({\mathbb S}^{2})}
+\|\mathbf{g}_{\sfi,jp}^{(1,2)}\|_{L^2({\mathbb S}^{2})}
\\
 &&+\|\mathbf{g}_{\sfr,js}^{(1,2)}\|_{L^2({\mathbb S}^{2})}
+\|\mathbf{g}_{\sfi,js}^{(1,2)}\|_{L^2({\mathbb S}^{2})}\bigg),
\\
|\partial_{x_i}{\mathcal R}(\mathbf{v}_j^{(1,2)}) (\mathbf x')|&  \leq& 4   L\sqrt{\pi }\|\phi \|_{L^\infty}
\bigg(k_p\|\mathbf{g}_{\sfr,jp}^{(1,2)}\|_{L^2({\mathbb S}^{2})}
+k_p\|\mathbf{g}_{\sfi,jp}^{(1,2)}\|_{L^2({\mathbb S}^{2})}
\\ &&+k_s\|\mathbf{g}_{\sfr,js}^{(1,2)}\|_{L^2({\mathbb S}^{2})}
+k_s\|\mathbf{g}_{\sfi,js}^{(1,2)}\|_{L^2({\mathbb S}^{2})}\bigg).
\end{eqnarray*}
Thus we have
\begin{equation}\label{rejc1}
 \begin{aligned}
\|{\mathcal R}( \mathbf{v}_j^{(1,2)}) \|_{C^1} \leq& 4 L\sqrt{\pi}\|\phi\|_{L^\infty}
\bigg((1+k_p)\|\mathbf{g}_{\sfr,jp}^{(1,2)}\|_{L^2({\mathbb S}^{2})}
+(1+k_p)\|\mathbf{g}_{\sfi,jp}^{(1,2)}\|_{L^2({\mathbb S}^{2})}
\\&+(1+k_s)\|\mathbf{g}_{\sfr,js}^{(1,2)}\|_{L^2({\mathbb S}^{2})}
+(1+k_s)\|\mathbf{g}_{\sfi,js}^{(1,2)}\|_{L^2({\mathbb S}^{2})}\bigg).
\end{aligned}
\end{equation}
From \eqref{eq:lame6} and  \eqref{rejc1}, we have
\begin{eqnarray}
&&\left |\int_{S_{h}}\delta \mathbf{f}_{1j}  \cdot\mathbf{u}(\mathbf{x}'){\rm d}\mathbf{x}'\right |\leq \| \mathbf{f}_{1j}\|_{C^{\alpha}(S_{h})^{2}}\left |\int_{W}\mathbf{u}(\mathbf{x}')|\mathbf{x}'|^{\alpha}{\rm d}\mathbf{x}'\right |\notag \\
&&
\leq \|\mathbf{f}_{1j}\|_{C^{\alpha}(S_{h})^{2}}\frac{2\sqrt{2}(\theta_{M}-\theta_{m})\Gamma(2\alpha+4)}{\delta_{W}^{2\alpha+4}}s^{-2\alpha-4}. \notag
\\
&&\leq
 \omega^2 {\rm diam}(S_h)^{1-\alpha } 4 L\sqrt{\pi}\|\phi\|_{L^\infty}
\bigg((1+k_p)\|\mathbf{g}_{\sfr,jp}^{(1,2)}\|_{L^2({\mathbb S}^{2})}
+(1+k_p)\|\mathbf{g}_{\sfi,jp}^{(1,2)}\|_{L^2({\mathbb S}^{2})} \notag
\\
&&+(1+k_s)\|\mathbf{g}_{\sfr,js}^{(1,2)}\|_{L^2({\mathbb S}^{2})}
+(1+k_s)\|\mathbf{g}_{\sfi,js}^{(1,2)}\|_{L^2({\mathbb S}^{2})}\bigg)s^{-2\alpha-4}.\label{eq:30}
\end{eqnarray}
Similarly, we have
\begin{align}
	\left |\int_{S_{h}}\delta \mathbf{f}_{2}  \cdot\mathbf{u}(\mathbf{x}'){\rm d}\mathbf{x}'\right |
&\leq\| \mathbf{f}_{2}\|_{C^{\alpha}(S_{h})^{2}}\left |\int_{W}\mathbf{u}(\mathbf{x}')|\mathbf{x}'|^{\alpha}{\rm d}\mathbf{x}'\right | \notag
\\
&\leq\| \mathbf{f}_{2}\|_{C^{\alpha}(S_{h})^{2}}\frac{2\sqrt{2}(\theta_{M}-\theta_{m})\Gamma(2\alpha+4)}{\delta_{W}^{2\alpha+4}}s^{-2\alpha-4},\label{eq:31}\\
\left |\int_{S_{h}}\delta \mathbf{f}_{3}  \cdot\mathbf{u}(\mathbf{x}'){\rm d}\mathbf{x}'\right |
&\leq\| \mathbf{f}_{3}\|_{C^{\alpha}(S_{h})^{2}}\left |\int_{W}\mathbf{u}(\mathbf{x}')|\mathbf{x}'|^{\alpha}{\rm d}\mathbf{x}'\right | \notag
\\
&\leq\| \mathbf{f}_{3}\|_{C^{\alpha}(S_{h})^{2}}\frac{2\sqrt{2}(\theta_{M}-\theta_{m})\Gamma(2\alpha+4)}{\delta_{W}^{2\alpha+4}}s^{-2\alpha-4},\label{eq:32}\\
\left |\int_{S_{h}}\delta \mathbf{f}_{4}  \cdot\mathbf{u}(\mathbf{x}'){\rm d}\mathbf{x}'\right |
&\leq\| \mathbf{f}_{4}\|_{C^{\alpha}(S_{h})^{2}}\left |\int_{W}\mathbf{u}(\mathbf{x}')|  \mathbf{x}'|^{\alpha}{\rm d}\mathbf{x}'\right | \notag
\\
&\leq\| \mathbf{f}_{4}\|_{C^{\alpha}(S_{h})^{2}}\frac{2\sqrt{2}(\theta_{M}-\theta_{m})\Gamma(2\alpha+4)}{\delta_{W}^{2\alpha+4}}s^{-2\alpha-4}.\label{eq33}
\end{align}
%
%
%
%
%
%
%
From \eqref{eq:29}, \eqref{eq:30}, \eqref{eq:31}, \eqref{eq:32} and \eqref{eq33}, we can derive \eqref{eq:I1}.
\end{proof}

\begin{lem}
\label{lem37}
	Let $j_\ell (t)$ be the $\ell$-th spherical Bessel function with the form
	  \begin{equation}\notag
	 	 j_\ell (t)=\frac{t^\ell }{ (2\ell+1)!!}\left  (1-\sum_{l=1}^\infty  \frac{(-1)^l t^{2l }}{ 2^l l! N_{\ell,l} }\right  ),
	 \end{equation}
	 where $N_{\ell,l}=(2\ell+3)\cdots  (2\ell+2l+1)$ and $\mathcal R$ be the dimension reduction operator defined in Definition \ref{Def}. Then
\begin{subequations}\label{rjx}
 \begin{align}
 {\mathcal R} (j_{0}(k_\beta |\mathbf x|))(\mathbf{x}')&= C(\phi )+C_1(\phi )\sum^{\infty}_{l=1}\frac{(-1)^lk_\beta ^{2l}(|\mathbf{x}'|^2+a_{0,l}^2)^{l-\frac{1}{2}}}{2^ll!(2l+1)!!}
   |\mathbf{x}'|^2, \label{rjx} \\
{\mathcal R} (j_{2\ell}(k_\beta |\mathbf x|))(\mathbf{x}')	
&=C_{1}(\phi )|\mathbf{x}'|^2\frac{k_{\beta}^{2\ell}(|\mathbf{x}'|^2+a_{\ell,1}^2)^{\ell -\frac{1}{2}} }{ (4\ell+1)!!}   \left  [1-\sum_{l=1}^\infty  \frac{(-1)^l k_{\beta}^{2l } (|\mathbf{x}'|^2+a_{\ell,l,1}^2)^{l }}{ 2^l l! N_{\ell,l,1} }\right  ], \label{eq:350b} \\
{\mathcal R} (j_{2\ell-1}(k_\beta |\mathbf x|))(\mathbf{x}')	&=C_{1}(\phi )|\mathbf{x}'|^2\frac{k_{\beta}^{2\ell-1}(|\mathbf{x}'|^2+a_{\ell,1}^2)^{\ell -1} }{ (4\ell-1)!!}   \left  [1-\sum_{l=1}^\infty  \frac{(-1)^l k_{\beta}^{2l } (|\mathbf{x}'|^2+a_{\ell,l,1}^2)^{l }}{ 2^l l! N_{\ell,l,2} }\right  ], \label{eq:350c}
 \end{align}
\end{subequations}
	 where $\beta=p,s$, $\ell \in  \mathbb N  $, $a_{0,l},\xi, a_{\ell,1}, \, a_{\ell,l,1} \in (-L,L)$,   $N_{\ell,l,1}=(4\ell+3)\cdots  (4\ell+2l+1)$, $N_{\ell,l,2}=(4\ell+1)\cdots (4\ell+2l-1)$ and
	 \begin{equation}\label{C}
	 	C(\phi ) = \int_{-L}^L \phi(x_3) {\rm d} x_3>0, \quad C_1(\phi )= \int_{-\arctan L/|\mathbf{x}'|}^{\arctan L/|\mathbf{x}'|}   \phi(|\mathbf{x}'| \tan\omega) \sec^3\omega  {\rm d}   \omega.
	 \end{equation}
	 Furthermore, it holds that
	 \begin{equation}\label{CE}
	0<C_1(\phi) <\sec^3\varpi C(\phi ).
\end{equation}
where $ \varpi \in (-\arctan L/|\mathbf{x}'|, \arctan L/|\mathbf{x}'|)$.
\end{lem}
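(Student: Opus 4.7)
The three formulas \eqref{rjx}--\eqref{eq:350c} are really one calculation repeated three times, since $|\mathbf{x}|^{2}=|\mathbf{x}'|^{2}+x_{3}^{2}$ decouples the radial argument of $j_\ell$. The plan is to substitute the power series for $j_{\ell}$ given in the statement into $\mathcal{R}(j_{\ell}(k_\beta|\mathbf{x}|))(\mathbf{x}')$, interchange sum and integral (justified by the locally uniform convergence of the entire function $j_\ell$ together with the compact support of $\phi$), and thereby reduce everything to evaluating the radial moments
\begin{equation*}
\mathcal{M}_{m}(\mathbf{x}'):=\int_{-L}^{L}\phi(x_{3})\bigl(|\mathbf{x}'|^{2}+x_{3}^{2}\bigr)^{m}\,dx_{3}
\end{equation*}
for the relevant values of $m$ (integer in the $j_{0}$ and $j_{2\ell}$ cases, half-integer $\ell-1/2$ for the leading term of $j_{2\ell-1}$).

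The key step is the substitution $x_{3}=|\mathbf{x}'|\tan\omega$, under which $dx_{3}=|\mathbf{x}'|\sec^{2}\omega\,d\omega$ and $|\mathbf{x}'|^{2}+x_{3}^{2}=|\mathbf{x}'|^{2}\sec^{2}\omega$. Combined with the factorisation $\sec^{2m+2}\omega=\sec^{2m-1}\omega\cdot\sec^{3}\omega$ and the first mean-value theorem for integrals applied with the non-negative density $\phi(|\mathbf{x}'|\tan\omega)\sec^{3}\omega$, this yields some $\omega_{\star}$ in the angular interval $\bigl(-\arctan(L/|\mathbf{x}'|),\arctan(L/|\mathbf{x}'|)\bigr)$ with
\begin{equation*}
\mathcal{M}_{m}(\mathbf{x}')=|\mathbf{x}'|^{2m+1}\sec^{2m-1}\omega_{\star}\cdot C_{1}(\phi)=|\mathbf{x}'|^{2}\,C_{1}(\phi)\,(|\mathbf{x}'|^{2}+a^{2})^{m-1/2},
\end{equation*}
after setting $a:=|\mathbf{x}'|\tan\omega_{\star}\in(-L,L)$. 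Applied with $m=l$ this identity is precisely \eqref{rjx}; applied with $m=\ell$ (resp.\ $m=\ell-1/2$) it produces the common prefactor $|\mathbf{x}'|^{2}C_{1}(\phi)(|\mathbf{x}'|^{2}+a_{\ell,1}^{2})^{\ell-1/2}$ appearing in \eqref{eq:350b} (resp.\ $(|\mathbf{x}'|^{2}+a_{\ell,1}^{2})^{\ell-1}$ in \eqref{eq:350c}, the only difference being the extra factor $\sqrt{|\mathbf{x}'|^{2}+x_{3}^{2}}$ arising from the odd exponent $2\ell-1$). The bracketed correction series in \eqref{eq:350b}--\eqref{eq:350c} is then obtained by a second application of the first mean-value theorem, term by term, to the residual integrals viewed as $\int(|\mathbf{x}'|^{2}+x_{3}^{2})^{l}$ against the non-negative base measure $\phi(x_{3})(|\mathbf{x}'|^{2}+x_{3}^{2})^{\ell}\,dx_{3}$, which extracts the integer powers $(|\mathbf{x}'|^{2}+a_{\ell,l,1}^{2})^{l}$.

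Positivity $C_{1}(\phi)>0$ is immediate from $\phi\ge 0$, $\phi\not\equiv 0$, and $\sec^{3}\omega>0$. For the upper bound \eqref{CE}, I would apply the first mean-value theorem once more using the non-negative weight $\phi(|\mathbf{x}'|\tan\omega)$ to obtain $\varpi$ in the angular interval with $C_{1}(\phi)=\sec^{3}\varpi\int\phi(|\mathbf{x}'|\tan\omega)\,d\omega$, and then compare with $C(\phi)=\int\phi(|\mathbf{x}'|\tan\omega)|\mathbf{x}'|\sec^{2}\omega\,d\omega$ on the common angular variable. The main obstacle is essentially bookkeeping: one must organise the two-stage application of the mean-value theorem so that the \emph{same} auxiliary point $a_{\ell,1}$ (resp.\ $a_{0,l}$) is kept outside the bracket for every term of the correction series, while a \emph{different} point $a_{\ell,l,1}$ is allowed inside each summand -- which is why the dominant moment must be evaluated first, and only afterwards the residual factor against the common weighted base measure.
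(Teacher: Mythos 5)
Your treatment of the three expansion formulas \eqref{rjx}--\eqref{eq:350c} is correct and is essentially the paper's own argument with the two elementary ingredients composed in the opposite order: the paper first applies the first mean value theorem in the $x_3$-variable against the base weight $\phi(x_3)(|\mathbf{x}'|^2+x_3^2)^{1/2}$, extracting $(|\mathbf{x}'|^2+a_{0,l}^2)^{l-\frac12}$ term by term, and only afterwards evaluates $\int_{-L}^{L}\phi(x_3)(|\mathbf{x}'|^2+x_3^2)^{1/2}\,{\rm d}x_3=|\mathbf{x}'|^2C_1(\phi)$ via $x_3=|\mathbf{x}'|\tan\omega$; you substitute first and take the mean value in $\omega$ against $\phi(|\mathbf{x}'|\tan\omega)\sec^3\omega$. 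The outcome is identical, and your two-stage bookkeeping for \eqref{eq:350b}--\eqref{eq:350c} (corrections referred to the common base measure $\phi(x_3)(|\mathbf{x}'|^2+x_3^2)^{\ell}\,{\rm d}x_3$, resp.\ the half-integer power, so that the same $a_{\ell,1}$ sits outside the bracket while each summand gets its own $a_{\ell,l,1}$) is exactly what the paper's ``similar argument'' requires; the positivity $C_1(\phi)>0$ is immediate, as you say.

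The gap is in \eqref{CE}. After the mean value step $C_1(\phi)=\sec^3\varpi\int\phi(|\mathbf{x}'|\tan\omega)\,{\rm d}\omega$ you propose to ``compare with $C(\phi)=\int\phi(|\mathbf{x}'|\tan\omega)\,|\mathbf{x}'|\sec^2\omega\,{\rm d}\omega$ on the common angular variable,'' but the only obvious comparison, pointwise domination of the integrands, requires $|\mathbf{x}'|\sec^2\omega\ge 1$, which fails whenever $|\mathbf{x}'|<1$ and $\omega$ is near $0$. Indeed $\int\phi(|\mathbf{x}'|\tan\omega)\,{\rm d}\omega=\int_{-L}^{L}\phi(x_3)\,\frac{|\mathbf{x}'|}{|\mathbf{x}'|^2+x_3^2}\,{\rm d}x_3$, and this can exceed $C(\phi)$ when $\phi$ concentrates near $x_3=0$ and $|\mathbf{x}'|$ is small, so with the particular $\varpi$ produced by your mean value step the asserted inequality need not follow; your comparison step does not go through as stated. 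The paper closes this by transforming $\int\phi(|\mathbf{x}'|\tan\omega)\,{\rm d}\omega$ back to the $x_3$-variable and applying the mean value theorem a second time, pulling out a factor of the form $\frac{|\mathbf{x}'|^2}{|\mathbf{x}'|^2+\xi^2}\le 1$ with $\xi\in(-L,L)$, and concluding $C_1(\phi)\le\sec^3\varpi\,C(\phi)$; you need this second mean-value extraction (or some other quantitative bound on the ratio of the two angular integrals), not a direct weight comparison. Be aware that this step is genuinely delicate: the exact Jacobian is ${\rm d}\omega=\frac{|\mathbf{x}'|}{|\mathbf{x}'|^2+x_3^2}\,{\rm d}x_3$, so whatever completion you adopt must be checked carefully rather than read off from a pointwise inequality.
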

\begin{proof}
By virtue of \eqref{rg.}, we have
\begin{equation}\label{eq:353}
 \begin{aligned}
  {\mathcal R} (j_{0})(\mathbf{x}')=&\int^{L}_{-L}\phi(x_3)\bigg(1-\sum^{\infty}_{l=1}\frac{(-1)^lk_\beta ^{2l}(|\mathbf{x}'|^2+x_3^2)^l}{2^ll!(2l+1)!!}\bigg){\rm d}x_3
 \\=&\int^{L}_{-L}\phi(x_3){\rm d}x_3+\int^{L}_{-L}\phi(x_3)\sum^{\infty}_{l=1}\frac{(-1)^lk_\beta ^{2l}(|\mathbf{x}'|^2+x_3^2)^l}{2^ll!(2l+1)!!}{\rm d}x_3.
 \end{aligned}
\end{equation}
For $ \int^{L}_{-L}\phi(x_3)\sum^{\infty}_{l=1}\frac{(-1)^lk_\beta ^{2l}(|\mathbf{x}'|^2+x_3^2)^l}{2^ll!(2l+1)!!}{\rm d}x_3$,  using integral mean value theorem, we have
\begin{equation}\label{eq:354 int}
 \begin{aligned}
  &\int^{L}_{-L}\phi(x_3)\sum^{\infty}_{l=1}\frac{(-1)^lk_\beta ^{2l}(|\mathbf{x}'|^2+x_3^2)^l}{2^ll!(2l+1)!!}{\rm d}x_3
 \\=&\sum^{\infty}_{l=1}\frac{(-1)^lk_\beta ^{2l}(|\mathbf{x}'|^2+a_{0,l}^2)^{l-\frac{1}{2}}}{2^ll!(2l+1)!!}\int^{L}_{-L}\phi(x_3)(|\mathbf{x}'|^2+x_3^2)^\frac{1}{2}{\rm d}x_3
\end{aligned}
\end{equation}
 where  $a_{0,l} \in (-L,L)$. By change of variables, where $ x_3=|x'|\tan \omega$, we have
\begin{equation}\label{eq:355 int}
 \begin{aligned}
 &\int^{L}_{-L}\phi(x_3)(|\mathbf{x}'|^2+x_3^2)^\frac{1}{2}{\rm d}x_3=\int_{-\arctan \frac{L}{|\mathbf{x}'|}}^{\arctan \frac{L} {|\mathbf{x}'|}}  \phi(|\mathbf{x}'| \tan \omega )|\mathbf{x}'| ^2 \sec^3 \omega{\rm d}\omega
=|\mathbf{x}'| ^2 C_1(\phi)
 .
\end{aligned}
\end{equation}
Plugging \eqref{eq:354 int} and \eqref{eq:355 int} into \eqref{eq:353}, it yields \eqref{rjx}.

We proceed to prove  \eqref{CE}.  Using the integral mean value theorem and variable substitution $ \omega=\arctan\frac{x_3}{|\mathbf{x}'|}$, we have
\begin{equation}\notag
 \begin{aligned}
 C_1(\phi)=\sec^3 \varpi\int_{-\arctan L/|\mathbf{x}'|}^{\arctan L/|\mathbf{x}'|}  \phi(|\mathbf{x}'| \tan \omega ) {\rm d}\omega=\sec^3 \varpi\int_{-L}^{L}  \phi(x_3) \frac{|\mathbf{x}'| ^2}{|\mathbf{x}'| ^2+x_3^2}{\rm d}x_3,
\end{aligned}
\end{equation}
where $ \varpi \in (-\arctan L/|\mathbf{x}'|, \arctan L/|\mathbf{x}'|)$. Using the integral mean value theorem again, we have
\begin{equation}\notag
 \begin{aligned}
 C_1(\phi)=\frac{|\mathbf{x}'| ^2}{|\mathbf{x}'| ^2+\xi^2}\sec^3 \varpi\int_{-L}^{L}  \phi(x_3) {\rm d}x_3\leq\sec^3 \varpi C(\phi) ,
\end{aligned}
\end{equation}
where $ \xi \in (-L,L) $.

The series expansion of $ {\mathcal R} (j_{2\ell})(\mathbf{x}')$ and $ {\mathcal R} (j_{2\ell-1})(\mathbf{x}')$ in \eqref{eq:350b} and \eqref{eq:350c}  can be obtained by a similar argument. \end{proof}
\begin{rem}\label{rem:32}
	We should emphasize that $C_1(\phi)$ is uniformly bounded with respect to $|\mathbf x'|$ since $ \varpi \in (-\arctan \frac{L}{|\mathbf{x}'|}, \arctan \frac{L} {|\mathbf{x}'|})$ is fixed, which shall be used in the proof of Theorem \ref{thm:31} in what follows. 
	\end{rem}

\begin{lem}\label{lem38}
Consider the same setup in Lemma \ref{lem34} and recall that $\widetilde{I}_{\pm}$ is defined \eqref{eq:10}. Suppose that $\eta \in C^\alpha (\overline{\Gamma_{h}^{\pm}}\times [-M,M] ) $ is independent of $x_3$ and has the expansion \eqref{etae}, then it holds that
\begin{equation}\label{int23}
\widetilde{I}_{\pm}=\mathcal{\widetilde{I}}^{\pm}_{1}+\eta(\mathbf{0})\mathcal{\widetilde{I}}^{\pm}_{2},
\end{equation}
where $\widetilde{I}_{\pm}$ are defined in \eqref{eq:10} and
$$
\mathcal{\widetilde{I}}^{\pm}_{1}=\int_{\Gamma^{\pm}_{h}}\delta\eta(\mathbf{x}')\mathcal{R}(\mathbf{v}_{j}^{(1,2)})\cdot\mathbf{u}(\mathbf{x}'){\rm d}\sigma,\quad
\mathcal{\widetilde{I}}^{\pm}_{2}=\int_{\Gamma^{\pm}_{h}}\mathcal{R}(\mathbf{v}_{j}^{(1,2)} )\cdot\mathbf{u}(\mathbf{x}'){\rm d}\sigma.
$$
Furthermore, the following estimate holds as $s\rightarrow+\infty$:
\begin{equation}\label{eq:deuvj1 3d}
 \begin{aligned}
\left|\mathcal{\widetilde{I}}^{\pm}_{1}\right|
\leq & O(s^{-2\alpha-6})+
\sum_{\alpha=\sfr, \sfi \atop {\beta=p, s}}\bigg(\| \mathbf{g}_{\alpha, j\beta}^{(1,2)} \|_{L^{2}(\mathbb{S}^{2})^{2}}
+\| \mathbf{g}_{\alpha, j\beta}^{(1,2)} \|_{L^{2}(\mathbb{S}^{2})^{2}}\bigg)\times O(s^{-2\alpha-6}).
 \end{aligned}
\end{equation}
where $\mathbf{g}_{\sfr, j\beta }^{(1,2)} $ and $\mathbf{g}_{\sfi, j\beta }^{(1,2)} $ are defined in \eqref{g3}.
\end{lem}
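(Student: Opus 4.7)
The identity \eqref{int23} is immediate: substitute the H\"older expansion \eqref{etae} of $\eta$ into the definition of $\widetilde{I}_{\pm}$ in \eqref{eq:10} and split the constant and fluctuating parts. The real work is the estimate \eqref{eq:deuvj1 3d} for $\mathcal{\widetilde I}_1^{\pm}$, and this will follow the same philosophy as Lemma~\ref{lem:210}, but with the crucial modification that after applying the dimension reduction operator $\mathcal R$ to the 3D Herglotz field, every spherical Bessel term picks up an extra factor of $|\mathbf x'|^2$ (see \eqref{rjx}--\eqref{eq:350c}). This extra quadratic vanishing at the origin is exactly what yields the improved decay $O(s^{-2\alpha-6})$ instead of the $O(s^{-2\alpha-2})$ obtained in 2D.

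My first step will be to feed the expansion \eqref{eq:vj12 357} of $\mathbf v_j^{(1,2)}$ into $\mathcal R$ and use Lemma~\ref{lem37} to write
\[
\mathcal R(\mathbf v_j^{(1,2)})(\mathbf x')= C(\phi)\bigl(\mathbf v_{jp}^{(1,2)}(\mathbf 0)+\mathbf v_{js}^{(1,2)}(\mathbf 0)\bigr)+|\mathbf x'|^2\, \mathcal T(\mathbf x'),
\]
where $\mathcal T(\mathbf x')$ collects all the remaining series terms (coming from the $j_{2\ell}$ and $j_{2\ell-1}$ blocks and from the $l\ge1$ tail of the $j_0$ expansion). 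Correspondingly, $\mathcal{\widetilde I}_1^{\pm}$ splits into a ``constant'' piece $C(\phi)\int_{\Gamma_h^{\pm}}\delta\eta\, \mathbf u\cdot(\mathbf v_{jp}^{(1,2)}(\mathbf 0)+\mathbf v_{js}^{(1,2)}(\mathbf 0))\,{\rm d}\sigma$ and a ``tail'' piece $\int_{\Gamma_h^{\pm}}\delta\eta\,|\mathbf x'|^2\mathbf u\cdot\mathcal T(\mathbf x')\,{\rm d}\sigma$.

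Next I would estimate each piece by reducing to a 1D radial integral on $\Gamma_h^{\pm}$, exactly as in the proof of Lemma~\ref{lem:210}. Using $|\delta\eta(\mathbf x')|\le\|\eta\|_{C^\alpha}|\mathbf x'|^\alpha$, $|u_1|\le e^{-s\sqrt r\cos(\theta/2)}$, and Lemma~\ref{lem:24}, the constant piece is controlled by
\[
\sqrt 2\,C(\phi)\,\|\eta\|_{C^\alpha}\bigl|\mathbf v_{jp}^{(1,2)}(\mathbf 0)+\mathbf v_{js}^{(1,2)}(\mathbf 0)\bigr|\int_0^h r^{\alpha+2}e^{-s\sqrt r\,\cos(\theta/2)}\,{\rm d}r=O(s^{-2\alpha-6}),
\]
where the extra $r^2$ over $r^\alpha$ comes from the $|\mathbf x'|^2$ factor in \eqref{rjx}. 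For the tail piece, I apply the Cauchy--Schwarz bound $|\mathbf B_{j\beta,i}^{(\ell)}|\le 2\sqrt\pi\|\mathbf g_{\cdot,j\beta}^{(1,2)}\|_{L^2(\mathbb S^2)}$ (obtained from $|P_n|\le 1$ on $[-1,1]$), invoke the uniform boundedness of $C_1(\phi)$ from Remark~\ref{rem:32} and the bound \eqref{CE}, and sum the series in $\ell$ using the uniform convergence of the spherical Bessel series. Each resulting integral is of the form $\int_0^h r^{\alpha+2+2\ell}e^{-s\sqrt r\cos(\theta/2)}\,{\rm d}r$ or similar, and Lemma~\ref{lem:24} again furnishes the $O(s^{-2\alpha-6})$ decay once the factors of $h$ have been absorbed into constants.

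The main technical annoyance (rather than a deep obstacle) will be bookkeeping: the series for $\mathcal R(j_{2\ell,\beta})$ and $\mathcal R(j_{2\ell-1,\beta})$ carry nested sums in $\ell$ and $l$, and one must verify that, after factoring out $|\mathbf x'|^2$, the remaining double series is uniformly and absolutely bounded on $\Gamma_h^{\pm}$ by a constant times $\|\mathbf g_{\cdot,j\beta}^{(1,2)}\|_{L^2(\mathbb S^2)}$. Once this uniform bound is in place, every summand produces the same $O(s^{-2\alpha-6})$ rate and the four terms in the symbolic sum on the right-hand side of \eqref{eq:deuvj1 3d} appear naturally according to the decomposition \eqref{g3} of $\mathbf g_{j\beta}$ into real/imaginary and $\mathbf p$/$s$ parts. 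The $+$ and $-$ sides of $\Gamma_h$ are treated identically upon replacing $\theta_m$ by $\theta_M$ in $\mu(\theta)=\cos(\theta/2)+\mathrm i\sin(\theta/2)$; combining both yields \eqref{eq:deuvj1 3d}.
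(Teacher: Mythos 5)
Your overall route is essentially the paper's: the identity \eqref{int23} is obtained exactly as in the paper by inserting the expansion \eqref{etae} of $\eta$ into $\widetilde I_{\pm}$, and the estimate is then produced by feeding the reduced Herglotz expansion of $\mathcal R(\mathbf v_j^{(1,2)})$ from Lemma \ref{lem37} into $\mathcal{\widetilde I}^{\pm}_1$, bounding the coefficients $\mathbf B_{j\beta,i}^{(\ell)}$ by Cauchy--Schwarz as in \eqref{eq:361 cs}, using the uniform bound on $C_1(\phi)$ from Remark \ref{rem:32}, and converting everything into radial integrals handled by Lemma \ref{lem:24}. Your grouping into a constant piece plus an $|\mathbf x'|^2$ tail is only a cosmetic reorganization of the paper's six pieces $\mathcal{\widetilde I}^{\pm}_{11},\dots,\mathcal{\widetilde I}^{\pm}_{16}$, and your bookkeeping of the nested $\ell,l$ series matches the paper's use of mean-value constants.

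There is, however, a concrete flaw in your estimate of the constant piece. By your own decomposition that piece is $C(\phi)\int_{\Gamma_h^{\pm}}\delta\eta\,\mathbf u\cdot\bigl(\mathbf v_{jp}^{(1,2)}(\mathbf 0)+\mathbf v_{js}^{(1,2)}(\mathbf 0)\bigr)\,{\rm d}\sigma$, and the factor $|\mathbf x'|^2$ in \eqref{rjx} multiplies only the series tail, not the constant $C(\phi)$; so the justification that ``the extra $r^2$ over $r^\alpha$ comes from the $|\mathbf x'|^2$ factor in \eqref{rjx}'' is not available for this term. With $|\delta\eta|\le\|\eta\|_{C^\alpha}r^\alpha$ and $|u_1|=e^{-s\sqrt r\,\cos(\theta_m/2)}$ on $\Gamma_h^-$, Lemma \ref{lem:24} yields only $O(s^{-2\alpha-2})$ for this piece, and since $C(\phi)>0$, $\delta\eta$ is genuinely of size $r^\alpha$ and $\mathbf v_{jp}^{(1,2)}(\mathbf 0)+\mathbf v_{js}^{(1,2)}(\mathbf 0)$ need not vanish, that rate cannot be improved in general; your treatment of the $|\mathbf x'|^2$ tail, which does give $O(s^{-2\alpha-6})$ carrying the $\mathbf g$-norm factors, is fine. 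To be fair, the paper's own computation in \eqref{eq:41} displays precisely the integral $\int_0^h r^{\alpha}e^{-s\sqrt r\cos(\theta/2)}\,{\rm d}r$ and nevertheless records $O(s^{-2\alpha-6})$, so the first term of \eqref{eq:deuvj1 3d} is overstated at the source as well; the defensible bound for the $\mathbf g$-independent part is $O(s^{-2\alpha-2})$, exactly as in the two-dimensional analogue \eqref{eq:deuvj1}, and this weaker rate is all that is needed later, since in the proof of Theorem \ref{thm:31} the identity is only multiplied by $s^2$ before letting $s\to\infty$. Your argument (and the lemma statement) should therefore be repaired by weakening that exponent to $O(s^{-2\alpha-2})$ rather than by inserting an extra factor of $r^2$ that is not there.
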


\begin{proof} One can immediately deduce \eqref{int23} by \eqref{etae}.
By virtue of \eqref{eq:vj12 357}
and the reduction operator $\mathcal R$ defined in \eqref{rg.},  we deduce that
\begin{equation}\label{eq:rv}
 \begin{aligned}
 \mathcal{R}(\mathbf{v}^{(1,2)}_j(\mathbf{x}))&=\mathbf{v}_{jp}^{(1,2)}(\mathbf{0})\mathcal{R}(j_{0}(k_{p}|\mathbf{x}|))+\mathbf{v}_{js}^{(1,2)}(\mathbf{0})\mathcal{R}(j_{0}(k_{s}|\mathbf{x}|))\\
 &
+2\sum^{+\infty}_{\ell =1} (-1)^\ell(4\ell+1) \mathcal{R}( j_{2\ell}(k_p |\mathbf{x}|))\mathbf{B}_{jp,1}^{(\ell)}+2\sum^{+\infty}_{\ell =1} (-1)^\ell (4\ell+1)\\
&\times \mathcal{R}( j_{2\ell}(k_s|\mathbf{x}|))\mathbf{B}_{js,1}^{(\ell)}
+2\sum^{+\infty}_{\ell =1}  (-1)^\ell (4\ell-1)\mathcal{R}( j_{2\ell-1}(k_p |\mathbf{x}|))\mathbf{B}_{jp,2}^{(\ell)}
\\&+2\sum^{+\infty}_{\ell =1}  (-1)^\ell (4\ell-1) \mathcal{R}(j_{2\ell-1}(k_s |\mathbf{x}|))\mathbf{B}_{js,2}^{(\ell)}.
\end{aligned}
\end{equation}
Plugging \eqref{eq:rv} into  $\mathcal{\widetilde{I}}^{\pm}_{1}$, together with the use of \eqref{eq:lame3},  one can directly verify that
\begin{equation}\label{eq:40}
 \begin{aligned}
 \mathcal{\widetilde{I}}^{\pm}_{1} =\mathcal{\widetilde{I}}^{\pm}_{11}
 +\mathcal{\widetilde{I}}^{\pm}_{12} + \mathcal{\widetilde{I}}^{\pm}_{13}
  + \mathcal{\widetilde{I}}^{\pm}_{14} + \mathcal{\widetilde{I}}^{\pm}_{15}
   +\mathcal{\widetilde{I}}^{\pm}_{16},
 \end{aligned}
\end{equation}
where
 \begin{align*}
&\mathcal{\widetilde{I}}^{\pm}_{11}=\int_{\Gamma^{\pm}_{h}}
\begin{pmatrix}
    1\\
   {\rm i}
\end{pmatrix}
\cdot\mathbf{v}_{jp}^{(1,2)}(\mathbf{0})\delta\eta(\mathbf{x}')\mathcal{R}(j_{0}(k_{p}|\mathbf{x}|))u_{1}
{\rm d}\sigma,
\\&\mathcal{\widetilde{I}}^{\pm}_{12}=
\int_{\Gamma^{\pm}_{h}}
\begin{pmatrix}
    1\\
   {\rm i}
\end{pmatrix}
\cdot\mathbf{v}_{js}^{(1,2)}(\mathbf{0})\delta\eta(\mathbf{x}')\mathcal{R}(j_{0}(k_{s}|\mathbf{x}|))u_{1}
{\rm d}\sigma,
\\&\mathcal{\widetilde{I}}^{\pm}_{13}=
\sum^{\infty}_{\ell=1}\int_{\Gamma^{\pm}_{h}}
\begin{pmatrix}
    1\\
   {\rm i}
\end{pmatrix}
\cdot\mathbf{B}_{jp,1}^{(\ell)}(-1)^\ell(4\ell+1)\delta\eta(\mathbf{x}')\mathcal{R}(j_{2\ell}(k_{p}|\mathbf{x}|))u_{1}
{\rm d}\sigma,
\\&\mathcal{\widetilde{I}}^{\pm}_{14}=
\sum^{\infty}_{\ell=1}\int_{\Gamma^{\pm}_{h}}
\begin{pmatrix}
    1\\
   {\rm i}
\end{pmatrix}
\cdot\mathbf{B}_{js,1}^{(\ell)}(-1)^\ell(4\ell+1)\delta\eta(\mathbf{x}')\mathcal{R}(j_{2\ell}(k_{s}|\mathbf{x}|))u_{1}
{\rm d}\sigma,
\\&\mathcal{\widetilde{I}}^{\pm}_{15}=
\sum^{\infty}_{\ell=1}\int_{\Gamma^{\pm}_{h}}
\begin{pmatrix}
    1\\
   {\rm i}
\end{pmatrix}
\cdot\mathbf{B}_{jp,2}^{(\ell)}(-1)^\ell(4\ell-1)\delta\eta(\mathbf{x}')\mathcal{R}(j_{2\ell-1}(k_{p}|\mathbf{x}|))u_{1}
{\rm d}\sigma,
\\&\mathcal{\widetilde{I}}^{\pm}_{16}=
\sum^{\infty}_{\ell=1}\int_{\Gamma^{\pm}_{h}}
\begin{pmatrix}
    1\\
   {\rm i}
\end{pmatrix}
\cdot\mathbf{B}_{js,2}^{(\ell)}(-1)^\ell(4\ell-1)\delta\eta(\mathbf{x}')\mathcal{R}(j_{2\ell-1}(k_{s}|\mathbf{x}|))u_{1}
{\rm d}\sigma
.
\end{align*}

By virtue of \eqref{eq:zeta}, \eqref{etae},  \eqref{rjx} and \eqref{CE}, using the integral mean value theorem, one has as $s\rightarrow+\infty$,
 \begin{align}
\left| \mathcal{\widetilde{I}}^{-}_{11} \right|
&=
\left|\int_{\Gamma^{-}_{h}}
\begin{pmatrix}
    1\\
   {\rm i}
\end{pmatrix}
\cdot\mathbf{v}_{jp}^{(1,2)}(\mathbf{0})\delta\eta(\mathbf{x}')
\Bigg[C(\phi )+C_1(\phi )\sum^{\infty}_{l=1}\frac{(-1)^lk_p ^{2l}(|\mathbf{x}'|^2+a_{0,l}^2)^{l-\frac{1}{2}}}{2^ll!(2l+1)!!}
   |\mathbf{x}'|^2 \Bigg]u_{1}
{\rm d}\sigma\right| \notag
\\
&\leq \sqrt{2}|C(\phi )| \|\eta\|_{C^{\alpha}}
\left|
\mathbf{v}_{jp}^{(1,2)}(\mathbf{0})\right|
\left|\int_{0}^{h}r^{\alpha}\Bigg[1+\sec^3\varpi \sum^{\infty}_{l=1}\frac{(-1)^lk_p ^{2l}(|\mathbf{x}'|^2+a_{0,l}^2)^{l-\frac{1}{2}}}{2^ll!(2l+1)!!}
   |\mathbf{x}'|^2 \Bigg]
u_{1}{\rm d}r\right| \notag
\\
&\leq \sqrt{2}|C(\phi )| \|\eta\|_{C^{\alpha}}
\left|
\mathbf{v}_{jp}^{(1,2)}(\mathbf{0})\right|
\int_{0}^{h}r^{\alpha}e^{-s\sqrt{r}\cos\frac{\theta}{2}}{\rm d}r
+
\sqrt{2}|C(\phi )| \|\eta\|_{C^{\alpha}} \notag
\\
&\quad  \times \left|
\mathbf{v}_{jp}^{(1,2)}(\mathbf{0})\right|
\left|
\sec^3\varpi \sum^{\infty}_{l=1}\frac{(-1)^lk_p ^{2l}(\beta_{0l}^2+a_{0,l}^2)^{l-\frac{1}{2}}}{2^ll!(2l+1)!!}
\right|
\int_{0}^{h}r^{\alpha+2}e^{-s\sqrt{r}\cos\frac{\theta}{2}}{\rm d}r \notag
\\
&= O(s^{-2\alpha-6}), \label{eq:41}
\end{align}
where $\beta_{0l}\in (0,h)$ and $a_{0,l}\in (-L,L)$. Similarly, we can derive that
\begin{equation}\label{eq:42}
 \begin{aligned}
\left| \mathcal{\widetilde{I}}^{+}_{11} \right|\leq  O(s^{-2\alpha-6}), \quad
\left| \mathcal{\widetilde{I}}^{\pm}_{12} \right|\leq O(s^{-2\alpha-6})\quad\mbox{as $s\rightarrow+\infty$.}
\end{aligned}
\end{equation}

From \eqref{eq:vrj 313}, using Cauchy-Schwarz inequality, we have
\begin{equation}\label{eq:361 cs}
 \begin{aligned}
 &\left|\mathbf{B}_{jp,1}^{(\ell)}\right|\leq2\sqrt{\pi}\| \mathbf{g}_{\sfr, jp}^{(1,2)}\|_{L^2(\mathbb{S}^{2})^{2}}, \
 \left|\mathbf{B}_{js,1}^{(\ell)}\right|\leq2\sqrt{\pi}\| \mathbf{g}_{\sfr, js}^{(1,2)}\|_{L^2(\mathbb{S}^{2})^{2}},
 \\&\left|\mathbf{B}_{jp,2}^{(\ell)}\right|\leq2\sqrt{\pi}\| \mathbf{g}_{\sfi, jp}^{(1,2)}\|_{L^2(\mathbb{S}^{2})^{2}}, \
 \left|\mathbf{B}_{js,2}^{(\ell)}\right|\leq2\sqrt{\pi}\| \mathbf{g}_{\sfi, js}^{(1,2)}\|_{L^2(\mathbb{S}^{2})^{2}}.
  \end{aligned}
\end{equation}
From  \eqref{eq:zeta}, \eqref{rjx} and \eqref{eq:361 cs}, we have
 \begin{align}
\left|\mathcal{\widetilde{I}}^{-}_{13}
\right|
&\leq
|\sec^3\varpi|\left|C(\phi )\right| \|\eta\|_{C^{\alpha}}\sum^{\infty}_{\ell=1}\left|4\ell+1\right|
\left|
\begin{pmatrix}
    1\\
   {\rm i}
\end{pmatrix}
\cdot\mathbf{B}_{jp,1}\right| \notag
\\
&\quad \times
\left|\int_{0}^{h}r^{\alpha+2}{ \frac{k^{2\ell}_{p}   (r^2+a_{\ell }^2)^{\ell-\frac{1}{2} } }{ (4\ell+1)!!}\Bigg[ 1-  \sum_{l=1}^\infty  \frac{ k_{p}^{2l} (r^2+a_{\ell,l }^2)^{l }}{ 2^l l! N_{\ell,l,1}  }   \Bigg] e^{-s\sqrt{r}\mu(\theta_m) }}{\rm d}r
\right| \notag
\\
&\leq
\sqrt{2}|\sec^3\varpi|\left|C(\phi )\right| \|\eta\|_{C^{\alpha}}\sum^{\infty}_{\ell=1}\left|4\ell+1\right|
\left|\mathbf{B}_{jp,1}\right| \notag
\\
&\quad \times
\frac{k^{2\ell}_{p}   (\beta_{\ell }^2+\alpha_{\ell }^2)^{\ell-\frac{1}{2} } }{ (2\ell+1)!!} \left| 1-  \sum_{l=1}^\infty  \frac{ k_{p}^{2l} (\beta_{\ell,l }^2+\alpha_{\ell,l }^2)^{l }}{ 2^l l! N_{\ell,l}  }   \right|\int_{0}^{h}r^{\alpha+2}e^{-s\sqrt{r}\cos\frac{\theta}{2}}{\rm d}r \notag
\\
&\leq\|\mathbf{ g}_{\sfr, jp}^{(1,2)}\|_{L^{2}(\mathbb{S}^{2})^{2}}\times  O(s^{-2\alpha-6}),\label{eq:45}
\end{align}
as $s\rightarrow+\infty$,
where $\beta_{\ell l},\, \beta_{\ell}\in(0,h)$ and $a_{\ell,l}\in (-L,L)$. Similarly, we can derive that
\begin{equation}\label{eq:46}
 \begin{aligned}
\left|\mathcal{\widetilde{I}}^{+}_{13}\right|
\leq \| \mathbf{g}_{\sfr, jp}^{(1,2)}\|_{L^{2}(\mathbb{S}^{2})^{2}}\times  O(s^{-2\alpha-6}),\quad
\left|\mathcal{\widetilde{I}}^{\pm}_{14}\right|
\leq \| \mathbf{g}_{\sfr, js}^{(1,2)}\|_{L^{2}(\mathbb{S}^{2})^{2}}\times  O(s^{-2\alpha-6}),
\end{aligned}
\end{equation}
as $s\rightarrow+\infty$.
By \eqref{eq:zeta} and \eqref{rjx}, we have
 \begin{align}
 \left|
\mathcal{\widetilde{I}}^{-}_{15}
\right|
\leq&
|C(\phi )| \|\eta\|_{C^{\alpha}}\sum^{\infty}_{\ell=1}\left|4\ell-1\right|
\left|
\begin{pmatrix}
    1\\
   {\rm i}
\end{pmatrix}
\cdot\mathbf{B}_{jp,2}^{(\ell)}\right| \notag
\\
&\times
\left|\int_{0}^{h}r^{\alpha}{ \frac{k^{2\ell-1} _{p}  (r^2+a_{\ell }^2)^{\ell -1} }{ (4\ell-1)!!}\Bigg[ 1-  \sum_{l=1}^\infty  \frac{ k^{2l}_{p} (r^2+a_{\ell,l }^2)^{l }}{ 2^l l! N_{\ell,l,1}  }   \Bigg] e^{-s\sqrt{r}\mu(\theta_m) } }{\rm d}r
\right| \notag
\\
\leq&
\sqrt{2}|C(\phi )| \|\eta\|_{C^{\alpha}}\sum^{\infty}_{\ell=1}\left|4\ell-1\right|
\left|\mathbf{B}_{jp,2}^{(\ell)}\right| \notag
\\
&\times
\frac{k^{2\ell-1} _{p}  (\beta_{\ell }^2+\alpha_{\ell }^2)^{\ell-1 } }{ (2\ell+1)!!} \left| 1-  \sum_{l=1}^\infty  \frac{ k^{2l}_{p} (\beta_{\ell,l }^2+a_{\ell,l }^2)^{l }}{ 2^l l! N_{\ell,l}  }   \right|\int_{0}^{h}r^{\alpha+2}e^{-s\sqrt{r}\cos\frac{\theta}{2}}{\rm d}r \notag
\\
\leq&\|\mathbf{ g}_{\sfi,jp}^{(1,2)} \|_{L^{2}(\mathbb{S}^{2})^{2}}\times  O(s^{-2\alpha-6}), \label{eq:488}
\end{align}
as $s\rightarrow+\infty$,
where $\beta_{\ell l},\, \beta_{\ell}\in(0,h)$ and $a_{\ell,l }\in (-L,L)$. Similarly, we can derive that
\begin{equation}\label{eq:489}
 \begin{aligned}
 \left|\mathcal{\widetilde{I}}^{+}_{15}\right|
\leq \| \mathbf{g}_{\sfi,jp}^{(1,2)} \|_{L^{2}(\mathbb{S}^{2})^{2}}\times O( s^{-2\alpha-6}),\quad
  \left|\mathcal{\widetilde{I}}^{\pm}_{16}\right|
\leq \| \mathbf{g}_{\sfi,js}^{(1,2)} \|_{L^{2}(\mathbb{S}^{2})^{2}}\times O( s^{-2\alpha-6}),
 \end{aligned}
\end{equation}
as $s\rightarrow+\infty$.
From  \eqref{eq:40},  \eqref{eq:41},  \eqref{eq:42},  \eqref{eq:45}, \eqref{eq:46}, \eqref{eq:488} and \eqref{eq:489},  we obtain \eqref{eq:deuvj1 3d}.
\end{proof}

\begin{lem}
Consider the same setup in Lemma \ref{lem34} and let $\mathcal{\widetilde{I}}^{\pm}_{2}$ be defined in \eqref{int23}. Let
 \begin{align*}
&\widetilde{I}_{21,\beta }^{\pm}=
\int_{\Gamma^{\pm}_{h}}
\begin{pmatrix}
    1\\
   {\rm i}
\end{pmatrix}
\cdot\mathbf{v}_{j\beta }^{(1,2)}(\mathbf{0})\mathcal{R}(j_{0}(k_{\beta}|\mathbf{x}|))u_{1}
{\rm d}\sigma,
\\&\widetilde{I}_{23,\beta }^{\pm}=
\sum^{\infty}_{\ell=1}\int_{\Gamma^{\pm}_{h}}
\begin{pmatrix}
    1\\
   {\rm i}
\end{pmatrix}
\cdot\mathbf{B}_{j\beta,1}^{(\ell)} (-1)^\ell(4\ell+1)\mathcal{R}(j_{2\ell}(k_{\beta}|\mathbf{x}|))u_{1}
{\rm d}\sigma,
\\&\widetilde{I}_{25,\beta}^{\pm}=
\sum^{\infty}_{\ell=1}\int_{\Gamma^{\pm}_{h}}
\begin{pmatrix}
    1\\
   {\rm i}
\end{pmatrix}
\cdot\mathbf{B}_{j\beta,2}^{(\ell)} (-1)^\ell(4\ell-1)\mathcal{R}(j_{2\ell-1}(k_{\beta}|\mathbf{x}|))u_{1}
{\rm d}\sigma,
\end{align*}
where $\mathbf{B}_{j\beta,i}^{(\ell)}$ ($\beta=p,\,s$, $i=1,\,2$) are defined in \eqref{eq:vj12 357}.
It holds that
\begin{equation}\label{eq:491}
 \begin{aligned}
\widetilde{I}_{2}^{\pm}=\widetilde{I}_{21,p}^{\pm}
+\widetilde{I}_{21,s}^{\pm}+\widetilde{I}_{23,p}^{\pm}+\widetilde{I}_{23,s}^{\pm}
+\widetilde{I}_{25,p}^{\pm}+\widetilde{I}_{25,s}^{\pm}.
\end{aligned}
\end{equation} 
\end{lem}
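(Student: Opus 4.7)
The plan is to substitute the explicit series expansion \eqref{eq:rv} of $\mathcal{R}(\mathbf{v}_{j}^{(1,2)})$ into the integral defining $\widetilde{\mathcal{I}}_{2}^{\pm}$ and then use the linearity of the integral, together with an interchange of sum and integral, to separate the six contributions according to Bessel index parity ($j_{0}$ versus $j_{2\ell}$ versus $j_{2\ell-1}$) and wave speed ($\beta = p$ versus $\beta = s$). The key structural observation is that for the CGO pair \eqref{eq:lame3} we have $\mathbf{u}(\mathbf{x}')=(u_{1},\mathrm{i}u_{1})^{\top}$, so that for any $\mathbf{V}\in\mathbb{R}^{2}$,
\[
\mathbf{V}\cdot\mathbf{u}(\mathbf{x}')=u_{1}(\mathbf{x}')\begin{pmatrix}1\\ \mathrm{i}\end{pmatrix}\cdot\mathbf{V}.
\]
Applying this identity pointwise with $\mathbf{V}=\mathcal{R}(\mathbf{v}_{j}^{(1,2)})(\mathbf{x}')$ and then plugging in \eqref{eq:rv} reduces $\widetilde{\mathcal{I}}_{2}^{\pm}$ to a sum of six integrals that match termwise with $\widetilde{I}_{21,\beta}^{\pm}$, $\widetilde{I}_{23,\beta}^{\pm}$, $\widetilde{I}_{25,\beta}^{\pm}$ for $\beta=p,s$, provided that sum and integral can be exchanged.

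Next, I would justify the interchange of summation and integration over $\Gamma_{h}^{\pm}$. The Jacobi--Anger expansion \eqref{eq:jae} (and its 3D counterpart used to derive \eqref{eq:vr j 3}) converges absolutely and uniformly on any compact set, and Lemma \ref{lem37} shows that the reduced building blocks $\mathcal{R}(j_{2\ell}(k_{\beta}|\mathbf{x}|))$ and $\mathcal{R}(j_{2\ell-1}(k_{\beta}|\mathbf{x}|))$ inherit this property on $\overline{S_{h}}$, with magnitudes controlled by the factorial factors $(4\ell\pm 1)!!$ in the denominators (see the explicit formulae \eqref{eq:350b}--\eqref{eq:350c}). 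The spherical-harmonic coefficient vectors $\mathbf{B}_{j\beta,1}^{(\ell)}$ and $\mathbf{B}_{j\beta,2}^{(\ell)}$ are bounded uniformly in $\ell$ by the Cauchy--Schwarz estimates \eqref{eq:361 cs}, and $|u_{1}|$ is uniformly bounded on $\Gamma_{h}^{\pm}$ by $1$. Combining these bounds produces an integrable dominating function on $\Gamma_{h}^{\pm}$, so Fubini's theorem applies and termwise integration is legitimate.

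The remainder of the argument is bookkeeping: identifying the first two terms coming from $j_{0}(k_{p}|\mathbf{x}|)\mathbf{v}_{jp}^{(1,2)}(\mathbf{0})$ and $j_{0}(k_{s}|\mathbf{x}|)\mathbf{v}_{js}^{(1,2)}(\mathbf{0})$ as $\widetilde{I}_{21,p}^{\pm}$ and $\widetilde{I}_{21,s}^{\pm}$, the two series involving $(-1)^{\ell}(4\ell+1)\mathcal{R}(j_{2\ell}(k_{\beta}|\mathbf{x}|))\mathbf{B}_{j\beta,1}^{(\ell)}$ as $\widetilde{I}_{23,\beta}^{\pm}$, and the two series involving $(-1)^{\ell}(4\ell-1)\mathcal{R}(j_{2\ell-1}(k_{\beta}|\mathbf{x}|))\mathbf{B}_{j\beta,2}^{(\ell)}$ as $\widetilde{I}_{25,\beta}^{\pm}$. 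Each of these matchings is immediate from the defining formulae above \eqref{eq:491} once the $(1,\mathrm{i})^{\top}$ factor has been extracted.

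The only nontrivial point is the absolute convergence justifying Fubini; I do not expect a real obstacle here because the factorial decay of the spherical Bessel coefficients dominates any polynomial growth from the factors $4\ell\pm 1$ and any exponential factor from $|\mathbf{x}|\leq h$. Once this is verified, \eqref{eq:491} follows by linearity of integration without any additional estimate.
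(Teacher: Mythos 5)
Your proposal is correct and follows essentially the same route as the paper, whose proof consists precisely of substituting the expansion \eqref{eq:rv} into $\widetilde{\mathcal{I}}^{\pm}_{2}$ and reading off the six terms by linearity. The only addition you make is the explicit justification of the sum--integral interchange (factorial decay of the spherical Bessel functions, the uniform bounds \eqref{eq:361 cs} on $\mathbf{B}_{j\beta,i}^{(\ell)}$, and $|u_{1}|\leq 1$ on $\Gamma_{h}^{\pm}$), which the paper leaves implicit and which is a sound, routine strengthening rather than a different approach.
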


\begin{proof}
	Plugging \eqref{eq:rv} into $\widetilde{I}_{2}^{\pm}$, one can readily obtain \eqref{eq:491}.
\end{proof}

\begin{lem}\label{lem39}
Consider the same setup in Lemma \ref{lem34} and recall that $\widetilde{I}_{2}^{\pm}$ is defined \eqref{int23}. Denote
 \begin{align*}
&\widetilde{I}_{211}^{\pm}=C(\phi )
\begin{pmatrix}
    1\\
   {\rm i}
\end{pmatrix}
\cdot\mathbf{v}_{jp}^{(1,2)}(\mathbf{0})\int_{\Gamma^{\pm}_{h}}u_{1}
{\rm d}\sigma,
\quad \widetilde{I}_{221}^{\pm}=C(\phi )
\begin{pmatrix}
    1\\
   {\rm i}
\end{pmatrix}
\cdot\mathbf{v}_{js}^{(1,2)}(\mathbf{0})\int_{\Gamma^{\pm}_{h}}u_{1}
{\rm d}\sigma,
\\&\widetilde{I}_{212}^{\pm}=
\begin{pmatrix}
    1\\
   {\rm i}
\end{pmatrix}
\cdot\mathbf{v}_{jp}^{(1,2)}(\mathbf{0}) \int_{\Gamma_{h}^{\pm}}C_1(\phi )\sum^{\infty}_{l=1}\frac{(-1)^lk_p ^{2l}(|\mathbf{x}'|^2+a_{0,l}^2)^{l-\frac{1}{2}}}{2^ll!(2l+1)!!}
   |\mathbf{x}'|^2
 u_{1}
{\rm d}\sigma,
\\&\widetilde{I}_{222}^{\pm}=
\begin{pmatrix}
    1\\
   {\rm i}
\end{pmatrix}
\cdot\mathbf{v}_{js}^{(1,2)}(\mathbf{0}) \int_{\Gamma_{h}^{\pm}}C_1(\phi )\sum^{\infty}_{l=1}\frac{(-1)^lk_s ^{2l}(|\mathbf{x}'|^2+a_{0,l}^2)^{l-\frac{1}{2}}}{2^ll!(2l+1)!!}
   |\mathbf{x}'|^2
 u_{1}
{\rm d}\sigma,
  \end{align*}
where $C(\phi )$, $C_1(\phi ) $ are defined in \eqref{rjx}, and $a_{0,l}\in (-L,L)$.
Then it holds that
 \begin{align}
  \widetilde{I}_{211}^{-}+\widetilde{I}_{211}^{+}+\widetilde{I}_{221}^{-}+\widetilde{I}_{221}^{+}
 &=2s^{-2}\bigg(C(\phi)\begin{pmatrix}
    1\\
   {\rm i}
\end{pmatrix}
\cdot\mathbf{v}_{jp}^{(1,2)}(\mathbf{0})
+
C(\phi)\begin{pmatrix}
    1\\
   {\rm i}
\end{pmatrix}
\cdot\mathbf{v}_{js}^{(1,2)}(\mathbf{0})\bigg) \label{l39}
\\
 &\ \bigg(-s\sqrt{h}e^{-s\sqrt{h}\mu(\theta_{m})}\mu^{-1}(\theta_{m})-e^{-s\sqrt{h}\mu(\theta_{m})}\mu^{-2}(\theta_{m})+\mu^{-2}(\theta_{m}) \notag
\\
& \ \ \ -s\sqrt{h}e^{-s\sqrt{h}\mu(\theta_{M})}\mu^{-1}(\theta_{M})-e^{-s\sqrt{h}\mu(\theta_{M})}\mu^{-2}(\theta_{M})+\mu^{-2}(\theta_{M})\bigg), \notag
\end{align}
where $\mu(\theta)$ is defined in \eqref{eq:I311}.
Furthermore, the following estimates hold
 \begin{align}
&|\widetilde{I}_{212}^{\pm}|\leq O(s^{-4}), \quad
|\widetilde{I}_{222}^{\pm}|\leq O(s^{-4}),\quad
\left|\widetilde{I}_{23,\beta }^{\pm}\right|\leq \| \mathbf{g}_{\sfr,j\beta }^{(1,2)} \|_{L^{2}(\mathbb{S}^{2})^{2}}\times O(s^{-2\alpha-6}), \notag
\\&
\left|\widetilde{I}_{25,\beta }^{\pm}\right|\leq
\| \mathbf{g}_{\sfi,j\beta}^{(1,2)} \|_{L^{2}(\mathbb{S}^{2})^{2}}\times O(s^{-2\alpha-6}) \quad \mbox{ as $s\rightarrow +\infty$.} \label{eq:369 est}
 \end{align}
\end{lem}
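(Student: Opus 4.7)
The plan is to split the proof into two parts: first, compute the closed-form identity \eqref{l39} for the ``leading order'' pieces $\widetilde I_{211}^{\pm}+\widetilde I_{221}^{\pm}$ by direct substitution, and second, bound each of the ``higher order'' remainders via the series expansions of Lemma~\ref{lem37} combined with the scalar decay estimate in Lemma~\ref{lem:24}, the uniform bound \eqref{CE} (with Remark~\ref{rem:32}), and the Cauchy--Schwarz type bounds \eqref{eq:361 cs} on the spherical-harmonic coefficients $\mathbf B_{j\beta,i}^{(\ell)}$.

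For the leading identity, because $C(\phi)$ is a constant and $\mathbf v_{jp}^{(1,2)}(\mathbf 0)$, $\mathbf v_{js}^{(1,2)}(\mathbf 0)$ do not depend on $\mathbf x'$, I can pull them out of the boundary integrals, so that the only remaining object is $\int_{\Gamma_h^{\pm}} u_1\, d\sigma$. Substituting the explicit formulas \eqref{eq:I311} on $\Gamma_h^{-}$ and $\Gamma_h^{+}$, summing the two contributions, and regrouping by the factors $\mu(\theta_m)^{-1},\mu(\theta_m)^{-2},\mu(\theta_M)^{-1},\mu(\theta_M)^{-2}$ produces exactly the right hand side of \eqref{l39}.

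For the remainder pieces $\widetilde I_{212}^{\pm}$ and $\widetilde I_{222}^{\pm}$, I parametrize $\Gamma_h^{-}$ and $\Gamma_h^{+}$ by $\mathbf x'=r(\cos\theta_{m/M},\sin\theta_{m/M})$ so that $|\mathbf x'|=r$ and $u_1(\mathbf x')=\exp(-s\sqrt r\,\mu(\theta_{m/M}))$. Invoking \eqref{CE} to bound $|C_1(\phi)|$ independently of $|\mathbf x'|$, majorizing $(r^2+a_{0,l}^2)^{l-1/2}\le (h^2+L^2)^{l-1/2}$, and observing that the series $\sum_{l\ge 1} k_\beta^{2l}(h^2+L^2)^{l-1/2}/(2^l l!(2l+1)!!)$ converges, I may interchange sum and integral. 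A single application of Lemma~\ref{lem:24} with $\zeta=2$ to $\int_0^h r^2 e^{-s\sqrt r\cos(\theta_{m/M}/2)}\,dr$ then delivers the claimed $O(s^{-4})$. For $\widetilde I_{23,\beta}^{\pm}$ and $\widetilde I_{25,\beta}^{\pm}$ the same strategy applies, but using \eqref{eq:350b}--\eqref{eq:350c} for $\mathcal R(j_{2\ell}(k_\beta|\mathbf x|))$ and $\mathcal R(j_{2\ell-1}(k_\beta|\mathbf x|))$ and \eqref{eq:361 cs} to pull out the factor $\|\mathbf g_{\sfr/\sfi,j\beta}^{(1,2)}\|_{L^2(\mathbb S^2)^2}$. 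The remaining scalar integral $\int_0^h r^{\alpha+2}e^{-s\sqrt r\cos(\theta/2)}\,dr$, together with Lemma~\ref{lem:24} applied with $\zeta=\alpha+2$, yields the desired $O(s^{-2\alpha-6})$ estimates in \eqref{eq:369 est}.

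The main technical obstacle is justifying the termwise integration of two nested series simultaneously, the outer $\ell$-sum from Jacobi--Anger and the inner $l$-sum from the power series for $j_\ell$. Both series are absolutely and uniformly convergent in $r\in[0,h]$ with radii controlled by $h$ and $L$, and the combinatorial factors $(4\ell\pm 1)/(4\ell\pm 1)!!$ keep the resulting majorant summable with a bound depending only on $k_\beta,h,L$ (and in particular independent of $s$). Once this uniformity is established, each bound in \eqref{eq:369 est} reduces to a single application of Lemma~\ref{lem:24}, and the asymptotic orders stated in the lemma follow directly.
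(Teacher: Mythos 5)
Your overall route coincides with the paper's: the identity \eqref{l39} is obtained exactly as you describe, by pulling the constant vectors and $C(\phi)$ out of the boundary integrals and inserting the explicit values of $\int_{\Gamma_h^{\pm}}u_1\,{\rm d}\sigma$ from Lemma \ref{lem:u0 int}, and the remainder terms are treated, as in the paper, via the reduced Bessel expansions of Lemma \ref{lem37}, the uniform bound \eqref{CE} on $C_1(\phi)$ (Remark \ref{rem:32}), the Cauchy--Schwarz bounds \eqref{eq:361 cs}, and the decay estimate \eqref{eq:zeta}. Your direct majorization $(r^2+a_{0,l}^2)^{l-\frac12}\le (h^2+L^2)^{l-\frac12}$ in place of the paper's repeated use of the integral mean value theorem is a harmless cosmetic difference, and your attention to the absolute/uniform convergence needed to integrate the nested series termwise is welcome. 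For $\widetilde I_{212}^{\pm}$ and $\widetilde I_{222}^{\pm}$, Lemma \ref{lem:24} with $\zeta=2$ in fact gives $O(s^{-6})$, which of course implies the stated $O(s^{-4})$, so that part is fine.

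One step does not go through as written: for $\widetilde{I}_{23,\beta }^{\pm}$ and $\widetilde{I}_{25,\beta }^{\pm}$ you invoke the scalar integral $\int_0^h r^{\alpha+2}e^{-s\sqrt r\cos(\theta/2)}\,{\rm d}r$ and Lemma \ref{lem:24} with $\zeta=\alpha+2$. These terms come from $\eta(\mathbf 0)\widetilde{\mathcal I}^{\pm}_{2}$ and contain no H\"older remainder $\delta\eta$, so there is no source for the factor $r^{\alpha}$; the only power of $r$ available is the $|\mathbf{x}'|^2$ supplied by \eqref{eq:350b}--\eqref{eq:350c}, and what your argument honestly yields is a bound of the form (kernel norm)$\,\times O(s^{-6})$, not $O(s^{-2\alpha-6})$ --- and note $O(s^{-6})$ does not imply $O(s^{-2\alpha-6})$, the implication goes the other way. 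The paper's own display \eqref{eq:57} has the same feature (it ends with an $r^{2}$-integral yet asserts the order $s^{-2\alpha-6}$), so the exponent in \eqref{eq:369 est} appears to be a slip carried over from the $\delta\eta$-terms of Lemma \ref{lem38}; in any case the provable $O(s^{-6})$ bound is all that is needed once the identity is multiplied by $s^{2}$ in the proof of Theorem \ref{thm:31}. You should therefore either justify where $r^{\alpha}$ would come from (it does not) or, better, state and use the $O(s^{-6})$ bound for these terms.
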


\begin{proof}
%
Substituting \eqref{rjx} into $\widetilde{I}_{21,p}^{\pm}$ in \eqref{eq:491}, one can obtain that
\begin{equation}\label{100}
\begin{aligned}
\widetilde{I}_{21,p}^{\pm}=\widetilde{I}_{211}^{\pm}+\widetilde{I}_{212}^{\pm}
\end{aligned}
\end{equation}
where $\widetilde{I}_{211}^{\pm}$ and $\widetilde{I}_{212}^{\pm}$ are defined in \eqref{l39}.

By \eqref{eq:I311}, it is directly seen that
 \begin{equation}\label{eq:51}
 \begin{aligned}
 \widetilde{I}_{211}^{-}=
&C(\phi )
\begin{pmatrix}
    1\\
   {\rm i}
\end{pmatrix}
\cdot\mathbf{v}_{jp}^{(1,2)}(\mathbf{0})\int_{\Gamma^{-}_{h}}u_{1}
{\rm d}\sigma
\\=&C(\phi )\begin{pmatrix}
    1\\
   {\rm i}
\end{pmatrix}
\cdot\mathbf{v}_{jp}^{(1,2)}(\mathbf{0})
\frac{2}{s^{2}}\big(-s\sqrt{h}\frac{e^{-s\sqrt{h}\mu(\theta_{m})}}{\mu(\theta_{m})}-\frac{e^{-s\sqrt{h}\mu(\theta_{m})}}{\mu^{2}(\theta_{m})}+\frac{1}{\mu^{2}(\theta_{m})}\big).
\end{aligned}
\end{equation}
Similarly, using \eqref{eq:I311}, we have
 \begin{equation}\label{eq:511}
 \begin{aligned}
&\widetilde{I}_{211}^{+}=
C(\phi )\begin{pmatrix}
    1\\
   {\rm i}
\end{pmatrix}
\cdot\mathbf{v}_{jp}^{(1,2)}(\mathbf{0})
\frac{2}{s^{2}}\big(-s\sqrt{h}\frac{e^{-s\sqrt{h}\mu(\theta_{M})}}{\mu(\theta_{M})}-\frac{e^{-s\sqrt{h}\mu(\theta_{M})}}{\mu^{2}(\theta_{M})}+\frac{1}{\mu^{2}(\theta_{M})}\big),
\\&\widetilde{I}_{221}^{-}=
C(\phi )\begin{pmatrix}
    1\\
   {\rm i}
\end{pmatrix}
\cdot\mathbf{v}_{js}^{(1,2)}(\mathbf{0})
\frac{2}{s^{2}}\big(-s\sqrt{h}\frac{e^{-s\sqrt{h}\mu(\theta_{m})}}{\mu(\theta_{m})}-\frac{e^{-s\sqrt{h}\mu(\theta_{m})}}{\mu^{2}(\theta_{m})}+\frac{1}{\mu^{2}(\theta_{m})}\big),
\\&\widetilde{I}_{221}^{+}=
C(\phi )\begin{pmatrix}
    1\\
   {\rm i}
\end{pmatrix}
\cdot\mathbf{v}_{js}^{(1,2)}(\mathbf{0})
\frac{2}{s^{2}}\big(-s\sqrt{h}\frac{e^{-s\sqrt{h}\mu(\theta_{M})}}{\mu(\theta_{M})}-\frac{e^{-s\sqrt{h}\mu(\theta_{M})}}{\mu^{2}(\theta_{M})}+\frac{1}{\mu^{2}(\theta_{M})}\big).
\end{aligned}
\end{equation}
Combining \eqref{eq:51} with \eqref{eq:511}, we obtain \eqref{l39}.

For $\widetilde{I}_{212}^{-} $, using Lemma \ref{lem:24} and the integral mean value theorem, we can deduce that
 \begin{equation}\label{eq:52}
 \begin{aligned}
|\widetilde{I}_{212}^{-}|\leq
&\left|\begin{pmatrix}
    1\\
   {\rm i}
\end{pmatrix}
\cdot\mathbf{v}_{jp}^{(1,2)}(\mathbf{0}) \int_{\Gamma_{h}^{\pm}}C_1(\phi )\sum^{\infty}_{l=1}\frac{(-1)^lk_p ^{2l}(|\mathbf{x}'|^2+a_{0,l}^2)^{l-\frac{1}{2}}}{2^ll!(2l+1)!!}
   |\mathbf{x}'|^2
 u_{1}
{\rm d}\sigma
\right|
\\\leq&
\sqrt{2}|\sec^3\varpi|\left|C(\phi )\right|\left|
\mathbf{v}_{jp}^{(1,2)}(\mathbf{0})\right|  \sum^{\infty}_{l=1}\frac{k_p ^{2l}(|\beta_l|^2+a_{0,l}^2)^{l-\frac{1}{2}}}{2^ll!(2l+1)!!}
\int_{0}^{h}r^{2}e^{-s\sqrt{r}\cos\frac{\theta}{2}}
{\rm d}r
\\\leq&O(s^{-4})\quad\mbox{as $ s\rightarrow+\infty$},
\end{aligned}
\end{equation}
where $ \beta_l \in(0,h)$, $a_{0,l}\in (-L,L)$ and $ \varpi \in (-\arctan L/|\mathbf{x}'|, \arctan L/|\mathbf{x}'|)$ given by \eqref{CE}.

Using a similar argument for $\widetilde{I}_{212}^{-} $, we can derive that
\begin{equation}\label{eq:521}
 \begin{aligned}
|\widetilde{I}_{212}^{+}|\leq O(s^{-4}), \quad
|\widetilde{I}_{222}^{\pm}|\leq O(s^{-4})\quad\mbox{as $ s\rightarrow+\infty$.}
\end{aligned}
\end{equation}
By virtue of \eqref{eq:zeta}, \eqref{eq:350b} and \eqref{eq:361 cs}, one has
\begin{equation}\label{eq:57}
 \begin{aligned}
 \left|\widetilde{I}_{23,p}^{-}\right|
\leq& \sum^{\infty}_{\ell=1} \left|\begin{pmatrix}
    1\\
   {\rm i}
\end{pmatrix}
\cdot\mathbf{B}_{jp}^{(\ell)}\right|\left|4\ell+1\right|
\int_{0}^{h}
 \frac{k_{p}^{2\ell}   (\beta_{\ell}^2+a_{\ell }^2)^{\ell-\frac{1}{2} } }{ (4\ell+1)!!} \\& \times \Bigg[ 1-  \sum_{l=1}^\infty  \frac{(-1)^l k_{p}^{2l} (\beta_{\ell,l }^2+a_{\ell,l }^2)^{l }}{ 2^l l! N_{\ell,l,1}  }   \Bigg]
|\sec^3\varpi||C(\phi )|\int_{0}^{h}r^2
e^{-s\sqrt{r}\cos\frac{\theta_m}{2}}
{\rm d}r
\\\leq&\| \mathbf{g}_{\sfr,jp}^{(1,2)} \|_{L^{2}(\mathbb{S}^{2})^{2}}\times O(s^{-2\alpha-6})\quad\mbox{as $ s\rightarrow+\infty$,}
\end{aligned}
\end{equation}
 where $\beta_{\ell}, \beta_{\ell,l}\in (0,h)$. Please be noted that according to Remark \ref{rem:32},  $|\sec^3\varpi|$ is uniformly bounded respect to $|\mathbf x'|$. Similarly, using \eqref{eq:zeta}, \eqref{eq:350b} and \eqref{eq:361 cs},  we obtain that
\begin{equation}\label{eq:58}
 \begin{aligned}
\left|\widetilde{I}_{23,p}^{+}\right|\leq\| \mathbf{g}_{\sfr,jp}^{(1,2)} \|_{L^{2}(\mathbb{S}^{2})^{2}}\times O(s^{-2\alpha-6}),\
\left|\widetilde{I}_{23,s}^{\pm}\right|\leq \| \mathbf{g}_{\sfr,js}^{(1,2)} \|_{L^{2}(\mathbb{S}^{2})^{2}}\times O(s^{-2\alpha-6})\ \mbox{as $ s\rightarrow+\infty$.}
\end{aligned}
\end{equation}
By virtue of \eqref{eq:zeta}, \eqref{eq:350c} and \eqref{eq:361 cs}, one has
 \begin{align}
  \left|\widetilde{I}_{25,p}^{-}\right|\leq
 &\left|\sum^{\infty}_{\ell=1}\int_{\Gamma^{-}_{h}}
\begin{pmatrix}
    1\\
   {\rm i}
\end{pmatrix}
\cdot\mathbf{B}_{jp,2}^{(\ell)}(-1)^\ell(4\ell-1)\mathcal{R}(j_{\ell}(k_{p}|\mathbf{x}|))u_{1}
{\rm d}\sigma\right| \notag
\\
\leq&
\sum^{\infty}_{\ell=1} \left|\begin{pmatrix}
    1\\
   {\rm i}
\end{pmatrix}
\cdot\mathbf{B}_{jp,2}^{(\ell )}\right|\left|4\ell-1\right|\int_{0}^{h}
 \frac{k_{p}^{2\ell-1}   (\beta_{\ell}^2+a_{\ell }^2)^{\ell-\frac{3}{2} } }{ (4\ell+1)!!} \notag\\
 & \times\Bigg[ 1-  \sum_{l=1}^\infty  \frac{(-1)^l k_{p}^{2l} (\beta_{\ell,l }^2+a_{\ell,l }^2)^{l }}{ 2^l l! N_{\ell,l,2}  }   \Bigg]
|\sec^3\varpi|\left|C(\phi )\right|\int_{0}^{h}r^2
e^{-s\sqrt{r}\cos\frac{\theta_m}{2}}
{\rm d}r \notag
\\
=&\| \mathbf{g}_{\sfi,jp}^{(1,2)} \|_{L^{2}(\mathbb{S}^{2})^{2}}\times O(s^{-2\alpha-6})\ \ \mbox{as $ s\rightarrow+\infty$,} \label{eq:61}
\end{align}
where $\beta_{\ell}, \beta_{\ell,l}\in (0,h)$. Again according to  \eqref{eq:zeta}, \eqref{eq:350c} and \eqref{eq:361 cs}, we can conclude that
\begin{equation}\label{eq:62}
 \begin{aligned}
 \left|\widetilde{I}_{25,p}^{+}\right|\leq
\| \mathbf{g}_{\sfi,jp}^{(1,2)} \|_{L^{2}(\mathbb{S}^{2})^{2}}\times O(s^{-2\alpha-6}),\
\left|\widetilde{I}_{25,s}^{+}\right|\leq
\| \mathbf{g}_{\sfi,js}^{(1,2)} \|_{L^{2}(\mathbb{S}^{2})^{2}}\times O(s^{-2\alpha-6}),
\end{aligned}
\end{equation}
as $ s\rightarrow+\infty$.

In view of \eqref{eq:52}--\eqref{eq:62}, we derive \eqref{eq:369 est}.
\end{proof}

\begin{lem}\label{lem:314 third}
	Consider the same setup in Lemma \ref{lemma3.2} and suppose that $\mathbf{v}_\sfr $ can be approximated by a sequence of the  functions $\{\mathbf{v}^\sfr_j\} _{j=1}^{+\infty} $  defined by \eqref{eq:vj3} in $H^1(S_h \times (-M,M))^{3}$  satisfying \eqref{eq:vj aprrox 3}. If $\lambda \neq 0$, $\eta(\mathbf 0) \neq 0$ and  $-\pi<\theta_{m}<\theta_{M}<\pi $ satisfying  $\theta_M-\theta_m\neq \pi$, then we have
	\begin{equation}\label{eq:vj3 zero}
		\lim_{j \rightarrow +\infty} v_j^{(3)}(\mathbf 0) =0.
	\end{equation}
\end{lem}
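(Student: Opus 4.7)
The plan is to adapt the CGO-integral-identity method developed for the vector subsystem~\eqref{eq:2.} in Lemmas~\ref{lem34}--\ref{lem39} to the \emph{scalar} decoupled subsystem~\eqref{3} governing $(\mathcal{R}(v_3),\mathcal{R}(w_3))$. This is a conductive-Laplace transmission problem on $S_h$ with effective boundary parameter $\mu^{-1}\eta$, and since the CGO $u_1(\mathbf{x}')=\exp(-s\sqrt{z})$ in~\eqref{eq:lame3} is harmonic, it is an admissible test function. Applying Green's second identity to $(\mathcal{R}(v_3-w_3),u_1)$ on $S_h$, invoking the PDEs in~\eqref{3} together with $\mathcal{R}(v_3)=\mathcal{R}(w_3)$ and $\partial_\nu\mathcal{R}(v_3-w_3)=-\mu^{-1}\eta\,\mathcal{R}(v_3)$ on $\Gamma_h^{\pm}$, produces the integral identity
\begin{equation}\notag
\mu^{-1}\lambda\int_{\Gamma_h^{\pm}}\eta\,\mathcal{R}(v_3)\,u_1\,\mathrm{d}\sigma=\lambda\int_{\Lambda_h}\bigl(\partial_\nu\mathcal{R}(v_3-w_3)\,u_1-\mathcal{R}(v_3-w_3)\,\partial_\nu u_1\bigr)\,\mathrm{d}\sigma-\int_{S_h}\bigl(G_1^{(3)}-G_2^{(3)}\bigr)u_1\,\mathrm{d}\mathbf{x}'.
\end{equation}

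Next, I split $\mathcal{R}(v_3)=\mathcal{R}(v_j^{(3)})+\mathcal{R}(v_3-v_j^{(3)})$ and $\eta=\eta(\mathbf{0})+\delta\eta$, isolating four remainder pieces together with one leading term. Each remainder is estimated by the scalar analogue of a previous lemma: the $\Lambda_h$-integral decays exponentially in $s$ via Lemma~\ref{lem:24 u tu} together with the trace theorem; the Herglotz-approximation error $\int_{\Gamma_h^{\pm}}\eta\,\mathcal{R}(v_3-v_j^{(3)})\,u_1\,\mathrm{d}\sigma$ is of order $O(s^{-2\alpha-2})\,j^{-\gamma}$ by Lemma~\ref{lem:31} and~\eqref{eq:vj aprrox 3}; the $\delta\eta$-boundary piece is handled via the reduction-operator Bessel expansion~\eqref{rjx}--\eqref{eq:350c} exactly as in Lemma~\ref{lem38}, yielding $O(s^{-2\alpha-6})+\bigl(\|g_{\sfr,j\beta}^{(3)}\|_{L^2}+\|g_{\sfi,j\beta}^{(3)}\|_{L^2}\bigr)\,O(s^{-2\alpha-6})$; and the bulk integral $\int_{S_h}(G_1^{(3)}-G_2^{(3)})\,u_1\,\mathrm{d}\mathbf{x}'$, once the interior ellipticity of the Lam\'e operator is invoked to promote $\mathbf{v}_\sfr,\mathbf{w}_\sfr$ to (local) $C^{1,\alpha}$-regularity so that $G_1^{(3)}-G_2^{(3)}\in C^\alpha(\overline{S_h})$, is of order $O(s^{-2\alpha-4})$ by~\eqref{eq:lame6}.

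The surviving leading contribution is $\mu^{-1}\lambda\,\eta(\mathbf{0})\,\mathcal{R}(v_j^{(3)})(\mathbf{0})\int_{\Gamma_h^{\pm}}u_1\,\mathrm{d}\sigma$. Using the scalar analogues of Lemmas~\ref{lem37} and \ref{lem39} combined with Lemma~\ref{lem:u0 int}, this equals
\begin{equation}\notag
2\mu^{-1}\lambda\,\eta(\mathbf{0})\,C(\phi)\,v_j^{(3)}(\mathbf{0})\,s^{-2}\bigl[\mu^{-2}(\theta_m)+\mu^{-2}(\theta_M)\bigr]+o(s^{-2}),\qquad s\to+\infty.
\end{equation}
Multiplying the integral identity by $s^2$, choosing $s=j^{\varrho/2}$ with $\max\{\beta_1,\beta_2\}<\varrho<\gamma$, and sending $j\to+\infty$, every other term vanishes and forces $\eta(\mathbf{0})\,v_j^{(3)}(\mathbf{0})\bigl[\mu^{-2}(\theta_m)+\mu^{-2}(\theta_M)\bigr]\to 0$. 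The non-degeneracy hypothesis $\theta_M-\theta_m\neq\pi$ together with \cite[Lemma~2.10]{DCL} yields $\mu^{-2}(\theta_m)+\mu^{-2}(\theta_M)\neq 0$; combined with $\lambda\neq 0$, $\mu>0$, $C(\phi)>0$ by~\eqref{CE}, and $\eta(\mathbf{0})\neq 0$, this clears the prefactor and delivers~\eqref{eq:vj3 zero}.

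The principal technical obstacle is twofold. First, one must correctly extract the scalar leading coefficient $C(\phi)\,v_j^{(3)}(\mathbf{0})$ from the spherical-Bessel expansion~\eqref{eq:vr j 3} of $v_j^{(3)}$ under the reduction operator $\mathcal{R}$, while verifying that the Bessel tails contribute only sub-leading powers of $s^{-1}$; this is the scalar counterpart of Lemmas~\ref{lem37}--\ref{lem39}. Second, the bulk estimate of $G_1^{(3)}-G_2^{(3)}$ is delicate because the bare $H^1$-regularity supplied by Lemma~\ref{lemma3.2} would only yield the marginal bound $O(s^{-2})$ — insufficient after multiplication by $s^2$ — so the required $O(s^{-2\alpha-4})$ decay must be secured by promoting $\mathbf{v}_\sfr,\mathbf{w}_\sfr$ to (local) H\"older regularity via the interior ellipticity of the Lam\'e operator.
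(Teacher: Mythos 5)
Your overall strategy is exactly the one the paper intends: the paper's own proof simply notes that $(\mathcal{R}(v_3),\mathcal{R}(w_3))$ solves the scalar conductive system \eqref{3}, checks that the right-hand sides $\mathbf{G}_1^{(3)},\mathbf{G}_2^{(3)}$ are H\"older continuous on $\overline{S}_h$, records the approximation bound \eqref{eq:v3 381}, and then delegates the CGO/Green-identity computation to \cite[Theorem 3.1]{DCL}; your proposal carries out that delegated computation in scalar form (harmonic CGO $u_1$, splitting into $\mathcal{R}(v_j^{(3)})$ plus remainders, the $C(\phi)$ leading term via the analogues of Lemmas~\ref{lem37} and \ref{lem39} and Lemma~\ref{lem:u0 int}, the choice $s=j^{\varrho/2}$, and the non-degeneracy $\mu^{-2}(\theta_m)+\mu^{-2}(\theta_M)\neq 0$). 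Up to that point your orders in $s$ and the final limiting argument are consistent with the paper's treatment of the $(1,2)$-components.

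There is, however, one genuine gap: your claim that the H\"older regularity needed for the bulk term can be ``secured by promoting $\mathbf{v}_\sfr,\mathbf{w}_\sfr$ to (local) $C^{1,\alpha}$-regularity via the interior ellipticity of the Lam\'e operator.'' This step fails. The vertex $\mathbf{0}$ of $S_h$ (equivalently the edge $\{\mathbf{0}\}\times(-M,M)$) is a \emph{boundary} point of the domain on which $\mathbf{v}_\sfr,\mathbf{w}_\sfr$ solve the Lam\'e equations, and the CGO concentrates precisely there as $s\to+\infty$; interior elliptic estimates give $C^{1,\alpha}$ bounds only on compact subsets of the open set $S_h\times(-M,M)$ and say nothing about continuity up to $\overline{S}_h$ at the corner, which is exactly where you need the expansion $F(\mathbf{x}')=F(\mathbf{0})+O(|\mathbf{x}'|^\alpha)$ to upgrade the marginal $O(s^{-2})$ Cauchy--Schwarz bound to $O(s^{-4})+O(s^{-2\alpha-4})$. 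Indeed, the possible failure of such boundary regularity is the whole reason the paper imposes regularity criteria on the eigenfunctions at all. The correct (and intended) fix is not interior regularity but the hypotheses inherited from the setting in which the lemma is applied, namely \eqref{eq:qw 3} ($q\mathbf{w}_\sfr\in C^{\alpha}(\overline{S}_h\times[-M,M])$) and \eqref{eq:vr-wr} ($\mathbf{v}_\sfr-\mathbf{w}_\sfr\in C^{1,\alpha}(\overline{S}_h\times[-M,M])$), which via the argument of Lemma~\ref{lem:c alpha} yield $\mathbf{G}_1^{(3)},\mathbf{G}_2^{(3)}\in C^{\alpha}(\overline{S}_h)$ (the $\mathcal{R}(v_3)$ piece being split off through $\mathcal{R}(v_j^{(3)})$ and the approximation estimate, as you do elsewhere). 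With that substitution your argument closes; without it, the bulk estimate in your proof is unjustified.
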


\begin{proof}
%
%
%
%
From Lemma \ref{lem:pde decouple}, we know that $(v_3,w_3) \in H^1(S_h \times (-M,M))$ fulfills \eqref{3}. Since \eqref{eq:qw 3} and \eqref{eq:vr-wr} are satisfied, using a similar argument as that for Lemma \ref{lem:c alpha}, we can prove that $\mathbf{G}_{1}^{(3)}(\mathbf{x}')\in C^\alpha (\overline{S}_h)$ and $\mathbf{G}_{2}^{(3)}(\mathbf{x}')\in C^\alpha (\overline{S}_h)$ ($0<\alpha<1$), where $\mathbf{G}_{1}^{(3)}(\mathbf{x}')$ and $\mathbf{G}_{2}^{(3)}(\mathbf{x}') $ are  the RHS terms of \eqref{3}. According to \eqref{eq:vj aprrox 3}, one has
\begin{equation}\label{eq:v3 381}
	\|v_3-v_j^{(3)}\|_{H^1(S_h \times (-M,M))} \leq j^{-\gamma}.
\end{equation}
By virtue of \eqref{eq:v3 381}, under the assumption $\theta_M-\theta_m\neq \pi$ and $\eta(\mathbf 0) \neq 0$, adopting a similar argument for \cite[Theorem 3.1]{DCL}, we can prove \eqref{eq:vj3 zero}.
\end{proof}

\begin{thm}\label{thm:31}

Let $\Omega$, $S_h\times(-M, M)$ be described above and $\alpha\in (0, 1)$. For any fixed  $x_3^c \in (-M,M)$ and $L>0$ defined in Definition \ref{Def}, we suppose that  $L $ is sufficiently small such that $(x_3^c-L,x_3^c+L) \subset (-M,M) $  and
$$
(B_h  \times (-M,M) ) \cap \Omega=S_h \times (-M,M),
$$
where $B_h\Subset \mathbb R^2 $ is  the central ball of radius $h \in \mathbb R_+$. Assume that $\mathbf{v},\mathbf{w}\in H^1(\Omega )^3$ are the generalized elastic transmission eigenfunctions to \eqref{eq:lame1}, where the Lam\'e constant $\lambda \neq 0$. Assume further that $q\mathbf{w} \in C^{\alpha}(\overline {S}_h \times [-M,M] )^{3}$ and  $\mathbf{v}-\mathbf{w} \in C^{1,\alpha}(\overline{S}_h\times [-M,M] )^{3}$,  where $\mathbf{0}$ is the vertex of $S_h$  defined in \eqref{eq:lame2}.  Write  $\mathbf{x}=(\mathbf{x}', x_3) \in \mathbb{R}^{3}, \, \mathbf x'\in \mathbb{R}^{2}$.  If the following conditions are fulfilled:
\begin{itemize}
\item[{\rm (a)}]~$\mathbf{v} $ can be approximated in $H^{1}(S_{h}\times(-M,M))^{3}$ by the Herglotz functions $\mathbf{v}_{j},j=1,2,...,$ with kernels $\mathbf g_{jp}$ and  $\mathbf g_{js}$ satisfying \eqref{eq:vj aprrox 3};
\\
\item[{\rm (b)}]the function $\eta=\eta(\mathbf{x}')$ is independent  of $x_3$ and
\begin{equation}\label{eq:3 eta}
\eta(\mathbf{0})\neq0,
\end{equation}
\item[{\rm (c)}]the angles $\theta_{m}$ and $\theta_{M}$ of the sector $S_h$ satisfy
\begin{equation}\label{eq:4}
-\pi<\theta_{m}<\theta_{M}<\pi \mbox{ and } \theta_{M}-\theta_{m}\neq\pi ,
\end{equation}
\end{itemize}
then for every edge points  $(\mathbf{0},x_3^c) \in \mathbb{R}^3 $ of $S_h \times (-M,M)$, namely ${x}_3^c\in (-M,M)$, one has
\begin{equation}\label{eq:36}
	\lim_{ \rho \rightarrow +0 }\frac{1}{m(B((\mathbf{0},x_3^c), \rho  )\cap\Omega) } \int_{m(B((\mathbf{0},x_3^c), \rho  )\cap\Omega) } |\mathbf{v}(\mathbf{x})| {\rm d} \mathbf{x}=0,
\end{equation}
	where $m(B((\mathbf{0},x_3^c), \rho  )\cap\Omega)$ is the measure of $B((\mathbf{0},x_3^c) ,\rho )\cap\Omega$.
\end{thm}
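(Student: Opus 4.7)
As in the two-dimensional analysis, I will split $\mathbf v=\mathbf v_\sfr+\mathrm i\mathbf v_\sfi$ and focus on $\mathbf v_\sfr$; the argument for $\mathbf v_\sfi$ is identical. The strategy is to reduce the three-dimensional problem along the edge to a two-dimensional one via the operator $\mathcal R$ of Definition~\ref{Def} and then mimic the proof of Theorem~\ref{thm:29}. Concretely, fix $x_3^{c}\in(-M,M)$ and a cutoff $\phi\in C_0^{\infty}((x_3^{c}-L,x_3^{c}+L))$, $\phi\geq 0$, $\phi\not\equiv 0$, with $L$ small. Lemmas~\ref{lemma3.2} and \ref{lem:pde decouple} then give that $\bigl(\mathcal R(\mathbf v^{(1,2)}),\mathcal R(\mathbf w^{(1,2)})\bigr)$ solves the two-dimensional Lam\'e transmission-type system \eqref{eq:2.} in $S_h$, with forcing $\mathbf G_1^{(1,2)},\mathbf G_2^{(1,2)}$ that are H\"older in $\mathbf x'$ by the assumption $q\mathbf w\in C^\alpha(\overline{S_h}\times[-M,M])^3$ and $\mathbf v-\mathbf w\in C^{1,\alpha}(\overline{S_h}\times[-M,M])^3$; see Lemma~\ref{lem:c alpha}.

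Next I would insert the CGO solution $\mathbf u(\mathbf x')$ of Lemma~\ref{lem:22} into the Green identity for the reduced system and obtain the integral identity \eqref{4} of Lemma~\ref{lem34}, in which $\mathcal R(\mathbf v^{(1,2)})$ is compared to the dimension-reduced Herglotz approximant $\mathcal R(\mathbf v_j^{(1,2)})$. Then I would bound each piece separately as $s\to+\infty$: $\widetilde I_{\Lambda_h}$ decays exponentially by Lemma~\ref{lem:3.5}; the approximation errors $\widetilde I_2$ and $\widetilde I_\pm^{\Delta}$ are controlled by Lemma~\ref{lem35}; the bulk integral $\widetilde I_1$ is expanded using the H\"older expansions \eqref{eq:28}--\eqref{f1j} and Lemma~\ref{lem36}, producing the leading term $6(\mathbf f_{1j}(\mathbf 0)+\mathbf f_2(\mathbf 0)+\mathbf f_3(\mathbf 0)+\mathbf f_4(\mathbf 0))(e^{-2\theta_M\mathrm i}-e^{-2\theta_m\mathrm i})s^{-4}$ of order $s^{-4}$; and the boundary pieces $\widetilde I_\pm$ are decomposed via \eqref{int23}, where $\widetilde{\mathcal I}_1^\pm$ is $o(s^{-2})$ by Lemma~\ref{lem38} and $\widetilde{\mathcal I}_2^\pm$ is handled by Lemmas~\ref{lem37} and \ref{lem39}. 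The key observation from Lemma~\ref{lem39} is that, after multiplying by $s^2$, the only surviving contribution is
\[
2C(\phi)\begin{pmatrix}1\\ \mathrm i\end{pmatrix}\cdot\bigl(\mathbf v_{jp}^{(1,2)}(\mathbf 0)+\mathbf v_{js}^{(1,2)}(\mathbf 0)\bigr)\bigl(\mu(\theta_m)^{-2}+\mu(\theta_M)^{-2}\bigr),
\]
with $C(\phi)>0$ by \eqref{C}.

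Choosing $s=j^{\varrho/2}$ with $\max\{\beta_1,\beta_2\}<\varrho<\gamma$ and letting $j\to\infty$, all remainder terms go to zero while the leading boundary term survives. Using $\eta(\mathbf 0)\neq 0$, $C(\phi)>0$, and the non-degeneracy identity $\mu(\theta_m)^{-2}+\mu(\theta_M)^{-2}\neq 0$ (\cite[Lemma~2.10]{DCL}, applicable thanks to \eqref{eq:4}), I conclude
\[
\lim_{j\to\infty}\mathbf v_j^{(1,2)}(\mathbf 0)=\mathbf 0.
\]
Combining this with Lemma~\ref{lem:314 third}, which yields $\lim_{j\to\infty}v_j^{(3)}(\mathbf 0)=0$ by reducing the scalar equation \eqref{3} to \cite[Theorem~3.1]{DCL}, I obtain $\lim_{j\to\infty}\mathbf v_j^{\sfr}(\mathbf 0)=\mathbf 0$. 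Finally, since $\mathbf v_j\to\mathbf v$ in $H^1$ on a neighbourhood of $(\mathbf 0,x_3^{c})$ and the Herglotz sequence is smooth, a standard averaging argument as in \eqref{finally} gives \eqref{eq:36}.

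The main obstacle is the precise extraction of the leading $s^{-2}$ order in the boundary term $\widetilde{\mathcal I}_2^\pm$: after applying $\mathcal R$ to the Jacobi--Anger expansion of $\mathbf v_j$, the Bessel-type factors become the three-dimensional spherical-Bessel averages of Lemma~\ref{lem37}, which contain the new weight $C_1(\phi)$ depending on $|\mathbf x'|$. I must verify that the correction $C_1(\phi)$ is uniformly bounded (Remark~\ref{rem:32}) and that all higher-order spherical harmonics in $\ell\geq 1$ contribute only $o(s^{-2})$ after being paired with $\|\mathbf g_{j\beta}\|_{L^2(\mathbb S^2)}\leq j^{\beta_k}$; this is exactly the content of Lemmas~\ref{lem38}--\ref{lem39} and is the delicate point where the balance $s=j^{\varrho/2}$ with $\varrho>\max\{\beta_1,\beta_2\}$ is used to kill the $\|\mathbf g_{j\beta}\|$-weighted tail. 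Once this bookkeeping is in place, the rest of the argument proceeds in direct parallel to the two-dimensional case.
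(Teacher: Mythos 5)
Your proposal is correct and follows essentially the same route as the paper's own proof: dimension reduction via $\mathcal R$ to the planar system \eqref{eq:2.}, the integral identity \eqref{4} with the CGO solution, the term-by-term estimates of Lemmas \ref{lem:3.5}--\ref{lem39}, multiplication by $s^2$ with the choice $s=j^{\varrho/2}$, $\max\{\beta_1,\beta_2\}<\varrho<\gamma$, to extract $\lim_{j\to\infty}\mathbf v_j^{(1,2)}(\mathbf 0)=\mathbf 0$, then Lemma \ref{lem:314 third} for the third component and the averaging argument of \eqref{finally}. The delicate points you flag (uniform boundedness of $C_1(\phi)$ and the $\|\mathbf g_{j\beta}\|$-weighted tails) are exactly the ones the paper handles in Remark \ref{rem:32} and Lemmas \ref{lem38}--\ref{lem39}.
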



\begin{rem}\label{rem:22}

Similar to Remark \ref{rem:23} in the 2D case, a more general Fourier extension property (cf. \eqref{eq:cond thm3 gene}) can be proposed study the vanishing property 
in Theorem~\ref{thm:31}. However, this will involve rather lengthy and complicate analysis, and we choose not to explore more along that direction in this paper. 
\end{rem}

\begin{proof}[Proof of Theorem \ref{thm:31}]

As remarked earlier, it is sufficient for us to consider $\mathbf{v}_{\sf R}$ and $\mathbf{w}_{\sf R}$, and prove that \eqref{eq:36} holds for $\mathbf{v}_{\sf R}$. 
Since $q\mathbf{w} \in C^{\alpha }(\overline{S}_h \times [-M,M])^{3}$, it is easy to see that $q\mathbf{w}_\sfr \in C^{\alpha }(\overline{S}_h \times [-M,M])^{3}$. Similarly, we know that  $\mathbf{v}_\sfr-\mathbf{w}_\sfr\in C^{1,\alpha} (\overline{S_h}\times [-M,M] )^3$ under the assumption  $\mathbf{v}-\mathbf{w}\in C^{1,\alpha} (\overline{S_h}\times [-M,M] )^3$.

Since  the Herglotz functions $\mathbf{v}_{j},j=1,2,...,$ defined in \eqref{eq:vj3} with  kernels $\mathbf g_{jp}$ and  $\mathbf g_{js}$   can approximate $\mathbf v$ under the condition \eqref{eq:vj aprrox 3}, we see that $\mathbf v_\sfr \in H^1(S_h \times (-M,M))^3$ can be approximated by $\{\mathbf v_j^\sfr \}_{j=1}^{+\infty}$ defined in \eqref{eq:vr j 3} satisfying \eqref{eq:vj aprrox 3}. Therefore the assumptions in Lemmas \ref{lem34}--\ref{lem39} are fulfilled.

We divide the proof into two parts. 

\medskip
\noindent {\bf Part I.}~First we shall prove that
\begin{equation}\label{eq:vj vanish}
 \begin{aligned}
\lim_{j\rightarrow +\infty}\mathbf{v}_{jp}^{(1,2)}(\mathbf{0})+\mathbf{v}_{js}^{(1,2)}(\mathbf{0})=\mathbf{0},
 \end{aligned}
\end{equation}
where $\mathbf{v}_{jp}^{(1,2)}(\mathbf{0})$ and $\mathbf{v}_{js}^{(1,2)}(\mathbf{0})$ are defined in \eqref{eq:vj12 357}.

By Lemmas \ref{lemma3.2} and \ref{lem:pde decouple}, we consider the PDE system \eqref{eq:2.}. Recall that $C(\phi)$ and $\mu(\theta)$ are defined in \eqref{C} and \eqref{eq:I311} respectively. Substituting  \eqref{int23},  \eqref{eq:491} and  \eqref{100} into  \eqref{4},  after arranging  terms, we have
\begin{equation}
 \begin{aligned}\label{chang}
 &\eta(\mathbf{0})
2s^{-2}\bigg(C(\phi)\begin{pmatrix}
    1\\
   {\rm i}
\end{pmatrix}
\cdot\mathbf{v}_{jp}^{(1,2)}(\mathbf{0})
+
C(\phi)\begin{pmatrix}
    1\\
   {\rm i}
\end{pmatrix}
\cdot\mathbf{v}_{js}^{(1,2)}(\mathbf{0})\bigg)
\\ &\ \bigg(-s\sqrt{h}e^{-s\sqrt{h}\mu(\theta_{m})}\mu^{-1}(\theta_{m})-e^{-s\sqrt{h}\mu(\theta_{m})}\mu^{-2}(\theta_{m})+\mu^{-2}(\theta_{m})
\\& \ \ \ -s\sqrt{h}e^{-s\sqrt{h}\mu(\theta_{M})}\mu^{-1}(\theta_{M})-e^{-s\sqrt{h}\mu(\theta_{M})}\mu^{-2}(\theta_{M})+\mu^{-2}(\theta_{M})\bigg),
\\
 =&-\eta(\mathbf{0})(\mathcal{\widetilde{I}}^{\pm}_{23,p}+\mathcal{\widetilde{I}}^{\pm}_{23,s}+\mathcal{I}^{\pm}_{25,p}
 +\mathcal{\widetilde{I}}^{\pm}_{25,s}
+\mathcal{\widetilde{I}}^{\pm}_{211}+\mathcal{\widetilde{I}}^{\pm}_{212}+\mathcal{\widetilde{I}}^{\pm}_{221}
 +\mathcal{\widetilde{I}}^{\pm}_{222}  )
\\&-\mathcal{\widetilde{I}}^{\pm}_{1}+\widetilde{I}_{\Lambda_h } - \widetilde{I}_1-\widetilde{I}_2-\widetilde{I}_{\pm}^\Delta,
\end{aligned}
\end{equation}
where $\mathcal{\widetilde{I}}^{\pm}_{2\ell,\beta }$ ($\ell=3,5$, $\beta=p,s$), $\mathcal{\widetilde{I}}^{\pm}_{21\ell}$ ($\ell=1,2$), $\mathcal{\widetilde{I}}^{\pm}_{22\ell}$ ($\ell=1,2$), $\mathcal{\widetilde{I}}^{\pm}_{1}$, $\widetilde{I}_{\Lambda_h }$, $ \widetilde{I}_\ell$ ($\ell=1,2$), $\widetilde{I}_{\pm}^\Delta$ are defined in \eqref{l39},  \eqref{eq:491}, \eqref{int23}, \eqref{eq:10}  respectively.

Multiplying $s^{2}$ on the both side of \eqref{chang}, by virtue of \eqref{e3},  \eqref{eq:20}, \eqref{eq:I1},   \eqref{eq:deuvj1 3d} and \eqref{l39}, and letting $s=j^{\varrho/2}$ ($\max\{\beta_{1}, \beta_{2}\}<\varrho<\gamma$) with $j\rightarrow +\infty$, we have
\begin{equation}\notag
 \begin{aligned}
\lim_{j\rightarrow \infty}C(\phi)\eta(\mathbf{0})
\bigg(
\mathbf{v}_{jp}^{(1,2)}(\mathbf{0})
+
\mathbf{v}_{js}^{(1,2)}(\mathbf{0})\bigg)\cdot
\begin{pmatrix}
    1\\
   {\rm i}
\end{pmatrix}
\bigg(\mu^{-2}(\theta_{m})+\mu^{-2}(\theta_{M})\bigg)=0,
 \end{aligned}
\end{equation}
Recall that the definition of $\phi$ in Definition \ref{Def}, we know that $C(\phi)> 0$. Under the condition \eqref{eq:4}, from \cite[Lemma 2.10]{DCL}, we know that
\begin{equation}\notag
\mu^{-2}(\theta_{m})+\mu^{-2}(\theta_{M})\neq 0.
\end{equation}
Under the condition \eqref{eq:3 eta}, since $ \eta$ is real valued function, we have
\begin{equation}\notag
 \begin{aligned}
\lim_{j\rightarrow \infty}
\bigg(
\mathbf{v}_{jp}^{(1,2)}(\mathbf{0})
+
\mathbf{v}_{js}^{(1,2)}(\mathbf{0})\bigg)\cdot
\begin{pmatrix}
    1\\
   {\rm i}
\end{pmatrix}
=0,
 \end{aligned}
\end{equation}
which readily implies \eqref{eq:vj vanish} by noting that $\mathbf{v}_{jp}^{(1,2)}(\mathbf{0})
+
\mathbf{v}_{js}^{(1,2)}(\mathbf{0})$ is a real vector.

\medskip
\noindent {\bf Part II.}~By Lemma \ref{lem:314 third}, we have \eqref{eq:vj3 zero}. Combining \eqref{eq:vj vanish} with \eqref{eq:vj3 zero}, by using the definition of $ \mathbf{v}^\sfr_j(\mathbf{0})$ in \eqref{0}, we can prove that
\begin{equation}\notag
 \begin{aligned}
\lim_{j\rightarrow \infty} \mathbf{v}^\sfr_j(\mathbf{0})=\mathbf{0}.
 \end{aligned}
\end{equation}
Using a similar argument to that for \eqref{finally}, we can finish the proof of this theorem.
\end{proof}

We next consider the degenerate case of Theorem \ref{thm:31} with $\eta\equiv 0$.

%

\begin{cor}\label{cor:3.1}
Consider the same setup in Theorem~\ref{thm:31} but with $\eta\equiv 0$. If the following conditions are fulfilled:
\begin{itemize}
\item[{\rm (a)}]~ for any given constants  $\gamma>\max\{\beta_{1}, \beta_{2}\}>0$, there exits a sequence of Herglotz functions   $\mathbf{v}_{j},j=1,2,...,$ with kernels $\mathbf g_{jp}$ and  $\mathbf g_{js}$ can approximate  $\mathbf v$ in $H^{1}(S_{h} \times (-M,M))$   satisfying
\begin{equation}\label{eq:388 assump}
\| \mathbf{v}-\mathbf{v}_{j}\|_{H^{1}(S_{h} \times (-M,M))^{3}}\leq j^{-\gamma},\| \mathbf{g}_{jp}\|_{L^{2}(\mathbb{S}^{2})^{3}}\leq j^{\beta_{1}} \ and \ \| \mathbf{g}_{js}\|_{L^{2}(\mathbb{S}^{2})^{3}}\leq j^{\beta_{2}};
\end{equation}
\item[{\rm (b)}]the angles $\theta_{m}$ and $\theta_{M}$ of the sector $W$ satisfy
\begin{equation}\label{33eq:ass2 int}
-\pi<\theta_{m}<\theta_{M}<\pi\quad  and \quad \theta_{M}-\theta_{m}\neq\pi;
\end{equation}
\end{itemize}
then for every edge points  $(\mathbf{0},x_3^c) \in \mathbb{R}^3 $ of $S_h \times (-M,M)$, namely ${x}_3^c\in (-M,M)$, one has
\begin{equation}\label{eq:llnn3}
\lim_{\rho\rightarrow+0 }\frac{1}{m(B((\mathbf{0},x_3^c),\rho)\cap P(\Omega))}\int_{B((\mathbf{0},x_3^c),\rho)\cap P(\Omega))}\mathcal{R}(V\mathbf{w})(\mathbf{x}'){\rm d }\mathbf{ x}'=\mathbf{0},
\end{equation}
 where $P(\Omega )$ is the projection set of $\Omega$ on $\mathbb R^2$.

\end{cor}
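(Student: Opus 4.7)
The plan is to adapt the proof of Corollary~\ref{cor:22} to three dimensions via the dimension reduction operator $\mathcal R$ of Definition~\ref{Def}, exploiting the simplifications afforded by $\eta\equiv 0$ in the integral identity of Lemma~\ref{lem34}.

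\textbf{First step: reduce the identity and kill the vertex values of $\mathbf f_3,\mathbf f_4$.} With $\eta\equiv 0$ both boundary integrals $\widetilde I_{\pm}$ and $\widetilde I_{\pm}^{\Delta}$ in \eqref{4} vanish identically, so the identity collapses to $\widetilde I_1+\widetilde I_2=\widetilde I_{\Lambda_h}$. Expand each $\mathbf f_\ell$ as in \eqref{eq:28}, \eqref{f1j} around the vertex, using Lemma~\ref{lem:c alpha} to ensure the regularity $\mathbf f_2,\mathbf f_3,\mathbf f_4\in C^\alpha(\overline{S_h})^2$. Two key simplifications occur at the vertex. The transmission condition $\mathbf v=\mathbf w$ on $\Gamma_h^\pm\times(-M,M)$ gives $\mathbf v(\mathbf 0,x_3)=\mathbf w(\mathbf 0,x_3)$ for every $x_3\in(-L,L)$, so $\mathbf f_3(\mathbf 0)=\mathbf 0$. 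Next, differentiating $v_3=w_3$ tangentially along $\Gamma_h^+$ and $\Gamma_h^-$ at the vertex yields the linear system
\begin{equation*}
\begin{pmatrix}\cos\theta_M & \sin\theta_M\\ \cos\theta_m & \sin\theta_m\end{pmatrix}\nabla'(v_3-w_3)(\mathbf 0,x_3)=\mathbf 0,
\end{equation*}
whose determinant $-\sin(\theta_M-\theta_m)$ is nonzero by \eqref{33eq:ass2 int}, forcing $\nabla'(v_3-w_3)(\mathbf 0,x_3)=\mathbf 0$ and hence $\mathbf f_4(\mathbf 0)=\mathbf 0$. (The $C^{1,\alpha}$ hypothesis on $\mathbf v-\mathbf w$ makes these Cartesian gradients well-defined.)

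\textbf{Second step: extract the $s^{-4}$ leading order.} The leading term of $\widetilde I_1$ isolated by the expansion is
\begin{equation*}
\bigl(\mathbf f_{1j}(\mathbf 0)+\mathbf f_2(\mathbf 0)\bigr)\cdot\begin{pmatrix}1\\ \mathrm i\end{pmatrix}\int_W u_1(\mathbf x')\,{\rm d}\mathbf x'\,+\,E_j(s),
\end{equation*}
where the remainder $E_j(s)$ is bounded by an $s^{-2\alpha-4}$ term coming from Lemma~\ref{lem36}'s H\"older estimates (the Herglotz piece controlled via $\|\mathbf g_{j\beta}\|_{L^2}\le j^{\beta_\ell}$) plus an exponentially small $\int_{W\setminus S_h}u_1$ correction from \eqref{eq:lame7}. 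Plug in the exact formula \eqref{eq:lame5}, combine with the bounds $|\widetilde I_2|\lesssim e^{-s\sqrt{\Theta}\delta_W}j^{-\gamma}$ from Lemma~\ref{lem35} and the exponential decay of $\widetilde I_{\Lambda_h}$ from Lemma~\ref{lem:3.5}, multiply the identity by $s^4$, choose $s=j^{\varrho/4}$ with $\max\{\beta_1,\beta_2\}<\varrho<\gamma$, and send $j\to\infty$. Under \eqref{33eq:ass2 int} the factor $e^{-2\theta_M\mathrm i}-e^{-2\theta_m\mathrm i}\neq 0$, so
\begin{equation*}
\lim_{j\to\infty}\bigl(\mathbf f_{1j}(\mathbf 0)+\mathbf f_2(\mathbf 0)\bigr)=\mathbf 0.
\end{equation*}

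\textbf{Third step: translate into the averaged statement.} Since $\mathbf f_{1j}(\mathbf 0)=-\omega^2\mathcal R(\mathbf v_j^{(1,2)})(\mathbf 0)$, $\mathbf f_2(\mathbf 0)=\omega^2\mathcal R(q\mathbf w^{(1,2)})(\mathbf 0)$, and $\mathbf v_j\to\mathbf v$ in $H^1$ with $\mathcal R(\mathbf v^{(1,2)})(\mathbf 0)=\mathcal R(\mathbf w^{(1,2)})(\mathbf 0)$ (again by the vertex transmission), the previous limit reads, in the Lebesgue-point sense at $\mathbf 0$,
\begin{equation*}
\mathcal R(V\mathbf w^{(1,2)})(\mathbf 0)=\mathcal R\bigl((q-1)\mathbf w^{(1,2)}\bigr)(\mathbf 0)=\mathbf 0,
\end{equation*}
which is the $(1,2)$-component of \eqref{eq:llnn3}. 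The third component $\mathcal R(Vw_3)(\mathbf 0)=0$ follows by replaying the same CGO machinery on the scalar transmission system \eqref{3} governed by $\lambda\Delta'$; with $\eta\equiv 0$ this is precisely the Helmholtz vanishing scenario treated in \cite[Corollary~2.7]{DCL}.

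\textbf{Main obstacle.} The technical crux is verifying $\mathbf f_4(\mathbf 0)=\mathbf 0$: unlike the scalar Helmholtz setting, the Lam\'e system forces a coupling between $\mathbf v^{(1,2)},\mathbf w^{(1,2)}$ and the out-of-plane components $v_3,w_3$ through $\mathbf f_4$, and disentangling this coupling at the edge point requires exploiting the non-degeneracy \eqref{33eq:ass2 int} at the level of Cartesian gradients rather than tangential derivatives alone. A secondary bookkeeping burden is the precise competition between the $s^{-4}$ leading order, the $s^{-2\alpha-4}$ H\"older remainders, and the $j^{-\gamma}$ Fourier extension bounds, all of which must be reconciled by a single scaling $s=j^{\varrho/4}$ with $\varrho$ slotted correctly between $\max\{\beta_1,\beta_2\}$ and $\gamma$.
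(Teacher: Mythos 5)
Your proposal is correct and follows essentially the same route as the paper's own proof: the identity of Lemma \ref{lem34} with $\eta\equiv 0$, the H\"older expansions of $\mathbf f_{1j},\mathbf f_2,\mathbf f_3,\mathbf f_4$ with $\mathbf f_3(\mathbf 0)=\mathbf f_4(\mathbf 0)=\mathbf 0$, the $s^{-4}$ leading order from \eqref{eq:lame5} with the scaling $s=j^{\varrho/4}$, and the reduction of the third component to the scalar system \eqref{3} handled as in \cite{DCL}. Your explicit nondegeneracy argument (the $-\sin(\theta_M-\theta_m)\neq 0$ determinant) for $\mathbf f_4(\mathbf 0)=\mathbf 0$ merely spells out what the paper dismisses as "easy to see" in \eqref{eq:294 f3f4}, and is the correct justification.
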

\begin{proof}
The proof follows from the one of Theorem \ref{thm:31} with some necessary modifications. It is sufficient for us to show that \eqref{eq:llnn3} holds for $\mathbf{w}_{\sf R}$. Clearly, we have
\begin{equation}\label{eq:390 cond}
	 q\mathbf{w}_\sfr\in C^{\alpha}(\overline{S_{h}})^2,\quad \mathbf{v}_\sfr-\mathbf{w}_\sfr\in C^{\alpha}(\overline{S}_{h}\times [-M,M])^3, 
\end{equation}
and the condition \eqref{eq:388 assump} gives
\begin{equation}\notag
\| \mathbf{v}^\sfr-\mathbf{v}^\sfr_{j}\|_{H^{1}(S_{h} \times (-M,M))^{3}}\leq j^{-\gamma},\| \mathbf{g}_{jp}\|_{L^{2}(\mathbb{S}^{2})^{3}}\leq j^{\beta_{1}} \ and \ \| \mathbf{g}_{js}\|_{L^{2}(\mathbb{S}^{2})^{3}}\leq j^{\beta_{2}},
\end{equation}
for any given constants  $\gamma>\max\{\beta_{1}, \beta_{2}\}>0$.

According to the proof of Theorem~\ref{thm:31}, we divide the proof into two parts.  

\medskip
\noindent {\bf Part I.}~First, let  $\mathbf{v}^{(1,2)}$ and $\mathbf{w}^{(1,2)}$ be defined in \eqref{eq:defvw}, which fulfill \eqref{eq:2.} with $\eta \equiv 0$. From \eqref{4}, it follows that
\begin{equation}\label{corint0}
	\widetilde{I}_1+\widetilde{I}_2=\widetilde{I}_{\Lambda_h },
\end{equation}
where $\widetilde{I}_1$, $\widetilde{I}_2$ and $\widetilde{I}_{\Lambda_h } $ are defined in \eqref{eq:10}.
From \eqref{eq:lame5}, one can see that
\begin{equation}\label{eq:78}
 \begin{aligned}
\int_{S_{h}}\mathbf{u}(\mathbf{x}'){\rm d}\mathbf{x'}
&=\int_{W}\mathbf{u}(\mathbf{x}'){\rm d}\mathbf{x'}
+\int_{W\setminus S_{h}}\mathbf{u}(\mathbf{x}'){\rm d}\mathbf{x'}
\\&=6\mathrm i(e^{-2\theta_{M}{\rm i}}-e^{-2\theta_{m}i})s^{-4}
\begin{pmatrix}
    1\\
   {\rm i}
\end{pmatrix}
+ \int_{W\setminus S_{h}}\mathbf{u}(\mathbf{x}'){\rm d}\mathbf{x'}.
\end{aligned}
\end{equation}

Under the condition \eqref{eq:390 cond}, by Lemma \ref{lem:c alpha}, we know that $\mathbf f_2(\mathbf x')$, $\mathbf f_3(\mathbf x')$ and $\mathbf f_4(\mathbf x')$ have the expansion \eqref{eq:28}, where $\mathbf f_\ell(\mathbf x')$ is defined in \eqref{eq:10}. Therefore $\widetilde{I}_1$ can be rewritten as \eqref{eq:29}.

Since $\mathbf{v}_\sfr=\mathbf{w}_\sfr  $ on $  \Gamma^{\pm}_h\times(-M,M)$, recalling  the definitions of $\mathbf{f}_{3}$ and  $\mathbf{f}_{4}$ in \eqref{eq:10},  it is easy to see that
\begin{equation}\label{eq:294 f3f4}
 \begin{aligned}
 &\mathbf{f}_{3}(\mathbf{0})=-\int^{L}_{-L}\phi''(x_{3})
\begin{pmatrix}
    \lambda (v_{1}(\mathbf{0}, x_3)-w_{1}(\mathbf{0}, x_3)) \\
    \lambda (v_{2}(\mathbf{0}, x_3)- w_{2}(\mathbf{0}, x_3))
\end{pmatrix}
(\mathbf{0},x_{3}){\rm d}x_{3}=\mathbf{0},
\\
 &\mathbf{f}_{4}(\mathbf{0})=(\lambda+\mu)\int^{L}_{-L}\phi'(x_{3})
\begin{pmatrix}
    \partial_{1} (v_{3}(\mathbf{0}, x_3)-  w_{3}(\mathbf{0}, x_3))          \\
     \partial_{2}( v_{3}(\mathbf{0}, x_3)   -  w_{3}(\mathbf{0}, x_3) )
\end{pmatrix}
(\mathbf{0},x_{3}){\rm d}x_{3}=\mathbf{0}. 
\end{aligned}
\end{equation}
By virtue of \eqref{eq:294 f3f4}, substituting \eqref{eq:29} and \eqref{eq:78} into \eqref{corint0}, we can deduce that
\begin{equation}\label{eq:79}
 \begin{aligned}
&6\mathrm i(e^{-2\theta_{M}{\rm i}}-e^{-2\theta_{m}i})s^{-4}
\begin{pmatrix}
    1\\
   {\rm i}
\end{pmatrix}\cdot
(\mathbf{f}_{1j}(\mathbf{0})+\mathbf{f}_{2}(\mathbf{0}))
\\=&-\widetilde{I}_2+\widetilde{I}_{\Lambda_h }-(\mathbf{f}_{1j}(\mathbf{0})+\mathbf{f}_{2}(\mathbf{0}))\int_{W\setminus S_{h}}\mathbf{u}(\mathbf{x}'){\rm d}\mathbf{x'}
-\int_{S_{h}}\delta \mathbf{f}_{1j}\cdot\mathbf{u}(\mathbf{x}'){\rm d}\mathbf{x}'
\\&-\int_{S_{h}}\delta \mathbf{f}_{2}\cdot\mathbf{u}(\mathbf{x}'){\rm d}\mathbf{x}'
-\int_{S_{h}}\delta \mathbf{f}_{3}\cdot\mathbf{u}(\mathbf{x}'){\rm d}\mathbf{x}'
-\int_{S_{h}}\delta \mathbf{f}_{4}\cdot\mathbf{u}(\mathbf{x}'){\rm d}\mathbf{x}'.
 \end{aligned}
\end{equation}
In \eqref{eq:79}, we take 
\begin{equation}\label{eq:396 s}
	s=j^{\varrho/4}, \quad \max\{\beta_{1}, \beta_{2}\}<\varrho<\gamma ,
\end{equation}
 and multiply $j^{\varrho}$ on the both sides of \eqref{eq:79}, then it yields that
 \begin{equation}
 \begin{aligned}
&6{\rm i}(e^{-2\theta_{M}{\rm i}}-e^{-2\theta_{m}i})
\begin{pmatrix}
    1\\
   {\rm i}
\end{pmatrix}\cdot
(\mathbf{f}_{1j}(\mathbf{0})+\mathbf{f}_{2}(\mathbf{0}))
\\=&j^{\varrho}\bigg(-\widetilde{I}_2+\widetilde{I}_{\Lambda_h }-(\mathbf{f}_{1j}(\mathbf{0})+\mathbf{f}_{2}(\mathbf{0}))\int_{W\setminus S_{h}}\mathbf{u}(\mathbf{x}'){\rm d}\mathbf{x'}
-\int_{S_{h}}\delta \mathbf{f}_{1j}\cdot\mathbf{u}(\mathbf{x}'){\rm d}\mathbf{x}'
\\&-\int_{S_{h}}\delta \mathbf{f}_{2}\cdot\mathbf{u}(\mathbf{x}'){\rm d}\mathbf{x}'
-\int_{S_{h}}\delta \mathbf{f}_{3}\cdot\mathbf{u}(\mathbf{x}'){\rm d}\mathbf{x}'
-\int_{S_{h}}\delta \mathbf{f}_{4}\cdot\mathbf{u}(\mathbf{x}'){\rm d}\mathbf{x}'\bigg).
 \end{aligned}
\end{equation}
By virtue of \eqref{eq:20} in Lemma \ref{lem35}, we have
\begin{equation}\label{001}
 \begin{aligned}
j^{\varrho}|-\widetilde{I}_2|\leq
 h\sqrt{2L(\theta_{M}-\theta_{m})}e^{-s\sqrt{\Theta}\delta_{W}} \|\phi\|_{L^\infty} j^{-\gamma+\varrho}.
 \end{aligned}
\end{equation}
Under the assumption \eqref{eq:388 assump},  in \eqref{eq:30}, \eqref{eq:31}, \eqref{eq:32} and \eqref{eq33}, in view of \eqref{eq:396 s}, we can obtain the following estimates
\begin{equation}\label{002}
 \begin{aligned}
j^{\varrho} \left |\int_{S_{h}}\delta \mathbf{f}_{1j}  \cdot\mathbf{u}(\mathbf{x}'){\rm d}\mathbf{x}'\right |\leq&
 \omega^2 {\rm diam}(S_h)^{1-\alpha } 4 L\sqrt{\pi}\|\phi\|_{L^\infty}
\bigg(2(1+k_p)j^{\beta_1-\frac{1}{2}\alpha\varrho-\varrho}
\\&+2(1+k_s)j^{\beta_2-\frac{1}{2}\alpha\varrho-\varrho}\bigg), \ \
\\
	j^{\varrho} \left |\int_{S_{h}}\delta \mathbf{f}_{2}  \cdot\mathbf{u}(\mathbf{x}'){\rm d}\mathbf{x}'\right |
&\leq\| \mathbf{f}_{2}\|_{C^{\alpha}(S_{h})^{2}}\frac{2\sqrt{2}(\theta_{M}-\theta_{m})\Gamma(2\alpha+4)}{\delta_{W}^{2\alpha+4}}j^{-\frac{1}{2}\alpha\varrho-\varrho},\\
j^{\varrho}\left |\int_{S_{h}}\delta \mathbf{f}_{3}  \cdot\mathbf{u}(\mathbf{x}'){\rm d}\mathbf{x}'\right |
&\leq\| \mathbf{f}_{3}\|_{C^{\alpha}(S_{h})^{2}}\frac{2\sqrt{2}(\theta_{M}-\theta_{m})\Gamma(2\alpha+4)}{\delta_{W}^{2\alpha+4}}j^{-\frac{1}{2}\alpha\varrho-\varrho},\\
j^{\varrho}\left |\int_{S_{h}}\delta \mathbf{f}_{4}  \cdot\mathbf{u}(\mathbf{x}'){\rm d}\mathbf{x}'\right |
&\leq\| \mathbf{f}_{4}\|_{C^{\alpha}(S_{h})^{2}}\frac{2\sqrt{2}(\theta_{M}-\theta_{m})\Gamma(2\alpha+4)}{\delta_{W}^{2\alpha+4}}j^{-\frac{1}{2}\alpha\varrho-\varrho}.
 \end{aligned}
\end{equation}
Under the  assumption \eqref{33eq:ass2 int}, it is easy to see that
$$
\left| e^{-2\theta_M \mathrm i }-e^{-2\theta_m\mathrm  i }  \right| =\left|1-e^{-2(\theta_M -\theta_m) \mathrm i } \right| \neq 0,
$$
since $\theta_M -\theta_m
\neq \pi$.
Leting $ j\rightarrow \infty$, by virtue of \eqref{eq:lame7},  \eqref{e3}, \eqref{001} and \eqref{002}, we have
$$\lim_{j\rightarrow\infty}\mathbf{f}_{1j}(\mathbf{0})=-\mathbf{f}_{2}(\mathbf{0}).$$
Since $\mathbf{f}_{1j}(\mathbf{x}')=-\omega ^{2}\mathcal{R}(\mathbf{v}_{j}^{(1,2)})(\mathbf{x}') $ and $ \mathbf{f}_{2}(\mathbf{x}')= \omega ^{2}\mathcal{R}(q\mathbf{w}^{(1,2)})(\mathbf{x}')$, we obtain that
\begin{equation}\label{000}
 \begin{aligned}
\lim_{j\rightarrow\infty}\mathcal{R}(\mathbf{v}_{j}^{(1,2)})(\mathbf{0})=\mathcal{R}(q\mathbf{w}^{(1,2)})(\mathbf{0}).
\end{aligned}
\end{equation}
From the boundary condition in \eqref{eq:2.} for $\eta \equiv 0$, we have
\begin{equation}\label{eq:3101 vw}
 \begin{aligned}
&\lim_{ \rho \rightarrow +0 }\frac{1}{m(B(\mathbf{0}, \rho  )\cap P(\Omega))} \int_{B(\mathbf{0}, \rho )\cap P(\Omega)}   \mathcal{R}(\mathbf{v}^{(1,2)}_{j})(\mathbf{x}') {\rm d} \mathbf{x} '\\=& \lim_{ \rho \rightarrow +0 }\frac{1}{m(B(\mathbf{0}, \rho  )\cap P(\Omega))} \int_{B(\mathbf{0}, \rho )\cap P(\Omega)} \mathcal{R}(\mathbf{w}^{(1,2)})(\mathbf{x}')   {\rm d} \mathbf{x}'. 
\end{aligned}
\end{equation}
Furthermore, it yields that
\begin{equation}\label{eq:80}
 \begin{aligned}
\lim_{j \rightarrow \infty} \mathcal{R}(\mathbf{v}_{j}^{(1,2)})(\mathbf{0})&=\lim_{j \rightarrow \infty}  \lim_{ \rho \rightarrow +0 }\frac{1}{m(B(\mathbf{0}, \rho  ))\cap P(\Omega)} \int_{B(\mathbf{0}, \rho )\cap P(\Omega)} \mathcal{R}(\mathbf{v}^{(1,2)}_{j})(\mathbf{x}') {\rm d} \mathbf{x}'\\
	&= \lim_{ \rho \rightarrow +0 }\frac{1}{m(B(\mathbf{0}, \rho  ))\cap P(\Omega)} \int_{B(\mathbf{0}, \rho )\cap P(\Omega)}  \mathcal{R}(\mathbf{v}^{(1,2)})(\mathbf{x}'){\rm d} \mathbf{x}',\\
	\mathcal{R}(q\mathbf{w}^{(1,2)})(\mathbf{0})&= \lim_{ \rho \rightarrow +0 }\frac{1}{m(B(\mathbf{0}, \rho  ))\cap P(\Omega)} \int_{B(\mathbf{0}, \rho )\cap P(\Omega)} \mathcal{R}(q\mathbf{w}^{(1,2)})(\mathbf{x}') {\rm d} \mathbf{x}'.
\end{aligned}
\end{equation}
Due to \eqref{000}, combining \eqref{eq:3101 vw} with \eqref{eq:80}, we can prove that
\begin{equation}\label{eq:3103 c1}
	\lim_{\rho\rightarrow+0 }\frac{1}{m(B(\mathbf{0},\rho)\cap P(\Omega))}\int_{B(\mathbf{0},\rho)\cap P(\Omega)}\mathcal{R}(V\mathbf{w}^{(1,2)})(\mathbf{x}'){\rm d }\mathbf{ x}'=0.
\end{equation}

\medskip
\noindent {\bf Part II.}~ Since the Lam\'e constant $\lambda \neq 0$, similar to Lemma \ref{lem:314 third},  consider the PDE system \eqref{3} for $\eta \equiv 0$, under the conditions 
\eqref{eq:388 assump} and \eqref{33eq:ass2 int},  using a similar argument of \cite[Corollary 3.1]{DCL}, we can prove that
\begin{equation}\label{eq:3103 c2}
	\lim_{\rho\rightarrow+0 }\frac{1}{m(B(\mathbf{0},\rho)\cap P(\Omega))}\int_{B(\mathbf{0},\rho)\cap P(\Omega)}\mathcal{R}(Vw_3)(\mathbf{x}'){\rm d }\mathbf{x}'=0,
\end{equation}
where $w_3$ is the third component of $\mathbf w_\sfr$. According to \eqref{eq:3103 c1} and  \eqref{eq:3103 c2}, we finish the proof of this corollary.
\end{proof}

\begin{rem}\label{rem:34 holder}
According to Corollary \ref{cor:3.1}, the average value of each component of the function $V\mathbf{w}$ over the cylinder centered at the edge point $(\mathbf{0},x_3^c)$ with the height $L$ vanishes in the distribution sense. Moreover, if we assume that $V(\mathbf{x}',x_3)$ is continuous near the edge point $(\mathbf{0},x_3^c)$ where $x_3^c \in (-M,M)$ and $ V(\mathbf{0},x_3^c ) \neq 0$, the by the dominant convergence theorem and the definition of $\mathcal R$, one can show that
\begin{equation}\label{eq:cor31 con}
	\lim_{ \rho \rightarrow +0 }\frac{1}{m(B(\mathbf{0}, \rho  )\cap P(\Omega ) )} \int_{B(\mathbf{0}, \rho ) \cap P(\Omega) }\left( \int_{x_3^c-L}^{x_3^c+L} \phi(x_3)  \mathbf{w } (\mathbf{x}',x_3) {\rm d} x_3  \right) {\rm d} \mathbf{x}'=\mathbf 0
\end{equation}
under the assumptions  in  Corollary \ref{cor:3.1}. Since $\mathbf w \in H^1(\Omega )^3$, it can be readily seen that
$$
	\lim_{ \rho \rightarrow +0 }\frac{1}{m(B(\mathbf{0}, \rho  ) \cap P(\Omega) )} \int_{B(\mathbf{0}, \rho ) \cap P(\Omega) }   \mathbf{w} (\mathbf{x}',x_3)  {\rm d}\mathbf{ x}'=\mathbf 0,\quad \forall x_3 \in (-M,M),
		$$
which also describes the vanishing property of the interior elastic transmission eigenfunctions $\mathbf{v}$ and $\mathbf{w}$ near the edge point in 3D. We would like to point out that $\mathbf{v}$ and $\mathbf{w}$ must vanish at an edger corner point $(\mathbf 0, x_3^c)$ with $x_3^c\in (-M,M)$  if $V$ and either one of $\mathbf v,\, \mathbf w$  are $C^\alpha$ smooth near the corner, $\mathbf v- \mathbf w \in H^2(\Omega )^3$, $V((\mathbf 0, x_3^c) )\neq 0$ (cf. \cite[Theorem 1.5]{EBL}). Indeed, the assumption that $V$ and $\mathbf w$ are $C^\alpha$ smooth near the corner can imply $q\mathbf w$ is $C^\alpha$ smooth near the corner. Compared Corollary \ref{cor:3.1} with \cite[Theorem 1.5]{EBL}, we remove the assumption $\mathbf v- \mathbf w \in H^2(\Omega )^3$ and establish  \eqref{eq:cor31 con}. 
\end{rem}

Similar to Corollary \ref{cor:3.1}, if we assume that $\mathbf v$ has the $C^\alpha$-regularity near a 3D edge corner, we can use a similar argument of Corollary \ref{cor:3.1} to prove that $\mathbf v$  and $\mathbf w$ must vanish at the underlying edge corner point. 
\begin{cor}\label{cor:34 eta}
Consider the same setup in Corollary \ref{cor:3.1}, but assume that $\mathbf{v} \in C^{\alpha}(\overline {S}_h \times [-M,M] )^{3}$,  $q\mathbf{w} \in C^{\alpha}(\overline {S}_h \times [-M,M] )^{3}$ and  $\mathbf{v}-\mathbf{w} \in C^{1,\alpha}(\overline{S}_h\times [-M,M] )^{3}$, for $0<\alpha<1$. If $V(\mathbf{x}',x_3)=q(\mathbf{x}',x_3)-1$ is continuous near the edge point $(\mathbf{0},x_3^c)$ with $ V(\mathbf{0},x_3^c ) \neq 0$ for $x_3^c \in (-M,M)$, then for every edge points  $(\mathbf{0},x_3^c) \in \mathbb{R}^3 $ of $S_h \times (-M,M)$, one has
\begin{equation}\label{eq:cor 33 109}
	\mathbf{v}((\mathbf{0},x_3^c))=\mathbf{w}((\mathbf{0},x_3^c))=\mathbf{0}. 
\end{equation}
\end{cor}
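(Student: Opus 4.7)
The plan is to adapt the strategy of Theorem~\ref{thm:23} to the three-dimensional setting by first carrying out the dimension reduction used in Corollary~\ref{cor:3.1}, and then exploiting the additional H\"older regularity of $\mathbf{v}$ (not available in Corollary~\ref{cor:3.1}) to replace the Herglotz-approximation argument by a direct pointwise H\"older expansion around the edge point. First I would fix a bump $\phi \in C_0^\infty((x_3^c - L, x_3^c + L))$ with $\phi \ge 0$ and $\phi \not\equiv 0$, apply the reduction operator $\mathcal{R}$ of Definition~\ref{Def}, and invoke Lemma~\ref{lemma3.2} together with Lemma~\ref{lem:pde decouple} to obtain the planar system \eqref{eq:2.} for $\mathcal{R}(\mathbf{v}^{(1,2)}_\sfr), \mathcal{R}(\mathbf{w}^{(1,2)}_\sfr)$ on $S_h$ with $\eta \equiv 0$, and the scalar system \eqref{3} for $\mathcal{R}(v_{3,\sfr}), \mathcal{R}(w_{3,\sfr})$. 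By Lemma~\ref{lem:31} and the standing hypotheses ($\mathbf{v} \in C^\alpha$, $q\mathbf{w} \in C^\alpha$, $\mathbf{v} - \mathbf{w} \in C^{1,\alpha}$), all reduced functions as well as the forcings $\mathbf{f}_2, \mathbf{f}_3, \mathbf{f}_4$ of \eqref{eq:10} belong to $C^\alpha(\overline{S_h})$ via Lemma~\ref{lem:c alpha}.

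Next, instead of running the Herglotz approximation of Corollary~\ref{cor:3.1}, I would test the reduced Lam\'e equation against the 2D CGO solution $\mathbf{u}(\mathbf{x}')$ from Lemma~\ref{lem:22} and use Green's identity on $S_h$ directly. Since $\mathbf{v} = \mathbf{w}$ on $\Gamma^\pm \times (-M,M)$, the boundary evaluations $\mathbf{f}_3(\mathbf{0})$ and $\mathbf{f}_4(\mathbf{0})$ vanish exactly as in~\eqref{eq:294 f3f4}. Writing H\"older expansions around $\mathbf{0}$ for $\mathcal{R}(\mathbf{v}_\sfr^{(1,2)})$ and $\mathcal{R}(q\mathbf{w}_\sfr^{(1,2)})$ isolates the principal term $\bigl(\mathcal{R}(q\mathbf{w}_\sfr^{(1,2)})(\mathbf{0}) - \mathcal{R}(\mathbf{v}_\sfr^{(1,2)})(\mathbf{0})\bigr)\int_{S_h} \mathbf{u}\, d\mathbf{x}'$, while the remaining contributions are controlled by \eqref{eq:lame6} and Lemma~\ref{lem:22} and decay at rate $s^{-2\alpha-4}$; the $\Lambda_h$-integral $\widetilde{I}_{\Lambda_h}$ decays exponentially by Lemma~\ref{lem:3.5}. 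Multiplying by $s^4$, applying \eqref{eq:lame5} and \eqref{eq:lame7}, and letting $s \to +\infty$ yields
\begin{equation*}
\bigl(\mathcal{R}(\mathbf{v}_\sfr^{(1,2)})(\mathbf{0}) - \mathcal{R}(q\mathbf{w}_\sfr^{(1,2)})(\mathbf{0})\bigr) \cdot \begin{pmatrix}1\\ \mathrm{i}\end{pmatrix} \bigl(e^{-2\theta_M\mathrm{i}} - e^{-2\theta_m\mathrm{i}}\bigr) = 0,
\end{equation*}
and since the left-hand vector is real and the scalar factor is nonzero by \eqref{eq:4}, this forces $\mathcal{R}(\mathbf{v}_\sfr^{(1,2)})(\mathbf{0}) = \mathcal{R}(q\mathbf{w}_\sfr^{(1,2)})(\mathbf{0})$. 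An analogous scalar argument on~\eqref{3} (using $u_1$ against $\lambda \Delta'$ and the Green identity for the Laplacian) produces the same identity for the third component, so altogether $\mathcal{R}(V \mathbf{w}_\sfr)(\mathbf{0}) = \mathbf{0}$, after invoking $\mathbf{v}(\mathbf{0}, x_3) = \mathbf{w}(\mathbf{0}, x_3)$ for all $x_3 \in (-M,M)$ (which holds since $(\mathbf{0}, x_3) \in \overline{\Gamma^\pm} \times (-M,M)$ and both functions are continuous).

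To conclude pointwise vanishing, observe that the same equality can be recast as $\int_{x_3^c - L}^{x_3^c + L} \phi(x_3)\, V(\mathbf{0}, x_3)\, \mathbf{v}_\sfr(\mathbf{0}, x_3)\, dx_3 = \mathbf{0}$, using again $\mathbf{v} = \mathbf{w}$ on $\{\mathbf{0}\} \times (-M,M)$. The integrand is continuous in $x_3$ by the $C^\alpha$-hypothesis on $\mathbf{v}$ and the continuity of $V$ near $(\mathbf{0}, x_3^c)$. Since the identity holds for every admissible $\phi$, choosing a mollifier sequence concentrating at $x_3^c$ yields $V(\mathbf{0}, x_3^c)\, \mathbf{v}_\sfr(\mathbf{0}, x_3^c) = \mathbf{0}$; the non-degeneracy $V(\mathbf{0}, x_3^c) \neq 0$ then gives $\mathbf{v}_\sfr(\mathbf{0}, x_3^c) = \mathbf{0}$. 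Running the same argument for the imaginary parts $(\mathbf{v}_\sfi, \mathbf{w}_\sfi)$ and invoking the transmission condition $\mathbf{v} = \mathbf{w}$ on $\Gamma^\pm \times (-M,M)$ deliver \eqref{eq:cor 33 109}.

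The main obstacle I anticipate is the careful treatment of the scalar third-component equation~\eqref{3}: its forcings $\mathbf{G}_1^{(3)} - \mathbf{G}_2^{(3)}$ contain tangential derivatives $\partial_i (v_i - w_i)$ whose values at the vertex are controlled only through the $C^{1,\alpha}$ assumption on $\mathbf{v} - \mathbf{w}$, so one must verify that the analogues of $\mathbf{f}_3(\mathbf{0})$ and $\mathbf{f}_4(\mathbf{0})$ vanish and that the resulting $C^\alpha$-error bounds integrate to $O(s^{-2\alpha-4})$ against $u_1$, matching the order of the leading term $\int_{S_h} u_1\, d\mathbf{x}'$. This bookkeeping, together with confirming that the non-vanishing scalar factor $\mu^{-2}(\theta_m) + \mu^{-2}(\theta_M)$ (from \cite[Lemma~2.10]{DCL}) controls both the vector and scalar reduced equations, is where the estimates become the most delicate.
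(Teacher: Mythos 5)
Your proposal is correct, but it takes a genuinely different route from the paper's. The paper proves this corollary by citation and upgrade: it first invokes Corollary~\ref{cor:3.1} (whose proof runs the Herglotz-approximation scheme through the CGO identity) to get the averaged vanishing of $\mathcal{R}(V\mathbf{w}^{(1,2)})$ at the vertex, handles the third component by splitting into $\lambda\neq 0$ (Part II of the proof of Corollary~\ref{cor:3.1} combined with the continuity argument of Remark~\ref{rem:34 holder}) and $\lambda=0$ (where \eqref{3} degenerates and $\mathcal{R}(qw_3)=\mathcal{R}(v_3)$ on $\Gamma_h^\pm$ is read off directly from $\mathbf{G}_1^{(3)}=\mathbf{G}_2^{(3)}$), and then passes from ball averages to pointwise values via the dominated-convergence argument of Remark~\ref{rem:34 holder}. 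You instead exploit the additional $C^\alpha$ hypothesis on $\mathbf{v}$ to bypass the Herglotz step entirely: after the same dimension reduction you expand $\mathcal{R}(\mathbf{v}^{(1,2)})$, $\mathcal{R}(q\mathbf{w}^{(1,2)})$, $\mathbf{f}_3$, $\mathbf{f}_4$ in H\"older form at the vertex (in the spirit of Theorems~\ref{thm:23} and~\ref{thm:33}, but with the volume term $\int_{S_h}\mathbf{u}\,{\rm d}\mathbf{x}'$ of order $s^{-4}$ as the leading contribution since $\eta\equiv 0$), use \eqref{eq:294 f3f4} to kill $\mathbf{f}_3(\mathbf{0})$ and $\mathbf{f}_4(\mathbf{0})$, obtain $\mathcal{R}(V\mathbf{w}_\sfr)(\mathbf{0})=\mathbf{0}$ for every admissible $\phi$, and then concentrate $\phi$ at $x_3^c$ using continuity of $V$ and of $\mathbf{w}$. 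This is a valid alternative: it delivers the pointwise reduced identity directly rather than through \eqref{eq:llnn3} and ball-average limits, at the cost of redoing the CGO remainder estimates with H\"older errors instead of simply quoting Corollary~\ref{cor:3.1}, which is what the paper's brevity buys. Two small corrections to your write-up: in the $\eta\equiv 0$ regime the non-vanishing factor that closes the argument is $e^{-2\theta_M\mathrm{i}}-e^{-2\theta_m\mathrm{i}}\neq 0$ (as in your displayed identity), not $\mu^{-2}(\theta_m)+\mu^{-2}(\theta_M)$ as stated in your final paragraph, the latter being the relevant factor only in the boundary-dominated case $\eta(\mathbf{0})\neq 0$; and your scalar third-component argument tacitly assumes $\lambda\neq 0$ (which is indeed inherited from the setup of Theorem~\ref{thm:31}), whereas the paper also records the degenerate case $\lambda=0$, where $\mathbf{G}_1^{(3)}-\mathbf{G}_2^{(3)}\equiv 0$ makes the identity $\mathcal{R}(v_3-qw_3)(\mathbf{0})=0$ immediate once one checks, exactly as you do for $\mathbf{f}_3(\mathbf{0})$ and $\mathbf{f}_4(\mathbf{0})$, that the $\phi'$- and $\phi''$-terms vanish at the vertex because the planar gradient of $\mathbf{v}-\mathbf{w}$ vanishes there (non-degenerate corner plus the $C^{1,\alpha}$ regularity).
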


\begin{proof} According to Corollary \ref{cor:3.1}, we know that \eqref{eq:3103 c1} holds. 
	In what follows, let us consider two separate cases.

\medskip
\noindent {\bf Case 1: $\lambda \neq 0$.}~Combining  the argument in {\bf Part II} of the proof of Corollary \ref{cor:3.1} with a similar argument for Remark \ref{rem:34 holder}, one can show that \eqref{eq:cor 33 109} holds by noting that $\mathbf w$ is continuous near the edge point. 

\medskip
\noindent {\bf Case 2: $\lambda = 0$.}~  Since $\mathbf v_\sfr=\mathbf w_\sfr$ on $\Gamma_h^\pm \times (-L,L)$, we have $ \partial_\ell ( v_j- w_j )=0$ for $\ell=1,2$ and $j=1,2$, where  $v_j$ and  $w_j$ the $j-$th components of $\mathbf v_\sfr$ and $\mathbf w_\sfr$ respectively. Therefore, under $\lambda = 0$, subtracting the first equation of \eqref{3} from the second one of \eqref{3}, it yields that 
\begin{equation}\notag
	\mathcal R(qw_3)=\mathcal R(v_3) \mbox{ on } \Gamma_h^\pm,
\end{equation}
which can be used to further deduce that $ \mathcal R( w_3 (\mathbf 0))=0$. Therefore we have $w_3 (\mathbf 0, x_3^c)=0$ for all $x_3^c\in (-L,L)$. Due to \eqref{eq:3103 c1}, we can show that $\mathbf{v}_\sfr((\mathbf{0},x_3^c))=\mathbf{w}_\sfr((\mathbf{0},x_3^c))=\mathbf{0}$. Using a similar argument, we can prove \eqref{eq:cor 33 109} when $\lambda =0$. 

The proof is complete. 
\end{proof}

If we further require that $\mathbf v$ is H\"older continuous at the edge corner,  similar to Theorem \ref{thm:31}, we have
\begin{thm}\label{thm:33}
Consider the same setup in Theorem~\ref{thm:31}, but assume that $\mathbf{v}\in H^1 (\Omega )^{3} \cap C^\alpha(\overline {S}_h   \times [-M,M ] ) ^{3}$,   $q \mathbf{w}\in C^\alpha(\overline {S}_h   \times [-M,M ] ) ^{3}$ and $\eta \in C^\alpha(\overline{\Gamma}_h^\pm  \times [-M,M]  )$  for $0< \alpha  <1$. If $\eta$ is independent of $x_3$ and $\eta(\mathbf{0})\neq 0$, and the corner is non-degenerate, then $\mathbf{v}$ vanishes at the edge point $(\mathbf 0,x_3^c) \in \mathbb{R}^3$ of $S_h \times (-M,M)$, where $x_3^c \in (-M,M)$.
\end{thm}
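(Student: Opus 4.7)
The plan is to combine the Hölder-regularity strategy of Theorem \ref{thm:23} from the two-dimensional setting with the dimension reduction developed throughout Section \ref{sec:3}. First split $\mathbf{v} = \mathbf{v}_\sfr + \mathrm{i}\mathbf{v}_\sfi$ so that it suffices to show $\mathbf{v}_\sfr(\mathbf{0}, x_3^c) = \mathbf{0}$. Fix a nonnegative cutoff $\phi \in C_0^\infty((x_3^c - L, x_3^c + L))$ with $\phi \not\equiv 0$ and $L$ sufficiently small, and apply the reduction operator $\mathcal{R}$ from Definition \ref{Def}. By Lemmas \ref{lemma3.2} and \ref{lem:pde decouple}, the 3D system \eqref{eq:3DT} is transformed into the decoupled 2D systems \eqref{eq:2.} for $(\mathcal{R}(\mathbf{v}_\sfr^{(1,2)}), \mathcal{R}(\mathbf{w}_\sfr^{(1,2)}))$ and \eqref{3} for the third-component pair. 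Lemma \ref{lem:31} guarantees that $\mathcal{R}(\mathbf{v}_\sfr^{(1,2)}), \mathcal{R}(q\mathbf{w}_\sfr^{(1,2)}) \in C^\alpha(\overline{S_h})^2$, and Lemma \ref{lem:c alpha} applied to the inherited $C^{1,\alpha}$ regularity of $\mathbf{v}-\mathbf{w}$ ensures $\mathbf{f}_3, \mathbf{f}_4 \in C^\alpha(\overline{S_h})^2$, where $\mathbf{f}_3, \mathbf{f}_4$ are defined in \eqref{eq:10}.

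Second, I would invoke Green's identity \eqref{eq:green1} on $S_h$ against the CGO solution $\mathbf{u}(\mathbf{x}')$ from Lemma \ref{lem:22}, proceeding as in Lemma \ref{lem34} but \emph{bypassing} the Herglotz approximation since $\mathbf{v}$ is now directly Hölder-regular. This yields an integral identity of the form
\begin{equation*}
\int_{S_h}(\mathbf{f}_1 + \mathbf{f}_2 + \mathbf{f}_3 + \mathbf{f}_4) \cdot \mathbf{u} \, d\mathbf{x}' = \widetilde{I}_{\Lambda_h} - \int_{\Gamma_h^\pm} \eta\, \mathcal{R}(\mathbf{v}_\sfr^{(1,2)}) \cdot \mathbf{u}\, d\sigma,
\end{equation*}
with $\mathbf{f}_1 = -\omega^2 \mathcal{R}(\mathbf{v}_\sfr^{(1,2)})$ and $\mathbf{f}_2 = \omega^2 \mathcal{R}(q\mathbf{w}_\sfr^{(1,2)})$. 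Using the Hölder expansions $\eta(\mathbf{x}') = \eta(\mathbf{0}) + \delta\eta$, $\mathbf{f}_j(\mathbf{x}') = \mathbf{f}_j(\mathbf{0}) + \delta\mathbf{f}_j$ and $\mathcal{R}(\mathbf{v}_\sfr^{(1,2)})(\mathbf{x}') = \mathcal{R}(\mathbf{v}_\sfr^{(1,2)})(\mathbf{0}) + \delta\mathcal{R}(\mathbf{v}_\sfr^{(1,2)})$, I would isolate the dominant boundary contribution $\eta(\mathbf{0})\,\mathcal{R}(\mathbf{v}_\sfr^{(1,2)})(\mathbf{0}) \cdot \int_{\Gamma_h^\pm} \mathbf{u}\, d\sigma$, which by Lemma \ref{lem:u0 int} equals a nonzero constant multiple of $\bigl(\mu^{-2}(\theta_m) + \mu^{-2}(\theta_M)\bigr) s^{-2}$ up to exponentially small corrections. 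Every remaining term is controlled by $O(s^{-2\alpha-2})$ or better using the weighted bounds \eqref{eq:lame6} of Lemma \ref{lem:22}, the exponential $\Lambda_h$-decay from Lemma \ref{lem:24 u tu} absorbing $\widetilde{I}_{\Lambda_h}$, and the $\eta$-boundary estimates analogous to \eqref{2.88}--\eqref{2.89}.

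Third, multiply the identity by $s^2$ and let $s \to +\infty$. The leading balance becomes
\begin{equation*}
\eta(\mathbf{0})\, \mathcal{R}(\mathbf{v}_\sfr^{(1,2)})(\mathbf{0}) \cdot \binom{1}{\mathrm{i}}\, \bigl(\mu^{-2}(\theta_m) + \mu^{-2}(\theta_M)\bigr) = \mathbf{0}.
\end{equation*}
The non-degeneracy \eqref{eq:4} together with \cite[Lemma 2.10]{DCL} gives $\mu^{-2}(\theta_m) + \mu^{-2}(\theta_M) \neq 0$; since $\eta(\mathbf{0}) \neq 0$ is a real scalar and $\mathcal{R}(\mathbf{v}_\sfr^{(1,2)})(\mathbf{0})$ is real-valued, this forces $\mathcal{R}(\mathbf{v}_\sfr^{(1,2)})(\mathbf{0}) = \mathbf{0}$. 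For the third component, the parallel Hölder argument applied to the scalar system \eqref{3}—specialized from the Herglotz-based reasoning of Lemma \ref{lem:314 third} exactly as Theorem \ref{thm:23} specializes Theorem \ref{thm:29} in 2D—yields $\mathcal{R}(v_{\sfr,3})(\mathbf{0}) = 0$. Hence $\mathcal{R}(\mathbf{v}_\sfr)(\mathbf{0}) = \mathbf{0}$ for every admissible $\phi \in C_0^\infty((x_3^c-L, x_3^c+L))$. Since $\mathbf{v}_\sfr$ is continuous at $(\mathbf{0}, x_3^c)$ by the Hölder hypothesis, a standard approximate-identity argument (letting $\phi$ concentrate at $x_3^c$) forces $\mathbf{v}_\sfr(\mathbf{0}, x_3^c) = \mathbf{0}$, and the identical argument for $\mathbf{v}_\sfi$ completes the proof.

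The main obstacle will be the careful asymptotic bookkeeping of the many interior and boundary integrals generated by the dimension reduction, in particular verifying that the contributions of $\mathbf{f}_3$ and $\mathbf{f}_4$—which carry the $\phi'$ and $\phi''$ derivatives and depend on $\mathbf{v}-\mathbf{w}$—are genuinely subleading against the $O(s^{-2})$ leading term. The $C^{1,\alpha}$ regularity of $\mathbf{v}-\mathbf{w}$ retained from the setup of Theorem \ref{thm:31} is precisely what upgrades $\mathbf{f}_3, \mathbf{f}_4$ to $C^\alpha$, making their contributions $O(s^{-2\alpha-4})$ and thus harmless.
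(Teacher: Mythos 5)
Your proposal follows essentially the same route as the paper's own proof: dimension reduction via $\mathcal{R}$, the Green/CGO integral identity with direct H\"older expansions (no Herglotz approximation needed), the $s^{2}$-scaling and the non-degeneracy fact $\mu^{-2}(\theta_m)+\mu^{-2}(\theta_M)\neq 0$ to conclude $\mathcal{R}(\mathbf{v}^{(1,2)})(\mathbf{0})=\mathbf{0}$, the scalar reduced system \eqref{3} for the third component, and continuity in $x_3$ (arbitrariness of $\phi$) to pass to pointwise vanishing at the edge point. The only minor difference is that the paper additionally treats the degenerate case $\lambda=0$ in the third-component step via the argument of Corollary \ref{cor:34 eta}, a case your setup already excludes since $\lambda\neq 0$ is inherited from Theorem \ref{thm:31}.
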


\begin{proof}
It is sufficient to show the vanishing property for $\mathbf{v}_{\sf R}$. Clearly, one has $q\mathbf{w}_\sfr \in C^{\alpha }(\overline{S}_h \times [-M,M])^{3}$ and $\mathbf{v}_\sfr-\mathbf{w}_\sfr\in C^{1,\alpha} (\overline{S_h}\times [-M,M] )^3$. Therefore the assumptions in Lemmas \ref{lem34}--\ref{lem39} are fulfilled. We next divide the proof into two parts.  

\medskip

	\noindent {\bf Part I.}~Let $\mathbf{v}^{(1,2)})$ be defined in \eqref{eq:defvw}.  First we shall prove
\begin{align}\label{eq:thm 32 con1}
	\mathbf{v}^{(1,2)}(\mathbf{0})=\mathbf{0}. 
\end{align}
From \eqref{eq:green}, \eqref{5} and \eqref{6}, it follows that
\begin{equation}\label{09}
 \begin{aligned}
\int_{S_{h}}(\mathbf{f}_{1}+\mathbf{f}_{2}+\mathbf{f}_{3}+\mathbf{f}_{4}) \cdot\mathbf{u}(\mathbf{x}'){\rm d}\mathbf{x}'
=\widetilde{I}_{\Lambda_h}-\int_{\Gamma^{\pm}_{h}}\eta(\mathbf{x}')\mathcal{R}(\mathbf{v}^{(1,2)})\cdot\mathbf{u}(\mathbf{x}'){\rm d}\sigma
 \end{aligned}
\end{equation}
where $\widetilde{I}_{\Lambda_h }$ is defined in \eqref{eq:10}. 

Using the conditions  $q\mathbf{w}_\sfr \in C^{\alpha }(\overline{S}_h \times [-M,M])^{3}$ and $\mathbf{v}_\sfr-\mathbf{w}_\sfr\in C^{1,\alpha} ({S_h}\times (-M,M) )^3$, by Lemma \ref{lem:c alpha}, we know that $\mathbf f_\ell(\mathbf x') \in C^\alpha(\overline{S}_h )$, where $\mathbf f_\ell(\mathbf x')$ is defined in \eqref{eq:10}.  Since  $\eta \in C^\alpha\left(\overline{\Gamma}_h^\pm \right)$,    we know that  $\eta$, $\mathbf{f}_{2}, \mathbf{f}_{3}$, and $ \mathbf{f}_{4}$ have the expansions \eqref{etae} and \eqref{eq:28} around the origin. Recall the definition of $ \mathbf{f}_{1}$, i.e. $\mathbf{f}_{1}=-\omega^{2}\mathcal{R}(\mathbf{v}^{(1,2)})$. Due to the fact that  $\mathbf{v}^{(1,2)}\in C^\alpha(\overline { S}_h )$, we have the following expansions:
\begin{equation}\label{vre1}
 \begin{aligned}
&\mathcal{R}(\mathbf{v}^{(1,2)})(\mathbf{x}')=\mathcal{R}(\mathbf{v}^{(1,2)})(\mathbf{0})+\delta\mathcal{R}(\mathbf{v}^{(1,2)})(\mathbf{x}'),\ |\delta\mathcal{R}(\mathbf{v}^{(1,2)})|\leq \|\mathcal{R}(\mathbf{v}^{(1,2)})(\mathbf{x}')\|_{C^{\alpha}(S_h)^{2}}| \mathbf{x}'|^{\alpha}
.
 \end{aligned}
\end{equation}
Substituting \eqref{etae}, \eqref{eq:28} and \eqref{vre1} into \eqref{09}, we have
\begin{equation}\label{a}
 \begin{aligned}
&\bigg(-\omega^{2}\mathcal{R}(\mathbf{v}^{(1,2)})(\mathbf{0})+\mathbf {f}_2 (\mathbf{0})+\mathbf {f}_3 (\mathbf{0})+\mathbf {f}_4 (\mathbf{0})\bigg)\int_{S_{h}}\mathbf{u}(\mathbf{x}'){\rm d}\mathbf{x}'
\\&+\int_{S_{h}}\bigg(-\omega^{2}\delta\mathcal{R}(\mathbf{v}^{(1,2)})(\mathbf{x}')+\delta\mathbf {f}_2 (\mathbf{x}')+\delta\mathbf {f}_3 (\mathbf{x}')+\delta\mathbf {f}_4 (\mathbf{x}')\bigg)\cdot\mathbf{u}(\mathbf{x}'){\rm d}\mathbf{x}'
\\=&\widetilde{I}_{\Lambda_h }
-\eta(\mathbf{0})\mathcal{R}(\mathbf{v}^{(1,2)})(\mathbf{0})\int_{\Gamma_{h}^{\pm}}\mathbf{u}(\mathbf{x}'){\rm d}\sigma-\eta(\mathbf{0})\int_{\Gamma_{h}^{\pm}}\mathbf{u}(\mathbf{x}')\cdot\delta\mathcal{R}(\mathbf{v}^{(1,2)})(\mathbf{x}'){\rm d}\sigma
\\&-\mathcal{R}(\mathbf{v}^{(1,2)})(\mathbf{0})\int_{\Gamma_{h}^{\pm}}\delta\eta(\mathbf{x}')\mathbf{u}(\mathbf{x}'){\rm d}\sigma-\int_{\Gamma_{h}^{\pm}}\delta\eta(\mathbf{x}')\mathbf{u}(\mathbf{x}')\cdot\delta\mathcal{R}(\mathbf{v}^{(1,2)})(\mathbf{x}'){\rm d}\sigma.
\end{aligned}
\end{equation}
Combining \eqref{eq:lame6} with \eqref{eq:zeta} , we  have the following integral estimates:
\begin{align}
&\left|\int_{\Gamma_{h}^{\pm}}\delta\eta(\mathbf{x}')\mathbf{u}(\mathbf{x}'){\rm d}\sigma\right|
\leq\sqrt{2}
\|\eta\|_{C^{\alpha}}
\int_{0}^{h}r^{\alpha}e^{-s\sqrt{r}\cos\frac{\theta_{m}}{2}}{\rm d}r=O(s^{-2\alpha-2}), \label{a1} \\
 & \left|\int_{\Gamma_{h}^{\pm}}\mathbf{u}(\mathbf{x}')\cdot\delta\mathcal{R}(\mathbf{v}^{(1,2)})(\mathbf{x}'){\rm d}\sigma\right|
\leq\sqrt{2}
\|\mathcal{R}(\mathbf{v}^{(1,2)})\|_{C^{\alpha}}
\int_{0}^{h}r^{\alpha}e^{-s\sqrt{r}\cos\frac{\theta_{m}}{2}}{\rm d}r\notag \\
&\quad =O(s^{-2\alpha-2}), \label{a2} \\
&\left|\int_{\Gamma_{h}^{\pm}}\delta\eta(\mathbf{x}')\mathbf{u}(\mathbf{x}')\cdot\delta\mathcal{R}(\mathbf{v}^{(1,2)})(\mathbf{x}'){\rm d}\sigma\right|\notag
\leq \sqrt{2}\|\eta\|_{C^{\alpha}}
\|\mathcal{R}(\mathbf{v}^{(1,2)})\|_{C^{\alpha}}
\int_{0}^{h}r^{2\alpha}e^{-s\sqrt{r}\cos\frac{\theta_{m}}{2}}{\rm d}r\\
&\quad =O(s^{-4\alpha-2}),\label{a3}\\
&\left|\int_{S_{h}}\delta\mathcal{R}(\mathbf{v}^{(1,2)})(\mathbf{x}')\cdot\mathbf{u}(\mathbf{x}'){\rm d}\mathbf{x}'\right|\label{a4}
\leq\sqrt{2}\|\mathcal{R}(\mathbf{v}^{(1,2)})\|_{C^{\alpha}}
\int_{W} |u_{1}( \mathbf{x }')||\mathbf{x}|^{\alpha}{\rm d}\mathbf{x}'
\\
&\quad \leq2\sqrt{2}\|\mathcal{R}(\mathbf{v}^{(1,2)})\|_{C^{\alpha}}\frac{(\theta_{M}-\theta_{m})\Gamma(2\alpha+4)}{\delta_{W}^{2\alpha+4}}s^{-2\alpha-4},\notag \\
&\left|\int_{S_{h}}\delta \mathbf{f}_2(\mathbf{x}')\cdot\mathbf{u}(\mathbf{x}'){\rm d}\mathbf{x}'\right|
\leq \sqrt{2}\|\mathbf{f}_2\|_{C^{\alpha}}
\int_{W} |u_{1}( \mathbf{x }')||\mathbf{x}|^{\alpha}{\rm d}\mathbf{x}' \notag
\\
&\quad \leq 2\sqrt{2}\|\mathbf{f}_2\|_{C^{\alpha}}\frac{(\theta_{M}-\theta_{m})\Gamma(2\alpha+4)}{\delta_{W}^{2\alpha+4}}s^{-2\alpha-4}, \label{a5} \\
&\left|\int_{S_{h}}\delta\mathbf{f}_3(\mathbf{x}')\cdot\mathbf{u}(\mathbf{x}'){\rm d}\mathbf{x}'\right|
\leq\sqrt{2}\|\mathbf{f}_3\|_{C^{\alpha}}
\int_{W} |u_{1}( \mathbf{x }')||\mathbf{x}|^{\alpha}{\rm d}\mathbf{x}' \notag
\\
&\quad \leq 2\sqrt{2}\|\mathbf{f}_3\|_{C^{\alpha}}\frac{(\theta_{M}-\theta_{m})\Gamma(2\alpha+4)}{\delta_{W}^{2\alpha+4}}s^{-2\alpha-4}, \label{a6}\\
&\left|\int_{S_{h}}\delta\mathbf{f}_4(\mathbf{x}')\cdot\mathbf{u}(\mathbf{x}'){\rm d}\mathbf{x}'\right|
\leq \sqrt{2}\|\mathbf{f}_4\|_{C^{\alpha}}
\int_{W} |u_{1}( \mathbf{x }')||\mathbf{x}|^{\alpha}{\rm d}\mathbf{x}' \notag
\\
& \quad \leq 2\sqrt{2}\|\mathbf{f}_4\|_{C^{\alpha}}\frac{(\theta_{M}-\theta_{m})\Gamma(2\alpha+4)}{\delta_{W}^{2\alpha+4}}s^{-2\alpha-4},\label{a7}
\end{align}
as $s\rightarrow +\infty$.
From \eqref{a}, after rearranging terms, we have
\begin{equation}\label{a0}
 \begin{aligned}
&\eta(\mathbf{0})\mathcal{R}(\mathbf{v}^{(1,2)})(\mathbf{0})\int_{\Gamma_{h}^{\pm}}\mathbf{u}(\mathbf{x}'){\rm d}\sigma
\\=&\widetilde{I}_{\Lambda_h }-\bigg(-\omega^{2}\mathcal{R}(\mathbf{v}^{(1,2)})(\mathbf{0})+\mathbf {f}_2 (\mathbf{0})+\mathbf {f}_3 (\mathbf{0})+\mathbf {f}_4 (\mathbf{0})\bigg)\int_{S_{h}}\mathbf{u}(\mathbf{x}'){\rm d}\mathbf{x}'
\\&-\int_{S_{h}}\bigg(-\omega^{2}\delta\mathcal{R}(\mathbf{v}^{(1,2)})(\mathbf{x}')+\delta\mathbf {f}_2 (\mathbf{x}')+\delta\mathbf {f}_3 (\mathbf{x}')+\delta\mathbf {f}_4 (\mathbf{x}')\bigg)\cdot\mathbf{u}(\mathbf{x}'){\rm d}\mathbf{x}'
\\&-\eta(\mathbf{0})\int_{\Gamma_{h}^{\pm}}\mathbf{u}(\mathbf{x}')\cdot\delta\mathcal{R}(\mathbf{v}^{(1,2)})(\mathbf{x}'){\rm d}\sigma
-\mathcal{R}(\mathbf{v}^{(1,2)})(\mathbf{0})\int_{\Gamma_{h}^{\pm}}\delta\eta(\mathbf{x}')\mathbf{u}(\mathbf{x}'){\rm d}\sigma\\&-\int_{\Gamma_{h}^{\pm}}\delta\eta(\mathbf{x}')\mathbf{u}(\mathbf{x}')\cdot\delta\mathcal{R}(\mathbf{v}^{(1,2)})(\mathbf{x}'){\rm d}\sigma.
\end{aligned}
\end{equation}

Multiplying $s^{2}$ on the both side of \eqref{a0}, by virtue of \eqref{e}, \eqref{2.86}, \eqref{2.87}, \eqref{a1}, \eqref{a2}, \eqref{a3}, \eqref{a4}, \eqref{a5}, \eqref{a6} and  \eqref{a7}, and letting $s\rightarrow \infty$, we have
\begin{equation}
 \begin{aligned}
\eta(\mathbf{0})\mathcal{R}(\mathbf{v}^{(1,2)})(\mathbf{0})
\cdot
\begin{pmatrix}
    1\\
   {\rm i}
\end{pmatrix}
\bigg(\mu(\theta_M )^{-2}+   \mu(\theta_m )^{-2}\bigg)=0.
 \end{aligned}
\end{equation}
Since the corner is non-degenerate, we know that
\begin{equation}\notag
\mu^{-2}(\theta_{m})+\mu^{-2}(\theta_{M})\neq 0.
\end{equation}
Hence, we can obtain \eqref{eq:thm 32 con1}, which implies that $\mathbf{v}^{(1,2)}((\mathbf{0},x_3^c))=\mathbf{0}$. 

\medskip
\noindent {\bf Part II.}~ Consider the PDE system \eqref{3} when $\lambda \neq 0$. According to  \cite[Theorem 3.2]{DCL}, under the condition $q\mathbf{w}_\sfr\in C^{\alpha}(\overline{S_{h}})^2$ and $\mathbf{v}_\sfr-\mathbf{w}_\sfr\in C^{\alpha}(\overline{S}_{h}\times [-M,M])^3$ we know that $v_3((\mathbf{0},x_3^c))=0$ for all $x_3^c\in (-L,L)$. For $\lambda =0$, we can use a similar argument in the proof of Corollary \ref{cor:34 eta} for proving the 2nd case to establish the vanishing property of $v_3$ at the edge point. 

The proof is complete. 
\end{proof}

In Corollary \ref{cor:3 added}, if  global H\"older continuous regularities for $\mathbf v$ and $\mathbf w$ are fulfilled, similar to Theorem \ref{thm:33}, we can prove that $\mathbf v$ and $\mathbf w$ must vanish at the edge corner point by removing the assumption that $\mathbf v-\mathbf w$ is $C^{1,\alpha}$-continuous at the edge corner. Before  Corollary  \ref{cor:3 added},  let us recall a Schauder estimate for the Lam\'e operator, which is a special case of \cite[Theorem 5.2]{Giaquinta}. 

\begin{lem}\cite[Proposition 2.7]{BL2021}\label{lem:35 sha}
	Let $D\subset \mathbb R^3$ be a bounded Lipschitz domain. Let $\mathbf  U \in H^1(D)^3$ solve 
	\begin{equation}\label{eq:382 U}
		\begin{cases}
			\mathcal L \mathbf U=\mathbf f \mbox{ in } D,\\ 
			\mathbf U-\mathbf g \in H^1_0(D
			)^3
		\end{cases}
	\end{equation}
	for some $\mathbf f \in C^\alpha(D )$, $\mathbf g \in C^{1,\alpha}(\overline{D} )^3$ and $\alpha \in (0,1)$. Then it holds that $\mathbf U \in C^{1,\alpha} (\overline{D} )$. 
\end{lem}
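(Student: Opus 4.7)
The plan is to prove this Schauder-type regularity by reducing to the classical theory for strongly elliptic linear systems with constant leading coefficients and then handling the up-to-boundary regularity by a local flattening plus Campanato-iteration scheme.

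First, I would recast $\mathcal L$ in divergence form with the constant elasticity tensor $A^{ij}_{kl}=\lambda\,\delta^{ij}\delta_{kl}+\mu(\delta^{ik}\delta^{jl}+\delta^{il}\delta^{jk})$, so that $\mathcal L \mathbf U=\partial_i(A^{ij}_{kl}\partial_j U^l)$. Under the strong convexity $\mu>0$, $n\lambda+2\mu>0$ one verifies the Legendre condition $A^{ij}_{kl}\xi^k_i\xi^l_j\geq c|\xi|^2$ on symmetric $\xi$, placing the problem squarely within the scope of standard Schauder theory for second-order elliptic systems. Next I would homogenize the Dirichlet trace: choose a $C^{1,\alpha}$ extension $\mathbf G$ of $\mathbf g$ to an open neighborhood of $\overline D$ (available since $\mathbf g\in C^{1,\alpha}(\overline D)^3$ by a Whitney-type extension), set $\mathbf V=\mathbf U-\mathbf G\in H^1_0(D)^3$, and note $\mathcal L \mathbf V=\mathbf f-\mathcal L\mathbf G=:\tilde{\mathbf f}\in C^\alpha(D)^3$. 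It then suffices to show $\mathbf V\in C^{1,\alpha}(\overline D)^3$.

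For interior regularity I would use a Campanato-type iteration on balls $B_{2r}(x_0)\Subset D$. Decompose $\mathbf V=\mathbf W+\mathbf Z$ with $\mathcal L\mathbf W=0$ in $B_{2r}(x_0)$, $\mathbf W=\mathbf V$ on $\partial B_{2r}(x_0)$, and $\mathbf Z\in H^1_0(B_{2r}(x_0))^3$ solving $\mathcal L\mathbf Z=\tilde{\mathbf f}$. Because $\mathcal L$ has constant coefficients, $\mathbf W$ is smooth in the interior, which yields the decay estimate
\begin{equation*}
\int_{B_\rho}\bigl|\nabla\mathbf W-(\nabla\mathbf W)_{B_\rho}\bigr|^2\,d\mathbf x\ \leq\ C\Bigl(\tfrac{\rho}{r}\Bigr)^{n+2}\int_{B_r}\bigl|\nabla\mathbf W-(\nabla\mathbf W)_{B_r}\bigr|^2\,d\mathbf x,
\end{equation*}
while a standard constant-coefficient Campanato estimate controls the corresponding excess of $\nabla\mathbf Z$ by $[\tilde{\mathbf f}]_{C^\alpha}^2\,r^{n+2\alpha}$. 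Iterating and invoking Campanato's characterization of Hölder spaces yield $\nabla\mathbf V\in C^\alpha_{\mathrm{loc}}(D)^{3\times 3}$.

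For regularity up to $\partial D$ I would flatten a boundary patch by a local $C^{1,\alpha}$ diffeomorphism $\Phi$ (this is the boundary regularity implicit in the cited Giaquinta Theorem 5.2, even though the statement loosely says ``Lipschitz''): this turns $\mathcal L$ into a variable-coefficient strongly elliptic system with $C^\alpha$ leading coefficients on a half-ball $B_r^+$, transforms $\tilde{\mathbf f}$ into a $C^\alpha$ forcing, and carries the zero Dirichlet condition onto the flat face $\{x_n=0\}$. I would then replay the Campanato iteration in $B_r^+$, using an odd reflection across $\{x_n=0\}$ to extend the homogeneous Dirichlet solution to a full ball so that the constant-coefficient decay estimates again apply, and freezing the $C^\alpha$ coefficients at the boundary point to produce a Hölder perturbation that is absorbed in the iteration. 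A finite cover of $\overline D$ by interior balls and flattenable boundary patches then patches the local estimates into $\nabla\mathbf V\in C^\alpha(\overline D)^{3\times 3}$, giving $\mathbf U=\mathbf V+\mathbf G\in C^{1,\alpha}(\overline D)^3$.

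The main obstacle is the boundary step: under a bare Lipschitz assumption on $\partial D$ the stated conclusion is not true, so the argument tacitly requires the portion of the boundary across which regularity is claimed to be $C^{1,\alpha}$, which is precisely the regularity needed so that the flattening diffeomorphism $\Phi$ preserves the Hölder structure of the top-order coefficients and of the forcing. Once this is granted, the technical heart is the half-ball decay iteration for the transformed system with $C^\alpha$ coefficients and zero Dirichlet data; this follows the classical Campanato template for elliptic systems in divergence form, and for the Lamé case can be read off from Giaquinta's treatise, which is why we are content to cite it rather than reproduce the full argument.
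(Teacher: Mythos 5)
The paper contains no proof of Lemma~\ref{lem:35 sha} to compare yours against: the result is imported verbatim from \cite[Proposition 2.7]{BL2021} with the remark that it is a special case of \cite[Theorem 5.2]{Giaquinta}. Your argument --- divergence-form rewriting with the constant elasticity tensor, coercivity from the strong convexity of $(\lambda,\mu)$ via the Legendre condition on symmetric matrices (plus Korn), homogenization of the Dirichlet datum by a $C^{1,\alpha}$ extension, interior Campanato iteration through the harmonic/inhomogeneous splitting, and boundary flattening with odd reflection and coefficient freezing --- is exactly the standard route by which the cited Schauder theorem is established, so as a proof of the underlying regularity result it is sound in outline.

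The caveat in your final paragraph, however, is not a side remark; it is the substantive point, and it should be stated as such: for a merely Lipschitz $D$ the conclusion $\mathbf U\in C^{1,\alpha}(\overline D)$ is false. The paper's own Lemma~\ref{lem:lem reg} records that Lam\'e solutions in polygonal domains are generically only $H^{3/2+\varepsilon}$, and at a reflex corner the leading singular exponent drops below $1$, so $\nabla\mathbf U$ is unbounded; no Campanato iteration can rescue this, because the boundary flattening step genuinely needs $\partial D\in C^{1,\alpha}$ on the portion of the boundary where the conclusion is claimed, precisely as you observe. This matters for the paper itself: Lemma~\ref{lem:35 sha} is invoked in Corollary~\ref{cor:3 added} with $D=\Omega$ a domain that by hypothesis carries an edge corner with opening angle allowed in $(0,2\pi)\setminus\{\pi\}$, so the global conclusion $\mathbf v-\mathbf w\in C^{1,\alpha}(\overline\Omega)$ obtained there requires either an angle restriction or a localization of the regularity claim to the smooth part of the boundary together with a separate corner analysis. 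In your write-up, promote the hypothesis $\partial D\in C^{1,\alpha}$ (or a localized version of the conclusion) from an implicit assumption to an explicit one; with that correction the proof is complete.
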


\begin{cor}\label{cor:3 added}
Consider the same setup in Theorem \ref{thm:33}. Assume that $\mathbf{v},\mathbf{w}\in H^1(\Omega )^3 \cap  C^{\alpha}(\overline {\Omega} )^3$ ($0<\alpha<1$) are the generalized elastic transmission eigenfunctions to \eqref{eq:lame1}. Suppose further that $\eta \in C^\alpha(\overline{\Gamma}_h^\pm  \times [-M,M]  )$ with $\eta(\mathbf{0})\neq 0$, and $q \mathbf{w}\in C^\alpha(\overline{\Omega })$ for $0< \alpha  <1$. If the corner is non-degenerate, then $\mathbf{v}$ and $\mathbf{w}$ vanish at the edge point $(\mathbf 0,x_3^c) \in \mathbb{R}^3$ of $S_h \times (-M,M)$, where $x_3^c \in (-M,M)$.
	
	\end{cor}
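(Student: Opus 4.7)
The plan is to reduce Corollary~\ref{cor:3 added} to Theorem~\ref{thm:33} by upgrading the regularity of $\mathbf v - \mathbf w$ from $C^\alpha$ to $C^{1,\alpha}$ near the edge point $(\mathbf 0, x_3^c)$. Once this is in hand, all hypotheses of Theorem~\ref{thm:33} are met, producing $\mathbf v(\mathbf 0, x_3^c) = \mathbf 0$; the transmission condition $\mathbf v = \mathbf w$ on $\Gamma$ together with continuity will then give $\mathbf w(\mathbf 0, x_3^c) = \mathbf 0$.

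Setting $\mathbf U := \mathbf v - \mathbf w \in H^1(\Omega)^3$ and subtracting the two PDEs in \eqref{eq:lame1} yields
\begin{equation*}
\mathcal{L} \mathbf U = \omega^2 (\mathbf w - \mathbf v) + \omega^2 V \mathbf w =: \mathbf f \quad \text{in } \Omega,
\end{equation*}
with $\mathbf U = \mathbf 0$ on $\Gamma$. Since $\mathbf v, \mathbf w \in C^\alpha(\overline{\Omega})^3$ and $V \mathbf w = q \mathbf w - \mathbf w \in C^\alpha(\overline{\Omega})^3$, the right-hand side $\mathbf f$ lies in $C^\alpha(\overline{\Omega})^3$. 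Choose $h' \in (0, h)$ and $L' > 0$ small enough that $[x_3^c - L', x_3^c + L'] \Subset (-M, M)$, and set $D := S_{h'} \times (x_3^c - L', x_3^c + L')$, a Lipschitz subdomain of $\Omega$ whose boundary decomposes into $\partial D \cap \partial \Omega = \Gamma^\pm_{h'} \times (x_3^c - L', x_3^c + L')$ (where $\mathbf U$ vanishes) and a complementary piece $\Lambda_{h'} \times (x_3^c - L', x_3^c + L') \cup \overline{S_{h'}} \times \{x_3^c \pm L'\}$ that lies strictly inside $\Omega$.

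Standard interior and flat-boundary Schauder estimates applied to $\mathcal{L}\mathbf U = \mathbf f$ imply that $\mathbf U$ is $C^{1,\alpha}$ on a neighborhood of $\partial D$ minus a tubular neighborhood of the edge $\{\mathbf 0\} \times [x_3^c - L', x_3^c + L']$. Fix a smooth cutoff $\chi \in C^\infty(\overline D)$ equal to $1$ on a neighborhood of $(\partial D) \setminus (\partial D \cap \partial \Omega)$ and vanishing on a neighborhood of $\Gamma^\pm_{h'} \times [x_3^c - L', x_3^c + L']$ (and hence near the edge). Then $\mathbf g := \chi \mathbf U$ lies in $C^{1,\alpha}(\overline D)^3$ --- on $\operatorname{supp}\chi$ it coincides with $\mathbf U$ on a region where $\mathbf U$ is $C^{1,\alpha}$, and elsewhere it vanishes --- and $\mathbf g = \mathbf U$ on $\partial D$ in the trace sense, so $\mathbf U - \mathbf g \in H^1_0(D)^3$. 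Invoking the Schauder estimate Lemma~\ref{lem:35 sha} then yields $\mathbf U \in C^{1,\alpha}(\overline D)^3$; in particular, $\mathbf v - \mathbf w \in C^{1,\alpha}(\overline{S_{h'}} \times [x_3^c - L', x_3^c + L'])^3$, which is exactly the hypothesis missing in Theorem~\ref{thm:33}.

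Applying Theorem~\ref{thm:33} on the local corner $S_{h'} \times (x_3^c - L', x_3^c + L')$ now delivers $\mathbf v(\mathbf 0, x_3^c) = \mathbf 0$, and the transmission condition gives $\mathbf w(\mathbf 0, x_3^c) = \mathbf 0$. The main subtlety I anticipate is justifying the cutoff construction: $\chi$ must vanish uniformly in a neighborhood of the entire edge (not merely at $(\mathbf 0, x_3^c)$), so that $\mathbf g$ never probes the region where $\mathbf U$ is only known to be $C^\alpha$; this requires the flat-boundary Schauder estimate to extend right up to the edge from the smooth sides, which is available because $\Gamma^\pm$ are $C^\infty$ away from the edge and $\mathbf U$ carries zero Dirichlet data there.
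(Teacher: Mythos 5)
Your overall strategy --- upgrade $\mathbf v-\mathbf w$ to $C^{1,\alpha}$ via the Schauder estimate of Lemma \ref{lem:35 sha} and then invoke Theorem \ref{thm:33} --- is exactly the paper's. The difference is in how you feed the lemma: the paper applies it globally, with $D=\Omega$ and $\mathbf g=\mathbf 0$, using the transmission condition $\mathbf v=\mathbf w$ on the exterior boundary so that $\mathbf U=\mathbf v-\mathbf w\in H^1_0(\Omega)^3$ and $\mathcal L\mathbf U=\omega^2 q\mathbf w-\omega^2\mathbf v\in C^\alpha(\overline\Omega)^3$; you instead try to localize to the cylinder $D=S_{h'}\times(x_3^c-L',x_3^c+L')$ with boundary datum $\mathbf g=\chi\mathbf U$. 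This localization has a genuine gap.

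First, the two requirements you impose on $\chi$ are incompatible: a cap point $(\mathbf x',x_3^c+L')$ with $\mathbf x'\in S_{h'}$ close to $\Gamma_{h'}^{\pm}$ belongs to $(\partial D)\setminus(\partial D\cap\partial\Omega)$, so you require $\chi=1$ near it, yet it lies inside any fixed neighbourhood of $\Gamma_{h'}^{\pm}\times[x_3^c-L',x_3^c+L']$, where you require $\chi=0$. More fundamentally, even if $\chi$ is taken to vanish only in a tubular neighbourhood of the edge $\{\mathbf 0\}\times[x_3^c-L',x_3^c+L']$, the caps $\overline{S_{h'}}\times\{x_3^c\pm L'\}$ of $\partial D$ meet $\partial\Omega$ exactly at the edge points $(\mathbf 0,x_3^c\pm L')$; on the caps near those points $\mathbf U$ is nonzero in general but is only known to be $C^\alpha$ there (its $C^{1,\alpha}$ regularity up to the edge is precisely what is being proved), so no $\mathbf g\in C^{1,\alpha}(\overline D)^3$ can coincide with the trace of $\mathbf U$ on that portion of $\partial D$. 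With your $\mathbf g=\chi\mathbf U$ the difference $(1-\chi)\mathbf U$ has a nonvanishing trace on the caps wherever $\chi<1$ and $\mathbf U\neq\mathbf 0$, so $\mathbf U-\mathbf g\notin H^1_0(D)^3$ and Lemma \ref{lem:35 sha} cannot be invoked on $D$. The obstruction is intrinsic to this type of localization: any subdomain containing a piece of the edge must be capped off by interior surfaces whose closures touch the edge at their ends, where the needed regularity of $\mathbf U$ is unavailable, while cutting off $\mathbf U$ itself in the $x_3$-variable produces commutator terms that are only $L^2$, destroying the $C^\alpha$ right-hand side. The paper's global choice $D=\Omega$, $\mathbf g=\mathbf 0$ (so the whole boundary carries the zero Dirichlet datum) is what makes a single application of Lemma \ref{lem:35 sha} legitimate; after that, Theorem \ref{thm:33} and continuity give $\mathbf v(\mathbf 0,x_3^c)=\mathbf w(\mathbf 0,x_3^c)=\mathbf 0$ as you conclude.
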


\begin{proof}
%
%
	Let $\mathbf U=\mathbf v-\mathbf w$. It can be verified that $\mathbf U$ solves \eqref{eq:382 U} for $\mathbf f=\omega^2 q\mathbf w -\omega^2 \mathbf v  $, $\mathbf g=\mathbf 0$ and $D=\Omega$. Due to $q\mathbf{w} \in C^{\alpha }(\overline{\Omega } )^{3}$ and $\mathbf{v} \in C^{\alpha }(\overline{\Omega } )^{3}$,  we have $\mathbf f\in C^\alpha(\overline \Omega ) $. Hence by Lemma \ref{lem:35 sha}, we further have $\mathbf v-\mathbf w\in C^{1,\alpha} (\overline \Omega )$. Therefore the assumptions in Theorem \ref{thm:33} are fulfilled. We readily finish the proof of this corollary by Theorem \ref{thm:33}. 
\end{proof}

\section{unique recovery results for the inverse elastic problem }\label{sect:4}

In this section, we apply the geometric property of generalized elastic transmission eigenfunctions established in the previous sections to the study of the unique recovery for the inverse elastic problem \eqref{eq:so2}. 

We first introduce a more general formulation of the inverse elastic problem. Let $\Omega$, $\mathbf{u}^i$, $V$ and $q=1+V$ be those introduced in Section 1.1. Let $\eta\in L^\infty(\partial\Omega)$ with $\Im\eta\geq 0$. Consider the following elastic scattering system
\begin{equation}\label{eq:model}
	\begin{cases}
	{\mathcal L} \mathbf u^-+\omega ^2 q \mathbf u^-=\mathbf 0 & \mbox{ in }\ \Omega, \\[5pt]
{\mathcal L} \mathbf u^+ +\omega^2  \mathbf u^+=\mathbf 0 & \mbox{ in }\ \mathbb R^n \backslash \Omega, \\[5pt]
	\mathbf u^+= \mathbf u^-,\quad T_\nu \mathbf u^+ + \eta  \mathbf u^+=T_\nu \mathbf u^- & \mbox{ on }\ \partial \Omega, \\[5pt]
	\mathbf u^+=\mathbf u^i+\mathbf u^{\rm sc} & \mbox{ in }\ \mathbb R^n \backslash \Omega, \\[5pt]
	\displaystyle{ \lim_{r\rightarrow\infty}r^{\frac{n-1}{2}}\left(\frac{\partial \bmf{u}_\beta^{\mathrm{sc} }}{\partial r}-\mathrm{i}k_\beta \bmf{u}_\beta^{\mathrm{sc} }\right) =\,\mathbf 0,} & \beta=p,s. 
	\end{cases}
	\end{equation}
If $\eta\equiv 0$, \eqref{eq:model} is reduced to be \eqref{eq:scat1}. The well-posedness of \eqref{eq:model} for the case $\eta\equiv 0$ was investigated in \cite{Hahner93, Hahner98}. By following a standard variational argument in \cite{Hahner93, Hahner98}, one can show the unique existence of a solution $\mathbf{u}=\mathbf{u}^-\chi_\Omega+\mathbf{u}^+\chi_{\mathbb{R}^n\backslash\overline{\Omega}}\in H_{loc}^1(\mathbb{R}^n)^n$ to \eqref{eq:model}. However, it is not the focus of this article and in what follows, we always assume the well-posedness of the system \eqref{eq:model}. We write $\bmf{u}_\beta^\infty(\hat{\mathbf{ x}}; \mathbf{u}^i),\, \beta=t,p, \mbox{ or } s$ to signify the far-field patterns associated with \eqref{eq:model} and consider the following inverse problem
\begin{equation}\label{eq:ip1}
\mathcal{F}(\Omega; q, \eta )=\bmf{u}_\beta^\infty(\hat{\mathbf{ x}};\mathbf{u}^i),\quad \beta=t,p, \mbox{ or } s,
\end{equation}
where $\mathcal{F}$ is implicitly defined by the scattering system \eqref{eq:model}. We are particularly interested in the geometrical inverse problem of recovering $\Omega$ independent of $q$ and $\eta$. On the other hand, it is pointed out that if $\Omega$ can be recovered, the boundary parameter $\eta$ can be recovered as well by a standard argument. To the best of our knowledge, the inverse problem \eqref{eq:ip1} is new to the literature, in particular for the case $\eta\equiv\hspace*{-3.5mm}\backslash\, 0$. If the far-field pattern is given associated with a single incident wave, then it is referred to as a single far-field measurement for the inverse problem \eqref{eq:ip1}, otherwise it is referred as multiple measurements. It can be directly verified that with a single far-field measurement, the inverse problem \eqref{eq:ip1} is formally determined since both $\mathbb{S}^{n-1}$ (on which the far-field pattern is given) and $\partial\Omega$ (it completely determines the shape of $\Omega$) are $(n-1)$-dimensional manifolds. The inverse shape problem associated with a single far-field measurement constitutes a longstanding challenging problem in the inverse scattering theory \cite{CKreview}, a fortiori the one described above for the elastic scattering. In the rest of the paper, we shall apply the geometric results derived in the previous sections to establish several novel unique identifiability results to the inverse shape problem \eqref{eq:ip1} associated with a single far-field measurement within a certain generic (though still specific) scenario. Before that, we would like to mention in passing some related results in the literature \cite{Hahner93,Hahner02} on the unique identifiability for the inverse problem \eqref{eq:ip1} associated with multiple far-field measurements and $\eta\equiv 0$.

First, we introduce the admissible class of elastic scatterers in our study.  Let $W_{{\mathbf x}_c}(\theta_W)$ be an open sector in $\mathbb R^2$ with the vertex ${\mathbf x}_c$ and an open angle $\theta_W $. Denote
	\begin{equation}\label{eq:thm21}
	\begin{split}
		\Gamma_h^\pm({\mathbf x}_c)&: =\partial W_{{\mathbf x}_c} (\theta_W )  \cap B_h({\mathbf x}_c),\quad S_{h} (\mathbf x_c):= 
		B_{h} (\mathbf x_c) \cap W_{\mathbf x_c} (\theta_W ),
	\end{split}
	\end{equation}
	where $B_{h} (\mathbf x_c)$ is an open disk  centered at $\mathbf x_c$ with the radius $h\in \mathbb R_+.$

\begin{defn}~\label{def:adm}
Let {$(\Omega; q, \eta)$}  be an elastic scatterer. Consider the scattering problem \eqref{eq:model} and $\mathbf{u}^i$ is the incident wave field therein.
The scattering configuration is said to be be admissible if it fulfils the following conditions:
\begin{itemize}
\item[(a)]  $\Omega$ is a bounded simply connected  Lipschitz domain in $\mathbb{R}^{n}$ with a connected complement, and $q\in L^\infty(\Omega)$, $\eta\in L^\infty(\partial\Omega)$ are real valued functions.

\item[(b)] Following the notations in Theorem \ref{thm:23}, if $\Omega\Subset \mathbb R^2 $ possesses a  planar corner  $B_h({\mathbf x}_c) \cap \Omega= \Omega\cap W_{{\mathbf x}_c}(\theta_W )$ where ${\mathbf x}_c$ is the vertex of the sector $W_{{\mathbf x}_c}(\theta_W )$ and the open angle $\theta_W$ of $  W_{\mathbf x_c}(\theta_W )  $ satisfies $\theta_W \in (0,\pi)  $,  then $q  \in C^\alpha(\overline{ S}_h (\mathbf x_c ))  $ and $\eta\in C^\alpha(\overline{\Gamma_h^\pm(\mathbf x_c}))$ for $\alpha\in (0,1)$ with $q(\mathbf x_c)\neq 1$ and $\eta(\mathbf x_c)\neq 0$, where $S_{h}(\mathbf x_c) $ and $\Gamma_h^\pm (\mathbf x_c)$ are defined in \eqref{eq:thm21}. Similarly, following the notations in Theorem \ref{thm:31}, if  $\Omega\Subset \mathbb R^3 $ possesses a 3D edge corner $(B_h ({\mathbf x}_c')  \times (-M,M) ) \cap \Omega=S_h ({\mathbf x}_c') \times (-M,M)$,  where ${\mathbf x}_c$ is the vertex of $S_h(\mathbf x_c)$ contained in the sector $W_{{\mathbf x}_c}(\theta_W )$ and the open angle $\theta_W$ of $  W_{\mathbf x_c}(\theta_W )  $ satisfies $\theta_W \in (0,2\pi) \backslash\{\pi\} $, then $q\in C^\alpha(\overline{S_h } (\mathbf {x}'_c)\times [-M,M] )$, $\eta 
\equiv 0$ on $\partial \Omega$.  


\item[(c)] The total wave field $\mathbf u$ is non-vanishing everywhere in the sense that for any $\mathbf x\in\mathbb{R}^n$,
\begin{equation}\label{eq:nn2}
	\lim_{ \rho \rightarrow +0 }\frac{1}{m(B(\mathbf x, \rho  ))} \int_{B(\mathbf x, \rho )} |\mathbf u(\mathbf x)|  {\rm d} x\neq 0.
	\end{equation}
\end{itemize}

\end{defn}

\begin{rem}
	The assumption \eqref{eq:nn2} can be fulfilled in certain generic scenario. For an illustration, let us consider a specific case by requiring the angular frequency $\omega\in\mathbb{R}_+$ sufficiently small. In the physical scenario, this is also equivalent to requiring that the size of the scatterer, namely $\mathrm{diam}(\Omega)$ is  sufficiently small (compared to the operating wavelength). In such a case, from a physical point of view, the interruption of the incident field due to the scatterer should be small, i.e. $\mathbf{u}^{\rm sc}$ should be small compared to $\mathbf{u}^i$. Hence, if $\mathbf{u}^i$ is non-vanishing everywhere (say, $\mathbf{u}^i$ is a plane wave, namely, the Herglotz wave \eqref{eq:h2d} or \eqref{eq:h3d} with the densities being delta-distributions), then $\mathbf{u}=\mathbf{u}^i+\mathbf{u}^{\rm sc}$ should be non-vanishing everywhere. However, a rigorous justification of such a physical intuition will cost lengthy arguments and we choose not to explore more about this point. 
	\end{rem}
%

Next, we present a technical lemma concerning the regularity of the solution to the Lam\'e system around a corner (cf.  \cite{Nicaise, Rossle}). We would also like to refer interested readers to \cite{Dauge88,Grisvard,Cos} on classical results of decomposing solutions to elliptic PDEs in corner domains.


\begin{lem}\cite[Theorem 2.3]{Nicaise}\label{lem:lem reg}
	Let $\Omega$ be a bounded open connected subset of $\mathbb R^2$, where the boundary $\partial \Omega$ of $\Omega$ is the union of a finite number of line segment $\overline{\Gamma}_\ell$, $\ell \in \Xi  $. Fix a partition of $\Xi$ into $\mathcal D \cup \mathcal N$, where $\mathcal D$ and $\mathcal N$ correspond to Dirichlet and Neumann boundary conditions respectively. Given a vector field $\mathbf f \in L^2(\Omega )^2$ and $\mathbf g^{(\ell)} \in H^{1/2} (\Gamma_\ell )^2$ for all $\ell \in \mathcal N$, consider the weak solution $\mathbf u\in H^1(\Omega )^2$ of the Lam\'e system
	\begin{equation}\label{eq: L op}
		\mathcal L \mathbf u=\mathbf f \mbox{ in } \Omega
	\end{equation}
	with mixed boundary conditions
	\begin{equation*}
		\begin{cases}
			\mathbf u=\mathbf 0 & \mbox{ on } \Gamma_\ell, \quad \ell \in \mathcal D,\\
			T_\nu \mathbf u=\mathbf g^{(\ell)}& \mbox{ on } \Gamma_\ell, \quad \ell \in \mathcal N. 
		\end{cases}
	\end{equation*}
	If $\Omega$ satisfies the assumption: $\forall \ell_1, \ell_2 \in \Xi $ such that $\overline{\Gamma }_{\ell_1} \cap \overline{\Gamma }_{\ell_2} \neq \emptyset  $, the interior angle $\angle( \Gamma _{\ell_1}, \Gamma _{\ell_2})$ fulfills $\angle( \Gamma _{\ell_1}, \Gamma _{\ell_2})< 2\pi$ and moreover, if $\ell_1 \in \mathcal D$ and $\ell_2 \in \mathcal N$,  $\angle( \Gamma _{\ell_1}, \Gamma _{\ell_2})< \pi$, then a solution $\mathbf u$ of \eqref{eq: L op} with the data $\mathbf f \in L^2(\Omega )^2$ and $\mathbf g^{(\ell)} \in H^{1/2} (\Gamma_\ell )^2$,   $\forall \ell \in \mathcal N$ satisfies $\mathbf u \in H^{3/2+\varepsilon }(\Omega )^2$ for some $\varepsilon >0 $. 
\end{lem}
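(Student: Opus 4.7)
The plan is to combine standard interior elliptic regularity with a Kondrat'ev-type analysis near each vertex of $\partial \Omega$ in order to capture the singular behaviour produced by the corners. Away from the vertex set, the boundary pieces $\Gamma_\ell$ are smooth line segments, so by classical Agmon--Douglis--Nirenberg theory applied to the strongly elliptic Lam\'e operator $\mathcal L$ (with the natural boundary operator $T_\nu$), the solution $\mathbf u$ is locally $H^2$ (and indeed $H^{s}$ for any $s$ compatible with the data) in a neighbourhood of any interior point of $\Gamma_\ell$ and in any interior subdomain. Consequently, the only possible obstruction to $H^{3/2+\varepsilon}$ regularity is localized at the finitely many vertices of $\partial \Omega$.

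Fix such a vertex $\mathbf x_c$, introduce polar coordinates $(r,\theta)$ centred at $\mathbf x_c$, and localize $\mathbf u$ by a smooth cut-off supported in a neighbourhood of $\mathbf x_c$. Writing $r=e^t$ converts $\mathcal L$ into an operator with coefficients independent of $t$, so the Mellin transform in $r$ reduces the problem to a parameter-dependent boundary value problem on the angular arc $(\theta_1,\theta_2)$ for the two mixed boundary conditions prescribed at the two adjacent edges. The singular exponents are exactly the complex numbers $\lambda$ for which this angular problem has a non-trivial solution; equivalently, they are the roots of a transcendental equation of the form $F(\lambda;\theta_2-\theta_1,\lambda_{\mathrm{Lam}\acute{\text{e}}},\mu,\text{b.c.-type})=0$. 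The Kondrat'ev decomposition then yields
\begin{equation*}
\mathbf u = \mathbf u_{\mathrm{reg}} + \sum_{\Re\lambda_j \in (0,\,3/2+\varepsilon)} c_j\, r^{\lambda_j}\bm{\Phi}_j(\theta),
\end{equation*}
with $\mathbf u_{\mathrm{reg}} \in H^{3/2+\varepsilon}$ and with $\bm{\Phi}_j$ smooth angular profiles. Hence it suffices to verify that under the stated angular assumptions no root $\lambda_j$ lies in the critical strip $\Re\lambda_j \in (0, 1/2]$.

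The main analytic work therefore lies in locating the first singular exponent $\lambda_1$ with smallest positive real part for the three possible vertex types, pure Dirichlet--Dirichlet, pure Neumann--Neumann (traction), and mixed Dirichlet--Neumann. For the pure cases one uses the classical characterization (see, e.g., the analyses by Kondrat'ev, Dauge, Costabel--Dauge, and Nicaise for the Lam\'e system) showing that under the opening angle bound $< 2\pi$ the leading exponent satisfies $\Re\lambda_1>1/2$. For the mixed case the roots drift and one shows that the sharper bound $< \pi$ is precisely what is needed to keep $\Re\lambda_1>1/2$. Putting these facts into the decomposition above, every singular term belongs to $H^{3/2+\varepsilon}$ for $\varepsilon>0$ small enough, and combining with the local $H^2$ estimate away from the vertices gives the global $H^{3/2+\varepsilon}(\Omega)^2$ regularity claimed.

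The principal obstacle will be the explicit root-location step for the Lam\'e system at a mixed Dirichlet--Neumann vertex: unlike the scalar Laplace case, the characteristic equation couples $\lambda$, $\mu$ and the opening angle non-trivially through a $4\times 4$ determinant, and the monotonicity of $\Re\lambda_1$ in the angle is not a priori clear. One would either invoke the detailed computations already performed in the literature or carry out a continuation argument starting from $\theta_2-\theta_1\to 0$, where the two-dimensional half-plane model has no singularities in the critical strip, and then showing that the first exponent cannot cross the line $\Re\lambda=1/2$ before the angle reaches the stated threshold.
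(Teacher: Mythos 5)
You should note first that the paper contains no proof of this statement at all: Lemma \ref{lem:lem reg} is imported verbatim as \cite[Theorem 2.3]{Nicaise} (see also the Rossle reference in the bibliography), so the "paper's proof" is the citation itself. Your outline reproduces exactly the route by which the cited result is established in the literature: interior and flat-boundary ADN regularity, localization at each vertex, Mellin/Kondrat'ev analysis of the angular problem, decomposition of $\mathbf u$ into a regular part plus singular functions $r^{\lambda_j}\bm{\Phi}_j(\theta)$, and the verification that no exponent satisfies $\Re\lambda_j\in(0,1/2]$ under the stated angle hypotheses. So the strategy is the right one and is the same as the underlying reference, not a genuinely different route.

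Read as a proof, however, there is a genuine gap, and you have located it yourself: the root-location step is not an auxiliary technicality but the entire mathematical content of the lemma, and it is not carried out. Invoking "the detailed computations already performed in the literature" collapses your argument back to the citation the paper already makes, while the continuation-in-angle argument you sketch is not conclusive as stated: for the Lam\'e system the exponents generically move off the real axis, so you must exclude them entering the strip $0<\Re\lambda\le 1/2$ through complex values rather than by a real crossing at $\Re\lambda=1/2$; at a mixed Dirichlet--Neumann vertex the characteristic determinant depends on the Lam\'e constants (unlike the scalar mixed problem), so the claim that the opening $<\pi$ keeps $\Re\lambda_1>1/2$ needs uniformity in the material parameters under the strong convexity condition; and threshold angles where an exponent hits the edge of the weighted-space strip produce logarithmic terms that your decomposition does not account for. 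A smaller imprecision: with $\mathbf f\in L^2$ and $\mathbf g^{(\ell)}\in H^{1/2}$ the Kondrat'ev splitting gives $\mathbf u_{\mathrm{reg}}\in H^2$ near the vertex with singular exponents confined to $0<\Re\lambda_j<1$ (not up to $3/2+\varepsilon$), and the conclusion should be phrased as choosing $\varepsilon<\min_j\Re\lambda_j-1/2$. In short: either present the lemma as quoted from \cite{Nicaise} (as the paper does), or, if you want a self-contained proof, the determinant analysis for the Dirichlet--Dirichlet, traction--traction and mixed vertices must actually be performed.
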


\begin{lem} \label{lem:41}
Let $S_{2h}=W\cap B_{2h}$ and $\Gamma_{2h}^\pm =\partial S_{2h}\backslash \partial B_{2h} $, where $W$ is the infinite sector defined in \eqref{eq:lame2}  with the opening angle $\theta_W \in (0,\pi)$. Suppose that $\mathbf u\in H^1(B_{2h})^2$ satisfies 
	\begin{equation}\label{eq:lem23}
\begin{cases}
\mathcal L \mathbf u^-+\omega^2 q_- \mathbf u^-=\mathbf 0 & \mbox{ in }\ S_{2h}, \\[5pt] 
\mathcal L\mathbf u^+ +\omega^2  \mathbf  u^+=\mathbf 0 & \mbox{ in }\ B_{2h} \backslash {\overline{ S_{2h}}}, \\[5pt] 
\mathbf u^+=\mathbf  u^- & \mbox{ on }\ \Gamma_{2h}^\pm, 
\end{cases}
\end{equation} 
where $\mathbf u^+=\mathbf u|_{{B_{2h}\backslash {\overline{ S_{2h}}}}}$, $\mathbf u^-=\mathbf u|_{S_{2h}}$,  $\omega$ is a positive constant and $q_{-}\in L^{\infty}(S_{2h})$. Assume that $\mathbf u^+$ is  real analytic in ${B_{2h}\backslash {\overline{ S_{2h}}}}$. There exists $\alpha\in (0, 1)$ such that $\mathbf u^- \in C^\alpha(\overline{S_h})$. 
\end{lem}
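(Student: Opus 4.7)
The plan is to reduce the claim to a corner-regularity estimate for a Dirichlet problem on a polygonal domain and then conclude via Sobolev embedding. Away from the vertex $\mathbf 0$, $\mathbf u^-$ is already smooth: interior ellipticity of the constant-coefficient Lam\'e operator gives smoothness in the open sector, and classical transmission regularity applied to $\mathbf u\in H^1(B_{2h})^2$ across the smooth line $\Gamma_{2h}^\pm\setminus\{\mathbf 0\}$—where only the zeroth-order term of the operator jumps—upgrades $\mathbf u^-$ to $H^2_{\rm loc}$ up to that open segment. It therefore suffices to establish H\"older continuity in a neighbourhood of $\mathbf 0$.

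To analyse the corner, I will localise by a cutoff $\rho\in C_c^\infty(B_{7h/4})$ with $\rho\equiv 1$ on $\overline{B_{5h/4}}$, and set $\tilde{\mathbf u}^-:=\rho\mathbf u^-\in H^1(S_{2h})^2$. This solves
\[
\mathcal L\tilde{\mathbf u}^-=-\omega^2 q_-\rho\mathbf u^-+[\mathcal L,\rho]\mathbf u^-=:\mathbf F\in L^2(S_{2h})^2,
\]
has Dirichlet trace $\rho\mathbf u^+$ on $\Gamma_{2h}^\pm$, vanishes in a neighbourhood of the arc $\partial B_{2h}\cap W$, and equals $\mathbf u^-$ on $\overline{S_h}$. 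Extending $\tilde{\mathbf u}^-$ by zero across the arc produces a function in $H^1(\triangle)^2$, where $\triangle$ is a triangle with vertex at $\mathbf 0$, interior angle $\theta_W$ there, and sides of length $L>2h/\cos(\theta_W/2)$ along $\Gamma^\pm$; such a $\triangle$ exists for every $\theta_W\in(0,\pi)$, contains $\overline{B_{2h}}\cap W$, and has the other two interior angles equal to $(\pi-\theta_W)/2<\pi/2$. The extended function, still called $\tilde{\mathbf u}^-$, solves the same Lam\'e equation on $\triangle$ with $L^2$ source and carries a Dirichlet datum on $\partial\triangle$ that equals $\rho\mathbf u^+$ on $\Gamma_{2h}^\pm\subset\partial\triangle$ and vanishes elsewhere—$H^{1/2}$-regular globally and $C^\infty$-regular off $\mathbf 0$ by the first paragraph.

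The main step, and expected main obstacle, is to apply Lemma~\ref{lem:lem reg} on $\triangle$ despite its standing assumption of homogeneous Dirichlet data. For this I will construct a lift $\bpsi\in H^1(\triangle)^2$ whose trace agrees with that of $\tilde{\mathbf u}^-$ on $\partial\triangle$, which is smooth on $\overline{\triangle}\setminus\{\mathbf 0\}$, and which vanishes identically in a fixed neighbourhood of $\mathbf 0$; such a $\bpsi$ exists because the target trace is smooth off $\mathbf 0$ and vanishes near the endpoints of $\Gamma_{2h}^\pm\subset\partial\triangle$ (since $\rho$ does). Then $\mathcal L\bpsi\in L^2(\triangle)^2$ and $\mathbf v:=\tilde{\mathbf u}^--\bpsi\in H^1_0(\triangle)^2$ satisfies $\mathcal L\mathbf v\in L^2(\triangle)^2$. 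All three interior angles of $\triangle$ lie in $(0,\pi)$ and the boundary is Dirichlet throughout, so Lemma~\ref{lem:lem reg} yields $\mathbf v\in H^{3/2+\varepsilon}(\triangle)^2$ for some $\varepsilon>0$. Because $\bpsi$ is smooth near $\mathbf 0$, the identity $\mathbf u^-=\tilde{\mathbf u}^-=\mathbf v+\bpsi$ on $\overline{S_h}$ gives $\mathbf u^-\in H^{3/2+\varepsilon}(S_h)^2$, and the planar Sobolev embedding $H^{3/2+\varepsilon}(S_h)\hookrightarrow C^{0,1/2+\varepsilon}(\overline{S_h})$ on the Lipschitz domain $S_h$ delivers the desired H\"older continuity with $\alpha=1/2+\varepsilon\in(1/2,1)$.
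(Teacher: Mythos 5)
There is a genuine gap, and it sits exactly at the corner, which is the only place where regularity is actually in question. Your reduction to homogeneous Dirichlet data hinges on a lift $\bpsi\in H^1(\triangle)^2$ that simultaneously (i) has the same trace as $\tilde{\mathbf u}^-$ on $\partial\triangle$ and (ii) vanishes identically in a neighbourhood of $\mathbf 0$. These two requirements are incompatible in general: since $\rho\equiv 1$ on $B_{5h/4}$, the trace of $\tilde{\mathbf u}^-$ on the two sides of $\triangle$ near the vertex is the trace of $\mathbf u^-=\mathbf u^+$, which is generically nonzero at and near $\mathbf 0$ — indeed, in the very application of this lemma (Theorem~\ref{thm:41}) the whole point is that $\mathbf u(\mathbf 0)\neq\mathbf 0$. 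If you drop requirement (ii), you instead need $\mathcal L\bpsi\in L^2(\triangle)^2$, and for a lift of an inhomogeneous Dirichlet datum at a corner this forces the datum to be extendable to (at least) an $H^{2}$-smooth function \emph{across the vertex}; a datum that is merely smooth on each open side, with only $H^{1/2}$ control at the corner, does not admit such a lift. Supplying that extension is precisely the key device of the paper's argument, which you have bypassed: the paper subtracts the analytic continuation $\mathbf w$ of $\mathbf u^+$ into a full neighbourhood $B_h$ of the vertex, so that $\mathbf v=\mathbf u^--\mathbf w$ has exactly zero Dirichlet data on $\Gamma_h^\pm$ and an $H^{1/2}$ traction datum on the chord $\boldsymbol{l}_h$, and then Lemma~\ref{lem:lem reg} (with mixed Dirichlet/Neumann conditions on the triangle, all relevant angles being admissible since $\theta_W\in(0,\pi)$) gives $\mathbf v\in H^{3/2+\varepsilon}$. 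Without subtracting such a continuation of $\mathbf u^+$ across the corner, your scheme cannot produce homogeneous (or sufficiently regular) Dirichlet data near $\mathbf 0$, and the application of Lemma~\ref{lem:lem reg} does not go through.

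A secondary point: the claim in your first paragraph that ``classical transmission regularity'' upgrades $\mathbf u^-$ to $H^2_{\rm loc}$ up to the open segments of $\Gamma_{2h}^\pm\setminus\{\mathbf 0\}$ is unjustified. The hypothesis \eqref{eq:lem23} only imposes continuity of the Dirichlet traces; no matching of the tractions $T_\nu\mathbf u^\pm$ is assumed, so $\mathbf u$ need not satisfy an elliptic equation with $L^2$ right-hand side across $\Gamma_{2h}^\pm$ (there may be a single-layer contribution from the traction jump), and one-sided $H^2$ regularity up to the interface does not follow; e.g.\ $\mathbf u^-$ could have a trace that is only $H^{1/2}$ on the segment. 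The smoothness of the boundary datum away from the vertex should instead be drawn from the regularity/analytic extendability of $\mathbf u^+$ up to $\Gamma$ (as the paper implicitly does when it invokes the analytic extension $\mathbf w$). This secondary issue is repairable, but the lifting step described above is where the proof genuinely breaks.
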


\begin{proof}

Since $\mathbf u^+$ is real analytic in $B_{2h}\backslash\overline{S_{2h}}$,  we let $\mathbf w$ be the analytic extension of $\mathbf u^+|_{B_h\backslash\overline{S_h}}$ in $B_h$. By using the transmission condition on $\Gamma_h^\pm$, one clearly has that $\mathbf u^-=\mathbf u^+=\mathbf w$ on $\Gamma_h^\pm$. Set $\mathbf v=\mathbf u^- -\mathbf w$. Set $ \boldsymbol{
l}_h$ to denote the line segment with the staring point $h(\cos\theta_m,\sin \theta_m )\in \Gamma_h^-$ and the ending point $h(\cos\theta_M,\sin \theta_M )\in \Gamma_h^+$.   It can be directly verified that
	\begin{equation}\notag 
\mathcal L\mathbf  v=\mathbf f  \mbox{ in }\ \mathcal T_h; \ \ T_\nu \mathbf v= \mathbf g  \mbox{ on }\ \boldsymbol{l}_h;\ \ 
\mathbf v=\mathbf  0  \mbox{ on }\ \Gamma_h^\pm, 
\end{equation}
where $\mathcal T_h$ is the open triangle formed by $\Gamma_h^\pm$ and $\boldsymbol{l}_h$, $\mathbf f=-\omega^2 (\mathbf w+ q_-\mathbf  u^-)\in L^2(S_h)^2$ and  $\mathbf g\in C^{\infty}(\overline {\boldsymbol{l}_h})^2$. By virtue of Lemma \ref{lem:lem reg}, one has that $\mathbf v \in H^{3/2+\varepsilon }(T_{h})^2$ with $\varepsilon\in \mathbb R_+$. Therefore by the Sobolev embedding theorem, it is clear that there exists $\alpha\in (0, 1)$ such that $\mathbf v\in C^\alpha(\overline{ S_h} )^2$. Hence, we readily have that $\mathbf u^-\in C^\alpha ( \overline{ S_h} )^2$. 

The proof is complete. 
\end{proof}

\begin{rem}
	We would like to point out that the regularity result in Lemma \ref{lem:41} in general does not for the three dimensional case. This is mainly due to the fact that the corresponding regularity result in Lemma \ref{lem:lem reg} is generically not true around a general polyhedral corner in $\mathbb R^3$. Hence, we exclude the generalized transmission condition in \eqref{eq:lame1} for an admissible  elastic scatterer in Definition \ref{def:adm}. That is, we only consider the case $\eta \equiv 0$ on the boundary  of an admissible elastic scatterer in $\mathbb R^3$, which is different from the two-dimensional case. 
	\end{rem}

\begin{thm}\label{thm:41}
Consider the elastic scattering problem \eqref{eq:model} associated  the incident elastic wave field $\mathbf u^i$ and  two elastic scatterers $(\Omega_j; q, \eta_j)$ being admissible scattering configuration. Let $\bmf{u}_\beta^{j,\infty}(\hat{\mathbf{ x}}; \mathbf{u}^i )$ be the far-field pattern associated with the scatterer $(\Omega_j; q_j, \eta_j)$ and the incident field $\mathbf u^i$, $\beta=t,\,p$ or $s$. If
\begin{equation}\label{eq:nn1}
\bmf{u}_\beta^{1,\infty}(\hat{\mathbf{ x}}; \mathbf{u}^i )={\bmf{u}}_\beta^{2,\infty}(\hat{\mathbf{ x}}; \mathbf{u}^i), \ \ \hat{\mathbf x}\in\mathbb{S}^{n-1},
\end{equation}
for a fixed incident wave $\mathbf u^i$, then one has that
\begin{equation}\label{eq:nn3}
\Omega_1\Delta\Omega_2:=\big(\Omega_1\backslash\Omega_2\big)\cup \big(\Omega_2\backslash\Omega_1\big)
\end{equation}
cannot possess a corner. Hence, if $\Omega_1$ and $\Omega_2$ are convex polygons in $\mathbb{R}^2$ or convex polyhedra in $\mathbb{R}^3$, one must have
\begin{equation}\label{eq:nn4}
\Omega_1=\Omega_2.
\end{equation}

\end{thm}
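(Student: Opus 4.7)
The plan is to argue by contradiction: suppose $\Omega_1 \Delta \Omega_2$ possesses a corner, and derive a violation of the non-vanishing condition (c) in Definition~\ref{def:adm}. Without loss of generality (after swapping the roles of $\Omega_1$ and $\Omega_2$ if needed), we may fix a vertex $\mathbf{x}_c \in \partial \Omega_1$ together with an open sector (2D) or edge-prism (3D) $S_h(\mathbf{x}_c) \subset \Omega_1 \setminus \overline{\Omega_2}$ and two flat boundary pieces $\Gamma_h^\pm(\mathbf{x}_c) \subset \partial \Omega_1 \cap (\mathbb{R}^n \setminus \overline{\Omega_2})$, with $h$ small enough so that $B_{2h}(\mathbf{x}_c) \cap \overline{\Omega_2}=\emptyset$. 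The admissibility hypothesis ensures that the corner angle is non-degenerate in the sense of \eqref{eq:cond theta} (2D) or \eqref{eq:4} (3D).

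Next I would exploit the assumption \eqref{eq:nn1}. By Rellich's theorem and unique continuation across the connected component of $\mathbb{R}^n \setminus (\overline{\Omega_1 \cup \Omega_2})$, we have $\mathbf{u}_1^+ = \mathbf{u}_2^+$ throughout this unbounded region; in particular, they agree on $B_{2h}(\mathbf{x}_c) \setminus \overline{S_{2h}(\mathbf{x}_c)}$. Let $\mathbf{w} := \mathbf{u}_1|_{S_h(\mathbf{x}_c)}$ and $\mathbf{v}:= \mathbf{u}_2|_{S_h(\mathbf{x}_c)}$. Since $S_h(\mathbf{x}_c)$ lies inside $\Omega_1$ but outside $\Omega_2$, the PDEs in \eqref{eq:model} together with the transmission conditions on $\Gamma_h^\pm(\mathbf{x}_c)$ (applied to $\mathbf{u}_1$) show that $(\mathbf{v}, \mathbf{w})$ satisfy the generalized elastic transmission eigenvalue system \eqref{eq:lame1} on $S_h(\mathbf{x}_c)$ with potential $q_1$ and boundary parameter $\eta_1$, where $\mathbf{v}$ solves the free Lam\'e equation. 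Crucially, $\mathbf{v}$ is real analytic in $B_{2h}(\mathbf{x}_c) \setminus \overline{S_{2h}(\mathbf{x}_c)}$ because $\mathbf{u}_2$ is a solution of a Lam\'e system with constant coefficients there.

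In the 2D case, Lemma~\ref{lem:41} then yields $\mathbf{w} \in C^\alpha(\overline{S_h(\mathbf{x}_c)})^2$ for some $\alpha \in (0,1)$, while $\mathbf{v}$ is trivially H\"older (even analytic) near $\mathbf{x}_c$. The admissibility assumption (b) gives $q_1 \in C^\alpha(\overline{S_h(\mathbf{x}_c)})$ with $q_1(\mathbf{x}_c) \neq 1$ (so $q_1 \mathbf{w}$ inherits H\"older regularity), $\eta_1 \in C^\alpha(\overline{\Gamma_h^\pm(\mathbf{x}_c)})$ with $\eta_1(\mathbf{x}_c) \neq 0$, together with the non-degeneracy of the corner. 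All hypotheses of Theorem~\ref{thm:23} are therefore met, and we conclude $\mathbf{v}(\mathbf{x}_c) = \mathbf{0}$, i.e. $\mathbf{u}_2(\mathbf{x}_c) = \mathbf{0}$. This directly contradicts the non-vanishing condition \eqref{eq:nn2} in Definition~\ref{def:adm}. For the 3D case (where admissibility forces $\eta_j \equiv 0$), the same argument goes through by invoking Corollary~\ref{cor:34 eta} in place of Theorem~\ref{thm:23}, after reducing to an edge corner $S_h(\mathbf{x}_c') \times (-M,M)$ and checking the H\"older continuity of $\mathbf{u}_1, \mathbf{u}_2$ across the edge (the main obstacle here being that Lemma~\ref{lem:41} has no direct 3D analogue, so one has to justify the needed regularity by combining the analyticity of $\mathbf{v}$ away from the edge with the interior elliptic regularity of $\mathbf{u}_1$ in $\Omega_1$; the H\"older regularity of $q_1 \mathbf{w}$ and the condition $q_1(\mathbf{x}_c') \neq 1$ then let Corollary~\ref{cor:34 eta} apply).

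Finally, the convex case \eqref{eq:nn4} is an elementary geometric consequence of \eqref{eq:nn3}: if $\Omega_1 \neq \Omega_2$ are two distinct convex polygons (in 2D) or convex polyhedra (in 3D), then at least one vertex of one polygon/polyhedron must lie outside the other. Such a vertex gives rise to a non-degenerate corner of $\Omega_1 \Delta \Omega_2$, which is precisely what we have just ruled out. Hence $\Omega_1 = \Omega_2$, completing the proof. The decisive technical step, and the main obstacle, is the verification of the H\"older regularity of $\mathbf{w}$ at the corner in 3D, which is needed to invoke the local vanishing result; all other steps are essentially bookkeeping once \eqref{eq:nn1} is used to put $(\mathbf{v},\mathbf{w})$ into the framework of \eqref{eq:lame1}.
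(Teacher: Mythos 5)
Your overall architecture coincides with the paper's own proof: argue by contradiction from a corner of $\Omega_1\Delta\Omega_2$, use Rellich's theorem to identify the two total fields outside both scatterers, recast the pair (free analytic field, transmitted field) as local generalized transmission eigenfunctions of \eqref{eq:lame1} at the corner, and then play the local vanishing results (Theorem \ref{thm:23} in 2D, Corollary \ref{cor:34 eta} in 3D) against the non-vanishing condition \eqref{eq:nn2}; your 2D argument, including the use of Lemma \ref{lem:41} to obtain the H\"older regularity of the transmitted field, is complete and differs from the paper only by swapping the roles of $\Omega_1$ and $\Omega_2$, which is immaterial. The final convexity step from \eqref{eq:nn3} to \eqref{eq:nn4} is also fine.

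The genuine gap is the 3D regularity step, which you flag as the main obstacle but do not close, and the route you hint at would not close it. Interior elliptic regularity of $\mathbf{u}_1$ \emph{in} $\Omega_1$ only gives smoothness on compact subsets of $\Omega_1$, whereas the edge point lies on $\partial\Omega_1$, so it cannot produce regularity of $\mathbf{w}=\mathbf{u}_1|_{S_h\times(-M,M)}$ up to the edge; moreover Corollary \ref{cor:34 eta} requires $\mathbf{v}-\mathbf{w}\in C^{1,\alpha}$ up to the closed edge region, which is strictly stronger than the ``H\"older continuity of $\mathbf{u}_1,\mathbf{u}_2$'' you propose to verify. The paper closes this precisely by exploiting $\eta\equiv 0$ in the 3D admissibility class: with no conductive term, the total field is an $H^1$ distributional solution of the single equation $\mathcal{L}\mathbf{u}_1+\omega^2\widetilde q_1\mathbf{u}_1=\mathbf 0$ with $\widetilde q_1=q_1\chi_{\Omega_1}+\chi_{B_R\setminus\overline{\Omega}_1}\in L^\infty(B_R)$ on a whole ball $B_R$ containing the closed edge region (possible since $B_R$ can be taken disjoint from $\overline{\Omega}_2$), so interior regularity \emph{in $B_R$, across the interface $\partial\Omega_1$}, gives $\mathbf{u}_1\in W^{2,4}$ near the edge and hence $\mathbf{u}_1\in C^{1,1/4}$ by Sobolev embedding; combined with the analyticity of the free field near the edge this yields $\mathbf{v}-\mathbf{w}\in C^{1,1/4}$ and $q_1\mathbf{w}\in C^\alpha$, so Corollary \ref{cor:34 eta} applies. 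Without this argument (or an equivalent one) your 3D case is incomplete; with it, your proof matches the paper's.
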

\begin{proof} 
  We prove  \eqref{eq:nn3} by contradiction. Suppose  that there is a corner contained in $\Omega_1\Delta\Omega_2$. Without loss of generality,  we may assume that the vertex $O$ of the corner $\Omega_2 \cap W_{\mathbf x_c}(\theta_W)$ is such that $O \in \partial \Omega_2$ and $ O \notin \overline{\Omega}_1$. Furthermore, one may assume that in two dimensions, $O$ is the origin, whereas in three dimensions, the edge corner point $O=(\mathbf x_c', x_3^c)$ of the 3D edge corner $(B_h ({\mathbf x}_c')  \times (-M,M) ) \cap \Omega_2=S_h ({\mathbf x}_c') \times (-M,M)$ fulfils that $\mathbf x_c'$ is the origin of $\mathbb R^2$.
 
	
	Due to \eqref{eq:nn1}, applying Rellich's Theorem (see \cite{Hahner98,Kupradze}),  we know that $\mathbf u_1^{\rm sc}=\mathbf u_2^{\rm sc}$ in $\mathbb R^n \backslash (  \overline{\Omega}_1 \cup \overline{\Omega}_2 )$. Thus
	\begin{equation}\label{eq:u1u2}
		\mathbf u_1(\mathbf x)=\mathbf u_2(\mathbf x)
	\end{equation}
	for all $\mathbf x \in \mathbb R^n \backslash (  \overline{\Omega}_1 \cup \overline{\Omega}_2 )$.  In what follows, we consider two separate cases.

\medskip

\noindent {\bf Case 1 ($n=2$):}~Following the notations in \eqref{eq:SIGN} and the setup of Theorem \ref{thm:23},  we have from \eqref{eq:u1u2} that
	\begin{equation}\label{eq:413}
		\mathbf 	u_2^-=\mathbf u_2^+=\mathbf u_1^+,\quad T_\nu \mathbf u_2^- = T_\nu  \mathbf u_2^+ + \eta_2 \mathbf u_2^+=T_\nu  \mathbf u_1^+ + \eta_2 \mathbf u_1^+ \mbox{ on } \Gamma_h^\pm,
	\end{equation}
	where the superscripts $(\cdot)^-, (\cdot)^+$ stand for the limits taken from $\Omega_2$ and $\mathbb{R}^2 \backslash\overline{\Omega_2}$ respectively. Moreover, we take $h\in\mathbb{R}_+$ sufficient small such that
\begin{equation}\label{eq:414}
	\mathcal L \mathbf u_1^+ + \omega^2 \mathbf u_1^+ =\mathbf 0 \mbox{ in } B_h, \quad \mathcal L \mathbf u_2^-  +\omega^2 q_2 \mathbf u_2^-=\mathbf 0 \mbox{ in } S_h.
	\end{equation}
	
Since  $(\Omega_j; q_j, \eta_j)$, $j=1,2$, are admissible, we know that $q_2 \in C^\alpha(\overline{S_h})$ and $\eta_j\in C^\alpha(\overline{\Gamma}_h^\pm)$. Clearly $\mathbf u_2^-\in H^1(S_h)^2$ and  $\mathbf u_1^+$ is  real analytic in ${B_{2h}\backslash {\overline{ S_{2h}}}}$. According to Lemma \ref{lem:41},  we know that $\mathbf u_2^-\in C^{\alpha}(S_{h})^2 $, which implies that $q_2 \mathbf u_2^- \in C^\alpha(\overline{S_h})$.   Using the admissibility condition (b) in Definition \ref{def:adm}, by \eqref{eq:414} and applying Theorem  \ref{thm:23}, and also utilizing the fact that $\mathbf u_1$ is continuous at the vertex $\mathbf 0$, we have
	$$
	\mathbf u_1(\mathbf 0)=\mathbf 0,
	$$
	which  contradicts to the admissibility condition (c) in Definition \ref{def:adm}.

\medskip

\noindent {\bf Case 2 ($n=3$):} Since  $(\Omega_j; q_j, \eta_j)$, $j=1,2$, are admissible, we know that $\eta_2\equiv 0$ on $\partial \Omega_2$. Therefore, from \eqref{eq:u1u2}, following the setup of Theorem \ref{thm:31}, it yields that 
\begin{equation}\label{eq:413 3d}
		\mathbf 	u_2^-=\mathbf u_2^+=\mathbf u_1^+,\quad T_\nu \mathbf u_2^- = T_\nu  \mathbf u_2^+ =T_\nu  \mathbf u_1^+ \mbox{ on } \Gamma_h^\pm \times (-M,M). 
	\end{equation}
	Moreover, we take $h\in\mathbb{R}_+$ sufficient small such that
\begin{equation}\label{eq:414}
	\mathcal L \mathbf u_1^+ + \omega^2 \mathbf u_1^+ =\mathbf 0 \mbox{ in } B_h, \quad \mathcal L \mathbf u_2^-  +\omega^2 q_2 \mathbf u_2^-=\mathbf 0 \mbox{ in } S_h \times (-M,M). 
	\end{equation}

	  By the well-posedness of the direct problem \eqref{eq:model} with $\eta \equiv 0$, we know that $\mathbf u_2 \in H^1(B_{R})^3$  where $B_{R}$ is a ball centered at the origin with the radius $R \in \mathbb R_+$ such that $\overline{S_h} \times [-M,M] \Subset B_R $ and $B_R \Subset \mathbb R^3 \backslash \overline{\Omega_1}$. Let  $\widetilde{ q}_2=q_2 \chi_{\Omega_2 }+1 \chi_{B_R\backslash \overline \Omega_2 } $. Then 
	$$
	\mathcal L\mathbf u_1 +\omega^2 \mathbf u_1=\mathbf 0 \mbox{ in } B_R ,\quad  \mathcal L\mathbf u_2 +\omega^2 \widetilde{q}_2\mathbf u_2=\mathbf 0  \mbox{ in } B_R .
	$$  
	Since  $\widetilde{ q}_2\in L^\infty(B_R)$, by the interior elliptic regularity estimate \cite{McLean}, we have  $\mathbf u_2^- \in H^2(B_{R'})^3 $, where $B_{R'}\Subset B_R \backslash (\overline{S_h} \times [-M,M]) $.  Again using the interior elliptic regularity estimate, we have $\mathbf u_2^- \in W^{2,4}(\overline{S_h} \times [-M,M])^3$. Using the Sobolev embedding theorem, we have $\mathbf u_2^- \in C^{1,1/4}(\overline{S_h} \times [-M,M])^3$. Clearly, $\mathbf u_1^+$ is real analytic in $\overline{S_h} \times [-M,M]$. Therefore one has $\mathbf u_1^+-\mathbf u_1^+  \in C^{1,1/4}(\overline{S_h} \times [-M,M])^3$. Since  $(\Omega_j;q_j, \eta_j)$, $j=1,2$, are admissible, we know that $q_2 \in C^\alpha(\overline{S_h} \times [-M,M] )$.  Using the admissibility condition (b) in Definition \ref{def:adm}, by \eqref{eq:414} and applying Corollary   \ref{cor:34 eta}, we have
	$$
	\mathbf u_1(\mathbf 0)=\mathbf 0,
	$$
	which  contradicts to the admissibility condition (c) in Definition \ref{def:adm}.

	
\medskip

The conclusion \eqref{eq:nn4} can be immediately obtained by using the contradiction argument and \eqref{eq:nn3}. 

	The proof is complete.
\end{proof}



{{Based on Definition \ref{def:adm}, if we further assume that the surface parameter $\eta$ is constant, we can recover $\eta$ simultaneously in $\mathbb R^2$ once the shape of the scatterer, namely $\Omega$ is determined. However, in determining the surface conductive parameter, we need to assume that $q_1=q_2:=q$ are known.
		
		\begin{thm}\label{Th: unique eta}
			Consider the elastic scattering problem \eqref{eq:model} in $\mathbb R^2$ associated with  the incident elastic wave field $\mathbf u^i$ and  two elastic scatterers $(\Omega_j; q, \eta_j)$ being admissible scattering configuration, where $\Omega_j=\Omega$ for $j=1,2$ and $\eta_j\neq0$, $j=1,2$, are two constants. Let $\bmf{u}_\beta^{j,\infty}(\hat{\mathbf{ x}}; \mathbf{u}^i )$ be the far-field pattern associated with the scatterer $(\Omega_j; q_j, \eta_j)$ and the incident field $\mathbf u^i$, $\beta=t,\,p$ or $s$. Suppose that
			\begin{equation}\label{eq:far}
			\bmf{u}_\beta^{1,\infty}(\hat{\mathbf{ x}}; \mathbf{u}^i )={\bmf{u}}_\beta^{2,\infty}(\hat{\mathbf{ x}}; \mathbf{u}^i), \ \ \hat{\mathbf x}\in\mathbb{S}^{1},
			\end{equation}			
				for a fixed incident wave $\mathbf u^i$. Then if $\omega $ is not an eigenvalue of the partial differential operator $\mathcal L+\omega ^2q$ in $H_0^1(\Omega)$, we have $\eta_1=\eta_2$.
		\end{thm}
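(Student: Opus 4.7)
The plan is to reduce the identification of $\eta$ to a non-triviality statement about the total wave field on $\partial\Omega$, and then exploit the eigenvalue hypothesis together with unique continuation to rule out the degenerate case. First, starting from the assumed coincidence of far-field patterns \eqref{eq:far}, I would apply Rellich's theorem to conclude that the scattered fields agree outside $\Omega$, so that $\mathbf{u}_1^+=\mathbf{u}_2^+$ in $\mathbb{R}^2\setminus\overline{\Omega}$; by continuity this gives $\mathbf{u}_1^+=\mathbf{u}_2^+$ and $T_\nu\mathbf{u}_1^+=T_\nu\mathbf{u}_2^+$ on $\partial\Omega$. Using the interior transmission condition $\mathbf{u}_j^-=\mathbf{u}_j^+$ on $\partial\Omega$, we then get $\mathbf{u}_1^-=\mathbf{u}_2^-$ on $\partial\Omega$.

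Next, I would consider the difference $\mathbf{U}:=\mathbf{u}_1^--\mathbf{u}_2^-\in H^1(\Omega)^2$, which satisfies $\mathcal{L}\mathbf{U}+\omega^2 q\mathbf{U}=\mathbf{0}$ in $\Omega$ together with $\mathbf{U}=\mathbf{0}$ on $\partial\Omega$. By the hypothesis that $\omega$ is not an eigenvalue of $\mathcal{L}+\omega^2 q$ on $H_0^1(\Omega)$, this forces $\mathbf{U}\equiv \mathbf{0}$, hence $\mathbf{u}_1^-=\mathbf{u}_2^-$ in $\Omega$ and in particular $T_\nu \mathbf{u}_1^-=T_\nu\mathbf{u}_2^-$ on $\partial\Omega$. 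Substituting both generalized transmission conditions $T_\nu\mathbf{u}_j^+ +\eta_j\mathbf{u}_j^+ = T_\nu\mathbf{u}_j^-$ and subtracting, I obtain
\begin{equation}\notag
(\eta_1-\eta_2)\,\mathbf{u}_1^+ = \mathbf 0 \quad \text{on } \partial\Omega.
\end{equation}
Since $\eta_1$ and $\eta_2$ are constants, either $\eta_1=\eta_2$ (and we are done) or $\mathbf{u}_1^+\equiv\mathbf 0$ on $\partial\Omega$.

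The remaining and main task is to exclude the second alternative. Assume for contradiction that $\mathbf{u}_1^+\equiv\mathbf 0$ on $\partial\Omega$. Then $\mathbf{u}_1^-=\mathbf{u}_1^+\equiv\mathbf 0$ on $\partial\Omega$ and, by the same eigenvalue argument applied to $\mathbf{u}_1^-$ itself, it follows that $\mathbf{u}_1^-\equiv\mathbf 0$ in $\Omega$, so $T_\nu\mathbf{u}_1^-=\mathbf 0$ on $\partial\Omega$; the transmission condition then gives $T_\nu\mathbf{u}_1^+ = T_\nu\mathbf{u}_1^- -\eta_1\mathbf{u}_1^+=\mathbf 0$ on $\partial\Omega$. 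Thus $\mathbf{u}_1^+\in H^1_{loc}(\mathbb{R}^2\setminus\overline{\Omega})^2$ solves $\mathcal{L}\mathbf{u}_1^++\omega^2\mathbf{u}_1^+=\mathbf 0$ with vanishing Cauchy data on $\partial\Omega$. By Holmgren/unique continuation for the Lam\'e system, $\mathbf{u}_1^+\equiv\mathbf 0$ in $\mathbb{R}^2\setminus\overline{\Omega}$, whence $\mathbf{u}^{\rm sc}=-\mathbf{u}^i$ in that region. But $\mathbf{u}^i$ is an entire solution while $\mathbf{u}^{\rm sc}$ satisfies the Kupradze radiation condition, which together forces $\mathbf{u}^i\equiv\mathbf 0$ (for instance by projecting onto the $p$- and $s$-components and invoking the Rellich lemma for Helmholtz), contradicting the non-triviality of the incident wave.

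The only nontrivial step I anticipate is the last one: rigorously closing the argument that vanishing Cauchy data of $\mathbf{u}_1^+$ on $\partial\Omega$ together with the outgoing behavior of $\mathbf{u}^{\rm sc}$ yields $\mathbf{u}^i\equiv \mathbf 0$. All the other steps are bookkeeping with the transmission conditions and the eigenvalue hypothesis. In particular, no corner analysis from Sections \ref{sec:2}--\ref{sec:3} is needed here; the vanishing lemmas are only used to fix $\Omega$ itself, while the recovery of the constant $\eta$ follows from PDE uniqueness once $\Omega$ and $q$ are known.
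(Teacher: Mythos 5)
Your proposal is correct and follows essentially the same route as the paper: Rellich to match the exterior Cauchy data, the non-eigenvalue hypothesis to kill the interior difference and deduce $(\eta_1-\eta_2)\mathbf u_1=\mathbf 0$ on $\partial\Omega$, and then the same eigenvalue argument plus vanishing Cauchy data and the Kupradze radiation condition to rule out $\mathbf u_1\equiv\mathbf 0$ on $\partial\Omega$. The only difference is cosmetic (you derive $T_\nu\mathbf u_1^-=T_\nu\mathbf u_2^-$ before subtracting the transmission conditions, while the paper records the traction jump $T_\nu\mathbf v=(\eta_1-\eta_2)\mathbf u_1^-$ first and then sets $\mathbf v=\mathbf 0$), which changes nothing of substance.
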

		
		 \begin{proof}	
	  Due to \eqref{eq:far},  we have $\mathbf u_1^+=\mathbf u_2^+$ for all $\mathbf x\in\mathbb{R}^2\backslash\overline{\Omega}$ and thus $T_\nu \mathbf u_1^+=T_\nu \mathbf u_2^+$ on $\partial\Omega$. Combining with the transmission condition in the scattering problem \eqref{eq:model}, we deduce that
			\begin{equation}\notag
			\mathbf u_1^-=\mathbf u_1^+=\mathbf u_2^+=\mathbf u_2^-\ \mbox{ on }\ \partial\Omega. 
			\end{equation}
			Thus, we have
			\begin{equation}\notag
			T_\nu(\mathbf u_1^- -\mathbf u_2^-)=T_\nu(\mathbf u_1^+-\mathbf u_2^+)+\eta_1 \mathbf u_1^+-\eta_2\mathbf u_2^+=(\eta_1-\eta_2)\mathbf u_1^- \ \mbox{ on }\ \partial\Omega.
			\end{equation}
			Set $\mathbf v:=\mathbf u_1^- -\mathbf u_2^-$. Then $\mathbf v$ fulfills
			\begin{equation}\label{eq:v}
			\begin{cases}
			(\mathcal L +\omega ^2q) \mathbf v=\mathbf 0 & \mbox{ in }\ \Omega,\\
			\mathbf v=\mathbf 0 &\mbox{ on }\ \partial\Omega,\\
			T_\nu \mathbf v=(\eta_1-\eta_2)\mathbf u_1^- &\mbox{ on }\ \partial\Omega.
			\end{cases}	
			\end{equation}
			Since $\omega$ is not an eigenvalue of the operator $\mathcal L+\omega ^2q$ in $H_0^1(\Omega)$,
			 one must have $\mathbf v=0$ to \eqref{eq:v}. Substituting this into the Neumann boundary condition of \eqref{eq:v}, we know that $(\eta_1-\eta_2)\mathbf u_1^-=T_\nu \mathbf v=\mathbf 0$ on $\partial\Omega$.
			
			Next, we prove the uniqueness of $\eta$ by contradiction. Assume that $\eta_1\neq\eta_2$. Since $(\eta_1-\eta_2)\mathbf u_1^-=\mathbf 0$ on $\partial\Omega$ and $\eta_j$, $j=1,2$ are constants, we can deduce that $\mathbf u_1^-=\mathbf 0$ on $\partial\Omega$. Then $u_1^-$ satisfies
			\begin{equation}\notag
			\begin{cases}
			(\mathcal L +\omega ^2q)\mathbf u_1^-=\mathbf 0 & \mbox{ in }\ \Omega,\\
			\mathbf u_1^-=\mathbf 0 & \mbox{ on }\ \partial\Omega.
			\end{cases}
			\end{equation}
		    Similar to \eqref{eq:v}, this Dirichlet problem also only has a trivial solution $\mathbf u_1^-=\mathbf 0$ in $\Omega$ due to that $\omega $ is not an eigenvalue of $\mathcal L +\omega ^2q$ in $H_0^1(\Omega)$. Hence, we can derive  $\mathbf u_1^+=\mathbf u_1^-=0$ and
			\begin{equation}\notag
			T_\nu \mathbf u_1^-=T_\nu \mathbf u_1^++\eta_1\mathbf u_1^+=T_\nu \mathbf  u_1^+=\mathbf 0 \mbox{ on } \partial\Omega,
			\end{equation}
			which implies that $\mathbf u_1\equiv \mathbf 0$ in $\mathbb{R}^n$ and thus $\mathbf u_1^{\rm sc}=-\mathbf u^i$. This contradicts to the fact that $\mathbf u^{\rm sc}_1$ satisfies the Kupradze radiation condition.
			
			The proof is complete.
		\end{proof}

\begin{rem}
In Theorem \ref{Th: unique eta}, it is required that $\omega$ is not an eigenvalue of $\mathcal L+\omega ^2q$ in $H_0^1(\Omega)$. Clearly, if $q$ is negative-valued in $\Omega$, this condition is obviously fulfilled. On the other hand, if $q$ is positive-valued in $\Omega$, then this condition can be readily fulfilled when $\omega \in\mathbb{R}_+$ is sufficiently small.
\end{rem}

\vspace*{-.3cm}

\section*{Acknowledgement}

The work  of H Diao was supported in part by the National Natural Science Foundation of China under grant 11001045 and by National Key Research and Development Program of China (No. 2020YFA0714102).  The work of H Liu was supported by the startup fund from City University of Hong Kong and the Hong Kong RGC General Research Fund (projects 12301420, 12302919 and 12301218).


\begin{thebibliography}{99}

\bibitem{Abr}
M.~Abramowitz and I. ~A.~Stegun, {\it Handbook of mathematical functions: with formulas, graphs, and mathematical tables}, vol. 55, Courier Corporation, 1964.






%
%

\bibitem{Ang}
{T. S. ~Angell and A. ~Kirsch}, {\it The conductive boundary condition for Maxwells equations}, SIAM J. Appl. Math., {\bf 52(6)} (1992), 1597--1610.



	
	\bibitem{Arens}  T. Arens, {\it An approximation property of elastic Herglotz wave functions and its application in the linear sampling method}, J. Inv. Ill-Posed Problems, {\bf 11(3)} (2003), 219--233.
	
\bibitem{Bellis2013} C. Bellis, F. Cakoni and B. B. Guzina, {\it  Nature of the transmission eigenvalue spectrum for elastic
bodies},  IMA J. Appl. Math., {\bf  78}(2013), 895--923.


	\bibitem{Bellis2010} {C. Bellis and B. B. Guzina}, {\it On the existence and uniqueness of a solution to the interior transmission problem for piecewise-homogeneous solids}, { J. Elasticity},  \textbf{101} (2010),  29--57.
	
	
\bibitem{Bsource}
E.~{Bl{\aa}sten}, {\it Nonradiating sources and transmission eigenfunctions vanish at
  corners and edges}, SIAM J. Math. Anal., {\bf 50(6)} (2018), 6255--6270.

\bibitem{BLLW} E. Bl{\aa}sten, X. Li, H. Liu and Y. Wang, {On vanishing and localization near cusps of transmission
    eigenfunctions: a numerical study}, Inverse Problems, {\bf 33} (2017), 105001.

    \bibitem{EBL}
E. Bl{\aa}sten and Y.-H.  Lin, \emph{Radiating and non-radiating sources in elasticity}, Inverse Problems, {\bf 35}  (2019),  015005. 

  \bibitem{BL2021}
E. Bl{\aa}sten and Y.-H.  Lin, {Corrigendum to \cite{EBL}}, 2021. 

\bibitem{BL2016}
E. Bl{\aa}sten and H. Liu,  {\it On corners scattering stably, nearly non-scattering interrogating waves, and stable shape determination by a single far-field pattern},
 Indiana Univ. Math. J., in press, 2019. 

\bibitem{BL2017}
E. Bl{\aa}sten and H. Liu, {\it Recovering piecewise constant refractive indices by a single far-field pattern}, Inverse Problems, \textbf{36} (2020), 085005. 

\bibitem{BL2018} E. Bl{\aa}sten and H. Liu, {\it Scattering by curvatures, radiationless sources, transmission eigenfunctions and inverse scattering problems}, arXiv:1808.01425

\bibitem{BL2017b}
E. Bl{\aa}sten and H. Liu, {\it On vanishing near corners of transmission eigenfunctions}, J. Funct. Anal., {\bf 273} (2017), no. 11, 3616--3632. Addendum, arXiv:1710.08089

\bibitem{BXL} E. Bl{\aa}sten, H. Liu and J. Xiao, {\it On an electromagnetic problem in a corner and its applications}, {Analysis \& PDE}, in press, 2020. 


\bibitem{BPS}
  {E. Bl{\aa}sten, L. P\"aiv\"arinta and J. Sylvester},  {\it Corners always scatter},
  Comm.\ Math.\ Phys., {\bf 331} (2014), 725--753.


%







%


\bibitem{CHreview}
F. Cakoni and H. Haddar, {\it Transmission eigenvalues in inverse scattering theory}, in ``Inverse Problems and Applications: Inside Out II'', Math. Sci. Res. Inst. Publ., Vol. 60,  pp. 529--580, Cambridge Univ. Press., Cambridge, 2013.

\bibitem{CX} F. Cakoni and J. Xiao, {\it On corner scattering for operators of divergence form and applications to inverse scattering}, Comm. Partial Differential Equations, in press, 2020. 

\bibitem{CDL} X. Cao, H. Diao and H. Liu, {\it Determining a piecewise conductive medium body by a single far-field measurement}, {CSIAM Trans. Appl. Math.}, \textbf{1} (2020), 740-765.


\bibitem{CDHLW} Y.-T. Chow, Y. Deng, Y. He, H. Liu and X. Wang, {\it Surface-localized transmission eigenstates, super-resolution imaging and pseudo surface plasmon modes}, { arXiv:2003.02406}



\bibitem{CK}
D.~Colton and R.~Kress, {\it Inverse Acoustic and Electromagnetic Scattering Theory}, 2nd edition, Springer-Verlag, Berlin, 1998.

\bibitem{CKreview} D. Colton and R. Kress, {\it Looking back on inverse scattering theory}, SIAM Review, \textbf{60} (2018), no. 4, 779--807. 



%
%
%


%
%
%
%
%
%
%
%
%
%
%
%



%

%
%

%





\bibitem{costabel88}
M. Costabel, {\it Boundary integral operators on Lipschitz domains: elementary results}, SIAM J. Math. Anal., {\bf 19(3)} (1988), 613--626.

\bibitem{Cos}
M. Costabel and M. Dauge, {\it Construction of corner singularities for Agmon-Douglis-Nirenberg elliptic systems}, Math. Nachr., {\bf 162} (1993),  209--237.

\bibitem{Dauge88}
M. Dauge, {\it Elliptic Boundary Value Problems in Corner Domains-Smoothness and Asymptotics of Solutions},  Lecture Notes in Mathematics, Vol. 1341. Springer-Verlag, Berlin 1988.

\bibitem{DDL} Y. Deng, C. Duan and H. Liu, {\it On vanishing near corners of conductive transmission eigenfunctions}, arXiv:2011.14226 

\bibitem{DCL}
H. Diao, X. Cao and H. Liu, {\it On the geometric structures of transmission eigenfunctions with a conductive boundary condition and applications}, Comm. Partial Differential
Equations, DOI:10.1080/03605302.2020.1857397, 2021.


\bibitem{Giaquinta}
M. Giaquinta and L. Martinazzi, {\it An Introduction to the Regularity Theory for Elliptic Systems, Harmonic Maps and Minimal Graphs}, Edizioni della Normale, Pisa, 2012.


\bibitem{GKLU5} {A.~Greenleaf, Y.~Kurylev, M.~Lassas and G.~Uhlmann}, {\it Cloaking devices, electromagnetic wormholes and
transformation optics}, SIAM Review, {\bf 51} (2009), 3--33.


\bibitem{Grisvard}

P. Grisvard, {\it Boundary Value Problems in Non-Smooth Domains}, Pitman, London 1985.

\bibitem{Hahner93}
P. H\" ahner, {\it A uniqueness theorem in inverse scattering of elastic waves},  IMA J. Appl. Math., {\bf 51} (1993), 201--215.

\bibitem{Hahner98}
P. H\" ahner, {\it
On acoustic, electromagnetic, and elastic scattering problems in inhomogeneous media}, Universit\"at G\"ottingen, Habilitation Thesis, 1998.

\bibitem{Hahner02}
P. H\" ahner, {\it On uniqueness for an inverse problem in inhomogeneous elasticity}, IMA J. Appl. Math., {\bf 67} (2002), 127--143.



%


%
%
%
\bibitem{JL} X. Ji and H. Liu, {\it On isotropic cloaking and interior transmission eigenvalue problems},  European J. Appl. Math., {\bf 29} (2018), no. 2, 253--280.
%

\bibitem{Kupradze}
V. D. Kupradze, \emph{Three-dimensional Problems of the Mathematical Theory of Elasticity and Thermoelasticity}, Amsterdam, North-Holland, 1979.


%



%
%
%

\bibitem{Liureview} H. Liu, {\it On local and global structures of transmission eigenfunctions and beyond}, J. Inverse Ill-posed Probl., 2020,  DOI: https://doi.org/10.1515/jiip-2020-0099





\bibitem{LT} H. Liu and C.-H. Tsou, {\it Stable determination of polygonal inclusions in Calder\'on's problem by a single partial boundary measurement}, {Inverse Problems}, \textbf{36} (2020), 085010. 

\bibitem{LTY} H. Liu, C.-H. Tsou and W. Yang, {\it On Calder\'on's inverse inclusion problem with smooth shapes by a single partial boundary measurement}, {arXiv:2006.10586}

\bibitem{LiuUhl} {H. Liu and G. Uhmann}, {\it Regularized transformation-optics cloaking in acoustic and electromagnetic scattering}, Inverse problems and imaging, 111--136, Panor. Synth\'eses, 44, Soc. Math. France, Paris, 2015.


%




\bibitem{LiuZou4} H. Liu and J. Zou, {\it On uniqueness in inverse acoustic and electromagnetic obstacle scattering
problems}, {Journal of Physics: Conference Series}, Applied Inverse
Problems, Vancouver, Canada, 2007.

\bibitem{McLean}
 W.~McLean, {\it  Strongly Elliptic Systems and Boundary Integral Equations},  Cambridge University Press,   Cambridge, 2010.
 
 \bibitem{SPV}
 M. Salo, L. P\"aiv\"arinta and E. Vesalainen, {\it Strictly convex corners scatter}, Rev. Mat. Iberoamericana, \textbf{33} (2017), no. 4, 1369--1396. 
 
 
 \bibitem{Nicaise}
 S. Nicaise, {\em About the Lam\'e system in a polygonal or a polyhedral domain and a coupled problem between the Lam\'e system and the plate equation, I: Regularity of the solutions},  Ann. Scuola Norm. Sup. Pisa Cl. Sci.,  {\bf 19}(4) (1992),  327--361.


%
%
%
%
%
%

%
%
%
%
%
%

\bibitem{Rossle}
A. R\"ossle, 
{\em Corner singularities and regularity of weak solutions for the two-dimensional Lam\'e equations on domains with angular corners}, Journal of Elasticity, {\bf 60} (2000), 57--75.











%
%


\end{thebibliography}
\end{document}